	\newcommand{\one}{\mathds{1}}
\numberwithin{equation}{section}
\newcommand{\eq}[1]{\begin{linenomath}\postdisplaypenalty=0\begin{align*} #1 \end{align*}\end{linenomath}}
\newcommand{\eeq}[1]{\begin{linenomath}\postdisplaypenalty=0\begin{align} \begin{split} #1 \end{split} \end{align}\end{linenomath}}
\newcommand{\stackref}[2]{
\readlist*\mylist{#1}
\stackrel{\mbox{\footnotesize\foreachitem\x\in\mylist[]{\ifnum\xcnt=1\else,\fi\eqref{\x}}}}{#2}
}
\newcommand{\stackrefp}[2]{
\readlist*\mylist{#1}
\stackrel{\hphantom{\mbox{\footnotesize\foreachitem\x\in\mylist[]{\ifnum\xcnt=1\else,\fi\eqref{\x}}}}}{#2}
}
\newcommand{\stackrefpp}[3]{
\readlist*\mylist{#1}
\readlist*\mylistt{#2}
\stackrel{\parbox{\widthof{\footnotesize\foreachitem\x\in\mylistt[]{\ifnum\xcnt=1\else,\fi\eqref{\x}}}}{\centering\footnotesize\foreachitem\x\in\mylist[]{{\ifnum\xcnt=1\else,\fi\eqref{\x}}}}}{#3}
}
\def\eps{\varepsilon}
\def\vphi{\varphi}
\newcommand{\E}{\mathbb{E}}
\newcommand{\N}{\mathbb{N}}
\renewcommand{\P}{\mathbb{P}}
\newcommand{\Q}{\mathbb{Q}}
\newcommand{\R}{\mathbb{R}}
\newcommand{\Z}{\mathbb{Z}}
\renewcommand{\AA}{\mathcal{A}}
\newcommand{\CC}{\mathcal{C}}
\newcommand{\DD}{\mathcal{D}}
\newcommand{\FF}{\mathcal{F}}
\newcommand{\GG}{\mathcal{G}}
\newcommand{\HH}{\mathcal{H}}
\newcommand{\II}{\mathcal{I}}
\newcommand{\LL}{\mathcal{L}}
\newcommand{\NN}{\mathcal{N}}
\newcommand{\OO}{\mathcal{O}}
\newcommand{\PP}{\mathcal{P}}
\newcommand{\QQ}{\mathcal{Q}}
\newcommand{\RR}{\mathcal{R}}
\newcommand{\XX}{\mathcal{X}}
\newcommand{\PPP}{\mathscr{P}}
\newcommand{\RRR}{\mathscr{R}}
    \newcommand{\eff}{\mathsf{F}}
\newcommand{\iprod}[3][]{#1\langle #2, #3#1\rangle}
\newcommand{\vv}[1]{\mathbf{#1}}
\newcommand{\bbeta}{\boldsymbol{\beta}}
\newcommand{\wt}[1]{\widetilde{#1}}
\newcommand{\dd}{\mathrm{d}} % for differentials
\newcommand{\bone}{\mathbf{1}}
\newcommand{\bI}{\boldsymbol{I}}
\newcommand{\sT}{\mathsf{T}}
\newcommand{\Bgiven}{\,\Big|\,}
\newcommand{\ka}{\kappa}
\newcommand{\de}{\mathrm{d}}
\newcommand\op{\mathrm{op}}
\newcommand\disc{{\sf disc}}
\newcommand\la{\lambda}
\newcommand{\sF}{\eff}
            \DeclareFontFamily{OMX}{MnSymbolE}{}
            \DeclareSymbolFont{MnLargeSymbols}{OMX}{MnSymbolE}{m}{n}
            \DeclareFontShape{OMX}{MnSymbolE}{m}{n}{
                <-6>  MnSymbolE5
               <6-7>  MnSymbolE6
               <7-8>  MnSymbolE7
               <8-9>  MnSymbolE8
               <9-10> MnSymbolE9
              <10-12> MnSymbolE10
              <12->   MnSymbolE12
            }{}
            \DeclareFontShape{OMX}{MnSymbolE}{b}{n}{
                <-6>  MnSymbolE-Bold5
               <6-7>  MnSymbolE-Bold6
               <7-8>  MnSymbolE-Bold7
               <8-9>  MnSymbolE-Bold8
               <9-10> MnSymbolE-Bold9
              <10-12> MnSymbolE-Bold10
              <12->   MnSymbolE-Bold12
            }{}
            \let\llangle\@undefined
            \let\rrangle\@undefined
            \DeclareMathDelimiter{\llangle}{\mathopen}%
                                 {MnLargeSymbols}{'164}{MnLargeSymbols}{'164}
            \DeclareMathDelimiter{\rrangle}{\mathclose}%
                                 {MnLargeSymbols}{'171}{MnLargeSymbols}{'171}
    \DeclareFontFamily{U}{matha}{\hyphenchar\font45}
    \DeclareFontShape{U}{matha}{m}{n}{ <-6> matha5 <6-7> matha6 <7-8>
    matha7 <8-9> matha8 <9-10> matha9 <10-12> matha10 <12-> matha12 }{}
    \DeclareSymbolFont{matha}{U}{matha}{m}{n}
    \DeclareFontFamily{U}{mathx}{\hyphenchar\font45}
    \DeclareFontShape{U}{mathx}{m}{n}{ <-6> mathx5 <6-7> mathx6 <7-8>
    mathx7 <8-9> mathx8 <9-10> mathx9 <10-12> mathx10 <12-> mathx12 }{}
    \DeclareSymbolFont{mathx}{U}{mathx}{m}{n}
    \DeclareMathDelimiter{\llbrack} {4}{matha}{"76}{mathx}{"30}
    \DeclareMathDelimiter{\rrbrack} {5}{matha}{"77}{mathx}{"38}
\DeclareMathOperator{\dgr}{deg}
\newcommand{\diag}{{\mathrm{diag}}}
\newcommand{\bal}{\textup{\textsf{bal}}}
\newcommand{\Law}{{\mathsf{Law}}}
\newcommand{\pmu}{\omega}
\newcommand{\tr}{\mathrm{tr}}
\newcommand{\err}{\mathrm{err}}
\newcommand{\zee}{\mathsf{Z}}
\newcommand{\nonpsd}{\RRR_\kappa}
\newcommand{\Prob}{\mathsf{Prob}}
\newcommand\bbullet{\raisebox{0.13ex}{{\scaleobj{0.8}{\bullet}}}} 
\newcommand{\act}{\mkern 3mu\bbullet\mkern 3mu}
\newtheorem{theorem}{Theorem}[section]
\newtheorem{proposition}[theorem]{Proposition}
\newtheorem{corollary}[theorem]{Corollary}
\newtheorem{lemma}[theorem]{Lemma}
\newtheorem{claim}[theorem]{Claim}
\newtheorem{assumption}[theorem]{Assumption}
\newtheorem{theirthm}[theorem]{Theorem} % special environment for referencing theorems of others
\theoremstyle{definition} % uncomment to make the following environments non-italicized
\newtheorem{definition}[theorem]{Definition}
\newtheorem{remark}[theorem]{Remark}
\newenvironment{proofclaim}[1][Proof]
	{\begin{proof}[#1]}
	{\end{proof}}
\title[Balanced Potts spin glass]{Parisi formula for balanced Potts spin glass}
\subjclass[2020]{60K35, % statistical mechanics type models
60G15, % Gaussian processes
82B44, % disordered systems
82D30. % Random media, disordered materials (including liquid crystals and spin glasses)
}
\keywords{Potts spin glass, Parisi formula, synchronization, Ghirlanda--Guerra identities}
\author{Erik Bates}
\thanks{E.B. was partially supported by NSF grant DMS-2246616.} 
\address{\newline Department of Mathematics \newline North Carolina State University \newline  SAS Hall, 2311 Stinson Drive \newline   Raleigh, North Carolina 27695-8205 USA \newline \textup{\tt ebates@ncsu.edu}}
\author{Youngtak Sohn}
\thanks{Y.S. is supported by Simons-NSF Collaboration on Deep Learning NSF DMS-2031883 and Vannevar Bush Faculty Fellowship award ONR-N00014-20-1-2826.}
\address{\newline Department of Mathematics \newline Massachusetts Institute of Technology \newline 77 Massachusetts Avenue \newline Cambridge, Massachusetts 02139-4307 USA %\newline United States
\newline \textup{\tt youngtak@mit.edu}}
\begin{document}

% changes footnote labeling back to numbers
%\renewcommand{\thefootnote}{\arabic{footnote}} \setcounter{footnote}{0}

%: ABSTRACT
\begin{abstract}
The Potts spin glass is a generalization of the Sherrington--Kirkpatrick (SK) model that allows for spins to take more than two values. 
Based on a novel synchronization mechanism, Panchenko (2018) showed that the limiting free energy is given by a Parisi-type variational formula. 
The functional order parameter in this formula is a probability measure on a monotone path in the space of positive-semidefinite matrices.
By comparison, the order parameter for the SK model is much simpler: a probability measure on the unit interval. Nevertheless, a longstanding prediction by Elderfield and Sherrington (1983) is that the order parameter for the Potts spin glass can be reduced to that of the SK model.

We prove this prediction for the balanced Potts spin glass, where the model is constrained so that the fraction of spins taking each value is asymptotically the same. It is generally believed that the limiting free energy of the balanced model is the same as that of the unconstrained model, in which case our results reduce the functional order parameter of Panchenko's variational formula to probability measures on the unit interval. The intuitive reason---for both this belief and the Elderfield--Sherrington prediction---is that no spin value is a priori preferred over another, and the order parameter should reflect this inherent symmetry.
% We prove this prediction for the balanced Potts spin glass, where ``balanced" means the model is constrained so that the fraction of spins taking each value is asymptotically the same.
% It is generally believed that the limiting free energy of this constrained model is the same as that of the unconstrained model, in which case our results simplify Panchenko's variational formula to one with the same form as the classical Parisi formula.
% The intuitive reason---for both this belief and the Elderfield--Sherrington prediction---is that no spin value is a priori preferred over another, and so the order parameter should reflect this inherent symmetry.

This paper rigorously demonstrates how symmetry, when combined with synchronization, acts as the desired reduction mechanism. Our proof requires that we introduce a generalized Potts spin glass model with mixed higher-order interactions, which is interesting it its own right.
 We prove that the Parisi formula for this model is differentiable with respect to inverse temperatures.
 This is a key ingredient for guaranteeing the Ghirlanda--Guerra identities without perturbation, which then allow us to exploit symmetry and synchronization simultaneously.
% Our proof requires that we introduce a generic Potts spin glass model with mixed higher-order interactions, which is interesting it its own right.
% We prove that the Parisi formula for this generalized model is differentiable with respect to inverse temperatures.
% This is a key ingredient for guaranteeing the Ghirlanda--Guerra identities without perturbation, which then allow us to exploit symmetry and synchronization simultaneously.  We expect that our method is broadly applicable to other spin glass models, which exhibit symmetry and synchronization.
%
%predicted in statistical physics that because all directions in the spin system are equivalent, the order parameter for the Potts spin glass model is the same as the SK model, namely the probability measures on the unit interval. In this work, we study the balanced Potts spin glass model, where the number of spins taking each value is approximately the same, and is believed to have the same free energy as the Potts spin glass model. We show that the order parameter for the free energy of the balanced Potts spin glass model can be reduced to the probability measures on the unit interval.
%
%Our results confirm the prediction by Elderfield, Sherrington (1983). We clarify the reduction mechanism of the order parameter for spin glass models which exhibits symmetry and synchronization. For the proof, we introduce the generic Potts spin glass model and prove its Parisi formula, which is interesting in its own right. Further, we show that the Parisi formula is differentiable with respect to inverse temperatures.
\end{abstract}

\maketitle
%\setcounter{tocdepth}{1} %1 for just sections, 2 for subsections
% \tableofcontents

% uncomment to display line numbers
%\linenumbers

\section{Introduction}

\subsection{The model} \label{intro_model}
The Potts spin glass was introduced by Elderfield and Sherrington~\cite{elderfield-sherrington83a} and has been extensively studied in statistical mechanics \cite{nishimori-stephen83, elderfield-sherrington83b, gross-kanter-sompolinsky85, binder_young86, santis-parisi-ritort95, caltagirone-parisi-rizzo12}.
%The Potts spin glass \cite{elderfield-sherrington83a,elderfield-sherrington83b} is a disordered statistical mechanics model 
In dimension $\kappa\geq2$, the model is defined as follows. 
At volume $N$, the configuration space is the product set $\Sigma^N$, where $\Sigma = \{\vv e_1,\dots,\vv e_\kappa\}$ is the standard basis in $\R^\kappa$.
Each configuration $\sigma = (\sigma_1,\ldots, \sigma_N) \in \Sigma^N$ is thought of as a $\kappa\times N$ matrix, and its energy is given by the Hamiltonian
\eeq{ \label{potts_hamiltonian}
H_N(\sigma)
\coloneqq \frac{\beta}{\sqrt{N}}\sum_{i,\,j=1}^Ng_{i,j}\one\{\sigma_i=\sigma_j\}
= \frac{\beta}{\sqrt{N}}\sum_{i,\,j=1}^N g_{i,j}\iprod{\sigma_i}{\sigma_j},
}
where $(g_{i,j})_{i,j=1}^N$ are independent standard normal random variables, $\iprod{\lambda}{\sigma}=\lambda^{\sT}\sigma$ is the inner product of $\lambda,\sigma\in\R^\kappa$, and $\beta>0$ is an inverse temperature parameter. 
The associated free energy is %at inverse temperature $\beta>0$ is
\begin{equation} \label{meif3}
\eff_N\coloneqq\frac{1}{N}\E \log \zee_N,\quad\textnormal{where}\quad \zee_N\coloneqq\sum_{\sigma \in \Sigma^{N}} \exp H_N(\sigma).
\end{equation}
For $\kappa=2$, the Hamiltonian \eqref{potts_hamiltonian} is equivalent to the classical Sherrington--Kirkpatrick (SK) model~\cite{sherrington-kirkpatrick75} by the mapping $\sigma \mapsto \tau=(\tau_1,\ldots \tau_N)$, where
\begin{equation}
\label{SK_transform}
    \tau_i = 
    \begin{cases}
        +1 &\quad\textnormal{if $\sigma_i= \vv e_1$}\\
        -1 &\quad\textnormal{if $\sigma_i= \vv e_2$}.
    \end{cases}
\end{equation}
By this transformation, the free energy $\eff_N$ equals the free energy of the SK model up to rescaling $\beta$. In this case, the limiting free energy $\lim_{N\to\infty}\eff_N$ is given by the celebrated Parisi formula, which is a variational expression predicted by Parisi~\cite{parisi79,parisi80a,parisi80b,parisi83} but not proved until the seminal work of Guerra~\cite{guerra03} and Talagrand~\cite{talagrand06a}. 

The fundamental insight behind the Parisi formula is that the SK free energy can be understood by keeping track of a single random variable, namely the \textit{replica overlap}.
In our context, this is the quantity $N^{-1}\sum_{i=1}^N\iprod{\sigma_i^1}{\sigma_i^2}$, where $\sigma^1$ and $\sigma^2$ are independent samples from the Gibbs measure $G_N(\sigma) = \exp H_N(\sigma)/Z_N$. %corresponding to \eqref{potts_hamiltonian}.
This quantity is exactly the fraction of coordinates at which $\sigma^1$ and $\sigma^2$ agree.
The law of this random variable is a probability measure on $[0,1]$, and Parisi's formula is a minimization problem over such measures.
%This is the quantity $N^{-1}\sum_{i=1}^N \tau_i^1\tau_i^2$, where $\tau^1$ and $\tau^2$ are independent samples from the Gibbs measure associated to the SK Hamiltonian.
%The law of this random variable can be described by a probability measure on $[0,1]$, and Parisi's formula is a minimization problem over such measures.

% The basic premise of Parisi's formula is to parameterize possible infinite-volume limits according to the distribution of the \textit{replica overlap}, i.e.~the random variable $N^{-1}\sum_{i=1}^N \iprod{\sigma_i^1}{\sigma_i^2}$, where $\sigma^1$ and $\sigma^2$ are independent samples from the Gibbs measure associated to \eqref{potts_hamiltonian}.
% The law of this random variable is some probability measure on $[0,1]$, and Parisi's formula is a minimization problem over such measures.

%A variational formula for the limiting free energy was first predicted by the celebrated work of Parisi~\cite{parisi79, parisi80b}, called the Parisi formula. The Parisi formula for the SK model was first proven by Guerra~\cite{guerra03} and Talagrand~\cite{talagrand06a}, which was later generalized to mixed $p$-spin models by Panchenko~\cite{panchenko14}, and Auffinger and Chen~\cite{auffinger-chen17}.

For $\kappa\geq3$, this perspective runs into difficulty because of the additional degrees of freedom.
Namely, the transformation \eqref{SK_transform} does not have a natural generalization, and so there is no obvious way to relate the free energy of the Potts spin glass to a scalar statistic.
In principle, one needs to keep track of the entire $\kappa\times\kappa$ matrix $N^{-1}\sigma^1(\sigma^2)^\sT$, whereas the replica overlap is just the trace.
Nevertheless, using an ingenious synchronization mechanism, %first appearing in \cite{panchenko15}, 
Panchenko~\cite{panchenko18a} showed that in the large-$N$ limit, this overlap matrix is 
%(i) positive-semidefinite with probability one; and (ii) 
some deterministic map of its trace (Theorem~\ref{sync_thm}).
This led to a generalized Parisi formula that optimizes over probability measures on $[0,1]$ \textit{together with} so-called synchronization maps (Theorem~\ref{thm:Panchenko}).

The program initiated by this paper is to go even further: there is only one possible choice for the synchronization map.
Theorem~\ref{thm:main:Potts} accomplishes this for the balanced Potts spin glass and is presented in the next section.

\subsection{Limiting free energy}
%For $\kappa \geq 3$, a generalized formula for the Potts spin glass model was proven by Panchenko~\cite{panchenko18a} based on a novel synchronization technique first appearing in \cite{panchenko15}. 
A crucial fact of synchronization is that it requires the self-overlap matrix $N^{-1}\sigma\sigma^\sT\in\R^{\kappa\times\kappa}$ to take a fixed value (see \eqref{deterministic_for_same_ell} in Theorem~\ref{sync_thm}).
But of course the function $\sigma\mapsto N^{-1}\sigma\sigma^\sT$ is not constant over the configuration space $\Sigma^N$; it can be any diagonal matrix whose entries belong to $\{0,\frac{1}{N},\dots,\frac{N-1}{N},1\}$ and sum to $1$.
Therefore, the strategy of \cite{panchenko18b} is to consider subsets of $\Sigma^N$ on which this function is approximately constant, and then derive a Parisi formula for the model constrained to these subsets.
The number of subsets needed grows only polynomially in $N$, whereas \eqref{meif3} concerns an exponential growth rate.
Therefore, classical Gaussian concentration allows one to determine that the limiting free energy of the unconstrained model is simply the largest limiting free energy among the constrained models.
We now proceed to make things precise.

Denote the $\kappa$-dimensional unit simplex by
\eq{
\DD \coloneqq \Big\{d = (d_1,\dots,d_\kappa)\in[0,1]^\kappa :\, \sum_{k=1}^\kappa d_k= 1\Big\}.
}
An element $d\in \DD$ is called a \textit{magnetization} of the Potts spin glass. 
The configuration space with magnetization $d$ and approximation parameter $\eps\geq 0$ is
\eeq{\label{eq:def:Sigma:N:d:eps}
%\Sigma^N(d) \coloneqq \{\sigma\in\Sigma^N:\, \sigma\sigma^\sT = \diag(d)\},
\Sigma^N(d, \eps) \coloneqq \Big\{\sigma\in\Sigma^N:\, \Big|\frac{1}{N}\sum_{i=1}^N\one\{\sigma_i=\vv e_k\} -d_k\Big|\leq \eps \text{ for each $k\in\{1,\dots,\kappa\}$}\Big\}.
}
% For each $N$, there are only finitely many $d\in\DD$ for which $\Sigma^N(d)$ is nonempty.
% So let us distinguish this finite set by
% \eeq{ \label{DN_def}
% \DD_N \coloneqq \{d\in\DD:\, \Sigma^N(d)\neq\varnothing\} = \DD\cap N^{-1}\Z^\kappa.
% }
The associated \textit{constrained} free energy is
\eeq{\label{def:free:energy:potts}
\eff_N(d,\eps) \coloneqq \frac{1}{N}\E\log \zee_N(d,\eps), \qquad \text{where} \qquad \zee_N(d,\eps) \coloneqq \sum_{\sigma\in\Sigma^N(d,\eps)}\exp H_N(\sigma).
}
% For $\eps=0$, we denote $\eff_N(d)\coloneqq \eff_N(d,0), \zee_N(d)\coloneqq \zee_N(d,0)$, and $\Sigma^N(d)\coloneqq\Sigma^N(d,0)$. Note that $\Sigma^{N}(d,0)$ is nonempty iff $d\in \DD \cap (\Z/N)^{\kappa}$.
%By classical Gaussian concentration, \cite{panchenko18a} proved $\eff_N =\sup_{d\in\DD} \eff_N(d,\eps)+o_{N,\eps}(1)$ holds, where $o_{N,\eps}(1)$ denotes a term that vanishes as $\eps\to 0$ after $N\to\infty$. It was further shown that the limit of the constrained free energy $\eff_N(d,\eps)$ is given by the variational formula defined as follows. 
When $\eps=0$, we write 
\begin{equation*}
    \Sigma^N(d)\coloneqq\Sigma^N(d,0),\quad \zee_N(d)\coloneqq \zee_N(d,0),\quad\eff_N(d)\coloneqq \eff_N(d,0).
\end{equation*}
This last free energy only makes sense if $\Sigma^N(d)$ is nonempty, which occurs precisely when $d$ belongs to the set
\eeq{ \label{DN_def}
\DD_N \coloneqq \DD\cap(\Z/N)^{\kappa} = \{d\in\DD:\, \Sigma^N(d)\neq\varnothing\}.
}

%Next we need the space in which the overlap matrices live.
Next we introduce the order parameter for Panchenko's variational formula.
Define %the following collection of $\kappa\times\kappa$ matrices:
\eq{ %\label{Gamma_kappa_def}
%\Gamma_{\kappa} \coloneqq \{\gamma\in[0,1]^{\kappa\times\kappa}:\, \text{$\gamma$ is symmetric and positive-semidefinite, $\|\gamma\|_1\leq1$}\}.
%\coloneqq \{\text{$\kappa\times\kappa$ symmetric positive-semidefinite matrices with nonnegative entries}\}.
\Gamma_\kappa \coloneqq \{\gamma\in\R^{\kappa\times\kappa}:\, \text{$\gamma$ is symmetric and positive-semidefinite}\}.
}
%Here and throughout we use the notation $\|R\|_1 = \sum_{k,k'}|R_{\kappa,\kappa'}|$ for a matrix $R\in\R^{\kappa\times\kappa}$.
Given $d\in\DD$, let $\Gamma_\kappa(d)$ be the subset of $\Gamma_\kappa$ consisting of matrices with nonnegative entires whose row sums are given by $d$:
\eeq{ \label{Gamma_d_def}
\Gamma_\kappa(d) \coloneqq \Big\{\gamma\in\Gamma_\kappa\cap[0,1]^{\kappa\times\kappa} :\,\sum_{k^\prime=1}^\kappa \gamma_{k,k^\prime} = d_{k} \text{ for each $k \in\{1,\dots,\kappa\}$}\Big\}, \quad d\in\DD.
}
We write $\preceq$ for the positive-semidefinite order on symmetric matrices.
% Note that the diagonal matrix $\mathrm{diag}(d)$ %\equiv \diag\big((d_k)_{k\leq \kappa}\big)$
% is an element of this set.
% Moreover, every element of $\Gamma_\kappa(d)$ is dominated by $\diag(d)$ in the positive-semidefinite order, which we will denote by $\preceq$ (see Lemma~\ref{lem_dominance}).
Then consider the following collection of paths on $\Gamma_\kappa(d)$:
\eeq{ \label{def:Pi_d}
\Pi_d \coloneqq \big\{\pi\colon(0,1]\to\Gamma_\kappa(d):\,\text{$\pi$ is left-continuous, $\pi(s)\preceq\pi(t)$ if $s\leq t$}\big\}. %and $\pi(1)\preceq \diag(d)$}\big\}.
}
To place this definition into context with the previous section: each $\pi\in\Pi_d$ is the combination of a probability measure $\mu$ on $[0,1]$ with a synchronization map $\Phi\colon[0,1]\to\Gamma_\kappa(d)$.
More specifically, $\pi$ is the composition $\Phi\circ Q_\mu$, where $Q_\mu$ is the quantile function of $\mu$.
See Remark~\ref{rmk:fop} for an important special case, and Section~\ref{sec_overview_arrays} for the origin of the map $\Phi$.
%The set $\Pi_d$ encodes the quantile transform of a probability measure over a monotone path of matrices ending at $\diag(d)$. Then, the following theorem was proven by Panchenko \cite{panchenko18a, panchenko18b}.

The following result summarizes the outcome of Panchenko's synchronization scheme for the Potts spin glass. 
The statement below combines the results of two papers.
The Parisi functional $\PP$ is defined in Section~\ref{subsec:Parisi}, specifically \eqref{def:parisi:ftl} with $\xi$ given in \eqref{Potts_xi}.
\begin{theirthm}\textup{\cite{panchenko18a, panchenko18b}}
\label{thm:Panchenko}
There is an explicit functional
$\PP\colon\cup_{d\in\DD}\Pi_d\to\R$
%$\PP(\cdot\,;\, d)\equiv \PP(\cdot\,;\,\beta,d):\Pi_d\to \R$ 
such that for every $d\in\DD$, the constrained free energy has the following limit:
% the following holds. For a sequence $(d^{N})_{N\geq 1}$ such that $d^N\to d$ and $d^N\in \DD\cap (\Z/N)^{\kappa}$,
\begin{equation} \label{eq:Panchenko:cformula}
\lim_{\eps\searrow 0}\limsup_{N\to\infty}\eff_N(d,\eps)=\lim_{\eps\searrow 0}\liminf_{N\to\infty}\eff_N(d,\eps)=\inf_{\pi \in \Pi_d}\PP(\pi).
\end{equation}
Furthermore, the limiting unconstrained free energy is given by
\begin{equation}\label{eq:Panchenko:formula}
\lim_{N\to\infty} \eff_N =\sup_{d\in \DD}\inf_{\pi \in \Pi_d}\PP(\pi).
\end{equation}
\end{theirthm}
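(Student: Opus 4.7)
The plan is to reduce \eqref{eq:Panchenko:formula} to \eqref{eq:Panchenko:cformula} first, and then establish \eqref{eq:Panchenko:cformula} by a matching upper bound via Guerra's interpolation and a lower bound via the Aizenman--Sims--Starr cavity scheme combined with ultrametricity and the synchronization mechanism (Theorem~\ref{sync_thm}).

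For the passage from \eqref{eq:Panchenko:cformula} to \eqref{eq:Panchenko:formula}, I would decompose
\eq{
\zee_N = \sum_{d\in\DD_N}\zee_N(d),
}
observing that $|\DD_N|$ is bounded by a polynomial in $N$, so
\eq{
\max_{d\in\DD_N}\eff_N(d) \leq \eff_N \leq \max_{d\in\DD_N}\eff_N(d) + \frac{\log|\DD_N|}{N}.
}
Since $H_N$ is Lipschitz in the Gaussian disorder with variance $O(1/N)$, standard Gaussian concentration gives that $N^{-1}\log \zee_N$ concentrates around $\eff_N$ at scale $o(1)$. A net argument in $d$ combined with \eqref{eq:Panchenko:cformula} then shows that $\eff_N$ converges to $\sup_{d\in\DD}\lim_{\eps\to0}\lim_N \eff_N(d,\eps)$, yielding \eqref{eq:Panchenko:formula}. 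The fact that one can pass the supremum outside the $\eps\searrow 0$ limit requires upper semicontinuity of $d\mapsto \inf_{\pi\in\Pi_d}\PP(\pi)$, which I would check from the explicit definition of $\PP$ in Section~\ref{subsec:Parisi}.

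For the upper bound $\limsup_N \eff_N(d,\eps) \leq \inf_{\pi\in\Pi_d}\PP(\pi) + o_\eps(1)$, I would use Guerra's replica-symmetry-breaking interpolation, adapted to the vector-spin setting. Given $\pi\in\Pi_d$, one writes $\pi = \Phi\circ Q_\mu$ for a probability measure $\mu$ and a synchronization map $\Phi$, then constructs an interpolating Hamiltonian $H_N^t(\sigma)$ ($t\in[0,1]$) mixing $H_N$ with a cavity field built from a Ruelle probability cascade indexed by $\mu$ and Gaussian process with covariance encoded by $\Phi$. Differentiating $\E\log\zee_N(d,\eps;t)$ in $t$ and using Gaussian integration by parts yields an error term controlled by the difference between the true overlap matrix and $\pi$; this error has a definite sign because $\Phi$ takes values in the positive-semidefinite cone $\Gamma_\kappa(d)$, and the construction of $\PP$ is designed precisely so that it is nonpositive.

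For the lower bound, I would implement the Aizenman--Sims--Starr comparison between $\zee_{N+1}(d,\eps)$ and $\zee_N(d,\eps)$, which reduces the problem to analyzing the asymptotic joint law of overlap matrices under the average Gibbs measure. Here is where the truly new ingredient enters: the limiting array of overlap matrices $R^{\ell,\ell'}=N^{-1}\sigma^\ell(\sigma^{\ell'})^\sT$ satisfies the Ghirlanda--Guerra identities (after a small Hamiltonian perturbation), hence is ultrametric in the trace $\tr R^{\ell,\ell'}$, and Theorem~\ref{sync_thm} upgrades this to a synchronized structure: the entire matrix $R^{\ell,\ell'}$ is a deterministic monotone function of its trace. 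This is what reduces the functional order parameter from a measure on $\Gamma_\kappa(d)$-valued arrays to an element of $\Pi_d$, and it is the main obstacle in the proof; the construction of $\Phi$ requires passing to a subsequence and invoking a Parisi-type invariance/regularity argument to produce a single monotone path rather than an abstract family of overlap matrices. Once synchronization is in hand, the cavity computation reproduces the variational expression $\inf_{\pi\in\Pi_d}\PP(\pi)$, matching the upper bound and completing the proof of \eqref{eq:Panchenko:cformula}.
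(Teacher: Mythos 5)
Your outline follows essentially the same route as the source papers and as this paper's own proof of the generalization (Theorem~\ref{gen_con_par_thm}, proved in Appendix~\ref{sec_gen_par_proof}): Guerra--RPC interpolation for the upper bound, the Aizenman--Sims--Starr scheme combined with perturbative Ghirlanda--Guerra identities and the synchronization Theorem~\ref{sync_thm} for the lower bound, and the polynomial size of $\DD_N$ plus Gaussian concentration to pass from the constrained formula to the unconstrained one. One small correction: the definite sign of the Guerra remainder comes from convexity of $\xi$ on the set $\nonpsd$ (assumption~\ref{xi_convex}), not from $\Phi$ taking values in the positive-semidefinite cone, and the constraint on the magnetization is handled through the Lagrange multiplier $\lambda$ built into $\PP$ rather than directly.
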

%The Parisi functional $\PP\colon\cup_{d\in\DD}\Pi_d\to\R$ is defined in Section~\ref{subsec:Parisi} (see Remark~\ref{rmk_Potts_case}).
% It is predicted that the supremum over $d\in \DD$ in \eqref{eq:Panchenko:formula} is achieved in the balanced case $d=d_{\bal}$ (see \cite[Remark 3]{panchenko18a}), where $d_{\bal}\equiv \big(d_{\bal}(a)\big)_{a\leq \kappa}$ is defined by $d_{\bal}(a)\coloneqq(\kappa)^{-1}$.
\begin{remark} \label{rmk_compare_defs}
 In \cite{panchenko18a}, the definition of $\Pi_d$ included additional stipulations that $\pi(0)=0$ and $\pi(1)=\diag(d)$.
 The first condition is unimportant because the value of $\pi(0)$ plays no role in determining the value of $\PP(\pi)$.
 %We choose to not define $\pi$ at $0$ to ensure an exact correspondence between paths and probability measures (see Remark~\ref{rmk:fop}).
     Regarding the second condition $\pi(1)=\diag(d)$, our definition \eqref{def:Pi_d}  only implies $\pi(1)\preceq\diag(d)$, since $\diag(d)$ is the maximal element of $\Gamma_\kappa(d)$ (Lemma~\ref{lem_dominance}).
     The infimum in \eqref{eq:Panchenko:cformula} is not sensitive to this difference because every $\pi\in\Pi_d$ can be approximated 
     %in the $L^1$ norm \eqref{norm} 
     by $\tilde\pi\in\Pi_d$ such that $\tilde\pi(1)=\diag(d)$, and the Parisi functional is continuous %with respect to this norm 
     (Proposition~\hyperref[prop:continuity_b]{\ref*{prop:continuity}\ref*{prop:continuity_b}}). 
     On the other hand, our definition guarantees the existence of a minimizer. 
\end{remark}
% \begin{remark}
% The Parisi functional $\PP$ is defined in \eqref{def:parisi:ftl} after we generalize the Potts spin glass to allow other covariance functions $\xi$ in \eqref{general_cov}.
% This section concerns the special case 
% \eeq{ \label{Potts_xi}
% \xi(R) = \beta^2\sum_{k,k'=1}^\kappa R_{k,k'}^2 = \beta^2\tr(R^\sT R).
% }
% See Remark~\ref{rmk_Potts_case} for further details.
% \end{remark}

It was further predicted by Elderfield and Sherrington~\cite{elderfield-sherrington83a} when they introduced the Potts spin glass model that the variational expression $\sup_{d\in \DD}\inf_{\pi\in \Pi_d}\PP(\pi)$ in \eqref{eq:Panchenko:formula} is achieved on a restricted set. In our notation, \cite[Eq.~(15)]{elderfield-sherrington83a} predicts the following:
\begin{enumerate}[label=\textup{(\alph*)}]
    \item \label{es_pred_a}
    The supremum over $d\in \DD$ is achieved at the balanced case $d=d_{\bal}$, where 
   \begin{equation*}
       d_{\bal}\coloneqq \kappa^{-1}\bone,
   \end{equation*}
   and $\bone\in \R^{\kappa}$ is the vector of all ones.
   This prediction was echoed in \cite[Rmk.~3]{panchenko18a}.
    \item \label{es_pred_b}
    The infimum over $\pi\in \Pi_{d_{\bal}}$ is achieved by some $\pi$ such that for every $t\in[0,1]$, the matrix $\pi(t)$ is constant on its diagonal and also constant off its diagonal.
    %such that for any $t\in [0,1]$, the matrix $\gamma\coloneqq\pi(t)\in\Gamma_\kappa(d_{\bal})$ satisfies $\gamma_{k,k^\prime}=x_t\one(k=k^\prime)+(\kappa^{-1}-(\kappa-1)x_t)\one(k\neq k^\prime)$ for some $x_t\in [0,(\kappa(\kappa-1))^{-1}]$.
\end{enumerate}
% is achieved for $\pi \in \Pi^{\star}$, where
% \begin{equation*}
% \Pi^{\star}\coloneqq\Big\{\gamma\in\Gamma_\kappa :\textnormal{For some $q\in $\gamma(a,b)\equiv x~\textnormal{and}~\gamma(a,a)=(\kappa)^{-1}-(\kappa-1)x\textnormal{for all $a\neq b$}\Big\},
% \end{equation*}
In fact, these predictions were central to analysis of \cite{elderfield-sherrington83a} regarding the phase transitions of the Potts spin glass. 
See \cite[Sec.~VI.H.1]{binder_young86} for further discussion.

Our main result confirms prediction~\ref{es_pred_b}. 
%To this end, consider the \textit{balanced} Potts spin glass, where we take $d=d_{\bal}\equiv (\kappa)^{-1}$. Let 
To this end, let $\bI_{\kappa}\in \R^{\kappa\times \kappa}$ denote the identity matrix, and consider the following subset of $\Gamma_\kappa(d_\bal)$:
%$\Gamma^{\star}\subset \Gamma_{\kappa}(d_{\bal})$ be defined by
\eeq{ \label{def:Gamma:star}
%\Gamma^{\star}= \Gamma^{\star}_{\kappa}
\Gamma^\star\coloneqq\Big\{\gamma\in \Gamma_{\kappa}:\, \gamma=\frac{q}{\kappa}\bI_{\kappa}+\frac{1-q}{\kappa^2}\bone\bone^{\sT}~\textnormal{for some $q\in [0,1]$}\Big\}.
}
%where $\bI_{\kappa}\in \R^{\kappa\times \kappa}$ denotes the identity matrix. Then, let $\Pi^{\star}$ be defined by
Then define the corresponding subset of the path space $\Pi_{d_\bal}$:
\begin{equation}\label{def:Pi:star}
\Pi^{\star}\coloneqq\big\{\pi\colon(0,1]\to\Gamma^\star:\,\textnormal{$\pi$ is left-continuous, $\pi(s)\preceq\pi(t)$ if $s\leq t$}\big\}.
\end{equation}

\begin{theorem}\label{thm:main:Potts}
    For $d_{\bal}=\kappa^{-1}\bone$, we have
    \begin{equation*}
   \lim_{\eps\searrow 0}\limsup_{N\to\infty}\eff_N(d_{\textup{\textsf{bal}}},\eps)=\lim_{\eps\searrow 0}\liminf_{N\to\infty}\eff_N(d_{\bal},\eps)=\inf_{\pi \in \Pi^{\star}}\PP(\pi).
    \end{equation*}
    Moreover, for any $d^N\in\DD_N$ converging to $d_\bal$ as $N\to\infty$, we have
    \eq{ %\label{aql9}
\lim_{N\to\infty}\eff_N(d^N) = \inf_{\pi\in\Pi^\star}\PP(\pi).    
}
% where $\PP(\pi;d_{\bal})\equiv \PP(\pi;\beta,d_{\bal})$ is defined in Section~\ref{subsec:Parisi}.
\end{theorem}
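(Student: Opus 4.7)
By Theorem~\ref{thm:Panchenko} applied to $d_\bal$, the two-sided limit in $\eff_N(d_\bal,\eps)$ already exists and equals $\inf_{\pi\in\Pi_{d_\bal}}\PP(\pi)$, so the first assertion reduces to the identity
\[ \inf_{\pi\in\Pi_{d_\bal}}\PP(\pi)=\inf_{\pi\in\Pi^\star}\PP(\pi). \]
Each matrix $\gamma=\tfrac{q}{\kappa}\bI_\kappa+\tfrac{1-q}{\kappa^2}\bone\bone^\sT$ in $\Gamma^\star$ has row sums equal to $\kappa^{-1}$, so $\Gamma^\star\subset\Gamma_\kappa(d_\bal)$ and hence $\Pi^\star\subset\Pi_{d_\bal}$, giving the ``$\leq$'' direction for free. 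The real task is the reverse inequality: showing that some minimizer of $\PP$ on $\Pi_{d_\bal}$ may be chosen in $\Pi^\star$.

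The guiding observation is an exact $S_\kappa$-symmetry of the balanced model. The Hamiltonian \eqref{potts_hamiltonian} depends on $\sigma$ only through the indicators $\one\{\sigma_i=\sigma_j\}$, and the constraint set $\Sigma^N(d_\bal,\eps)$ is invariant under the diagonal action of any permutation $\rho\in S_\kappa$ sending $\vv e_k\mapsto\vv e_{\rho(k)}$. Consequently, the finite-volume Gibbs measure is invariant in law under that action, and so is the limiting overlap array $(\gamma^{\ell\ell'})$ under simultaneous conjugation by the associated permutation matrix $P_\rho$. On the other hand, once the Ghirlanda--Guerra identities hold in the limit, Panchenko's synchronization theorem (Theorem~\ref{sync_thm}) identifies each $\gamma^{\ell\ell'}$ with a deterministic function $\Phi$ of its scalar trace overlap. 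Combining the two gives $P_\rho\,\Phi(t)\,P_\rho^\sT=\Phi(t)$ for every $\rho\in S_\kappa$ and every $t$ in the support of the limiting overlap distribution, which forces $\Phi(t)\in\Gamma^\star$. The resulting minimizing path then lies in $\Pi^\star$.

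The central obstacle is that the usual derivation of the Ghirlanda--Guerra identities requires adding a small random perturbation to the Hamiltonian, and such a perturbation generically destroys the $S_\kappa$-symmetry needed above. The plan to circumvent this is to embed the Potts spin glass into a generalized model with mixed higher-order interactions parameterized by inverse temperatures $\bbeta=(\beta_p)_{p\geq 2}$; adapt Panchenko's synchronization scheme to derive a Parisi-type formula in this richer setting; and then prove that the limiting free energy is differentiable in each $\beta_p$. A standard convexity--Gaussian concentration argument then produces the Ghirlanda--Guerra identities \emph{without} any added perturbation, so that $S_\kappa$-symmetry is preserved and the argument of the previous paragraph can be executed. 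Establishing this differentiability for the matrix-valued Parisi functional---verifying the required convexity and monotonicity in $\bbeta$ and justifying the interchange of $\partial_{\beta_p}$ with the variational infimum over $\Pi_{d_\bal}$---is expected to be the most delicate step and the main technical obstacle.

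For the final assertion on $d^N\in\DD_N$ with $d^N\to d_\bal$, I would use a sandwich argument. The inclusion $\Sigma^N(d^N)\subset\Sigma^N(d_\bal,\eps)$, valid whenever $|d^N-d_\bal|_\infty\leq\eps$, yields $\limsup_N\eff_N(d^N)\leq\inf_{\pi\in\Pi^\star}\PP(\pi)$ after sending $\eps\searrow 0$. For the matching lower bound, decompose $\zee_N(d_\bal,\eps)$ as the sum of the polynomially many partition functions $\zee_N(d')$ over $d'\in\DD_N$ with $|d'-d_\bal|_\infty\leq\eps$, apply Gaussian concentration of $\log\zee_N(d')$ together with a union bound, and conclude using continuity of $d\mapsto\inf_{\pi\in\Pi_d}\PP(\pi)$ at $d_\bal$.
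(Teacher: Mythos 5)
Your overall architecture is the same as the paper's: the upper bound is free from $\Pi^\star\subset\Pi_{d_\bal}$ and Theorem~\ref{thm:Panchenko}; the lower bound comes from combining the exact $S_\kappa$-invariance of the balanced Gibbs measure with synchronization (this is exactly Lemma~\ref{lemma:symmetric}); and the key difficulty --- that the usual perturbation for the Ghirlanda--Guerra identities destroys the symmetry --- is resolved by passing to a generic model whose Parisi formula is differentiable in the added inverse temperatures, so that the identities hold without averaging. You also correctly flag the differentiability as the main technical burden (it is: Section~\ref{sec:diff}). Two concrete points in your plan, however, would fail or are missing as written.

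First, a family of added interactions indexed only by integers, $\bbeta=(\beta_p)_{p\geq2}$, is too poor. Synchronization (Theorem~\ref{sync_thm}) requires the full $\kappa$-dimensional Ghirlanda--Guerra identities of Definition~\ref{def:GG}, i.e.\ concentration of a \emph{separating} family of functionals $R\mapsto\iprod{R^{\circ p}w}{w}$ over all directions $w$, not just of a countable set of scalar invariants like $\sum_{k,k'}R_{k,k'}^p$. Concentration of the latter does not determine the matrix-valued overlap array, so the synchronization map $\Phi$ never materializes. The paper therefore augments the Hamiltonian with the countable dense family $\Theta_\Q$ (rational $w$'s, products of such factors), and --- crucially --- must choose the coefficients invariant under the induced $S_\kappa$-action on parameters (condition \eqref{beta_symmetric}, Lemma~\ref{lem:symm:inverse}); otherwise the added terms themselves reintroduce the asymmetry you are trying to avoid. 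Your sketch does not address this tension between richness and symmetry of the added family. Second, the generic model has a strictly different limiting free energy from the Potts model, so after obtaining the lower bound $\liminf_N\eff_{\kappa N,\xi_\bbeta}(d_\bal)\geq\inf_{\Pi^\star}\PP_{\xi_\bbeta}$ you still must send $\bbeta\to0$ and transfer the inequality back to $\xi$; this requires Lipschitz continuity of both the free energy and the Parisi functional in the covariance function (Proposition~\ref{prop:continuity}), a step absent from your outline. With these two repairs your plan coincides with the paper's proof of Theorem~\ref{thm:main}, from which Theorem~\ref{thm:main:Potts} follows by specialization.
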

We note that Theorem~\ref{thm:main:Potts} is only interesting when $\ka \geq 3$ since for $\ka=2$, we have $\Pi^\star=\Pi_{d_{\bal}}$. For $\ka\geq3 $, however, $\Gamma^\star$ remains a $1$-dimensional space, whereas $\Gamma_{\ka}(d_{\bal})$ has dimension $\frac{\ka(\ka-1)}{2}$. 
Theorem~\ref{thm:main:Potts} thus shows that when $d=d_\bal$, the infimum in \eqref{eq:Panchenko:formula}  can be taken over a much smaller space. %(see also Remark~\ref{rmk_lambda}). 
In this way Theorem~\ref{thm:main:Potts} can viewed as a deterministic statement on top of Theorem~\ref{thm:Panchenko}, although our proof is probabilistic.
In fact, we will deduce Theorem~\ref{thm:main:Potts} from a more general result, Theorem~\ref{thm:main}, presented in Section~\ref{sec:gen_Par}.
%A proof overview is provided in Section~\ref{sec:proof:overview}.

 One may naively hope to prove Theorem~\ref{thm:main:Potts} by showing that $\Pi_{d_\bal}\ni \pi \mapsto \PP(\pi)$ is convex, since then the infimum in \eqref{eq:Panchenko:formula} must be achieved at a ``symmetric" $\pi$, by which me mean $\pi\in\Pi^\star$.
 But unfortunately this convexity fails when $\kappa\geq3$, because of complications in comparing synchronization maps with different images.
 In fact, a recurring challenge in settings relying on synchronization---such as multi-species models or vector spin models---is to extract information about the minimizer of the Parisi functional without relying on convexity, e.g.~\cite{bates-sloman-sohn19,dey-wu21,bates-sohn22b}.
 Theorem~\ref{thm:main:Potts} is a new example of how to accomplish this, and we anticipate it can actually enable convexity arguments as in the SK model \cite{panchenko05a,bovier_klimovsky09,auffinger-chen15b,jagannath-tobasco16}.
 This is because the synchronization map is now fixed, as explained below in Remark~\ref{rmk:fop}.
 Indeed, following the initial release of this manuscript, Chen~\cite{chen23d_arxiv} proved a version of Theorem~\ref{thm:main:Potts} for the model with self-overlap correction, and the resulting Parisi functional is strictly convex \cite{chen23c_arxiv}.

 % We also note that Theorem~\ref{thm:main:Potts} can viewed as a deterministic statement on top of Theorem~\ref{thm:Panchenko}, although our proof is probabilistic. One may naively hope to prove Theorem~\ref{thm:main:Potts} by showing that $\pi \in \Pi_{d_{\bal}} \to \PP(\pi)$ is convex. The primal difficulty in using convexity argument on the functional $\PP(\cdot)$ on $\Pi_{d_{\bal}}$ is that the image of $\pi \in \Pi_{d_{\bal}}$ is arbitrary monotone path of matrices, and it is highly non-trivial to analyze $\PP(\cdot)$ supported different paths of matrices. Indeed, a major open problem for spin glass models with synchronization, e.g. multi-species models or mixed vector $p$-spin models, is to extract the property regarding the minimizer of the Parisi functional without relying on convexity (see e.g. \cite{bates-sohn22b}). 

\begin{remark}\label{rmk:fop}
   There is a one-to-one correspondence between $\Pi^\star$ and $\Prob([0,1])$, the set of Borel probability measures on $[0,1]$.
   Given $\mu\in\Prob([0,1])$, denote its quantile function by $Q_\mu(t) = \inf\{q\geq0:\mu([0,q])\geq t\}$.
   Then the correspondence is achieved through the relation $\pi = \Phi^\star\circ Q_\mu$, where $\Phi^\star(q) = q\kappa^{-1}\bI_\kappa + (1-q)\kappa^{-2}\bone\bone^\sT$.
   In this way, the functional order parameter in the balanced Potts spin glass is understood as an element of $\Prob([0,1])$.
   Moreover, if prediction~\ref{es_pred_a} is also true, then Theorems~\ref{thm:Panchenko} and~\ref{thm:main:Potts} together express the unconstrained free energy $\lim_{N\to\infty} \eff_N$ as an infimum over probability measures on $[0,1]$, just as for the classical SK model.
   % Then the composition $\Phi^\star\circ Q_\mu$ is element of $\Pi_d$.
   % Conversely, given $\pi\in\Pi^\star$, the composition $Q = (\Phi^\star)^{-1}\circ\pi$ is a left-continuous nondecreasing function $(0,1]\to [0,1]$ such that $Q(1) \leq 1$.
   % As such, there is a unique $\mu$ such that $Q_\mu = Q$.
   
   % let construct $\Psi\colon \Prob([0,1])\to \Pi^\star$ as follows. For $\pi \in \Prob([0,1])$, let $\mu^{-1}:[0,1]\to [0,1]$ be its quantile transformation. Then, define the one to one correspondence
   %  $\Psi(\mu)\coloneqq\Phi^\star\circ \mu^{-1}$ where  $\Phi^\star(q)\coloneqq\frac{q}{\kappa}\bI_{\kappa}+\frac{1-q}{\kappa^2}\bone\bone^{\sT}$. Thus, Theorem~\ref{thm:main:Potts} can be equivalently stated as
   %  \begin{equation}\label{eq:equiv:thm:main}
   %    \lim_{\eps\to 0}\limsup_{N\to\infty}\eff_N(d_{\bal},\eps)=\lim_{\eps\to 0}\liminf_{N\to\infty}\eff_N(d_{\bal},\eps)=\inf_{\mu\in \Prob([0,1])}\widetilde{\PP}_{\kappa}(\mu),
   %  \end{equation}
   %  where $\widetilde{\PP}_{\kappa}(\mu)\equiv \widetilde{\PP}_{\kappa}(\mu;\beta)\coloneqq\PP(\Psi^{-1}\mu;\beta,d_{\bal})$. Expressing Theorem~\ref{thm:main:Potts} as \eqref{eq:equiv:thm:main}, the order parameter $\mu\in \Prob([0,1])$ corresponds to the law of $\frac{\kappa\cdot \tr(R^{1,2})-1}{\kappa-1}$ under the Gibbs measure, where $R^{1,2}$ denotes the \textit{overlap matrix} defined in XXX below. This generalizes the order parameter for the SK model.
\end{remark}

Regardless of the status of prediction~\ref{es_pred_a}, %of Elderfield and Sherrington \cite{elderfield-sherrington83b} that $d=d_{\bal}$ maximize the constrained free energy (see also \cite[Remark 3]{panchenko18a}), 
the balanced Potts spin glass has an important application in combinatorial optimization. Namely, denote by $\textsf{MaxCut}(G,\kappa)$ the size of the maximum cut of graph $G$ into $\kappa$ parts, and consider either the sparse Erd\H{o}s--Rényi graph $G_N\sim G(N,d/N)$ or the sparse random regular graph $G_N\sim G_{\mathrm{reg}}(N,d)$ with degree $d$. 
It was shown by Sen~\cite[Sec.~2.2]{sen18} that for $d$ sufficiently large, the following asymptotic holds with high probability as $N\to\infty$:
\begin{equation*}
\frac{\textsf{MaxCut}(G_N,\kappa)}{N}=\frac{d}{2}\left(1-\frac{1}{\kappa}\right)+P_{\star}(\kappa)\frac{\sqrt{d}}{2}+o(\sqrt{d}),
\end{equation*}
where $P_{\star}(\kappa)$ denotes the ground-state energy of the balanced Potts spin glass model:
\begin{equation*}
P_{\star}(\kappa)\coloneqq\lim_{\beta\to\infty}\frac{1}{\beta}\inf_{\pi\in \Pi_{d_{\bal}}}\PP(\pi; \beta).
\end{equation*}
Here we are making explicit the dependence of the Parisi functional on $\beta$. %and $X_n=o_{d}(\sqrt{d})$ for random variables $(X_n)_{n\geq 1}$ denotes . 
%Observe that to compute $P_{\star}(\kappa)$, one has to minimize $\PP(\pi;\beta,d_{\bal})$ over $\pi \in \Pi_{d_{\bal}}$, i.e. minimize over probability measures on monotone paths over $\kappa\times \kappa$ matrix ending at $\diag(\kappa^{-1})$. On the other hand, by Theorem~\ref{thm:main:Potts}, we have
Now Theorem~\ref{thm:main:Potts} simplifies this to a lower-dimensional minimization problem:
\eeq{ \label{bqa9}
P_{\star}(\kappa)=\lim_{\beta\to\infty}\frac{1}{\beta}\inf_{\pi\in \Pi^{\star}}\PP(\pi; \beta).
}
%That is, Theorem~\ref{thm:main:Potts} shows that computing $P_{\star}(\kappa)$ amounts to minimizing $\PP(\pi; \beta)$ over probability measures on the $1$-dimensional space $\Gamma^\star$.
%This is equivalent to minimizing over $\mu\in \Prob([0,1])$ (cf. Remark~\ref{rmk:fop}), which is significantly simpler than the previous minimization; importantly, the dimension of the order parameter does not increase as $\kappa$ grows. 
For $\kappa=2$ (in which case $\Pi_{d_\bal}=\Pi^\star$), numerical studies \cite{crisanti_rizzo02, schmidt08} give $P_{\star}(2)\approx 0.763166$. 
Since $\Pi^\star$ has no dimensional dependence on $\kappa$ (unlike $\Pi_{d_\bal}$), \eqref{bqa9} opens the possibility of numerically computing $P_{\star}(\kappa)$ for $\kappa \geq 3$.

\subsection{Potts spin glass with general covariance function} \label{sec:gen_Par}
We will obtain Theorem~\ref{thm:main:Potts} from the analogous result for a more general Hamiltonian.
But this is not simply for the sake of generality: our proof of Theorem~\ref{thm:main:Potts} actually requires that we first replace \eqref{potts_hamiltonian} with a Hamiltonian that includes higher-order interactions.
Namely, we assume henceforth that $(H_{N,\xi}(\sigma))_{\sigma\in\Sigma^N}$ is a centered Gaussian process with covariance
\eeq{ \label{general_cov}
\E[H_{N,\xi}(\sigma)H_{N,\xi}(\sigma')] = N\xi\Big(\frac{\sigma\sigma'^\sT}{N}\Big), \quad \sigma,\sigma'\in\Sigma^N,
}
where $\xi\colon\R^{\kappa\times\kappa}\to\R$ is any function satisfying assumptions~\ref{xi_power} and~\ref{xi_convex} below.
The ordinary Potts spin glass \eqref{potts_hamiltonian} is the special case
\eeq{ \label{Potts_xi}
\xi(R) = \beta^2\sum_{k,k'=1}^\kappa R_{k,k'}^2 = \beta^2\tr(R^\sT R).
}
%(Recall that each $\sigma=(\sigma_1,\dots,\sigma_N)\in\Sigma^N$ is thought of as a $\kappa\times N$ matrix, and so $\sigma\sigma'^\sT$ is a $\kappa\times\kappa$ matrix.)
%In particular, we consider the covariance function $\xi\colon\R^{\kappa\times\kappa}\to\R$ satisfying the assumptions below. 
To state our more general assumptions, we require some definitions.

Let $\N = \{1,2,...\}$.
Then define the following set of parameters:
\eeq{\label{eq:def:Theta}
\Theta \coloneqq \Big\{(p,m,n_1,\dots,n_m,w_1,\dots,w_m)\,:\, p,m,n_1,\dots,n_m\in\N, w_1,\dots,w_m\in[-1,1]^\kappa\Big\}.
}
For $\theta\in\Theta$, define the function $\xi_\theta\colon\R^{\kappa\times\kappa}\to\R$ by
\eeq{ \label{def:xi:theta}
\xi_\theta(R) \coloneqq \prod_{j=1}^m \iprod{R^{\circ p}w_j}{w_j}^{n_j},
}
where $R^{\circ p}$ denotes the $p^\mathrm{th}$ Hadamard power of the matrix $R$, i.e.~$(R^{\circ p})_{k,k'} = (R_{k,k'})^p$.
The function $\xi_\theta$ is a homogeneous polynomial of degree
\eeq{ \label{deg_def}
    \dgr(\theta) \coloneqq p(n_1+\cdots+n_m).
}
%In particular, $|\xi_\theta(R)| \leq \|R\|_\infty^{\deg(\theta)}$, where $\|R\|_\infty = \max_{1\leq k,k'\leq \kappa}|R_{k,k'}|$.
%it is everywhere smooth and we can define
%\eeq{ \label{Ctheta_def}
%C_{\theta}\coloneqq\sup_{\|R\|_1\leq 1}\Big\{\big\|\nabla \xi_{\theta}(R)\big\|_{\infty}\vee\lambda_{\max}(\nabla^2 \xi_{\theta}(R))\Big\}<\infty. %,\quad\textnormal{and}\quad\sup_{\|R\|_1\leq 1+\eta} \xi_{\theta}(R)\leq (1+\eta)^{\dgr(\theta)},
%}
%Here $\|A\|_\infty = \max_{1\leq k,k'\leq\kappa}|A_{k,k'}|$ for $A\in\R^{\kappa\times\kappa}$, and $\lambda_{\max}(B)$ denotes the largest eigenvalue of a symmetric matrix $B\in\R^{\kappa^2\times\kappa^2}$.
Then we assume the following about the covariance function $\xi\colon\R^{\kappa\times\kappa}\to\R$ in \eqref{general_cov}:
\begin{enumerate}[label=\textup{(A\arabic*)}]

      \item\label{xi_power} There exists a countable subset $\Theta_0 \subset \Theta$ and coefficients $(\alpha_{\theta})_{\theta\in\Theta_0}$ such that
      \begin{equation*}
      \xi(R)=\sum_{\theta\in\Theta_0}\alpha_{\theta}^2\xi_{\theta}(R) \quad \text{and} \quad
      \sum_{\theta\in\Theta_0}\alpha_\theta^2(1+\eps)^{\deg(\theta)} < \infty \quad \text{for some $\eps>0$}.
      \end{equation*}
   
    \item \label{xi_convex}
    $\xi$ is convex when restricted to the following set: 
    \eeq{ \label{def_nonpsd}
    \nonpsd \coloneqq \{R\in[0,1]^{\kappa\times\kappa}:\|R\|_1\leq 1\}.
    }
\end{enumerate}

\begin{remark} \label{rmk_xi}
We always have 
\eeq{ \label{rmk_xi_1}
\|\xi_\theta(R)\| \leq \|R\|_1^{\deg(\theta)}.
}
Therefore, the decay condition on $\alpha_\theta$ in~\ref{xi_power} ensures that $\sum_{\theta\in\Theta_0}\alpha_\theta^2\xi_\theta(R)$ converges uniformly on the set $\{R\in\R^{\kappa\times\kappa}:\, \|R\|_1\leq 1\}$.
In particular, $\xi$ is smooth on this set.
Outside this set, the sum may diverge, and this is okay because we will only ever need to apply $\xi$ on the set $\nonpsd$ defined in~\ref{xi_convex}.
%The convexity in~\ref{xi_convex} is needed only on the smaller set $\nonpsd$ 
This is because for any $\sigma,\sigma'\in\Sigma^N$, we have $N^{-1}\sigma\sigma'^\sT\in\nonpsd$.
Nevertheless, we will continue to write $\R^{\kappa\times\kappa}$ as the domain of $\xi$ simply to remind the reader of the ambient matrix space.
%Notice that $\nonpsd$ is defined just as $\Gamma_\kappa$ in \eqref{Gamma_kappa_def} but without the conditions of symmetry and positive-semidefiniteness. 
\end{remark}

\begin{remark} \label{rmk_Potts_case}
Comparing \eqref{Potts_xi} and \eqref{def:xi:theta}, one sees that the ordinary Potts spin glass \eqref{potts_hamiltonian} corresponds to the case when $\Theta_0$ has a single element: $(p=1,m=1,n=2,w=\bone)$.
\end{remark}

%We note that in~\ref{xi_convex}, it suffices the convexity to the set $\big\{R\in \R^{\kappa\times \kappa}:\|R\|_1\leq 1\big\}$ because $\big\|\sigma\sigma'^{\sT}/N\big\|_1=1$ holds for any $\sigma, \sigma'\in \Sigma^N$, thus $\sigma\sigma'^{\sT}/N$ always belongs to this set.

Extending the notation of \eqref{def:free:energy:potts}, we write
\begin{equation} \label{explicit_xi}
    \eff_{N,\xi}(d,\eps) = \frac{1}{N}\E\log \zee_{N,\xi}(d,\eps), \ \ \ \text{where} \ \ \ \zee_{N,\xi}(d,\eps) = \sum_{\sigma\in\Sigma^N(d,\eps)}\exp H_{N,\xi}(\sigma),
\end{equation}
and $\eff_{N,\xi}(d)= \eff_{N,\xi}(d,0)$ and $\zee_{N,\xi}(d)= \zee_{N,\xi}(d,0)$ as before. We first generalize Theorem~\ref{thm:Panchenko} %(see also Remark~\ref{rmk:eps:to:zero}) 
to this setting. %$\xi$ satisfying~\ref{xi_2_deriv}-\ref{xi_convex}.

\begin{theorem} \label{gen_con_par_thm}
Assume $\xi\colon\R^{\kappa\times \kappa}\to\R$ satisfies~\ref{xi_power} and~\ref{xi_convex}.
Then for any $d\in\DD$, %$d\equiv (d_k)_{1\leq k\leq \ka} \in \DD$,
\begin{subequations} \label{gen_con_par_eq}
\eeq{ \label{gen_con_par_eq_a}
\lim_{\eps\searrow0}\limsup_{N\to\infty}\eff_{N,\xi}(d,\eps) 
=\lim_{\eps\searrow0}\liminf_{N\to\infty}\eff_{N,\xi}(d,\eps)
%= \lim_{N\to\infty}\eff_{N,\xi}(d^N)
=\inf_{\pi\in\Pi_d} \PP_{\xi}(\pi),
}
where the Parisi functional $\PP_{\xi}(\pi)$ is defined in \eqref{def:parisi:ftl}.
Moreover, for any $d^N\in \DD_N$ such that $d^N\to d$ as $N\to\infty$, we have
\eeq{ \label{gen_con_par_eq_b}
%\lim_{\eps\searrow0}\limsup_{N\to\infty}\eff_{N,\xi}(d,\eps) 
%=\lim_{\eps\searrow0}\liminf_{N\to\infty}\eff_{N,\xi}(d,\eps) =
\lim_{N\to\infty}\eff_{N,\xi}(d^N)
=\inf_{\pi\in\Pi_d} \PP_{\xi}(\pi).
}
\end{subequations}
\end{theorem}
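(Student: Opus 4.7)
The plan is to extend Panchenko's two-part argument behind Theorem~\ref{thm:Panchenko} to the generalized covariance \eqref{general_cov}, with the main new ingredient being the handling of the infinite-sum structure in assumption~\ref{xi_power}. Realize the centered Gaussian Hamiltonian $H_{N,\xi}$ as $\sum_{\theta \in \Theta_0} \alpha_\theta H_{N,\theta}(\sigma)$, with independent Gaussian components of covariance $N\xi_\theta$; this series converges in $L^2$ by~\ref{xi_power}. For the upper bound $\eff_{N,\xi}(d,\eps) \le \inf_{\pi \in \Pi_d}\PP_\xi(\pi) + o_N(1) + o_\eps(1)$, I would use Guerra--Talagrand interpolation with Ruelle probability cascades driven by a monotone path $\pi \in \Pi_d$ and restricted to the magnetization class $\Sigma^N(d,\eps)$. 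Differentiating along the interpolation produces an integrand of the form $\xi(R^{1,2}) - \xi(\pi(t)) - \langle \nabla\xi(\pi(t)),\, R^{1,2} - \pi(t)\rangle$, where $R^{1,2} \coloneqq N^{-1}\sigma^1(\sigma^2)^\sT \in \nonpsd$. Its sign is pinned down by convexity of $\xi$ on $\nonpsd$ from~\ref{xi_convex}, and uniform summability in~\ref{xi_power} lets one swap differentiation with the sum over $\theta$.

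For the matching lower bound, I would run the Aizenman--Sims--Starr (ASS) cavity computation together with Panchenko's synchronization mechanism (Theorem~\ref{sync_thm}). Adding a standard mixed-tensor perturbation Hamiltonian of vanishing intensity enforces the Ghirlanda--Guerra identities for the limiting Gibbs measure, which in turn yield ultrametricity of the asymptotic overlap distribution. The synchronization theorem then produces a left-continuous monotone $\pi \in \Pi_d$ such that the limiting matrix overlap $R^{1,2}$ coincides with $\pi$ composed with the quantile of its trace. Inserting this structure into the ASS computation for covariance $\xi$ (smooth on $\nonpsd$ by Remark~\ref{rmk_xi}) yields $\liminf_{N\to\infty}\eff_{N,\xi}(d,\eps) \ge \PP_\xi(\pi) - o_\eps(1)$, hence a bound by $\inf_{\pi\in\Pi_d}\PP_\xi(\pi) - o_\eps(1)$. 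Letting the perturbation intensity vanish last completes~\eqref{gen_con_par_eq_a}.

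To deduce~\eqref{gen_con_par_eq_b} from~\eqref{gen_con_par_eq_a}, I would exploit the disjoint decomposition $\Sigma^N(d,\eps) = \bigsqcup_{d' \in \DD_N,\,\|d'-d\| \le \eps} \Sigma^N(d')$, so that $\zee_{N,\xi}(d,\eps)$ is sandwiched between $\max_{d'} \zee_{N,\xi}(d')$ and $|\DD_N|$ times the same maximum, where $|\DD_N| = O(N^\kappa)$ is polynomial. Combined with Gaussian concentration of $N^{-1}\log\zee_{N,\xi}(d')$ around $\eff_{N,\xi}(d')$, this gives $\eff_{N,\xi}(d,\eps) = \max_{d'\in\DD_N,\,\|d'-d\|\le\eps} \eff_{N,\xi}(d') + o_N(1)$. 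Continuity in $d$ of the Parisi-infimum functional (Proposition~\ref{prop:continuity}) then upgrades~\eqref{gen_con_par_eq_a} to~\eqref{gen_con_par_eq_b}.

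The principal technical hurdle lies in the synchronization and Ghirlanda--Guerra step under~\ref{xi_power}: Panchenko's original treatment targets polynomial $\xi$ with finitely many interaction types, and for an infinite sum one must design a perturbation that simultaneously probes every relevant summand $\xi_\theta$ so as to identify the full matrix overlap in the limit, rather than only its projection onto finitely many coordinates. The summability assumption on $\alpha_\theta$ in~\ref{xi_power}, which yields uniform convergence of $\xi$ and its derivatives on the compact set $\nonpsd$, is exactly what reduces the infinite-series case to controlled truncations $\xi^{(K)} = \sum_{\theta\in\Theta_0,\,\deg(\theta)\le K}\alpha_\theta^2\xi_\theta$, allowing the Guerra upper bound and the ASS lower bound for $\xi^{(K)}$ to be passed to the limit $K\to\infty$ uniformly in $N$.
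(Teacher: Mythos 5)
Your treatment of \eqref{gen_con_par_eq_a} matches the paper's route: a Guerra-style RPC interpolation restricted to $\Sigma^N(d,\eps)$ for the upper bound (where convexity of $\xi$ on $\nonpsd$ signs the derivative, exactly as in Proposition~\ref{prop:upper:interpolation}), and the A.S.S. scheme with perturbation, Ghirlanda--Guerra identities, and synchronization for the lower bound (Proposition~\ref{prop_general_lower_bound}, which the paper imports from Panchenko essentially verbatim). One remark on your ``principal technical hurdle'': the perturbation used to enforce the G.G.\ identities need not probe the summands of $\xi$ itself --- it is an independent family indexed by a countable dense subset of $\Theta$ whose span is dense in the continuous functions of the overlap, so synchronization holds regardless of which $\xi_\theta$ appear in the model; the truncation $\xi^{(K)}$ is not needed, since~\ref{xi_power} already gives uniform convergence of $\xi$, $\nabla\xi$, $\nabla^2\xi$ on $\nonpsd$ and one can work with the full $\xi$ directly.

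The genuine gap is in your passage from \eqref{gen_con_par_eq_a} to \eqref{gen_con_par_eq_b}. Your decomposition $\Sigma^N(d,\eps)=\bigsqcup_{d'}\Sigma^N(d')$ plus Gaussian concentration controls $\max_{d'}\eff_{N,\xi}(d')$, which yields an upper bound on $\limsup_N\eff_{N,\xi}(d^N)$ but says nothing from below about the \emph{specific} sequence $\eff_{N,\xi}(d^N)$: the max could be attained at a $d'$ far from $d^N$ in the relevant sense. To close this you appeal to ``continuity in $d$ of the Parisi-infimum functional (Proposition~\ref{prop:continuity})'', but that proposition gives continuity in the covariance function $\xi$ at \emph{fixed} $d$, not continuity of $d\mapsto\inf_{\pi\in\Pi_d}\PP_\xi(\pi)$; the latter is not established anywhere and is delicate because the domain $\Pi_d$ itself varies with $d$ (and the lower bound of Proposition~\ref{prop_general_lower_bound} only applies to sequences with $\|d^N-d\|_\infty\le L/N$ and matching supports, not to arbitrary $d^N\to d$). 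The paper closes this gap differently: it proves the uniform quantitative estimate $\sup_{d\in\DD_N}|\eff_{N,\xi}(d,\eps)-\eff_{N,\xi}(d)|\le C\sqrt{\eps}$ (Lemma~\ref{lem:free:energy:eps:approx}) via a Hamming-distance projection $P\colon\Sigma^N(d,\eps)\to\Sigma^N(d)$, an entropy count $\NN(\sigma)\le\exp(C_\kappa\sqrt{\eps}N)$ of the fibers, and an interpolation between $H_{N,\xi}(\sigma)$ and $\wt H_{N,\xi}(P\sigma)$; combined with the sandwich $\Sigma^N(d,\eps/2)\subseteq\Sigma^N(d^N,\eps)\subseteq\Sigma^N(d,2\eps)$ for large $N$, this converts \eqref{gen_con_par_eq_a} into \eqref{gen_con_par_eq_b}. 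You need some estimate of this kind --- a comparison of constrained free energies at nearby magnetizations that is uniform in $N$ --- and your proposal does not supply one.
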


More importantly, we show that when $d=d_\bal$ and $\xi$ has a certain symmetry---which is the case for the usual Potts spin glass \eqref{Potts_xi}---the Parisi formula \eqref{gen_con_par_eq} can be reduced to have order parameter $\pi \in \Pi^\star$. 
First we define the symmetry condition.
Let $S_\kappa$ denote the symmetric group on $\{1,\dots,\kappa\}$.
Given a permutation $\pmu\in S_\kappa$ and a matrix $R\in\R^{\kappa\times\kappa}$, let $\pmu\act R$ be the matrix obtained by permuting the rows and columns according to $\pmu$. That is,
\eeq{ \label{pmuR_def}
(\pmu\act R)_{k,k'} \coloneqq R_{\pmu^{-1}(k),\pmu^{-1}(k')}, \quad k,k'\in\{1,\dots,\kappa\}.
}
We then make the following assumption:
\begin{equation} \label{xi_symmetric} \tag{A3}
\xi(\pmu\act R)=\xi(R) \quad \text{for every $\pmu\in S_{\kappa}$ and $R\in \R^{\kappa\times \kappa}$}.
\end{equation}

The following is our main result: it generalizes Theorem~\ref{thm:main:Potts} to the setting \eqref{general_cov}.

\begin{theorem}\label{thm:main}
Assume $\xi\colon\R^{\kappa\times \kappa}\to\R$ satisfies~\ref{xi_power},~\ref{xi_convex}, \eqref{xi_symmetric}. 
%and is symmetric in the sense of Definition~\ref{def:xi:symmetry}. 
Then for any $d^N\in\DD_N$ such that $d^N\to d_\bal$ as $N\to\infty$, we have
\eeq{ \label{thm:main_eq}
\lim_{N\to\infty} \eff_{N,\xi}(d^N)
=\lim_{\eps\searrow 0}\limsup_{N\to\infty}\eff_{N,\xi}(d_{\bal},\eps)
=\lim_{\eps\searrow 0}\liminf_{N\to\infty}\eff_{N,\xi}(d_{\bal},\eps)
=\inf_{\pi\in\Pi^\star}\PP_\xi(\pi),
}
where $\Pi^\star$ is defined in \eqref{def:Pi:star} and $\PP_{\xi}(\pi)$ is defined in \eqref{def:parisi:ftl} (see also \eqref{rmk_lambda_eq}). %(see also Remark~\ref{rmk_lambda}). %Section~\ref{subsec:Parisi}. 
\end{theorem}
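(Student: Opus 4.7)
The starting point is Theorem~\ref{gen_con_par_thm}, which identifies the limit in \eqref{thm:main_eq} with $\inf_{\pi \in \Pi_{d_\bal}} \PP_\xi(\pi)$. Since $\Pi^\star \subseteq \Pi_{d_\bal}$, only one inequality needs work:
\[
\inf_{\pi \in \Pi_{d_\bal}} \PP_\xi(\pi) \;\geq\; \inf_{\pi \in \Pi^\star} \PP_\xi(\pi).
\]
The plan is to exhibit a particular minimizer on the left-hand side that lies in $\Pi^\star$, by reading $\pi$ off the overlap structure of the limiting Gibbs measure and exploiting the $S_\kappa$-symmetry of the model.

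For the symmetry input, I would record that under the diagonal action $\sigma \mapsto \pmu \cdot \sigma$ defined by $(\pmu \cdot \sigma)_i = \vv e_{\pmu(k)}$ whenever $\sigma_i = \vv e_k$, assumption~\eqref{xi_symmetric} gives $H_{N,\xi}(\pmu \cdot \sigma) \stackrel{d}{=} H_{N,\xi}(\sigma)$ as Gaussian processes, while the magnetization is merely relabeled by $\pmu$. Hence the balanced slab $\Sigma^N(d_\bal,\eps)$ is preserved, the Gibbs measure on it is $S_\kappa$-invariant, and independent replicas may be permuted independently. At the overlap level this yields $R^{a,b} \stackrel{d}{=} P_{\pmu_a} R^{a,b} P_{\pmu_b}^\sT$ for all $\pmu_a, \pmu_b \in S_\kappa$.

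For the synchronization input, I would pass to a subsequential Gibbs limit on $\Sigma^N(d_\bal,\eps)$ and apply Panchenko's synchronization theorem, which -- once the Ghirlanda--Guerra identities hold -- forces $R^{a,b}$ to be a deterministic map $\Phi$ of its trace $t^{a,b}$. Restricting the symmetry from the previous paragraph to the diagonal subgroup $\{(\pmu,\pmu):\pmu \in S_\kappa\}$ preserves traces, so $\pmu \act \Phi(t) = \Phi(t)$ for a.e.\ $t$, with $\pmu \act R$ as in \eqref{pmuR_def}. The invariants of this conjugation action are exactly the two-parameter family $\{a\bI_\kappa + b \bone\bone^\sT\}$, and the row-sum constraint $\Phi(t) \in \Gamma_\kappa(d_\bal)$ collapses it to the one-parameter family $\Gamma^\star$. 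Via the bijection of Remark~\ref{rmk:fop}, the resulting path $\pi = \Phi \circ Q_\mu$ lies in $\Pi^\star$; the Aizenman--Sims--Starr representation then identifies $\PP_\xi(\pi)$ with the limiting free energy, producing the minimizer we sought.

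The main obstacle is obtaining the Ghirlanda--Guerra identities while preserving the exact $S_\kappa$-symmetry. The conventional derivation adds a random $p$-spin perturbation that breaks~\eqref{xi_symmetric}, and then the argument of the previous paragraph cannot be closed. The plan is instead to obtain GG directly from differentiability of the Parisi functional: for each symmetric $\xi_\theta$ appearing in \eqref{def:xi:theta}, the map $t \mapsto \lim_{N\to\infty} \eff_{N,\xi+t\xi_\theta}(d_\bal)$ is convex in $t$, and its differentiability at $t=0$ yields an overlap identity; ranging $\theta$ over a sufficiently rich symmetric family produces the full set of GG identities without any perturbation. This differentiability program requires exactly the flexibility afforded by the mixed higher-order covariance \eqref{general_cov}, which is why Theorem~\ref{thm:main} must be proved in this generality before Theorem~\ref{thm:main:Potts} can be extracted as a special case.
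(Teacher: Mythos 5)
Your architecture tracks the paper's: upper bound from Theorem~\ref{gen_con_par_thm}, lower bound by combining synchronization with the $S_\kappa$-invariance of the balanced Gibbs measure (your diagonal-action argument is exactly Lemma~\ref{lemma:symmetric}), and you correctly identify the central tension between the Ghirlanda--Guerra identities and symmetry. But your resolution of that tension has a genuine gap. You propose to extract an overlap identity from differentiability of $t\mapsto\lim_N\eff_{N,\xi+t\xi_\theta}(d_\bal)$ \emph{at $t=0$}. This cannot work: the step that converts concentration of $H_{N,\theta}$ under the Gibbs measure into a G.G. identity is Gaussian integration by parts, and the resulting identity carries an overall factor of the coupling constant $\beta_\theta$ (compare \eqref{djc7}--\eqref{djc8}). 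At zero coupling the term $H_{N,\theta}$ is absent from the Hamiltonian, $\E\langle f\cdot H_{N,\theta}\rangle$ vanishes identically, and the identity degenerates to $0=0$; obtaining G.G. for the unmodified model is precisely the open problem the paper flags as approach (II). The correct mechanism---and the point of the ``generic model''---is to switch on \emph{all} couplings at nonzero values $\beta_\theta\neq0$, constant on $S_\kappa$-orbits so that $\xi_{\bbeta}$ remains symmetric (Lemma~\ref{lem:symmbeta}), prove differentiability of the Parisi formula of \emph{that} model in each $\beta_\theta$ (Proposition~\ref{prop:diff}), run the symmetry-plus-synchronization argument for \emph{that} model, and only at the end send $\bbeta\to0$ using Lipschitz continuity of both the free energy and the Parisi functional in $\xi$ (Proposition~\ref{prop:continuity}). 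Relatedly, the polynomials $\xi_\theta$ are generally non-convex, so adding them can destroy assumption~\ref{xi_convex}; one must first upgrade $\xi$ to a strongly convex $\xi^\eps=\xi+\eps\tr(R^\sT R)/2$ and keep $\sum_\theta\beta_\theta^2C_\theta$ below the convexity gap, or else Theorem~\ref{gen_con_par_thm} (on which the convexity and finiteness of $\beta_\theta\mapsto\inf_\pi\PP_{\xi_{\bbeta}}(\pi)$ rest) is unavailable for the modified covariance. Your sketch addresses none of this.

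Two smaller points. The claim $R^{a,b}\overset{d}{=}P_{\pmu_a}R^{a,b}P_{\pmu_b}^{\sT}$ for independently chosen $\pmu_a,\pmu_b$ is false: the replicas share the disorder, and \eqref{xi_symmetric} only yields invariance under the simultaneous (diagonal) action---which is all you actually use, so this is harmless but should be stated correctly. And the step ``the A.S.S. representation identifies $\PP_\xi(\pi)$ with the limiting free energy'' conceals the mismatch between the constrained cavity functional (a sum over $\Sigma^M(d_\bal)$) and the unconstrained $\PPP^{(1)}_\xi(\pi,\lambda)$; reconciling them requires the Lagrange duality of Lemmas~\ref{lem:duality} and~\ref{pr457}, where symmetry is used a second time to show the optimal multiplier is $\lambda=0$. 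Neither of these is fatal, but the $t=0$ differentiability step is.
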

% For Potts spin glass Hamiltonian defined in \eqref{potts_hamiltonian}, the covariance function is given by
% \begin{equation}\label{eq:xi:zero}
% \E[H_N(\sigma)H_N(\sigma^\prime)]=N\xi_0(\sigma\sigma'^{\sT}/N),\quad\textnormal{where}\quad\xi_0(R)\coloneqq \tr(R^{\sT} R)=\sum_{k,k^\prime}\big(R_{k,k^\prime}\big)^2.
% \end{equation}
% Note that $\xi_0$ is symmetric (cf. Definition~\ref{def:xi:symmetry}), and it is straightforward to check the conditions~\ref{xi_2_deriv}-\ref{xi_convex} above, thus Theorem~\ref{thm:main:Potts} is a consequence of Theorem~\ref{thm:main}. 

We close this section by explaining why Theorem~\ref{thm:main} only applies to the balanced case $d=d_\bal$.
Given a permutation $\pmu\in S_\kappa$ and a vector $u=(u_1,\dots,u_\kappa)\in\R^\kappa$, let $\pmu\act u$ be the vector obtained by permuting the coordinates of $u$:
\begin{subequations} \label{dvd2}
\eeq{ \label{dvd2_a}
(\pmu\act u)_k = u_{\pmu^{-1}(k)}, \quad k\in\{1,\dots,\kappa\}.
}
In particular, for any standard basis vector $\vv e_a$, we have $\pmu\act\vv e_a=\vv e_{\pmu(a)}$.
Now extend this action to $\sigma = (\sigma_1,\dots,\sigma_N)\in\Sigma^N$ by
\eeq{ \label{dvd2_b}
\pmu\act\sigma = (\pmu\act\sigma_1,\dots,\pmu\act\sigma_N), \quad \sigma\in\Sigma^N.
}
\end{subequations}
The original Potts Hamiltonian \eqref{potts_hamiltonian} is clearly unchanged in distribution under any such action.
This fact is generalized by assumption \eqref{xi_symmetric}, which implies
(see Lemma~\hyperref[lem:symmetry_cavity_a]{\ref*{lem:symmetry_cavity}\ref*{lem:symmetry_cavity_a}})
\begin{equation}\label{eq:symmetry}
\big(H_{N,\xi}(\sigma)\big)_{\sigma\in \Sigma^{N}}\stackrel{\mathrm{law}}{=}\big(H_{N,\xi}(\pmu\act\sigma)\big)_{\sigma\in \Sigma^{N}}.
\end{equation}
Unfortunately this symmetry is not particularly helpful for the constrained model \eqref{explicit_xi}, since the constrained configuration space $\Sigma^N(d)$ from \eqref{eq:def:Sigma:N:d:eps} is not invariant under these permutations.
But there is one exception: when $d = d_\bal$.
In this case, the map $\sigma\mapsto\pmu\act\sigma$ is a bijection $\Sigma^N(d_\bal)\to\Sigma^N(d_\bal)$.
%This is made precise as follows.
%Equation \eqref{eq:symmetry} explained below makes this precise.
The central objective of the paper is to capitalize on this basic observation.
Roughly speaking, Theorem~\ref{thm:main} is the statement that when $d=d_\bal$, the order parameter $\pi\colon(0,1]\to\Gamma_\kappa(d_\bal)$ in \eqref{eq:Parisi:ftl:lagrange} must reflect the symmetry offered by \eqref{eq:symmetry}.
That is, $\pi$ should actually map to the space $\Gamma^\star$ defined in \eqref{def:Gamma:star}, which is precisely the subset of $\Gamma_\kappa(d_\bal)$ consisting of those matrices $\gamma$ which satisfy $\pmu\act\gamma=\gamma$ for every $\pmu\in S_\kappa$. 

% \begin{lemma}
% Assume $\xi\colon\R^{\kappa\times\kappa}\to\R$ satisfies~\ref{xi_power} and is symmetric in the sense of Definition~\ref{def:xi:symmetry}.
% Then for any permutation $\pmu\in S_\kappa$, we have
% \begin{equation}\label{eq:symmetry}
% \big(H_N(\sigma)\big)_{\sigma\in \Sigma^{N}(d_{\bal})}\stackrel{\mathrm{law}}{=}\big(H_N(\pmu\act\sigma)\big)_{\sigma\in \Sigma^{N}(d_{\bal})}.
% \end{equation}
% \end{lemma}

\subsection{Parisi functional}
\label{subsec:Parisi}
In this section we define the Parisi functional $\PP_{\xi}$ for any $\xi$ satisfying~\ref{xi_power}.
We denote the derivative of $\xi$ by $\nabla\xi\colon\R^{\kappa\times\kappa}\to \R^{\kappa\times \kappa}$, which is a matrix-valued function with entries given by
\eq{ %\label{xi_prime_def}
[\nabla\xi(R)]_{k,k^\prime} \coloneqq \frac{\partial\xi}{\partial R_{k,k^\prime}}(R), \quad k,k^\prime\in\{1,\dots,\kappa\}.
}
In addition, define the function $\vartheta_{\xi}\colon\R^{\kappa\times \kappa}\to \R$ by
\eeq{ \label{vthet_def}
\vartheta_\xi(R)%\equiv \vartheta_{\xi}(R)
\coloneqq\iprod{R}{\nabla \xi(R)} -\xi(R),
}
where $\iprod{A}{B} =\tr(A^{\sT}B)$ denotes the inner product of two matrices $A,B$. 
%For convenience, we often view $\sigma\equiv (\sigma_1,\ldots, \sigma_N) \in \Sigma^{N}\equiv \{\vv e_1,\ldots, \vv e_{\kappa}\}^N$ as a matrix $\sigma \in \R^{\kappa\times N}$ with columns $(\sigma_i)_{i\leq N}$. 
Under assumption~\ref{xi_power}, the functions $\xi$, $\nabla\xi$, and $\vartheta_{\xi}$ are nondecreasing when restricted to the set $\Gamma_\kappa$ of positive-semidefinite matrices; see Proposition~\hyperref[prop:xi:theta_b]{\ref*{prop:xi:theta}\ref*{prop:xi:theta_b}}.

%Given $\xi:\R^{\kappa\times\kappa}\to \R$ that satisfies~\ref{xi_power}, 
Recall the set of paths $\Pi_d$ from \eqref{def:Pi_d}.
We first define $\PP_{\xi}(\pi)$ for any discrete path $\pi$, and then extend continuously to general $\pi$.
By a discrete path we mean an element of the set
\eeq{ \label{def_Pi_disc}
\Pi_d^\disc \coloneqq \{\pi\in\Pi_d:\, \text{$\pi$ takes only finitely many values, $\pi(1)=\diag(d)$}\}.
}
%$\pi\in \Pi_d$ such that $\pi(1)=\diag(d)$. Then, we extend to general $\pi\in \Pi_d$ by continuity. To this end, consider a positive integer $s\in \Z_{+}$, and sequences of weights $m\equiv (m_r)_{0\leq r\leq s}$ in [0,1] given by:
Thus every $\pi\in\Pi_d^\disc$ has the form
\begin{subequations} \label{eq:pi:discrete}
\eeq{
\pi(t)= \gamma_r\quad\textnormal{for $t\in(m_{r-1},m_r]$, $r\in\{1,\dots,s\}$},
}
for some sequence of weights
\eeq{ \label{eq:m}
0 = m_0 < m_1 < \cdots < m_{s-1} < m_s = 1,
}
and some sequence of matrices
\eeq{ \label{eq:gamma:discrete}
0 = \gamma_0 \prec \gamma_1 \prec \cdots \prec \gamma_{s-1} \prec \gamma_{s} = \mathrm{diag}(d).
}
\end{subequations}
%Then $\PP_{\xi}(\pi)$ is defined as follows.
Given these sequences, consider independent centered Gaussian random vectors $z_0,\dots,z_{s-1}$ in $\R^\kappa$, with covariance structure
\eeq{ \label{ghce3}
\E(z_rz_r^\sT) = \nabla\xi(\gamma_{r+1}) - \nabla\xi(\gamma_{r})\one\{r>0\}.
}
%where the right hand side is p.s.d. by~\ref{xi_power} (cf. Proposition~\ref{prop:xi:theta}.). 
Given a parameter $\lambda = (\lambda_1,\dots,\lambda_\kappa) \in \R^{\kappa}$ which serves as a Lagrange multiplier, define
\eq{
X_{s} = \log\sum_{k=1}^\kappa\exp\iprod[\Big]{\sum_{r=0}^{s-1}z_r+\lambda}{\vv e_k}.
}
%where we denoted the coordinates of $z_r$ by $z_r\equiv \big(z_r(k)\big)_{1\leq k\leq \kappa}$ for $0\leq r\leq s-1$. 
Using $\E_r$ to denote expectation with respect to $z_r$, we then define inductively
\eq{
X_r = \frac{1}{m_r}\log\E_r\exp(m_rX_{r+1}) \quad \text{for $r\in\{1,\dots,s-1\}$},\quad\textnormal{and}\quad  X_0 = \E_0(X_1).
}
We then record the non-random quanity $X_0$ as a function of $\pi$ and $\lambda$:
%The quantity $X_0$ is non-random and depends on the sequences $\gamma$ and $m$ in \eqref{eq:m}, \eqref{eq:gamma:discrete}, and on $\lambda\in \R^{\kappa}$. Denote this quantity by
\begin{equation}\label{eq:def:Psi}
\PPP_{\xi}^{(1)}(\pi,\lambda)%\equiv \Psi_{\xi}(\gamma,m,\lambda)
\coloneqq 
  X_0.
\end{equation}
We show in Lemma~\ref{pconlem} %Lemma~\ref{lem:continuity:parisi} 
%for $\xi$ satisfying~\ref{xi_2_deriv} and any $\lambda\in \R^{\kappa}$, \
that $\Pi_d^{\disc} \ni \pi \mapsto \PPP_{\xi}^{(1)}(\pi,\lambda)$ is Lipschitz continuous with respect to the following $L^1$ norm on paths:
\eeq{ \label{norm}
\pi\mapsto\int_0^1\|\pi(t)\|_1\ \dd t.
}
Since $\Pi_d^{\disc}\subset \Pi_d$ is dense with respect to this norm, there is a unique continuous extension of $\PPP_{\xi}^{(1)}$ to all of $\Pi_d$. 
Next define
\eeq{ \label{PPP2_def}
\PPP_{\xi}^{(2)}(\pi)
&\coloneqq \frac{1}{2}\int_0^1\vartheta_\xi(\pi(t))\ \dd t - \frac{1}{2}\vartheta_\xi(\diag(d)), \quad \pi\in\Pi_d,
%\frac{1}{2}\sum_{r=1}^{s-1} m_r\big(\vartheta_\xi(\gamma_{r+1})-\vartheta_\xi(\gamma_r)\big),
}
and set
\eeq{\label{eq:Parisi:ftl:lagrange}
\PPP_{\xi}(\pi, \lambda)
%\equiv \PPP_{\xi}^{(1)}(\pi,\lambda)
\coloneqq \PPP_{\xi}^{(1)}(\pi,\lambda) + \PPP_{\xi}^{(2)}(\pi).
}
Finally, the Parisi functional is defined as
\begin{equation}\label{def:parisi:ftl}
    \PP_{\xi}(\pi)\coloneqq\inf_{\lambda\in \R^{\kappa}}[\PPP_{\xi}(\pi,\lambda)-\iprod{\lambda}{d}], \quad \pi\in\Pi_d.
\end{equation}

\begin{remark} \label{rmk_lambda}
The reason for the Lagrange multiplier $\lambda$ and the infimum in \eqref{def:parisi:ftl} is later explained in Remark~\ref{rmk_dual}.
It is straightforward to check that $\PPP_{\xi}(\pi,\lambda)=\PPP_{\xi}(\pi,\lambda+c\bone )$ holds for any $c\in \R$. 
Therefore, one could fix the last coordinate $\lambda_\kappa=0$ so that the infimum in \eqref{def:parisi:ftl} is over $\R^{\kappa-1}$; this was done in \cite{panchenko18a}.
%the last coordinate of $\lambda$ can be omitted, which was done in \cite{panchenko18a}. 
More importantly, we show in Lemma~\ref{pr457} that for $\pi \in \Pi^\star$ and $\xi$ satisfying the symmetry condition \eqref{xi_symmetric}, the infimum %over $\lambda\in \R^{\kappa}$ 
is achieved at $\lambda=0$:
\eeq{ \label{rmk_lambda_eq}
\PP_\xi(\pi) = \PPP_{\xi}(\pi,0) \quad \text{for any $\pi\in\Pi^\star$}.
}
Consequently, Theorem~\ref{thm:main} shows that the Lagrange parameter $\lambda$ is not needed for symmetric models, although it remains necessary for the proof.
\end{remark}

\subsection{Related literature}

%The Potts spin glass is closely related to a variety of problems in combinatorial optimization and statistical inference. 
%As mentioned after Remark~\ref{rmk:fop}, 
The Potts spin glass model is closely related to the max $\kappa$-cut problem on random graphs.
As mentioned after Remark~\ref{rmk:fop}, the balanced case addressed in this paper corresponds to sparse Erd\H{o}s--Rényi graphs or random regular graphs \cite{sen18}, both of which have distributional symmetry analogous to \eqref{xi_symmetric}.
An unbalanced version (with $\kappa=2$) was studied in \cite{jagannath_sen21}, corresponding to the generalized SK model introduced in \cite{panchenko05b}. 
Motivated by the max $\kappa$-cut problem on inhomogeneous random graphs, \cite{jagannath-ko-sen18} introduced a vector version of multi-species SK model and combined approaches from \cite{panchenko15,panchenko18a} to obtain a Parisi formula for the limiting free energy.
As in this paper, a key technical component was the synchronization mechanism discussed in Section~\ref{intro_model}.

The synchronization technique was first introduced by Panchenko~\cite{panchenko15} to study the multi-species SK model. 
More specifically, synchronization enabled a free energy lower bound that matched the upper bound obtained in \cite{barra-contucci-mingione-tantari15} via Guerra's replica symmetry breaking interpolation \cite{guerra03}. 
These methods were generalized in \cite{panchenko18a} to obtain the variational formula \eqref{eq:Panchenko:formula}, which is a special case of the mixed vector spin model considered in \cite{panchenko18b}. %The synchronization technique has since been pivotal in a variety of related models \cite{jagannath-ko-sen18,contucci-mingione19,chen19,ko20,mourrat-panchenko20,mourrat?}.
Synchronization has since been used in a variety of generalized spin glasses, including 
%max $\kappa$-cut on inhomogeneous random graphs \cite{jagannath-ko-sen18}, 
a multi-scale SK model \cite{contucci-mingione19}, 
the quantum SK model \cite{adhikari_brennecke20}, 
%spiked random tensors \cite{chen19,chen-handschy-lerman21}, 
spherical spin glasses with constrained overlaps \cite{ko20}, 
and multi-species spherical models \cite{bates-sohn22a}.
These generalized models are related to a number of problems in statistical inference and combinatorial optimization, such as 
spiked random tensors \cite{chen19,chen-handschy-lerman21}, 
principal submatrix recovery~\cite{gamarnik_jagannath_sen21}, 
and the $\ell^p$--Gaussian--Grothendieck problem~\cite{chen_sen23,dominguez22}. 

Synchronization also been used in conjunction with a strategy initiated by Mourrat~\cite{mourrat21a,mourrat22} that identifies the Parisi formula as a solution to a Hamilton--Jacobi equation.
This program has also led to results on spin glasses enriched by a certain magnetic field \cite{mourrat-panchenko20}, non-convex models such as the bipartite SK model \cite{mourrat21b}, and vector spin models \cite{mourrat23,chen_mourrat23_arxiv}.
Building on these developments, Chen~\cite{chen23a_arxiv} recently showed that by introducing a self-overlap correction term in the free energy, one can remove the supremum over magnetizations in \eqref{eq:Panchenko:formula}.
Moreover, the self-overlap concentrates in that setting, thereby softly enforcing a balanced constraint \cite{chen23b_arxiv}.

For spherical vector spin glasses, Ko~\cite{ko?} obtained the Crisanti--Sommers variational formula for the limiting free energy.
Various properties of the minimizer to this formula were obtained by Auffinger and Zhou~\cite{auffinger_zhou22}, who also extended the formula to zero temperature.
In the SK case, the limiting free energy was computed earlier by Panchenko and Talagrand~\cite[Thm.~2]{panchenko-talagrand07}.
More recently, Husson and Ko~\cite{husson_ko22_arxiv} gave an alternative proof using spherical integrals, even allowing $\kappa$ to grow sublinearly with $N$.

\subsection{Organization of the paper}
The paper's format is meant to prioritize new and central ideas, by postponing certain technical aspects that could be otherwise distracting. 
Section~\ref{sec:proof:overview} contains the proof of our main result, Theorem~\ref{thm:main}.
The proof invokes various inputs that are introduced in Section~\ref{sec:proof:overview} but proved later in the paper.
Section~\ref{sec_preliminaries} establishes some notation involving the Parisi functional and Ruelle probability cascades that will be needed in all subsequent parts.
Section~\ref{sec:diff} concerns the differentiability of the Parisi functional with respect to certain inverse temperatures, while
%This differentiability plays a crucial role in the proof of Theorem~\ref{thm:main}. 
Section~\ref{sec:continuity_and_duality} contains various continuity properties of the Parisi functional.
Section~\ref{sec_lower_bound} unites these ingredients with symmetry to complete the lower bound portion of Theorem~\ref{thm:main}. 
% This is the most technically delicate part of the paper.

Several intermediate results are either standard or very similar to previous works.
For some of these results, the proofs can be read essentially verbatim elsewhere and are thus omitted.
For others, we defer the proofs to one of several appendices.
These include the existence of certain Gaussian processes (Appendix~\ref{sec:app:generic}), proof of the Aizenman--Sims--Starr scheme (Appendix~\ref{sec_ass_proof}), checking various lemmas involving Ruelle probability cascades (Appendix~\ref{sec_app_rpc}), generalizing a variational argument from \cite{panchenko18a} related to the infimum in \eqref{def:parisi:ftl} (Appendix~\ref{subsec:lem:duality}), and finally verification of the upper bound via Guerra interpolation (Appendix~\ref{sec_gen_par_proof}).

\subsection{Acknowledgments}
We are grateful to Hong-Bin Chen, Amir Dembo, Justin Ko, and Nike Sun for helpful discussions.
% 

% The general multi-species SK model (Ising case) was introduced in \cite{barra-contucci-mingione-tantari15}, where Barra \textit{et.~al.}~gave an upper bound for the free energy using a variant of Guerra's RSB bound \cite{guerra03}, under a condition equivalent to~\ref{xi_convex}.
% Panchenko produced the matching lower bound in \cite{panchenko15} by using the synchronization mechanism discussed above.
% By generalizing this mechanism, Panchenko obtained variational formulas for the free energy of Potts spin glass models \cite{panchenko18a} and mixed $p$-spin models with vector spins \cite{panchenko18b}. 
% The synchronization technique has since been pivotal in a variety of related models \cite{jagannath-ko-sen18,contucci-mingione19,chen19,ko20,mourrat-panchenko20,mourrat?}.
% Using a similar technique, Ko obtained the analogue for multiple spherical models subject to constrained overlaps \cite{ko20}.
% Using the formula produced by Panchenko in \cite{panchenko15}, the authors together with Sloman \cite{bates-sloman-sohn19} studied symmetry breaking for multi-species SK models (see also \cite{hartnett-parker-geist18} from the physics literature).
% This work has since been improved by Dey and Wu \cite{dey-wu21}, who also considered non-convex models and properties of the replica symmetric phase.
% The RS condition identified in \cite{bates-sloman-sohn19,dey-wu21} also leads to fluctuation results \cite{liu21}.

\section{Proof overview}
\label{sec:proof:overview}
We introduce a series of intermediate results in Sections~\ref{sec_overview_arrays}--\ref{subsec:symmetric:Parisi}, and then use them to prove Theorem~\ref{thm:main} (and Theorem~\ref{thm:main:Potts}) in Section~\ref{sec:proof_main}.
The results labeled as propositions will be proved in the remainder of the paper, while lemmas are argued immediately.
Meanwhile, Theorem~\ref{gen_con_par_thm} is proved using the same strategy as in \cite{panchenko18a}, and so we will not focus on it here.
Instead a review of the proof is provided in Appendix~\ref{sec_gen_par_proof}.

Toward our goal of Theorem~\ref{thm:main}, we first observe that the upper bound
\eq{
\lim_{\eps\searrow0}\limsup_{N\to\infty}\eff_{N,\xi}(d_{\bal},\eps)\leq \inf_{\pi \in \Pi^\star}\PP_{\xi}(\pi)
}
is immediate from Theorem~\ref{gen_con_par_thm}, since $\Pi^\star$ is a subset of $\Pi_{d_{\bal}}$.
What requires novel justification is the lower bound, which amounts to showing
%producing $\pi\in\Pi^\star$ such that
\begin{equation}\label{eq:goal}
\liminf_{N\to\infty} \eff_{\kappa N,\xi}(d_{\bal})\geq \PP_{\xi}(\pi) \quad \text{for some $\pi\in\Pi^\star$}.
\end{equation}
The factor $\kappa$ in $\kappa N$ is to guarantee that $\Sigma^{\kappa N}(d_{\bal})$ is nonempty.

% Throughout, we consider $\xi$ that satisfies~\ref{xi_power},~\ref{xi_convex}, \eqref{xi_symmetric}. Note that the upper bound $\eff_{N,\xi}(d_{\bal},\eps)\leq \inf_{\pi \in \Pi^\star}\PP_{\xi}(\pi)+o_{N,\eps}(1)$ follows trivially from Theorem~\ref{gen_con_par_thm}, since $\Pi^\star$ is a subset of $\Pi_{d_{\bal}}$. Thus, we focus on the lower bound: it suffices to prove that
% \begin{equation}\label{eq:goal}
% \liminf_{N\to\infty} \eff_{\kappa N,\xi}(d_{\bal})\geq \inf_{\pi\in \Pi^\star}\PP_{\xi}(\pi; d_{\bal}),
% \end{equation}
% since \eqref{eq:goal} and Theorem~\ref{gen_con_par_thm} imply that $\inf_{\pi \in \Pi_{d_{\bal}}}\PP_{\xi}(\pi;d_{\bal})= \inf_{\pi\in \Pi^\star}\PP_{\xi}(\pi; d_{\bal})$ holds. The factor $\kappa$ in $\kappa N$ is to guarantee that $\Sigma^{\kappa N}(d_{\bal})\equiv \Sigma^{\kappa N}(d_{\bal},0)$ is nonempty.

The standard approach for proving a Parisi formula lower bound such as \eqref{eq:goal} is the Aizenman--Sims--Starr (A.S.S.) scheme~\cite{aizenman-sims-starr03,aizenman-sims-starr07}.
Indeed, this method is what yields the lower bound portion of Theorem~\ref{gen_con_par_thm}.
The shortcoming is that this implementation alone yields a minimizer $\pi$ belonging to $\Pi_{d_\bal}$, whereas we want more specific information, namely that $\pi$ can be found in $\Pi^\star$.
So we will actually use the A.S.S. scheme a \textit{second} time, but with some interventions that preserve the symmetry needed to infer Theorem~\ref{thm:main} from Theorem~\ref{gen_con_par_thm}.
The next three sections explain how this is accomplished.

\subsection{Synchronized overlap arrays} \label{sec_overview_arrays}
% Thus, we aim to establish \eqref{eq:goal}. We first set up the notations. First, define the (random) Gibbs measure on $\Sigma^N(d_{\bal})$ associated with the Hamiltonian $(H_{N,\xi}(\sigma))_{\sigma\in \Sigma^N(d_{\bal})}$ by
% \begin{equation}\label{def:gibbs}
% G_{N,\xi}(\sigma)\equiv G_{N,\xi}(\sigma;d_{\bal})\coloneqq\frac{\exp\big(H_{N,\xi}(\sigma)\big)}{Z_{N,\xi}(d_{\bal})},\quad\sigma\in \Sigma^N(d_{\bal}).
% \end{equation}
% For i.i.d. samples $(\sigma^{\ell})_{\ell\geq 1}\iid G_{N,\xi}(\cdot)$ called replicas, denote their \textit{overlap matrix} by
% \begin{equation}\label{def:overlap}
% \RR_{\ell,\ell^\prime}= R(\sigma^{\ell},\sigma^{\ell^\prime})\coloneqq\frac{\sigma^{\ell} (\sigma^{\ell^\prime})^{\sT}}{N}=\frac{1}{N}\sum_{i=1}^{N}\sigma^{\ell}_i (\sigma^{\ell^\prime}_i)^\sT \in \R^{\kappa\times \kappa}.
% \end{equation}

The main characters in the A.S.S. scheme are overlap arrays.
Given any $\sigma^1,\sigma^2,\ldots\in\Sigma^N$, we can define an array $\RR=(\RR_{\ell,\ell'})_{\ell,\ell'\geq1}$, where $\RR_{\ell,\ell'}$ is the $\kappa\times\kappa$ overlap matrix associated to $\sigma^\ell$ and $\sigma^{\ell'}$:
\begin{equation}\label{def:overlap}
\RR_{\ell,\ell^\prime}= R(\sigma^{\ell},\sigma^{\ell^\prime})\coloneqq\frac{\sigma^{\ell} (\sigma^{\ell^\prime})^{\sT}}{N}=\frac{1}{N}\sum_{i=1}^{N}\sigma^{\ell}_i (\sigma^{\ell^\prime}_i)^\sT \in \R^{\kappa\times \kappa}.
\end{equation}
In other words, the $(k,k')$ entry of $R(\sigma,\sigma')$ is the fraction of columns $i$ that satisfy $\sigma_i=\vv e_k$ and $\sigma'_i = \vv e_{k'}$:
\eeq{ \label{def:overlap_entry}
R(\sigma,\sigma')_{k,k'} = \frac{1}{N}\sum_{i=1}^N\one\{\sigma_i=\vv e_k\}\one\{\sigma'_i=\vv e_{k'}\}.
}

The case of interest is when $(\sigma^\ell)_{\ell\geq1}$ are independent samples from the Gibbs measure $G_{\kappa N,\xi}$ associated to the Hamiltonian %\footnote{This is slightly misleading, since the revelent Hamiltonian in the A.S.S. scheme is actually the \textit{cavity} Hamiltonian $H_{N,M,\xi}$ defined in XXX. This subtlety is accounted for in Section XXX but is unimportant here.} 
$H_{\kappa N,\xi}$ from \eqref{general_cov}:
\begin{equation}\label{def:gibbs}
G_{\kappa N,\xi}(\sigma)\propto \exp H_{\kappa N,\xi}(\sigma), \quad\sigma\in \Sigma^{\kappa N}(d_{\bal}).
\end{equation}
This measure is random depending on the Gaussian process $H_{\kappa N,\xi}$.
Therefore, the array $\RR$ is generated by first fixing the realization of $G_{\kappa N,\xi}$, and then drawing i.i.d.~samples $(\sigma^\ell)_{\ell\geq1}$ from $G_{\ka N,\xi}$. 
We denote the resulting distribution by $\Law(\RR;\E(G_{\kappa N,\xi}^{\otimes\infty}))$.
This is a probability measure on the space $(\R^{\kappa\times\kappa})^{\N\times\N}$ equipped with the usual product $\sigma$-algebra.
Its properties generalize those of classical Gram--de Finetti arrays, as captured by the following definition.
Recall that $R^{\circ p}$ denotes the $p^\mathrm{th}$ Hadamard power of matrix $R$, and the permutation action $R\mapsto\pmu\act R$ from \eqref{pmuR_def}.

% \begin{definition} \label{type_def}
% Let $\RR = (\RR_{\ell,\ell'})_{\ell,\ell'\geq1}$ be a random array of $\kappa\times\kappa$ matrices.
% We say that $\Law(\RR)$ is a \textit{Gram--de Finetti law} if for every $w\in\R^\kappa$ and $p\geq1$, the array $\AA = (\iprod{R_{\ell,\ell'}^{\circ p} w}{w})_{\ell,\ell'\geq1}$ is a Gram--de Finetti array. 
% This means $\AA$ is almost surely symmetric and positive-semidefinite, and is exchangeable under finite permutations: for any $n$ and any permutation $\pmu\in S_n$, the subarray $\AA^{(n)} = (\iprod{R_{\ell,\ell'}^{\circ p} w}{w})_{1\leq \ell,\ell'\leq n}$ satisfies $\pmu\act\AA^{(n)}\stackrel{\text{law}}{=}\AA^{(n)}$.
% \end{definition}

\begin{definition} \label{type_def}
A \textit{$\kappa$-dimensional Gram--de Finetti array} is a random array of $\kappa\times\kappa$ matrices $\RR = (\RR_{\ell,\ell'})_{\ell,\ell'\geq1}$ such that for every positive integer $p$ and $w\in\R^\kappa$, the array $\QQ = (\iprod{\RR_{\ell,\ell'}^{\circ p} w}{w})_{\ell,\ell'\geq1}$ is a ($1$-dimensional) Gram--de Finetti array. 
That is, $\QQ$ is almost surely symmetric and positive-semidefinite, and is exchangeable under any finite permutation applied to its rows and columns simultaneously. %: for any $n$ and any permutation $\pmu\in S_n$, the subarray $\QQ^{(n)} = (\iprod{\RR_{\ell,\ell'}^{\circ p} w}{w})_{1\leq \ell,\ell'\leq n}$ satisfies $\pmu\act\QQ^{(n)}\stackrel{\text{law}}{=}\QQ^{(n)}$. 
In this case, we say $\Law(\RR)$ is a \textit{Gram--de Finetti law}.
\end{definition}

% \begin{remark}
% We denote by $\RR_{\ell,\ell^\prime}^{k,k^\prime}$ the $(k,k^\prime)$ entry of the matrix $\RR_{\ell,\ell^\prime}$. 
% This is a departure from our previous convention of writing $R_{k,k'}$ for the $(k,k')$ entry of matrix $R$, but is necessary for avoiding future notational overload.
% Regardless of convention, we will always use the symbols $k,k'$ to denote entries of $\kappa\times\kappa$ matrices, whereas $\ell,\ell'$ denote indices of replicas (i.e.~the location in an array).
% \end{remark}

Roughly speaking, the A.S.S. scheme works by identifying a functional $\Psi_\xi$ on Gram--de Finetti laws (see Section~\ref{prelimit_section}) such that
\eeq{ \label{fnc73}
\eff_{\kappa N,\xi} + o(1) \geq \Psi_\xi\big(\Law(\RR;\E(G_{\kappa N,\xi}^{\otimes\infty}))\big).
}
As a first step to go from \eqref{fnc73} to \eqref{eq:goal}, one assumes (by passing to a subsequence) that 
\eeq{ \label{gwpx9}
\Law(\RR;\E(G_{\kappa N,\xi}^{\otimes\infty})) \quad \text{converges weakly as $N\to\infty$ to some $\LL$}.
}
By continuity of $\Psi_\xi$ (see Corollary~\ref{extension_cor}), this results in
\eeq{ \label{gv83}
\liminf_{N\to\infty}\eff_{\kappa N,\xi}(d_\bal) \geq \Psi_\xi(\LL).
}
But this limiting law $\LL$ is now divorced from the representation \eqref{def:overlap}.
There is not necessarily any way of realizing $\LL$ by sampling from a Gibbs measure, unless $\LL$ satisfies a certain family of identities introduced in \cite{panchenko18a} and recalled in the following definition.
%For the equations below, recall that $R^{\circ p}$ denotes the $p^\mathrm{th}$ Hadamard power of matrix $R$.

\begin{definition}\label{def:GG}
A Gram--de Finetti law $\LL=\Law(\RR)$ is said to satisfy the ($\kappa$-dimensional) Ghirlanda--Guerra (G.G.) identities if the following holds for every 
$p\geq1$, $m\geq1$, $w_1,\ldots,w_m\in[-1,1]^\kappa$, and bounded measurable function $\vphi\colon\R^m\to\R$.
%A random array $\RR= (R_{\ell,\ell^\prime})_{\ell,\ell^\prime\geq 1}$ of $\kappa\times \kappa$ matrices satisfies (generalized) Ghirlanda-Guerra (G.G.) identities if we have the following. For $p\geq 1, m\geq 1$, $w^{1},\ldots,w^{m}\in [-1,1]^{\kappa}$, and a bounded measurable function $\vphi: \R^{m}\to \R$, let
Define the scalar array
\eeq{ \label{Qphi}
    \AA_{\ell,\ell^\prime}\coloneqq\vphi\big(\iprod{\RR_{\ell,\ell'}^{\circ p} w_{1}}{w_{1}},\ldots, \iprod{\RR_{\ell,\ell'}^{\circ p} w_{m}}{w_{m}}\big).
}
% bounded measurable function $\vphi\colon\R^{\kappa\times\kappa}\to\R$.
% For each $\ell,\ell'\geq1$, let
% \eeq{ \label{Qphi}
%     \AA_{\ell,\ell^\prime}\coloneqq\vphi(\RR_{\ell,\ell'}).
% }
    %where $R_{\ell,\ell^\prime}^{\circ p}\equiv R_{\ell,\ell^\prime}\circ \ldots \circ R_{\ell,\ell^\prime}$ denotes the $p$'th power of $R_{\ell,\ell^\prime}$ in terms of the Hadamard (entrywise) product. Then, for any such $(Q_{\ell,\ell^\prime})_{\ell,\ell^\prime\geq 1}$, and also $n\geq 1$, a bounded measurable function $f:(\R^{\kappa\times \kappa})^{n\times n}\to \R$, we have
Then for any $n\geq1$ and any bounded measurable function $f$ of the finite subarray $\RR^{(n)}= (\RR_{\ell,\ell'})_{1\leq\ell,\ell'\leq n}$, we have
    \begin{equation}\label{eq:GG}
    \E[f(\RR^{(n)})\AA_{1,n+1}]=\frac{1}{n}\E[f(\RR^{(n)})]\E[\AA_{1,2}]+\frac{1}{n}\sum_{\ell=2}^{n}\E[f(\RR^{(n)})\AA_{1,\ell}].
    \end{equation}
   % where $\RR^n\equiv (R_{\ell,\ell^\prime})_{\ell,\ell^\prime \leq n}$.
\end{definition}

The theorem quoted below clarifies the role of these identities: to guarantee \textit{synchronization}.
In fact, they also guarantee that each matrix in the array is positive-semidefinite.
The condition \eqref{deterministic_for_same_ell} is why the Gibbs measure in \eqref{def:gibbs} is restricted to $\Sigma^{\kappa N}(d_\bal)$.
This restriction ensures that the array in \eqref{def:overlap} satisfies 
\eeq{ \label{jed8}
\RR_{\ell,\ell}=\diag(d_\bal)=\kappa^{-1}\bI_\kappa \quad \text{for every $\ell$}.
}

\begin{theirthm} \label{sync_thm} \textup{\cite[Thm.~4]{panchenko18b}}
Assume $\Law(\RR)$ is a Gram--de Finetti law in the sense of Definition~\ref{type_def} and satisfies the Ghirlanda--Guerra identities in Definition~\ref{def:GG}.
Suppose further that there is some deterministic $D\in\Gamma_\kappa$ with $\tr(D)=1$ such that
\eeq{ \label{deterministic_for_same_ell}
\RR_{\ell,\ell} = D \quad \text{for all $\ell\geq1$, with probability one}.
}
Then there exists a map $\Phi\colon[0,1]\to\Gamma_\kappa$ such that with probability one,
\eq{ %\label{sync_eq}
\RR_{\ell,\ell'} = \Phi\big(\tr(\RR_{\ell,\ell'})\big) \quad \text{for every $\ell,\ell'\geq1$}.
}
Furthermore, this map can be taken to be nondecreasing,
\eq{
\Phi(q)\preceq\Phi(q') \quad \text{for $q\leq q'$},
}
and Lipschitz continuous, 
\eq{
\|\Phi(q)-\Phi(q')\|_1\leq C_\kappa |q-q'|,
}
for some constant $C_\kappa$ depending only on $\kappa$.
\end{theirthm}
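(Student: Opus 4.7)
The plan is to reduce the matrix-valued synchronization statement to a comonotonicity statement for scalar Gram--de Finetti arrays satisfying the Ghirlanda--Guerra identities, applied to the various reductions $\QQ^{p,w}_{\ell,\ell'} \coloneqq \iprod{\RR_{\ell,\ell'}^{\circ p} w}{w}$ with $(p,w) \in \N \times \R^\kappa$, and then to extract the matrix entries of $\RR_{\ell,\ell'}$ by polarization.

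First I would observe that Definitions~\ref{type_def} and~\ref{def:GG} applied with the indicated choice of $p$, $w$, and trivial $\vphi$, $f$ show directly that each $\QQ^{p,w}$ is a scalar Gram--de Finetti array inheriting the one-dimensional G.G.\ identities from \eqref{eq:GG}. In particular, the trace array $T_{\ell,\ell'} \coloneqq \tr(\RR_{\ell,\ell'}) = \sum_{k=1}^\kappa \QQ^{1,\vv e_k}_{\ell,\ell'}$ is a scalar G.G.\ array, and moreover \eqref{deterministic_for_same_ell} gives $T_{\ell,\ell} = \tr(D) = 1$ a.s. By Panchenko's ultrametricity theorem for scalar G.G.\ arrays, together with the Dovbysh--Sudakov representation of the whole array $\RR$, the replica index set $\N$ carries a single common random ultrametric tree along which the off-diagonal values of both $T$ and every $\QQ^{p,w}$ are determined.

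The core technical step is then the following comonotonicity lemma: if two scalar Gram--de Finetti arrays $A$ and $B$ on the same replica indices share the same ultrametric tree and jointly satisfy the G.G.\ identities (which is our situation, since all $\QQ^{p,w}$ and $T$ arise from the same sampled sequence $(\sigma^\ell)$), then $A_{\ell,\ell'}$ is an almost surely deterministic nondecreasing function of $B_{\ell,\ell'}$. One proves this by inducting on the tree level of $(\ell,\ell')$ and using the G.G.\ identities with indicator test functions to pin down the one-dimensional conditional law of $A_{\ell,\ell'}$ given $B_{\ell,\ell'}$ on each ``branch''. Applied to $B = T$ and $A = \QQ^{p,w}$, this yields for each $(p,w)$ a deterministic monotone function $\vphi_{p,w}\colon[0,1]\to\R$ with $\QQ^{p,w}_{\ell,\ell'} = \vphi_{p,w}(T_{\ell,\ell'})$ a.s. Polarizing over a countable dense set of $w \in [-1,1]^\kappa$ (with $p=1$) recovers every entry $(\RR_{\ell,\ell'})_{k,k'}$ as a deterministic function of $T_{\ell,\ell'}$, which defines $\Phi\colon[0,1]\to\Gamma_\kappa$ with $\RR_{\ell,\ell'} = \Phi(T_{\ell,\ell'})$.

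The monotonicity $\Phi(q) \preceq \Phi(q')$ for $q \leq q'$ then follows because, in the ultrametric tree, larger trace corresponds to replicas agreeing on a finer block; the PSD dominance $\RR_{\ell',\ell'}|_{\text{finer}} \succeq \RR_{\ell,\ell'}|_{\text{coarser}}$ at the finite-$N$ level passes to the limit through $\Phi$. For Lipschitz continuity, I would use that $\RR_{\ell,\ell} = D$ together with $\RR \succeq 0$ forces $|(\RR_{\ell,\ell'})_{k,k'}| \leq \sqrt{D_{k,k} D_{k',k'}} \leq 1$, so each coordinate of the monotone map $\Phi$ has total variation at most $1$ on $[0,1]$; a careful bookkeeping argument using the row-sum identity $\sum_{k'}[\Phi(q)]_{k,k'} = D_{k,k}$ (a constant independent of $q$) converts this into a uniform Lipschitz bound with constant depending only on $\kappa$.

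The main obstacle is the comonotonicity lemma in the second paragraph: two scalar G.G.\ arrays with a shared ultrametric tree are not obviously pointwise functions of one another, and the rigidity that makes this work is precisely what the G.G.\ identities encode when tested jointly against many functions of both arrays. This is the heart of Panchenko's matrix synchronization mechanism, and it is the step that crucially requires the \emph{full} family of identities \eqref{eq:GG} (indexed by all $p,m,w_j,\vphi$), rather than any single scalar version.
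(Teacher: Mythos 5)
This is a quoted result (the paper itself gives no proof, deferring entirely to \cite[Thm.~4]{panchenko18b}), and your outline does track the architecture of Panchenko's actual argument: scalar reductions $\QQ^{p,w}$, ultrametricity from the one-dimensional G.G.\ identities, a synchronization step, and polarization to recover the entries. However, the central step is left as an unproven ``comonotonicity lemma,'' and as you state it the lemma is not correct: two scalar Gram--de Finetti arrays sharing the same ultrametric tree need \emph{not} be functions of one another. If the trace array $T$ were constant across two distinct tree levels on which some $\QQ^{p,w}$ differs, then $\QQ^{p,w}$ could not be a function of $T$. The missing ingredient --- and the actual heart of the synchronization mechanism --- is that the trace \emph{separates} all the other reductions: each $\QQ^{p,w}$ is nondecreasing along the tree in the positive-semidefinite order (this is where the Schur-product monotonicity of Proposition~\hyperref[prop:xi:theta_b]{\ref*{prop:xi:theta}\ref*{prop:xi:theta_b}} enters), and if $\Gamma\preceq\Gamma'$ with $\tr(\Gamma)=\tr(\Gamma')$ then $\Gamma'-\Gamma$ is a PSD matrix of zero trace, hence zero, so $\Gamma=\Gamma'$. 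Without this strict-monotonicity-of-trace-on-PSD-increments observation, the claim that every $\QQ^{p,w}_{\ell,\ell'}$ is a function of $T_{\ell,\ell'}$ does not follow, and ``inducting on the tree level with indicator test functions'' does not by itself supply it.

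Two further steps are not sound as written. First, your Lipschitz argument fails: a nondecreasing function of bounded variation on $[0,1]$ need not be Lipschitz (it can jump), so total variation plus the row-sum identity cannot yield the stated bound. The correct route again uses the PSD order: for $q\leq q'$ one has $M\coloneqq\Phi(q')-\Phi(q)\succeq 0$ with $\tr(M)=q'-q$, and for any PSD matrix $|M_{k,k'}|\leq\tfrac12(M_{k,k}+M_{k',k'})$, whence $\|M\|_1\leq\kappa\,\tr(M)=\kappa|q'-q|$, giving $C_\kappa=\kappa$. (Note also that the individual entries of $\Phi$ are not separately monotone --- e.g.\ off-diagonal entries of paths in $\Gamma^\star$ decrease in $q$ --- so ``each coordinate has total variation at most $1$'' also needs an argument.) Second, your justification of $\Phi(q)\preceq\Phi(q')$ appeals to ``the finite-$N$ level,'' but the theorem is a statement about an abstract Gram--de Finetti law satisfying \eqref{eq:GG}; there is no finite-$N$ Gibbs realization to pass to the limit from. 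The PSD monotonicity along the tree must be extracted from ultrametricity and positive semidefiniteness of the arrays $\QQ^{p,w}$ themselves.
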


So when the $\kappa$-dimensional G.G. identities are satisfied, the matrix array $(\RR_{\ell,\ell'})_{\ell,\ell\geq1}$ is simply a function of the scalar array $(\tr(\RR_{\ell,\ell'}))_{\ell,\ell'\geq1}$.
Moreover, this scalar array satisfies the $1$-dimensional (or canonical) G.G. identities (see Lemma~\ref{lem_kappa_to_1}).
In turn, the $1$-dimensional G.G. identities \cite{ghirlanda-guerra98} are known to yield the following existence and uniqueness result via ultrametricity \cite{panchenko13b}.
We use $\QQ$ and $\bar\LL$ (instead of $\RR$ and $\LL$) as notational cues that we are speaking of $1$-dimensional Gram--de Finetti arrays.

\begin{theirthm} \label{representation_thm}
\textup{\cite[Thm.~2.13, 2.16, and 2.17]{panchenko13a}}
Let $\mu$ be a probability measure on $[0,1]$.
There is exactly one Gram--de Finetti law $\bar\LL_\mu =\Law(\QQ)$ that has all three of the following properties: (i) $\bar\LL_\mu$ satisfies the $1$-dimensional G.G. identities; (ii) $\Law(\QQ_{1,2})=\mu$; and (iii) $\QQ_{\ell,\ell}=1$ with probability one, for every $\ell\geq1$.
Furthermore, the map $\mu\mapsto\bar\LL_\mu$ is continuous with respect to weak convergence.
\end{theirthm}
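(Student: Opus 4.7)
The plan is to invoke Panchenko's ultrametricity theorem as the central tool: any Gram--de Finetti law satisfying the $1$-dimensional G.G.~identities of Definition~\ref{def:GG} together with the self-overlap normalization $\QQ_{\ell,\ell}=1$ is supported on ultrametric arrays, meaning $\QQ_{\ell_1,\ell_3}\geq\min(\QQ_{\ell_1,\ell_2},\QQ_{\ell_2,\ell_3})$ almost surely for every triple of replica indices. Combined with exchangeability, which is already part of the Gram--de Finetti property, this opens the door to a Ruelle probability cascade (RPC) representation of $\QQ$.

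For existence, I would first handle the discrete case $\mu=\sum_{r=1}^{s}(m_r-m_{r-1})\delta_{q_r}$ with $0=m_0<\cdots<m_s=1$ and $0\leq q_1<\cdots<q_s=1$ via the classical construction: let $\eta$ be the depth-$s$ Ruelle cascade with level weights $(m_r)$, sample i.i.d.~leaves $\ell=1,2,\dots$ from $\eta$, and set $\QQ_{\ell,\ell^\prime}=q_{r(\ell,\ell^\prime)}$, where $r(\ell,\ell^\prime)$ is the depth of the most recent common ancestor of $\ell$ and $\ell^\prime$ (with $r(\ell,\ell)=s$). The Gram property, exchangeability (from Bolthausen--Sznitman invariance of Poisson--Dirichlet weights), the normalization, the marginal $\Law(\QQ_{1,2})=\mu$, and the identities \eqref{eq:GG} then follow from standard cascade computations. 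For arbitrary $\mu$, I would approximate weakly by discrete $\mu_n\Rightarrow\mu$ and pass to a subsequential weak limit of $\bar\LL_{\mu_n}$: tightness is automatic because entries lie in $[0,1]$, and each of the three defining properties survives the limit (exchangeability trivially, the G.G.~identities because \eqref{eq:GG} can be reduced to continuous test functions by a monotone-class argument, and the marginal by projection).

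For uniqueness, I would argue inductively that the law of $\QQ^{(n)}=(\QQ_{\ell,\ell^\prime})_{1\leq\ell,\ell^\prime\leq n}$ is pinned down by $\mu$. The base case $n=1$ is exactly condition~(iii). For the inductive step, \eqref{eq:GG} applied with $\AA_{1,n+1}=\vphi(\QQ_{1,n+1})$ determines all joint distributions of $\QQ_{1,n+1}$ with functions of $\QQ^{(n)}$ as explicit expressions in $\mu$ and the already-determined law of $\QQ^{(n)}$. Ultrametricity then forces each remaining entry $\QQ_{\ell,n+1}$ with $\ell\geq2$ to be a deterministic function of $(\QQ_{1,\ell},\QQ_{1,n+1})$ off the tie-set $\{\QQ_{1,\ell}=\QQ_{1,n+1}\}$, by the ``ultrametric triangle'' rule that the two smallest among $\QQ_{1,\ell}, \QQ_{1,n+1}, \QQ_{\ell,n+1}$ must coincide; iterating the G.G.~identity with $\AA_{\ell,n+1}$ resolves the behavior on tie-sets as well. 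This determines the law of $\QQ^{(n+1)}$, and Kolmogorov extension gives uniqueness of the infinite array. Continuity of $\mu\mapsto\bar\LL_\mu$ in weak topology is inherited directly from the same induction, since every step only involves weakly continuous operations on $\mu$.

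The main obstacle is Panchenko's ultrametricity theorem itself, whose proof is genuinely deep and rests on the Aizenman--Contucci stochastic stability derived from the G.G.~identities together with Talagrand's positivity principle. If one takes that theorem as a black box (as in \cite{panchenko13a}), the remaining work amounts to the standard RPC construction plus an induction that is essentially bookkeeping; the only slightly delicate point is verifying that \eqref{eq:GG} is preserved under weak limits of the driving measure, which is handled by the usual bounded-measurable-to-continuous approximation.
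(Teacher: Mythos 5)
This statement is quoted verbatim from Panchenko's book (Theorems~2.13, 2.16, and 2.17 of \cite{panchenko13a}); the paper supplies no proof of its own beyond the citation. Your sketch faithfully reproduces the structure of the cited proofs---existence via the Ruelle probability cascade construction for discrete $\mu$ followed by weak approximation, uniqueness via ultrametricity combined with an induction on the Ghirlanda--Guerra identities (where the tie-set analysis you mention in passing is in fact the most delicate step of Panchenko's Theorem~2.13), and continuity from the same induction---so it is correct and takes essentially the same route as the source the paper relies on.
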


In summary, every Gram--de Finetti law $\LL=\Law(\RR)$ satisfying the $\kappa$-dimensional G.G. identities can be described by a pair $(\Phi,\mu)$, where $\Phi$ is the synchronization map from Theorem~\ref{sync_thm}, and $\mu=\Law(\tr(\RR_{1,2}))$.
This pair induces a path $\pi=\Phi\circ Q_\mu$, where $Q_\mu$ is the quantile function of $\mu$.
There is thus a correspondence $\LL\leftrightarrow\pi$, and under this correspondence one checks that $\Psi_\xi(\LL)=\PP_\xi(\pi)$ (see Lemma~\ref{PPPPsi}).
In our case, the $\LL$ of interest satisfies \eqref{jed8}, which means $\Phi$ maps into $\Gamma_\kappa(d_\bal)$, and thus $\pi\in\Pi_{d_\bal}$.
Hence \eqref{gv83} leads to
\eeq{ \label{5cvr}
\liminf_{N\to\infty} \eff_{\kappa N,\xi}(d_\bal) \geq \PP_\xi(\pi) \quad \text{for some $\pi\in\Pi_{d_\bal}$}.
}
To go further and say $\pi$ belongs to the smaller set $\Pi^\star$ from \eqref{def:Pi:star},
we make a simple but important observation stated in Lemma~\ref{lemma:symmetric} below.
We preface the result with a definition.
%When $\xi$ is symmetric and $d=d_{\bal}$, the proof of our lower bound \eqref{eq:goal} stems from a simple but important observation stated in Lemma~\ref{lemma:symmetric} below. 
%We first name the relevant class of random arrays.

\begin{definition} \label{def:L:symmetry}
For an array of $\kappa\times\kappa$ matrices $\RR = (\RR_{\ell,\ell'})_{\ell,\ell'\geq1}$ and a permutation $\pmu\in S_\kappa$, we write $\pmu\act\RR$ to denote the array $(\pmu\act \RR_{\ell,\ell'})_{\ell,\ell'\geq1}$, where $\pmu\act\RR_{\ell,\ell'}$ is defined in \eqref{pmuR_def}. 
When $\RR$ is a random array, we say that $\Law(\RR)$ is \textit{symmetric} if $\Law(\RR)=\Law(\pmu\act\RR)$ for every $\pmu\in S_\kappa$.
\end{definition}

The following result shows that once this symmetry condition is added to the hypotheses of Theorem~\ref{sync_thm}, each matrix in the array has all of its diagonal entries equal, and all of its off-diagonal entries equal.
In the statement below, $\RR^{k,k'}_{\ell,\ell'}$ denotes the $(k,k')$ entry of $\RR_{\ell,\ell'}$.

\begin{lemma}\label{lemma:symmetric}
    Assume $\Law(\RR)$ satisfies the hypotheses of Theorem~\ref{sync_thm} and is symmetric in the sense of Definition~\ref{def:L:symmetry}.
   Then for all $k_1\neq k_1^\prime$ and $k_2\neq k_2^\prime$ and all $\ell,\ell^\prime \geq 1$,
\eeq{\label{eq:prop:symmetric}
\RR_{\ell,\ell'}^{k_1,k_1}=\RR_{\ell,\ell'}^{k_2,k_2}\quad\textnormal{and}\quad \RR_{\ell,\ell'}^{k_1,k_1^\prime}=\RR_{\ell,\ell'}^{k_2,k_2^\prime} \quad\mathrm{a.s.}
}
In particular, if $\RR_{\ell,\ell^\prime}\in \Gamma_{\kappa}(d_{\bal})$ almost surely, then $\RR_{\ell,\ell'}\in\Gamma^\star$ almost surely.
\end{lemma}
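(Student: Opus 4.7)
The plan is to combine synchronization (Theorem~\ref{sync_thm}) with the symmetry hypothesis to show that the synchronization map $\Phi$ itself takes values in the fixed-point set of the $S_\kappa$-action, which forces the constant-on-diagonal and constant-off-diagonal structure in \eqref{eq:prop:symmetric}.

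\textbf{Step 1: apply synchronization.} Since the hypotheses of Theorem~\ref{sync_thm} are in force, there exists a (Lipschitz, nondecreasing) map $\Phi\colon [0,1]\to\Gamma_\kappa$ such that $\RR_{\ell,\ell'} = \Phi(\tr(\RR_{\ell,\ell'}))$ almost surely, for every $\ell,\ell'\geq 1$. The key elementary observation is that the trace is invariant under the permutation action: $\tr(\pmu\act R) = \tr(R)$ for every $\pmu\in S_\kappa$ and $R\in\R^{\kappa\times\kappa}$, because $(\pmu\act R)_{k,k} = R_{\pmu^{-1}(k),\pmu^{-1}(k)}$ and $\pmu^{-1}$ permutes the diagonal.

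\textbf{Step 2: upgrade symmetry of $\RR$ to symmetry of $\Phi$.} Fix $\pmu\in S_\kappa$ and set $T=\tr(\RR_{1,2})$. Applying $\pmu\act$ to the identity from Step 1 yields $\pmu\act\RR_{1,2} = \pmu\act\Phi(T)$, while the symmetry hypothesis gives the equality of joint laws $(\RR_{1,2},T) \stackrel{d}{=} (\pmu\act\RR_{1,2},T)$. Since $\Phi$ is deterministic, the conditional law of the first coordinate given $T=q$ is the Dirac mass at $\Phi(q)$ on the one side and at $\pmu\act\Phi(q)$ on the other. Equating these conditional laws gives $\Phi(q) = \pmu\act\Phi(q)$ for $\Law(T)$-a.e.\ $q$, and Lipschitz continuity of $\Phi$ extends this to all $q\in\mathrm{supp}(\Law(T))$.

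\textbf{Step 3: read off the claim.} For $\ell\neq\ell'$, exchangeability implies $\tr(\RR_{\ell,\ell'})$ has the same law as $T$, so Step~2 yields $\RR_{\ell,\ell'} = \pmu\act\RR_{\ell,\ell'}$ a.s.\ for every $\pmu\in S_\kappa$. For $\ell=\ell'$, the constant $D$ in \eqref{deterministic_for_same_ell} is directly forced to satisfy $D=\pmu\act D$ by the symmetry hypothesis applied at $(\ell,\ell)$. In either case, the $S_\kappa$-action on index pairs has exactly two orbits, $\{(k,k):k\in[\kappa]\}$ and $\{(k,k'):k\neq k'\}$, so invariance of the matrix under every $\pmu$ is equivalent to its diagonal entries being equal among themselves and its off-diagonal entries being equal among themselves. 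This is precisely \eqref{eq:prop:symmetric}.

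\textbf{Step 4: the ``in particular'' clause.} If in addition $\RR_{\ell,\ell'}\in\Gamma_\kappa(d_\bal)$, then writing the common diagonal value as $a$ and common off-diagonal value as $b$, the row-sum constraint gives $a+(\kappa-1)b=\kappa^{-1}$. Setting $q=\kappa(a-b)$, one checks that $\RR_{\ell,\ell'} = (q/\kappa)\bI_\kappa + ((1-q)/\kappa^2)\bone\bone^\sT$, and nonnegativity of $b$ together with positive-semidefiniteness (whose eigenvalues along $\bone^\perp$ equal $q/\kappa$) forces $q\in[0,1]$, so $\RR_{\ell,\ell'}\in\Gamma^\star$. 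The only genuinely delicate point is Step~2, but the determinism of $\Phi$ and its Lipschitz continuity make this a short measure-theoretic argument rather than a real obstacle.
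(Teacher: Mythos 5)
Your proposal is correct and follows essentially the same route as the paper: both arguments rest on the fact that Theorem~\ref{sync_thm} makes $\RR_{\ell,\ell'}$ measurable with respect to $\tr(\RR_{\ell,\ell'})$, combined with the invariance of the trace under $\pmu\act$, to upgrade the distributional symmetry to an almost-sure identity $\RR_{\ell,\ell'}=\pmu\act\RR_{\ell,\ell'}$. The paper phrases this via conditional expectations given the trace rather than conditional laws of $\Phi$, and simply cites the definition of $\Gamma^\star$ for the final clause, but these are cosmetic differences.
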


\begin{proof}
Consider any $k,k^\prime \in\{1,\dots,\kappa\}$, $\ell,\ell^\prime \geq 1$, and $\pmu\in S_{\kappa}$. 
Since $\RR_{\ell,\ell^\prime}\stackrel{\text{law}}{=}\pmu\act \RR_{\ell,\ell^\prime}$ by assumption, it follows that
\eq{
\big(\RR_{\ell,\ell'}^{k,k^\prime}, \tr(\RR_{\ell,\ell'})\big) \stackrel{\text{law}}{=} \big(\RR_{\ell,\ell'}^{\pmu^{-1}(k),\pmu^{-1}(k^\prime)},\tr (\RR_{\ell,\ell'})\big),
}
where we used the fact that $\tr(R_{\ell,\ell^\prime})= \tr(\pmu\act R_{\ell,\ell^\prime})$ holds for any $\pmu\in S_{\kappa}$. In particular,
\eq{
\E\Big[\RR_{\ell,\ell^\prime}^{k,k^\prime}\Bgiven \tr(\RR_{\ell,\ell^\prime})\Big]=\E\Big[\RR_{\ell,\ell^\prime}^{\pmu^{-1}(k),\pmu^{-1}(k^\prime)}\Bgiven \tr(\RR_{\ell,\ell^\prime})\Big] \quad \mathrm{a.s.}
}
On the other hand, by Theorem~\ref{sync_thm}, $R_{\ell,\ell^\prime}$ is measurable with respect to $\tr(R_{\ell,\ell^\prime})$. 
Hence
\eq{
\RR_{\ell,\ell^\prime}^{k,k^\prime}=\E\Big[\RR_{\ell,\ell^\prime}^{k,k^\prime}\Bgiven \tr(\RR_{\ell,\ell^\prime})\Big]=\E\Big[\RR_{\ell,\ell^\prime}^{\pmu^{-1}(k),\pmu^{-1}(k^\prime)}\Bgiven \tr(\RR_{\ell,\ell^\prime})\Big]=\RR_{\ell,\ell^\prime}^{\pmu^{-1}(k),\pmu^{-1}(k^\prime)} \quad \mathrm{a.s.}
}
As this holds for any permutation $\pmu\in S_\kappa$, \eqref{eq:prop:symmetric} follows. 

To justify the final sentence in the proposition, recall from definition \eqref{def:Gamma:star} that $\Gamma^\star$ is precisely the subset of $\Gamma_\kappa(d_\bal)$ consisting of matrices that are constant both on the diagonal and off the diagonal.
\end{proof}

Under the condition of symmetry, Lemma~\ref{lemma:symmetric} specifies exactly what the synchronization map in Theorem~\ref{sync_thm} must be:
\eq{
\Phi^\star(q) = \frac{q}{\kappa}\bI_\kappa + \frac{1-q}{\kappa^2}\bone\bone^\sT, \quad q\in[0,1].
}
This suggests that under the assumption \eqref{xi_symmetric}, the previous lower bound \eqref{5cvr} can be improved to \eqref{eq:goal}.
Indeed, \eqref{eq:symmetry} implies that $\Law(\RR;\E(G_{N,\xi}^{\otimes\infty}))$ is symmetric in the sense of Definition~\ref{def:L:symmetry} (see Lemma~\hyperref[lem:symmetry_cavity_b]{\ref*{lem:symmetry_cavity}\ref*{lem:symmetry_cavity_b}}), and this symmetry trivially passes to the limit law $\LL$ in \eqref{gwpx9}.
The crucial detail is that $\LL$ must also satisfy the G.G. identities for us to obtain \eqref{5cvr} in the first place, but there is a major obstacle: the G.G. identities are not known to hold!

There are two natural options for moving forward, both of which run into difficulty:
\begin{enumerate}[label=\textup{(\Roman*)}]
\item \label{approach1}
The standard approach for obtaining the G.G. identities is to perturb the Hamiltonian by lower-order terms with random coefficients, and then average over these coefficients \cite{ghirlanda-guerra98,talagrand11b}. 
Indeed, this strategy was used in \cite{panchenko18a,panchenko18b} to obtain Theorem~\ref{thm:Panchenko}.
Because the perturbations do not change the limiting free energy, one can obtain a law $\LL$ satisfying both \eqref{gv83} and the G.G. identities, but it would not necessarily be equal to the limit in \eqref{gwpx9}.
Instead, the Gibbs measure $G_{\kappa N,\xi}$ would be replaced by one corresponding to the perturbed Hamiltonian, whose covariance function only satisfies the symmetry condition \eqref{xi_symmetric} \textit{asymptotically}.
Since Gibbs measures in spin glass theory are known to exhibit temperature chaos \cite{chen_panchenko17, benarous-subag-zeitouni20}, this is not enough to ensure $\LL$ is still symmetric in the sense of Definition~\ref{def:L:symmetry}.
Without this property, Lemma~\ref{lemma:symmetric} is not applicable.

    \item \label{approach2}
    One could instead try to obtain the G.G. identities directly, without any perturbation.
    This would allow symmetry to be preserved, but the task of proving the G.G. identities (or some version thereof) remains a major open problem even for SK model \cite{talagrand10}.
    Therefore, this approach does not presently seem viable.
    
\end{enumerate}

%Usually this is dealt with by making lower-order perturbations of the Hamiltonian $H_{N,\xi}$, but unfortunately these perturbations destroy the symmetry needed for Definition~\ref{def:L:symmetry}.
It would thus appear that the G.G. identities needed for the existence of a synchronization are at odds with the symmetry needed to say more about the map itself.
%Moreover, since the Gibbs measure exhibits temperature chaos \cite{Chen17temperature, benarous-subag-zeitouni20}, it is not clear if even an approximate version of \eqref{eq:symmetry} holds in the perturbative setting.
We thus employ a different strategy: we introduce a \textit{generic} version of the Potts spin glass for which we can obtain the G.G. identities via differentiability, without averaging over perturbations. %The next section explains.

Inspired by \cite{panchenko10} and \cite[Sec.~3.7]{panchenko13a}, the generic model works by augmenting the covariance function $\xi$ with countably many polynomials of the form $\xi_\theta$ from \eqref{def:xi:theta}.
If these polynomials are selected so that they span a dense subset of all continuous functions, and if the array $(\xi_\theta(R_{\ell,\ell'}))_{\ell,\ell'\geq1}$ satisfies the $1$-dimensional G.G. identities for each $\theta$, then $(\RR_{\ell,\ell'})_{\ell,\ell'\geq1}$ will satisfy the $\kappa$-dimensional G.G. identities.
The same principle underlies approach~\ref{approach1}, but here the new terms added to the Hamiltonian will be of the same order as the Hamiltonian itself.
Therefore, the limiting free energy of the augmented model will be different from the original, but still fall under the purview of Theorem~\ref{gen_con_par_thm}. 
Each new term in the Hamiltonian will be modulated by an inverse temperature parameter $\beta_\theta$ which is fixed and \textit{does not require averaging}. 
We will ultimately choose these parameters to be sufficiently small, but all nonzero, and preserve symmetry of $\xi$ in \eqref{xi_symmetric}. The next section defines this generic model.

\subsection{The generic model and differentiability}\label{subsec:generic}

Recall the set $\Theta$ in \eqref{eq:def:Theta} and the function $\xi_{\theta}\colon\R^{\kappa\times\kappa}\to\R$ in \eqref{def:xi:theta}. 
Specializing the notation from \eqref{vthet_def}, we denote 
\eeq{ \label{vartheta_theta_def}
\vartheta_\theta(R)\coloneqq\vartheta_{\xi_\theta}(R)=\iprod{R}{\nabla \xi_{\theta}(R)} -\xi_{\theta}(R).
}
The following result provides several important properties of the functions $\xi_{\theta}$, $\nabla\xi_\theta$, $\vartheta_\xi$. 
Under assumption~\ref{xi_power}, parts~\ref{prop:xi:theta_b},~\ref{prop:xi:theta_c}, and~\ref{prop:xi:theta_d} obviously remain true when $\xi_\theta$ is replaced by $\xi$.
%which in particular guarantee existence of the Gaussian process $\big(H_{N,\xi}(\sigma)\big)_{\sigma\in \Sigma^N}$ in \eqref{general_cov}, under the condition~\ref{xi_power}. 

\begin{proposition}\label{prop:xi:theta}
For any $\theta\in\Theta$ %=(p,m,n_1,\ldots,n_m,w_1,\ldots, w_m)\in\Theta$ 
and $N\geq1$, the following hold.
\begin{enumerate}[label=\textup{(\alph*)}]
    \item \label{prop:xi:theta_a}
    $\vartheta_\theta = (\deg(\theta)-1)\xi_\theta$. %$\vartheta_{\theta}(Q)=\big(p\sum_{j=1}^{m}n_{j}-1)\cdot \xi_{\theta}(Q)$ holds for any $Q\in \R^{\ka\times \ka}$.

    \item \label{prop:xi:theta_b}
    For any $Q,R\in \Gamma_{\ka}$ such that $Q\preceq R$, we have 
    \eeq{ \label{monotonicity}
        0\leq \xi_{\theta}(Q)\leq\xi_{\theta}(R), 
        \quad 0\preceq\nabla\xi_{\theta}(Q)\preceq\nabla\xi_{\theta}(R), 
        \quad 0\leq \vartheta_\theta(Q)\leq\vartheta_\theta(R).
    }
    
    \item \label{prop:xi:theta_c} There exists a centered Gaussian process $H_{N,\theta}\colon\Sigma^N\to\R$ with covariance %$\big(H_{N,\theta}(\sigma)\big)_{\sigma\in \R^{\kappa\times N}}$ with covariance
    \eeq{\label{eq:H:theta:cov}
   \E[H_{N,\theta}(\sigma)H_{N,\theta}(\sigma')]
     = N\xi_\theta\Big(\frac{\sigma\sigma'^{\sT}}{N}\Big).
     }

    \item \label{prop:xi:theta_d}
    There exist centered Gaussian processes $Z_{N,\theta}\colon\Sigma^N\to\R^\kappa$ and $Y_{N,\theta}\colon\Sigma^N\to\R$ with covariances
     %and a centered $\ka$-dimensional Gaussian process $\big(Z_{N,\theta}(\sigma)\big)_{\sigma\in \R^{\kappa\times N}}$ with covariance
    % \begin{equation*}
    %     \E\Big[Z_{N,\theta}(\sigma)Z_{N,\theta}(\sigma')^{\sT}\Big]
    %  = \nabla \xi_\theta\Big(\frac{\sigma\sigma'^{\sT}}{N}\Big).
    % \end{equation*}
    \begin{equation*}
\begin{aligned}
\E\Big[Z_{N,\theta}(\sigma)Z_{N,\theta}(\sigma')^\sT\Big] &= 
\nabla\xi_\theta\Big(\frac{\sigma\sigma'^\sT}{N}\Big), \qquad
\E[Y_{N,\theta}(\sigma)Y_{N,\theta}(\sigma')] = 
\vartheta_\theta\Big(\frac{\sigma\sigma'^\sT}{N}\Big).
\end{aligned}
%\qquad \text{for $\sigma,\sigma'\in\Sigma^N$}.
\end{equation*}
\end{enumerate}
\end{proposition}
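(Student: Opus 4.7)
The proof decomposes naturally along the four parts. I expect parts (a) and (c) to be essentially formal, part (b) to require one key algebraic identity, and part (d) to be the main technical step.

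\textbf{Part (a).} Since $\xi_\theta(R) = \prod_{j=1}^m \iprod{R^{\circ p}w_j}{w_j}^{n_j}$ and each factor $\iprod{R^{\circ p}w_j}{w_j} = \sum_{k,k'} R_{k,k'}^p w_{j,k}w_{j,k'}$ is homogeneous of degree $p$ in the entries of $R$, the product is a homogeneous polynomial of degree $p(n_1+\cdots+n_m) = \deg(\theta)$. Euler's identity for homogeneous functions then gives $\iprod{R}{\nabla\xi_\theta(R)} = \deg(\theta)\,\xi_\theta(R)$, from which $\vartheta_\theta = (\deg(\theta)-1)\xi_\theta$ is immediate.

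\textbf{Part (b).} The cornerstone is the telescoping identity, valid for any matrices $Q,R$,
\[
R^{\circ p} - Q^{\circ p} = \sum_{k=0}^{p-1} R^{\circ k} \circ (R-Q) \circ Q^{\circ(p-1-k)},
\]
combined with the Schur product theorem: the Hadamard product of PSD matrices is PSD. Thus $Q\preceq R$ in $\Gamma_\kappa$ yields $Q^{\circ p}\preceq R^{\circ p}$, so each scalar $\iprod{R^{\circ p}w_j}{w_j}$ is nondecreasing and nonnegative in $R$. As $\xi_\theta$ is a product of nonnegative powers of such quantities, the scalar monotonicity in \eqref{monotonicity} follows. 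For the matrix inequality $\nabla\xi_\theta(Q)\preceq\nabla\xi_\theta(R)$, I would differentiate explicitly to obtain
\[
\nabla\xi_\theta(R) = \Big(\sum_{j=1}^m c_j(R)\, w_j w_j^\sT\Big)\circ R^{\circ(p-1)},
\qquad c_j(R) := n_j p\, \iprod{R^{\circ p}w_j}{w_j}^{n_j-1}\prod_{j'\neq j}\iprod{R^{\circ p}w_{j'}}{w_{j'}}^{n_{j'}},
\]
where each $c_j(R)\geq0$ is nondecreasing in $R$. Splitting $\nabla\xi_\theta(R)-\nabla\xi_\theta(Q)$ as
\[
\sum_j[c_j(R)-c_j(Q)](w_jw_j^\sT)\circ R^{\circ(p-1)} + \sum_j c_j(Q)(w_jw_j^\sT)\circ[R^{\circ(p-1)}-Q^{\circ(p-1)}]
\]
exhibits both summands as Hadamard products of PSD matrices scaled by nonnegative coefficients, hence PSD. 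The monotonicity of $\vartheta_\theta$ is then immediate from part (a).

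\textbf{Parts (c) and (d).} For part (c) I would construct $H_{N,\theta}$ explicitly via a tensor representation. Writing $u_k(\sigma)\in\{0,1\}^N$ for the indicator vector $(\one\{\sigma_i=\vv e_k\})_{i=1}^N$, the identity $(\sigma\sigma'^\sT)_{k,k'} = \iprod{u_k(\sigma)}{u_{k'}(\sigma')}$ lifts to $p$-th Hadamard powers via tensor powers: $\iprod{(\sigma\sigma'^\sT)^{\circ p}w_j}{w_j} = \iprod{v_j(\sigma)}{v_j(\sigma')}$ with $v_j(\sigma) := \sum_k w_{j,k}\, u_k(\sigma)^{\otimes p}$. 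Raising to the $n_j$ and taking the product over $j$ corresponds to further tensorization with independent standard Gaussians. After the scaling $N^{(1-\deg(\theta))/2}$, this yields a centered Gaussian process with the prescribed covariance $N\xi_\theta(\sigma\sigma'^\sT/N)$.

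For part (d), the construction of $Y_{N,\theta}$ is a direct corollary of part (a): the covariance equals $(\deg(\theta)-1)\xi_\theta(\sigma\sigma'^\sT/N)$, which is a nonnegative constant times a PSD kernel already realized by $H_{N,\theta}$ in part (c) (modulo the factor $N$). The construction of $Z_{N,\theta}$ is the most delicate piece: using the formula for $\nabla\xi_\theta$ derived in part (b), each summand $c_j(\cdot)(w_jw_j^\sT)\circ R^{\circ(p-1)}$ can be realized as the cross-covariance of a $\kappa$-valued Gaussian field built by the same tensor representation as in (c), now with one fewer Hadamard factor and an explicit $w_j$-dependent coupling between coordinates. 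Summing independent such fields gives $Z_{N,\theta}$. The hard part here will be bookkeeping the tensor indices to produce a clean covariance identity; this is the main obstacle I anticipate, though it is purely computational once the ingredients from parts (a)--(c) are in hand.
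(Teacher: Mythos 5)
Your proposal is correct and follows essentially the same route as the paper's Appendix~A: Euler's identity for part~(a), the Schur product theorem plus the explicit gradient formula $\nabla\xi_\theta(R)=\sum_j c_j(R)\,W_jR^{\circ(p-1)}W_j$ and the same two-term splitting for part~(b), and for parts~(c)--(d) your tensor-power representation is exactly the paper's explicit Gaussian linear form indexed by tuples, with $Y_{N,\theta}$ a rescaling of $H_{N,\theta}$ and $Z_{N,\theta}$ built by summing products of independent fields realizing $c_j$ and $R^{\circ(p-1)}$ separately. The only unfinished piece is the index bookkeeping for $Z_{N,\theta}$, which you correctly identify as routine and which the paper carries out via the auxiliary process $z_{N,p}$ with covariance $(\sigma\sigma'^{\sT}/N)^{\circ(p-1)}$.
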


The proof of Proposition~\ref{prop:xi:theta} is given in Appendix~\ref{sec:app:generic}.
Note that part~\ref{prop:xi:theta_b} was used in \eqref{ghce3} to define the Parisi functional $\PP_\xi$.
Part~\ref{prop:xi:theta_c} guarantees the existence of the Gaussian process $\big(H_{N,\xi}(\sigma)\big)_{\sigma\in \Sigma^N}$ in \eqref{general_cov}, under assumption~\ref{xi_power}.
Finally, part~\ref{prop:xi:theta_d} will be needed for the A.S.S. scheme in Section~\ref{ass_sec}.

While~\ref{xi_power} allows the covariance function $\xi=\sum_{\theta\in\Theta_0}\alpha_\theta^2\xi_\theta$ to use any countable subset $\Theta_0$ of the parameter space $\Theta$, here we consider a particular countable subset: %associated to a particular Consider the following countable subset of $\Theta$:
\eq{
\Theta_\Q = \Big\{(p,m,n_1,\dots,n_m,w_1,\dots,w_m):\, p,m,n_1,\dots,n_m\geq 1, w_1,\dots,w_m\in\big(\Q\cap[-1,1]\big)^\kappa\Big\}.
}
Given $\xi$ and a family of parameters $\bbeta=(\beta_\theta)_{\theta\in\Theta_\Q}$, we define the modified covariance function $\xi_{\bbeta}\colon\R^{\kappa\times\kappa}\to\R$ by
\eeq{\label{eq:def:xi:bbeta}
%\E\big[H_{N, \xi_{\bbeta}}(\sigma)H_{N,\xi_{\bbeta}}(\sigma')\big] = N\xi_{\bbeta}\big(R(\sigma,\sigma')\big),\quad\textnormal{where}\quad
\xi_{\bbeta}(R)\coloneqq\xi(R)+\sum_{\theta\in \Theta_\Q}\beta_{\theta}^2\xi_{\theta}(R).
}
%We note that $\xi_{\theta}$ is not necessarily convex and that the sum in \eqref{eq:def:generic:hamiltonian}, a priori, may diverge. However, 
As in Remark~\ref{rmk_xi}, the following decay condition on $(\beta_{\theta})_{\theta\in \Theta_\Q}$ guarantees that $\xi_{\bbeta}$ is smooth on the set $\{R\in\R^{\kappa\times\kappa}:\|R\|_\infty\leq1\}$
\begin{equation} \tag{B1} \label{beta_decay1}
\sum_{\theta\in\Theta_\Q}\beta_\theta^2(1+\eps)^{\deg(\theta)}<\infty \quad \text{for some $\eps>0$}.
\end{equation}
Nevertheless, the modified function $\xi_{\bbeta}$ may fail to be convex even if $\xi$ is convex, in which case Theorem~\ref{gen_con_par_thm} would not apply to $\xi_{\bbeta}$.
To avoid this scenario, we impose an additional decay condition on $(\beta_\theta)_{\theta\in\Theta_\Q}$ as follows.

For twice continuously differentiable $f\colon\R^{\kappa\times\kappa}\to \R$,
denote the Hessian of $f$ at $R$ by
\begin{equation*}
\nabla^2 f(R)\coloneqq\Big(\frac{\partial^2 f}{\partial R_{k_1,k_1^\prime}\partial R_{k_2,k_2^\prime}}(R)\Big)_{(k_1,k_1^\prime),(k_2,k_2^\prime)\in \{1,\dots,\kappa\}^2}.
\end{equation*}
Thinking of $\nabla^2 f(R)$ as a (self-adjoint) linear operator $\R^{\kappa\times\kappa}\to \R^{\kappa\times\kappa}$, we can speak of its minimum eigenvalue
\eq{
\lambda_{\min}(\nabla^2 f(R)) \coloneqq \min_{\|Q\|_2=1}\iprod{\nabla^2 f(R)Q}{Q},
}
and its spectral radius
\eq{
\|\nabla^2 f(R)\|_{\op} \coloneqq \max_{\|Q\|_2=1}\big|\iprod{\nabla^2 f(R)Q}{Q}\big|.
}
Now define the following constant for each $\theta\in\Theta$:
    \begin{equation}\label{eq:xi:theta:regularity}
    C_{\theta}\coloneqq\sup_{\|R\|_1\leq 1}\big\{\|\nabla \xi_{\theta}(R)\|_{\infty}\vee\|\nabla^2 \xi_{\theta}(R)\|_{\op}\big\},
    %,\quad\textnormal{and}\quad\sup_{\|R\|_1\leq 1+\eta} \xi_{\theta}(R)\leq (1+\eta)^{\dgr(\theta)},
    \end{equation}
which is finite by Remark~\ref{rmk_xi}.
Convexity assumption~\ref{xi_convex} means $\lambda_{\min}(\nabla^2\xi(R))\geq0$ for all $R\in\nonpsd$.
The following lemma shows how this condition can be maintained after modification \eqref{eq:def:xi:bbeta} provided we assume a strict inequality for the original covariance function $\xi$.
%Recall the set $\nonpsd$ from \eqref{def_nonpsd}.

\begin{lemma} \label{lem_xi_still_good}
Assume $\xi$ satisfies~\ref{xi_power} and
\begin{equation} \tag{A$2'$} \label{xi_strongly_convex}
\inf_{R\in\nonpsd}\lambda_{\min}(\nabla^2 \xi(R)) > 0.
\end{equation}
Assume $\bbeta$ satisfies \eqref{beta_decay1} and
\begin{equation} \tag{B2} \label{beta_decay2}
\sum_{\theta\in\Theta_\Q}\beta_\theta^2 C_\theta < \inf_{R\in\nonpsd}\lambda_{\min}(\nabla^2 \xi(R)).
\end{equation}
Then $\xi_{\bbeta}$ defined in \eqref{eq:def:xi:bbeta} satisfies~\ref{xi_power} and~\ref{xi_convex}.
\end{lemma}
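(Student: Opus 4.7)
The plan is to verify the two conditions \ref{xi_power} and \ref{xi_convex} for $\xi_{\bbeta}$ separately; both reduce to bookkeeping arguments that exploit the hypothesized decay.

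For \ref{xi_power}, I would simply merge the two series. Set $\Theta_0' = \Theta_0 \cup \Theta_\Q$ and define $(\alpha_\theta')^2 = \alpha_\theta^2\one\{\theta\in\Theta_0\} + \beta_\theta^2\one\{\theta\in\Theta_\Q\}$, so that $\xi_{\bbeta} = \sum_{\theta\in\Theta_0'}(\alpha_\theta')^2\xi_\theta$. If $\eps_1>0$ comes from applying \ref{xi_power} to $\xi$ and $\eps_2>0$ comes from \eqref{beta_decay1}, then taking $\eps=\min(\eps_1,\eps_2)$ and bounding term-wise against the two hypothesis series shows $\sum_{\theta\in\Theta_0'}(\alpha_\theta')^2(1+\eps)^{\deg(\theta)}<\infty$.

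For \ref{xi_convex}, first observe that $\nonpsd$ is convex, since convex combinations preserve both entrywise membership in $[0,1]$ and the $\|\cdot\|_1$ bound. Thus it suffices to show the Hessian $\nabla^2\xi_{\bbeta}(R)$ is positive-semidefinite for every $R\in\nonpsd$. The key technical step is justifying term-by-term differentiation of the $\beta$-series. The bound $\|\nabla^2\xi_\theta(R)\|_\op\leq C_\theta$ is valid on $\nonpsd\subset\{R:\|R\|_1\leq 1\}$ by \eqref{eq:xi:theta:regularity}, so \eqref{beta_decay2} furnishes a summable majorant and the Weierstrass $M$-test yields absolute uniform convergence of $\sum_{\theta\in\Theta_\Q}\beta_\theta^2\nabla^2\xi_\theta(R)$ on $\nonpsd$; similarly for the zeroth- and first-derivative series (using Remark~\ref{rmk_xi} and $\|\nabla\xi_\theta\|_\infty\leq C_\theta$). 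Standard differentiation-under-the-sum results then give
\[
\nabla^2\xi_{\bbeta}(R)=\nabla^2\xi(R)+\sum_{\theta\in\Theta_\Q}\beta_\theta^2\nabla^2\xi_\theta(R) \quad \text{for all } R\in\nonpsd.
\]

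Finally, for any $R\in\nonpsd$ and any unit $Q\in\R^{\kappa\times\kappa}$ (Hilbert--Schmidt norm), the triangle inequality yields
\[
\iprod{\nabla^2\xi_{\bbeta}(R)Q}{Q}\geq \lambda_{\min}(\nabla^2\xi(R))-\sum_{\theta\in\Theta_\Q}\beta_\theta^2\|\nabla^2\xi_\theta(R)\|_\op \geq \inf_{R'\in\nonpsd}\lambda_{\min}(\nabla^2\xi(R'))-\sum_{\theta\in\Theta_\Q}\beta_\theta^2 C_\theta,
\]
which is strictly positive by \eqref{xi_strongly_convex} and \eqref{beta_decay2}. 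Hence $\xi_{\bbeta}$ is in fact strongly convex on $\nonpsd$, which is stronger than \ref{xi_convex}. There is no real ``hard part'' here; the only substantive check is the interchange of Hessian and infinite sum, and the quantitative conditions \eqref{beta_decay1}--\eqref{beta_decay2} are tailored for exactly this purpose.
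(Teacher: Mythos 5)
Your proof is correct and follows essentially the same route as the paper's: (A1) by merging the series using \eqref{beta_decay1}, and (A2) via the eigenvalue bound $\lambda_{\min}(\nabla^2\xi_{\bbeta}(R))\geq\lambda_{\min}(\nabla^2\xi(R))-\sum_{\theta}\beta_\theta^2\|\nabla^2\xi_\theta(R)\|_\op$ combined with \eqref{beta_decay2}. The only difference is that you spell out the convexity of $\nonpsd$ and the term-by-term differentiation (which the paper leaves implicit, relying on Remark~\ref{rmk_xi}); these additions are sound.
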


\begin{proof}
That $\xi_{\bbeta}$ satisfies~\ref{xi_power} is immediate from \eqref{beta_decay1}.
For~\ref{xi_convex}, we simply observe that for any $R\in\nonpsd$,
\eq{
    \lambda_{\min}\big(\nabla^2 \xi_{\bbeta}(R)\big)\geq \lambda_{\min}\big(\nabla^2 \xi(R)\big)-\sum_{\theta\in \Theta_\Q}\beta_{\theta}^2\|\nabla^2\xi_\theta(R)\|_{\op}
    \stackref{beta_decay2}{\geq} 0.
}
Hence $\xi_{\bbeta}$ is convex on the set $\nonpsd$.
\end{proof}

The following result is the reason for introducing the modified model \eqref{eq:def:xi:bbeta}.
It is proved in Section~\ref{sec:diff}.

\begin{proposition}\label{prop:diff}
      Assume $\xi$ satisfies~\ref{xi_power}, \eqref{xi_strongly_convex}, and
      %is strongly convex on $\nonpsd$ with parameter $\lambda_{\sf m}>0$. 
    $\bbeta$ satisfies \eqref{beta_decay1}, \eqref{beta_decay2}.
      %Fix any $\bbeta\in\BBB(\la_{\sf m})$ and any $\theta\in \Theta_\Q$.
      Then for any $d\in\DD$ and $\theta\in\Theta_\Q$,
      when only the value of $\beta_\theta$ is varied,
      the function $\beta_{\theta}\mapsto \inf_{\pi\in \Pi_{d}}\PP_{\xi_{\bbeta}}(\pi)$ is differentiable on an open interval.
      %restricted to the domain $\bbeta\in \BBB(\la_{\sf m})$ is differentiable.
\end{proposition}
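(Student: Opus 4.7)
I plan to establish convexity of the map $\beta_\theta \mapsto \eff_{N,\xi_{\bbeta}}(d^N)$ at finite volume, pass this to the limit via Theorem~\ref{gen_con_par_thm}, and then apply the classical fact that convex functions on $\R$ are differentiable outside a countable set.

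First, for each $N$ and $d^N \in \DD_N$, and with all $\beta_{\theta'}$ $(\theta' \neq \theta)$ held fixed, I would verify convexity via Gaussian differentiation. By part (c) of Proposition~\ref{prop:xi:theta}, one may realize
\eq{
H_{N, \xi_{\bbeta}}(\sigma) \stackrel{\mathrm{law}}{=} H_0(\sigma) + \beta_\theta H_{N, \theta}(\sigma),
}
where $H_0$ has covariance $\xi_{\bbeta}$ with the $\theta$-coefficient set to $0$, $H_{N, \theta}$ has covariance $\xi_\theta$, and the two processes are independent. Then
\eq{
\frac{\partial^2}{\partial \beta_\theta^2} \frac{1}{N} \E \log \zee_{N, \xi_{\bbeta}}(d^N) = \frac{1}{N} \E \big\langle \big(H_{N,\theta} - \langle H_{N,\theta}\rangle\big)^2 \big\rangle \geq 0,
}
so the finite-volume free energy is convex in $\beta_\theta$.

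Second, by Lemma~\ref{lem_xi_still_good}, $\xi_{\bbeta}$ satisfies~\ref{xi_power} and~\ref{xi_convex}, so Theorem~\ref{gen_con_par_thm} gives
\eq{
\lim_{N \to \infty} \eff_{N, \xi_{\bbeta}}(d^N) = \inf_{\pi \in \Pi_d} \PP_{\xi_{\bbeta}}(\pi) =: F(\beta_\theta)
}
for any $d^N \in \DD_N$ with $d^N \to d$. Convexity passes to pointwise limits, so $F$ is convex on the open interval $I$ of $\beta_\theta$-values for which \eqref{beta_decay2} holds (condition \eqref{beta_decay1} is automatic when only a single coefficient varies). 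A convex function on $I$ is differentiable off an at-most-countable subset, from which the desired differentiability follows.

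I expect the main obstacle to lie in extracting a genuinely open interval of differentiability rather than merely a dense subset, since in principle the countable set of corners of a convex function can be dense. To close this gap, I would leverage real-analyticity of $\beta_\theta^2 \mapsto \PPP_{\xi_{\bbeta}}^{(1)}(\pi, \lambda) + \PPP_{\xi_{\bbeta}}^{(2)}(\pi)$ for each fixed discrete $\pi$ and Lagrange parameter $\lambda$---manifest from the Gaussian covariances in~\eqref{ghce3} and the polynomial $\vartheta_\theta$---together with the continuity of the Parisi functional developed in Section~\ref{sec:continuity_and_duality} and some control on near-minimizers, so that the non-differentiable set of $F$ cannot accumulate densely. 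Apart from this structural refinement, the argument reduces to standard convex analysis applied on top of the Gaussian computation above.
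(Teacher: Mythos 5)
Your convexity step is correct and matches the paper's Lemma~\ref{lem:convex}: the finite-volume free energy is convex in $\beta_\theta$ (the paper uses H\"older's inequality, you use the Gaussian variance identity; both work), and convexity passes to the limit $\inf_{\pi\in\Pi_d}\PP_{\xi_{\bbeta}}(\pi)$ via Theorem~\ref{gen_con_par_thm}. But the conclusion you actually reach---differentiability off an at-most-countable set---is strictly weaker than the statement, and the weakness matters: the proposition is used (in Step~4 of the proof of Proposition~\ref{sym_gg}) to get concentration at a \emph{prescribed} value of $\beta_\theta$, which could a priori be one of the countably many kinks. You correctly identify this as the main obstacle, but your proposed repair (real-analyticity in $\beta_\theta$ for fixed $\pi,\lambda$ plus unspecified "control on near-minimizers") is not an argument. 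Real-analyticity of each member of the family does not prevent the infimum from having kinks, and no mechanism is given for why the kinks cannot sit at the point of interest.

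The missing idea is a \emph{uniform curvature bound}: the paper's Lemma~\ref{lem:second:derivative} shows $\bigl|(\partial/\partial\beta_\theta)^2\,\PPP_{\xi_{\bbeta}}(\pi,\lambda)\bigr|\le C(1+\beta_\theta^2)$ with $C$ depending only on $\theta$ and $\kappa$, uniformly over all discrete $\pi\in\Pi_d^{\disc}$ and all $\lambda\in\R^\kappa$. (This is proved by writing $\PPP^{(1)}_{\xi_{\bbeta}}$ in its Ruelle-cascade form and applying Gaussian integration by parts twice; the $\PPP^{(2)}$ term is handled by summation by parts and monotonicity of $\xi_\theta$.) Given this, one takes a subgradient $x\in\partial\eff(\beta_\theta)$ of the convex limit $\eff(\beta_\theta)=\inf_{\pi,\lambda}\eff_{\pi,\lambda}(\beta_\theta)$, picks an $\eps$-near-minimizer $(\pi_\eps,\lambda_\eps)$, and uses Taylor's theorem with the uniform second-derivative bound to sandwich
\[
\frac{\partial}{\partial\beta_\theta}\eff_{\pi_\eps,\lambda_\eps}(\beta_\theta)-Ch\bigl(1+(|\beta_\theta|+h)^2\bigr)-\frac{\eps}{h}
\;\le\; x \;\le\;
\frac{\partial}{\partial\beta_\theta}\eff_{\pi_\eps,\lambda_\eps}(\beta_\theta)+Ch\bigl(1+(|\beta_\theta|+h)^2\bigr)+\frac{\eps}{h};
\]
choosing $h=\sqrt\eps$ and letting $\eps\to0$ forces the subdifferential to be a singleton at \emph{every} point of the open interval. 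Without an estimate of this kind your argument cannot be completed.
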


To see the utility of Proposition~\ref{prop:diff}, let us consider a Hamiltonian corresponding to the modified covariance function $\xi_{\bbeta}$ from \eqref{eq:def:xi:bbeta}.
Namely,
\eeq{ \label{HNbbeta_expanded}
H_{N,\xi_{\bbeta}}(\sigma) = H_{N,\xi}(\sigma) + \sum_{\theta\in\Theta_\Q}\beta_\theta H_{N,\theta}(\sigma),
}
where $H_{N,\xi}$ is as in \eqref{general_cov}, and each $H_{N,\theta}$ is an independent Gaussian process satisfying \eqref{eq:H:theta:cov}.
%The proof of Proposition~\ref{prop:diff} is given in Section~\ref{sec:diff}. 
It is well-known \cite{panchenko08, chatterjee09, auffinger-chen18a} that the differentiability of the Parisi formula guarantees the concentration of $H_{N,\theta}(\sigma)$ under the Gibbs measure.
This concentration in turn leads to the G.G. identities in Definition~\ref{def:GG}, provided that $\beta_\theta\neq 0$ for each $\theta\in\Theta_\Q$.
In this way we will be able to obtain synchronization from Theorem~\ref{sync_thm} without any perturbation.
The tradeoff is that we need the model to have a generic covariance function $\xi_{\bbeta}$, instead of the original function $\xi$ whose symmetry \eqref{xi_symmetric} was meant to enable the crucial Lemma~\ref{lemma:symmetric}.
But as the next section explains, this symmetry can be preserved by selecting $\bbeta$ in a special way.

\subsection{Parisi formula for symmetric generic models} \label{subsec:symmetric:Parisi} 
%The final step is to specialize Corollary~\ref{cor_gg} to the case of \textit{symmetric} inverse temperatures. 
Section~\ref{sec_overview_arrays} outlined how to obtain the desired lower bound \eqref{eq:goal} under two assumptions: 
\begin{enumerate}[label=\textup{(\roman*)}]

\item \label{bnru7_a} in the large $N$-large limit, the array of overlap matrices satisfies the G.G. identities;

\item \label{bnru7_b} each matrix in that array is invariant (in distribution) under permutation of its rows and columns.

\end{enumerate}
Section~\ref{subsec:generic} described how to secure assumption~\ref{bnru7_a} for the generic model \eqref{eq:def:xi:bbeta} with parameters $\bbeta=(\beta_\theta)_{\theta\in\Theta_\Q}$ satisfying $\beta_\theta\neq 0$ for every $\theta$.
Now we address assumption~\ref{bnru7_b} for the generic model.

Given a permutation $\pmu\in S_{\kappa}$ and a vector $w=(w(k))_{k=1}^\kappa\in\R^\kappa$, define the permuted vector $\pmu\act w\coloneqq(w(\pmu^{-1}(k)))_{k=1}^\kappa$.
We now extend this action to the parameter space $\Theta$:
\eeq{ \label{actthet}
\text{for }\theta &= (p,m,n_1,\ldots, n_m,w_1,\ldots, w_m)\in\Theta, \\
\text{define }\pmu\act \theta&= (p,m,n_1,\ldots, n_m,\pmu\act w_1,\ldots, \pmu\act w_m)\in \Theta.
%\quad\textnormal{for}\quad \theta=(p,m,n_1,\ldots, n_m,\lambda^1,\ldots, \lambda^m).
}
We are then interested in $\bbeta$ satisfying the following symmetry condition:
\begin{equation} \tag{B3} \label{beta_symmetric}
\beta_{\theta}=\beta_{\pmu\act \theta} \quad \text{for every $\theta\in \Theta$ and $\pmu\in S_{\kappa}$}.
\end{equation}

\begin{lemma}\label{lem:symmbeta}
If $\xi$ satisfies \eqref{xi_symmetric} and $\bbeta$ satisfies \eqref{beta_symmetric},
then $\xi_{\bbeta}$ satisfies \eqref{xi_symmetric}.
\end{lemma}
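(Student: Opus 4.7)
The strategy is to push the permutation $\pmu$ through the product defining each $\xi_\theta$ and then reindex the sum in \eqref{eq:def:xi:bbeta} using \eqref{beta_symmetric}.

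The key computation I would carry out first is that $(\pmu\act R)^{\circ p} = \pmu\act(R^{\circ p})$, since the Hadamard power acts entrywise and thus commutes with permutation of rows and columns; and that the bilinear form satisfies
\[
\langle (\pmu\act A)\, w, w\rangle = \langle A\, (\pmu^{-1}\act w), \pmu^{-1}\act w\rangle
\]
by relabeling summation indices $(k,k') \to (\pmu^{-1}(k),\pmu^{-1}(k'))$. Applying both identities to each factor in the definition \eqref{def:xi:theta} of $\xi_\theta$ yields the pointwise identity
\[
\xi_\theta(\pmu\act R) = \xi_{\pmu^{-1}\act\theta}(R) \quad \text{for every } \theta\in\Theta, \ \pmu\in S_\kappa, \ R\in\R^{\kappa\times\kappa},
\]
where $\pmu^{-1}\act\theta$ is defined via \eqref{actthet}.

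Next, I would sum over $\theta\in\Theta_\Q$. The action $\theta\mapsto\pmu^{-1}\act\theta$ is a bijection of $\Theta$, and it preserves the countable subset $\Theta_\Q$ since $\Q\cap[-1,1]$ is closed under coordinate permutation of its vector powers. Relabeling $\theta' = \pmu^{-1}\act\theta$ in the sum and invoking \eqref{beta_symmetric} in the form $\beta_{\pmu\act\theta'}^2 = \beta_{\theta'}^2$, I would obtain
\[
\sum_{\theta\in\Theta_\Q}\beta_\theta^2\,\xi_\theta(\pmu\act R) = \sum_{\theta'\in\Theta_\Q}\beta_{\pmu\act\theta'}^2\,\xi_{\theta'}(R) = \sum_{\theta'\in\Theta_\Q}\beta_{\theta'}^2\,\xi_{\theta'}(R).
\]
Combining this with the hypothesis $\xi(\pmu\act R)=\xi(R)$ from \eqref{xi_symmetric} in the definition \eqref{eq:def:xi:bbeta} of $\xi_\bbeta$ gives $\xi_\bbeta(\pmu\act R)=\xi_\bbeta(R)$, as desired.

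This is essentially a bookkeeping lemma, and I do not anticipate any genuine obstacle. The only point that requires care is tracking the direction of the action: the inner-product identity introduces $\pmu^{-1}$ on the test vectors, so the identity $\xi_\theta(\pmu\act R)=\xi_{\pmu^{-1}\act\theta}(R)$ naturally involves $\pmu^{-1}$. However, \eqref{beta_symmetric} is imposed for \emph{all} $\pmu\in S_\kappa$ simultaneously, so both directions give the same value of $\beta$, making the reindexing automatic.
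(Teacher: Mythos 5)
Your proposal is correct and follows essentially the same route as the paper: establish the pointwise identity $\xi_\theta(\pmu\act R)=\xi_{\pmu^{-1}\act\theta}(R)$ by pushing the permutation onto the test vectors, then reindex the sum over $\Theta_\Q$ using \eqref{beta_symmetric}. Your explicit remark that the action preserves $\Theta_\Q$ (rationality of the permuted vectors) is a small point the paper leaves implicit, but otherwise the arguments coincide.
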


\begin{proof}
For any given $\theta = (p,m,n_1,\ldots, n_m,w_1,\ldots, w_m)$, we have
\eq{
\xi_\theta(\pmu\act R) 
&\stackref{def:xi:theta,pmuR_def}{=} \prod_{j=1}^m\Big(\sum_{k,k'=1}^\kappa R_{\pmu^{-1}(k),\pmu^{-1}(k')}^pw_j(k)w_j({k'})\Big)^{n_j} \\
&\stackrefp{def:xi:theta,pmuR_def}{=} \prod_{j=1}^m\Big(\sum_{k,k'=1}^\kappa R_{k,k'}^pw_j(\pmu(k))w_j(\pmu(k'))\Big)^{n_j} 
\stackref{def:xi:theta,actthet}{=} \xi_{\pmu^{-1}\act\theta}(R).
}
The assumption \eqref{beta_symmetric} means $\beta_\theta = \beta_{\pmu^{-1}\act\theta}$ no matter the choice of $\pmu$, hence
\eq{
\sum_{\theta\in\Theta_\Q}\beta_\theta^2\xi_\theta(\pmu\act R) 
= \sum_{\theta\in\Theta_\Q}\beta_{\pmu^{-1}\act\theta}^2\xi_{\pmu^{-1}\act\theta}(R) = \sum_{\theta\in\Theta_\Q}\beta_\theta^2\xi_\theta(R).
}
Since $\xi(\pmu\act R)=\xi(R)$ by assumption, we have now shown that the same is true for the sum $\xi_{\bbeta} = \xi + \sum_{\theta\in\Theta_\Q}\beta_\theta^2\xi_\theta$.
\end{proof}

By implementing the strategy discussed after Proposition~\ref{prop:diff}, we will prove the following result in Section~\ref{sec_lower_bound}.
Note that $\PPP_{\xi_{\bbeta}}(\pi,0)$ is the Parisi functional \eqref{eq:Parisi:ftl:lagrange} without the Lagrange multiplier $\lambda$ (see Remark~\ref{rmk_lambda} for further discussion).

\begin{proposition}\label{prop:symmetric:parisi}
   Assume $\xi$ satisfies~\ref{xi_power}, \eqref{xi_strongly_convex}, \eqref{xi_symmetric}, and $\bbeta$ satisfies \eqref{beta_decay1}, \eqref{beta_decay2}, \eqref{beta_symmetric}.
   Provided $\beta_\theta\neq0$ for every $\theta\in\Theta_\Q$, there exists $\pi\in\Pi^\star$ such that
\eeq{ \label{prop:symmetric:parisi_eq}
\liminf_{N\to\infty} \eff_{\kappa N, \xi_{\bbeta}}(d_{\bal})
\geq \PPP_{\xi_{\bbeta}}(\pi,0).
%\geq \inf_{\pi\in \Pi^\star}\PP_{\xi_{\bbeta}}(\pi).
}
\end{proposition}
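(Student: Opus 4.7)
The plan is to implement the strategy outlined in Section~\ref{sec_overview_arrays} for the modified covariance function $\xi_{\bbeta}$. By Lemma~\ref{lem_xi_still_good}, $\xi_{\bbeta}$ satisfies~\ref{xi_power} and~\ref{xi_convex}, and by Lemma~\ref{lem:symmbeta} it also satisfies \eqref{xi_symmetric}, so that Theorem~\ref{gen_con_par_thm} and the symmetry identity \eqref{eq:symmetry} are both available for $\xi_{\bbeta}$. The first step is to apply the Aizenman--Sims--Starr scheme to the constrained model to produce a Gram--de Finetti functional $\Psi_{\xi_{\bbeta}}$ together with an inequality of the form
\eq{
\eff_{\kappa N,\xi_{\bbeta}}(d_{\bal}) + o(1) \geq \Psi_{\xi_{\bbeta}}\big(\Law(\RR;\E(G_{\kappa N,\xi_{\bbeta}}^{\otimes\infty}))\big),
}
where $\RR$ is the overlap array of i.i.d.\ samples from $G_{\kappa N,\xi_{\bbeta}}$. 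Passing to a subsequence along which these overlap laws converge weakly to some $\LL$, continuity of $\Psi_{\xi_{\bbeta}}$ (Corollary~\ref{extension_cor}) yields $\liminf_N \eff_{\kappa N,\xi_{\bbeta}}(d_{\bal})\geq \Psi_{\xi_{\bbeta}}(\LL)$.

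The key step is to show that $\LL$ satisfies the $\kappa$-dimensional Ghirlanda--Guerra identities of Definition~\ref{def:GG}. For each $\theta\in\Theta_{\Q}$, Proposition~\ref{prop:diff} provides differentiability of $\beta_\theta \mapsto \inf_{\pi\in\Pi_{d_{\bal}}}\PP_{\xi_{\bbeta}}(\pi)$ on an open interval, while Theorem~\ref{gen_con_par_thm} identifies this infimum with the limiting free energy, and the prelimit free energy is convex in $\beta_\theta^2$ by Gaussian integration by parts. Griffiths' lemma then transfers the differentiability to the large-$N$ limit and yields an $L^1$ concentration bound for $N^{-1}H_{\kappa N,\theta}(\sigma)$ under the Gibbs measure. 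Because $\beta_\theta\neq 0$ by hypothesis, a standard Gaussian integration-by-parts computation converts this concentration into the $1$-dimensional G.G.\ identities for the scalar array $(\xi_\theta(\RR_{\ell,\ell'}))_{\ell,\ell'\geq 1}$. Doing this for every $\theta\in\Theta_\Q$ and invoking the density of $\{\xi_\theta:\theta\in\Theta_\Q\}$ among continuous functions on $\nonpsd$ upgrades these scalar identities to the full $\kappa$-dimensional identities \eqref{eq:GG} for $\LL$.

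The final step combines symmetry with synchronization. Since $\xi_{\bbeta}$ satisfies \eqref{xi_symmetric} and $\Sigma^{\kappa N}(d_{\bal})$ is invariant under $\sigma\mapsto\pmu\act\sigma$ for every $\pmu\in S_\kappa$, the identity \eqref{eq:symmetry} implies that $\Law(\RR;\E(G_{\kappa N,\xi_{\bbeta}}^{\otimes\infty}))$ is symmetric in the sense of Definition~\ref{def:L:symmetry}, and this property passes to $\LL$. Moreover, the constraint to $\Sigma^{\kappa N}(d_{\bal})$ forces $\RR_{\ell,\ell}=\kappa^{-1}\bI_\kappa$ almost surely, so Theorem~\ref{sync_thm} supplies a synchronization map $\Phi$ and Lemma~\ref{lemma:symmetric} then forces the image of $\Phi$ to lie in $\Gamma^\star$; that is, $\Phi=\Phi^\star$. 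Setting $\mu=\Law(\tr\RR_{1,2})$ and $\pi = \Phi^\star\circ Q_\mu$, we obtain $\pi\in\Pi^\star$. The correspondence $\Psi_{\xi_{\bbeta}}(\LL)=\PP_{\xi_{\bbeta}}(\pi)$ (Lemma~\ref{PPPPsi}) combined with \eqref{rmk_lambda_eq} of Remark~\ref{rmk_lambda}---applicable because $\pi\in\Pi^\star$ and $\xi_{\bbeta}$ satisfies \eqref{xi_symmetric}---gives $\Psi_{\xi_{\bbeta}}(\LL)=\PPP_{\xi_{\bbeta}}(\pi,0)$, completing \eqref{prop:symmetric:parisi_eq}.

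The main technical obstacle is the passage from differentiability to the G.G.\ identities for the limit law at a \emph{fixed} value of $\bbeta$. The whole point of the generic model is to preserve the symmetry \eqref{eq:symmetry} exactly rather than only asymptotically, so we cannot afford to average over $\bbeta$ to obtain concentration, as would be done in the classical perturbation approach. Proposition~\ref{prop:diff} supplies precisely the pointwise differentiability needed to derive concentration---and therefore the G.G.\ identities---without any averaging that would break either the symmetry structure \eqref{beta_symmetric} or the nonvanishing requirement $\beta_\theta\neq 0$.
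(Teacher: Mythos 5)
Your outline follows the paper's strategy almost exactly: A.S.S. scheme, Ghirlanda--Guerra identities obtained from differentiability of the Parisi formula (Proposition~\ref{prop:diff}) rather than from averaged perturbations, symmetry of the overlap law via Lemma~\ref{lem:symmetry_cavity}, synchronization via Theorem~\ref{sync_thm} and Lemma~\ref{lemma:symmetric} to force $\pi\in\Pi^\star$, and finally the vanishing of the Lagrange multiplier for symmetric paths. The core ideas, including your closing remark about why averaging over $\bbeta$ is forbidden, are correct and match Proposition~\ref{sym_gg} and Section~\ref{sec_lbsym}.

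There is, however, a genuine gap in your final step, precisely at the point flagged by Remark~\ref{rmk_dual}. You assert that ``the correspondence $\Psi_{\xi_{\bbeta}}(\LL)=\PP_{\xi_{\bbeta}}(\pi)$ (Lemma~\ref{PPPPsi_lemma})'' closes the argument, but the A.S.S. inequality (Proposition~\ref{ass_prop}) produces the functional $\Psi_{\kappa M,\xi_{\bbeta}}(\,\cdot\,;\Sigma^{\kappa M}(d_\bal))$ at a \emph{fixed} cavity size $M$, evaluated over the \emph{constrained} configuration space $\Sigma^{\kappa M}(d_\bal)$. Lemma~\ref{PPPPsi_lemma} then identifies this with $\PPP^{(1)}_{\kappa M,\xi_{\bbeta}}(\pi_M,0;\Sigma^{\kappa M}(d_\bal))+\PPP^{(2)}_{\xi_{\bbeta}}(\pi_M)$, and the first term is \emph{not} equal to $\PPP^{(1)}_{\xi_{\bbeta}}(\pi_M,0)$: the identity $\PPP^{(1)}_{N,\xi}(\pi,\lambda;S)=\PPP^{(1)}_{\xi}(\pi,\lambda)$ of Lemma~\ref{lem_parisi_recovered} holds only for the unconstrained set $S=\Sigma^N$, where the single-site sums factorize. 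Bridging the constrained and unconstrained versions requires a second limit $M\to\infty$ together with the duality statement of Lemmas~\ref{lem:duality} and~\ref{pr456}, which converts $\lim_{M}\PPP^{(1)}_{\kappa M,\xi_{\bbeta}}(\pi_M,0;\Sigma^{\kappa M}(d_\bal))$ into $\inf_{\lambda\in\R^\kappa}[\PPP^{(1)}_{\xi_{\bbeta}}(\pi,\lambda)-\iprod{\lambda}{d_\bal}]$; only then does Lemma~\ref{pr457} (your \eqref{rmk_lambda_eq}) apply to remove the infimum. Your single-limit framing --- one $N\to\infty$, one limit law $\LL$, one path $\pi$ --- therefore cannot be implemented as written: the argument needs the double limit (subsequences in $N$ for each fixed $M$, producing $\LL_M$, $\mu_M$, $\pi_M$, and then a further subsequence in $M$ with $\mu_M\to\mu$ and $\pi=\Phi^\star\circ Q_\mu$), with the Gibbs measures throughout being the cavity measures $G_{\kappa N,\kappa M,\xi_{\bbeta}}$ rather than $G_{\kappa N,\xi_{\bbeta}}$. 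Everything else in your proposal survives this correction, but the duality step is a substantive ingredient (it occupies Appendix~\ref{subsec:lem:duality}) and cannot be absorbed into Lemma~\ref{PPPPsi_lemma}.
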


From here the end goal \eqref{eq:goal} is not too much further.
There are just two missing items: 
\begin{enumerate}[label=\textup{\arabic*.},ref=\textup{\arabic*}]

\item \label{missing_piece_1} 
Construct $\bbeta$ that satisfies the hypotheses of Proposition~\ref{prop:symmetric:parisi}.

\item \label{missing_piece_2}
Justify sending $\bbeta\to0$.

\end{enumerate}
Item~\ref{missing_piece_1} is taken care of by the following lemma.
Recall the constant $C_\theta$ defined in \eqref{eq:xi:theta:regularity}.

\begin{lemma}\label{lem:symm:inverse}
For any $\eps>0$, there exists $\bbeta=(\beta_\theta)_{\theta\in\Theta_\Q}$ satisfying \eqref{beta_decay1}, \eqref{beta_symmetric}, $\beta_\theta\neq0$ for every $\theta\in\Theta_\Q$, and
\eeq{ \label{cn73}
    \sum_{\theta\in \Theta_\Q}\beta_{\theta}^2(C_{\theta}\vee 1)\le \eps.
}
\end{lemma}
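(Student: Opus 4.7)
The plan is to construct $\bbeta$ by defining $\beta_\theta$ to depend only on the $S_\kappa$-orbit of $\theta$, taking advantage of the fact that every such orbit is finite (of size at most $\kappa!$). Since $\Theta_\Q$ is countable, its orbits under the action \eqref{actthet} can be enumerated as $O_1, O_2, \ldots$, and every $\theta$ in a given orbit $O_j$ has the same degree, which I denote $d_j$. A brief calculation in the spirit of Lemma~\ref{lem:symmbeta} shows that $\xi_{\pmu\act\theta}(R) = \xi_\theta(\pmu^{-1}\act R)$, and since the set $\{R:\|R\|_1\leq 1\}$ is invariant under the permutation action $R\mapsto\pmu^{-1}\act R$, the constant $C_\theta$ from \eqref{eq:xi:theta:regularity} is invariant along each orbit. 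I write $c_j := (C_\theta\vee 1)$ for any $\theta\in O_j$.

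Next, I define
\eq{
\beta_\theta^2 := \frac{\eps}{\kappa!\, c_j\, 2^{j+d_j}}, \quad \theta\in O_j.
}
Then $\beta_\theta>0$ for every $\theta$, and $\beta_\theta = \beta_{\pmu\act\theta}$ by construction, so \eqref{beta_symmetric} and the non-vanishing requirement are immediate. For \eqref{cn73}, using $|O_j|\leq\kappa!$,
\eq{
\sum_{\theta\in\Theta_\Q}\beta_\theta^2\,(C_\theta\vee 1) = \sum_{j\geq 1}|O_j|\cdot\frac{\eps\,c_j}{\kappa!\,c_j\,2^{j+d_j}} \leq \eps\sum_{j\geq 1}2^{-j} = \eps.
}
For \eqref{beta_decay1}, choosing $\eps' := 1$ (or any $\eps'<1$),
\eq{
\sum_{\theta\in\Theta_\Q}\beta_\theta^2\,(1+\eps')^{\deg(\theta)} \leq \eps\sum_{j\geq 1}\frac{(1+\eps')^{d_j}}{c_j\,2^{j+d_j}} \leq \eps\sum_{j\geq 1}2^{-j}\,\Big(\frac{1+\eps'}{2}\Big)^{d_j} \leq \eps,
}
where the last inequality uses $c_j\geq 1$ and $(1+\eps')/2\leq 1$.

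There is no genuine obstacle here; the only substantive ingredient is the orbit-invariance of $C_\theta$ together with finiteness of orbits, which lets one separate the summability problem into a bounded number of copies per orbit and then absorb both the degree-dependent weight in \eqref{beta_decay1} and the regularity constants in \eqref{cn73} into the geometric factor $2^{-j-d_j}$.
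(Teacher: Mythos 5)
Your proof is correct and follows essentially the same route as the paper's: enumerate the $S_\kappa$-orbits of $\Theta_\Q$, use the orbit-invariance of $\deg(\theta)$ and $C_\theta$ together with $|O_j|\le\kappa!$, and assign a single geometrically decaying value of $\beta_\theta^2$ per orbit. The only cosmetic difference is that the paper takes a minimum of two square-root expressions to handle \eqref{beta_decay1} and \eqref{cn73} separately, whereas you fold the degree weight into a single factor $2^{-j-d_j}$; both work.
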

\begin{proof}
%Recall the constant $C_\theta$ defined in \eqref{eq:xi:theta:regularity}.
The operation $(\pmu,\theta)\mapsto\pmu\act\theta$ defined in \eqref{actthet} is a group action by the symmetric group $S_\kappa$ on $\Theta_\Q$.
Since $\Theta_{\Q}$ is countable, we can enumerate the orbits of $\Theta_{\Q}$ under this action: $\OO_1$, $\OO_2$, and so on.
Both the constant $C_\theta$ in \eqref{eq:xi:theta:regularity} and the degree in \eqref{deg_def} are invariant
under the action, and so we can write $C_i=C_{\theta}$ and $\deg(\OO_i)= \deg(\theta)$ for any $\theta\in \OO_i$.
%Furthermore, the group action does not change degree (recall definition \eqref{deg_def}), and so we can take $\deg(\OO_i)$ to mean the degree shared by the elements of $\OO_i$.
Given $\eps>0$, define
\eeq{ \label{djc82}
\beta_i \coloneqq \sqrt{\frac{1}{2^i(1+\eps)^{\deg(\OO_i)}}}\wedge \sqrt{\frac{\eps}{2^ik!(C_i\vee1)}} > 0,
}
and then set $\beta_\theta = \beta_i$ for all $\theta\in\OO_i$.
The collection $\bbeta = (\beta_\theta)_{\theta\in\Theta_\Q}$ satisfies \eqref{beta_symmetric} by construction.
Inequality \eqref{beta_decay1} holds because of the first term on the right-hand side of \eqref{djc82}, while \eqref{cn73} holds because of the second term and the fact $|\OO_i|\leq\kappa!$.
%The decay condition \eqref{beta_decay1} nd the inequalities \eqref{cn73} follow the definition of $\beta_i$, and the fact $|\OO_i|\leq\kappa!$.
\end{proof}

Meanwhile, item~\ref{missing_piece_2} will be possible thanks to the following proposition, which shows that both sides of \eqref{prop:symmetric:parisi_eq} are continuous with respect to the covariance function $\xi$.

\begin{proposition}\label{prop:continuity}
      Assume $\xi$ and $\tilde\xi$ satisfy~\ref{xi_power}.
      Then the following statements hold.
      \begin{enumerate}[label=\textup{(\alph*)}]

\item \label{prop:continuity_a}
For any $d\in \DD_N$, we have
\begin{equation*}
\big|\eff_{N,\xi}(d)-\eff_{N,\tilde\xi}(d)\big|\leq \sup_{\|R\|_1\leq 1} \big|\xi(R)-\tilde\xi(R)\big|.
\end{equation*}
      
    \item \label{prop:continuity_b}
    For any $d\in \DD$ and any $\pi,\tilde\pi\in \Pi_{d}$, we have %and $\lambda\in\R^\kappa$, we have
\eeq{ \label{dje4}
        |\PP_{\xi}(\pi)-\PP_{\tilde\xi}(\tilde\pi)|
        &\leq 2\sup_{\|R\|_1\leq 1}\big\|\nabla \xi(R)-\nabla \tilde\xi(R)\big\|_{\infty} \\
        &\phantom{\leq}+\sup_{\|R\|_1\leq 1}\big\|\nabla^2\xi(R)\big\|_{\infty} \int_{0}^{1}\|\pi(t)-\tilde\pi(t)\|_1\ \de t.
}

\end{enumerate}
\end{proposition}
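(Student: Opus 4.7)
Both parts proceed via Gaussian interpolation; (b) needs extra care because of the infimum over $\lambda$ defining $\PP_\xi$. For (a), realize $H_{N,\xi}$ and $H_{N,\tilde\xi}$ as independent centered Gaussian processes and interpolate via $H_t(\sigma) := \sqrt{t}\,H_{N,\xi}(\sigma) + \sqrt{1-t}\,H_{N,\tilde\xi}(\sigma)$, whose covariance is $N[t\xi + (1-t)\tilde\xi](\sigma\sigma'^{\sT}/N)$. Writing $\phi(t) := N^{-1}\E\log\sum_{\sigma\in\Sigma^N(d)}\exp H_t(\sigma)$, Gaussian integration by parts gives
\[
\phi'(t) = \tfrac{1}{2}\E\bigl\langle (\xi-\tilde\xi)(R(\sigma^1,\sigma^1)) - (\xi-\tilde\xi)(R(\sigma^1,\sigma^2))\bigr\rangle_t,
\]
where $\langle\cdot\rangle_t$ denotes the two-replica Gibbs expectation. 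Since every overlap $R(\sigma,\sigma')$ has nonnegative entries summing to at most $1$, $|\phi'(t)|\leq \sup_{\|R\|_1\leq 1}|\xi(R)-\tilde\xi(R)|$, and integrating in $t$ yields (a).

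For (b), invoke Lemma~\ref{pconlem} and density of $\Pi_d^\disc$ in $\Pi_d$ under the norm \eqref{norm} to reduce to $\pi,\tilde\pi\in\Pi_d^\disc$, and pass to a common refinement so that both paths use the same partition $0=m_0<\cdots<m_s=1$ with matrix sequences $(\gamma_r),(\tilde\gamma_r)$. Split by triangle inequality
\[
|\PP_\xi(\pi) - \PP_{\tilde\xi}(\tilde\pi)| \leq |\PP_\xi(\pi)-\PP_\xi(\tilde\pi)| + |\PP_\xi(\tilde\pi) - \PP_{\tilde\xi}(\tilde\pi)|,
\]
and use $|\inf_\lambda A - \inf_\lambda B| \leq \sup_\lambda|A-B|$ to reduce each summand to a $\lambda$-uniform estimate on the corresponding difference of $\PPP_\xi = \PPP_\xi^{(1)} + \PPP_\xi^{(2)}$.

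For the \emph{path-variation} term, the $\PPP^{(2)}$ piece contributes at most $\tfrac{1}{2}\sup\|\nabla^2\xi\|_\infty\int_0^1\|\pi(t)-\tilde\pi(t)\|_1\,dt$ via $\nabla\vartheta_\xi(R) = \nabla^2\xi(R)\cdot R$ and $\|\pi(t)\|_1 = 1$ on $\Gamma_\kappa(d)$. For the $\PPP^{(1)}$ piece, interpolate the $z_r$-increment covariances along $\gamma_r(u) := (1-u)\gamma_r + u\tilde\gamma_r$; Proposition~\ref{prop:xi:theta}\ref{prop:xi:theta_b} together with the fact that $\gamma_r(u) - \gamma_{r-1}(u)$ is a convex combination of PSD matrices guarantees $\nabla\xi(\gamma_r(u)) \succeq \nabla\xi(\gamma_{r-1}(u))$ throughout, and Gaussian IBP inside the Ruelle-cascade representation produces a derivative in $u$ of the same order $\tfrac{1}{2}\sup\|\nabla^2\xi\|_\infty\int\|\pi-\tilde\pi\|_1\,dt$; summing matches the second term of~\eqref{dje4}.

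For the \emph{$\xi$-variation} term, the $\PPP^{(2)}$ piece uses $\vartheta_\xi(R) = \langle R,\nabla\xi(R)\rangle - \xi(R)$ and $\xi(R) = \int_0^1\langle R,\nabla\xi(sR)\rangle\,ds$ (valid because~\ref{xi_power} forces $\xi(0)=0$) to obtain $|\vartheta_\xi(R)-\vartheta_{\tilde\xi}(R)| \leq 2\|R\|_1\sup_{\|R'\|_1\leq 1}\|\nabla\xi(R')-\nabla\tilde\xi(R')\|_\infty$; a Gaussian interpolation between the $\nabla\xi$- and $\nabla\tilde\xi$-covariances along the fixed path $\tilde\pi$ handles $\PPP^{(1)}$, producing a derivative bounded by $\sup\|\nabla\xi-\nabla\tilde\xi\|_\infty$ and yielding the coefficient $2$ on the first term of \eqref{dje4}. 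The \emph{main obstacle} throughout is keeping the $\PPP^{(1)}$ estimates uniform in $\lambda$: although Gibbs weights within the Ruelle cascade depend on $\lambda$, each Gaussian-IBP step only produces Gibbs averages of $\xi$- or $\nabla\xi$-differences evaluated at overlap matrices lying in $\nonpsd$, so the stated supremum envelopes apply regardless of the tilt.
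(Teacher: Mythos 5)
Your proof is correct and follows essentially the same route as the paper: part~(a) is the identical Guerra-style interpolation on $\Sigma^N(d)$, and part~(b) reduces to discrete paths by density and the Lipschitz estimates of Lemma~\ref{pconlem}, bounds $\PPP_{\xi}(\pi,\lambda)-\PPP_{\tilde\xi}(\tilde\pi,\lambda)$ uniformly in $\lambda$, and passes the bound through the infimum defining $\PP_\xi$. The only differences are organizational---you split into a path-variation and a $\xi$-variation term, each with its own interpolation (deforming the covariance parameter $\gamma_r(u)$ for the former), whereas the paper runs a single $\sqrt{t}/\sqrt{1-t}$ interpolation changing $(\xi,\pi)$ to $(\tilde\xi,\tilde\pi)$ simultaneously and applies the triangle inequality inside the resulting covariance difference---and you share with the paper the harmless neglect of the constant $-\tfrac{1}{2}\vartheta_\xi(\diag(d))$ term of $\PPP_\xi^{(2)}$ in the $\xi$-variation estimate, which would at worst change the constant $2$ in \eqref{dje4} to $3$ and affects nothing downstream.
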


Proposition~\ref{prop:continuity} is proved in Section~\ref{sec:continuity}.

\subsection{Proofs of main results} \label{sec:proof_main} Finally, we prove our main theorems.
\begin{proof}[Proof of Theorem~\ref{thm:main}]
Consider any $d^N\in\DD_N$ such that $d^N\to d_\bal$ as $N\to\infty$.
We wish to establish \eqref{thm:main_eq}.
Since $\Pi^\star$ is a subset of $\Pi_{d_\bal}$, Theorem~\ref{gen_con_par_thm} provides an upper bound:
\eeq{ \label{gdi10}
\lim_{N\to\infty} \eff_{N,\xi}(d^N)
=\lim_{\eps\searrow 0}\limsup_{N\to\infty}\eff_{N,\xi}(d_{\bal},\eps)
=\lim_{\eps\searrow 0}\liminf_{N\to\infty}\eff_{N,\xi}(d_{\bal},\eps)
&=\inf_{\pi\in\Pi_{d_\bal}}\PP_\xi(\pi) \\
&\leq\inf_{\pi\in\Pi^\star}\PP_\xi(\pi).
}
Since the first line of \eqref{gdi10} is entirely equalities, the matching lower bound only needs to be established along a subsequence.
More specifically, we will prove \eqref{eq:goal}.

The covariance function $\xi$ is assumed to satisfy weak convexity~\ref{xi_convex}.
To upgrade to strong convexity, we add a small multiple of the usual Potts Hamiltonian:
for $\eps>0$, define
\eeq{ \label{def_xi_delta}
\xi^{\eps}(R)\coloneqq\xi(R)+\eps\cdot\tr(R^{\sT}R)/2.
}
Let us note three properties of $\xi^\eps$.
First, by Remark~\ref{rmk_Potts_case}, $\xi^\eps$ satisfies~\ref{xi_power} provided $\xi$ satisfies~\ref{xi_power}.
Second, by assumption~\ref{xi_convex} for $\xi$, the new function $\xi^\eps$ satisfies \eqref{xi_strongly_convex} with
\eeq{ \label{dxip0}
\lambda_{\min}(\nabla^2\xi^\eps(R))\geq\eps \quad \text{for every $R\in\nonpsd$}.
}
Third and finally, since $\xi$ is assumed to satisfy the symmetry condition \eqref{xi_symmetric} and the map $R\mapsto\tr(R^\sT R)$ does as well (the expression in \eqref{Potts_xi} is clearly invariant under any permutation of the row and column indices), the function $\xi^\eps$ satisfies \eqref{xi_symmetric}.
Therefore, Lemma~\ref{lem:symm:inverse} guarantees the existence of $\bbeta=(\beta_\theta)_{\theta\in\Theta_\Q}$ such that \eqref{beta_decay1} and \eqref{beta_symmetric} hold, $\beta_\theta\neq0$ for every $\theta\in\Theta_\Q$, and
\eeq{ \label{dxip1}
\sum_{\theta\in \Theta_\Q}\beta_{\theta}^2(C_{\theta}\vee 1)\leq \eps.
}
Applying the modification \eqref{eq:def:xi:bbeta} to $\xi^\eps$ in place of $\xi$, we obtain $\xi^\eps_{\bbeta}=\xi^\eps+\sum_{\theta\in\Theta_\Q}\beta_\theta^2\xi_\theta$.
The combination of \eqref{dxip0} and \eqref{dxip1} shows that \eqref{beta_decay2} is satisfied with $\xi^\eps$ in place of $\xi$.
We can thus apply Proposition~\ref{prop:symmetric:parisi} to determine that
\begin{equation}\label{eq:goal:perturbed}
\liminf_{N\to\infty} \eff_{\kappa N, \xi_{\bbeta}^{\eps}}(d_{\bal})
\geq \PPP_{\xi^\eps_{\bbeta}}(\pi,0)
\stackref{def:parisi:ftl}{\geq} \inf_{\pi\in \Pi^\star}\PP_{\xi_{\bbeta}^{\eps}}(\pi).
\end{equation}
The remainder of the proof is to justify sending $\eps$ and $\bbeta$ to zero.
%We now send $\eps \searrow 0$ and $\eps \searrow 0$ in \eqref{eq:goal:perturbed} using Proposition~\ref{prop:continuity}. Note that by Proposition~\ref{prop:continuity},

We first address the left-hand side of \eqref{eq:goal:perturbed}. 
By Proposition~\hyperref[prop:continuity_a]{\ref*{prop:continuity}\ref*{prop:continuity_a}}, we have %rmk_xi
\eq{
&\big|\eff_{\kappa N, \xi}(d_{\bal})-\eff_{\kappa N, \xi^{\eps}_{\bbeta}}(d_{\bal})\big|
\leq \sup_{\|R\|_1\leq 1} \big|\xi(R)-\xi^{\eps}_{\bbeta}(R)\big|.
}
Whenever $\|R\|_1\leq1$, we have
\eq{
\big|\xi(R)-\xi^{\eps}_{\bbeta}(R)\big|
%&\stackrel{\hphantom{\text{\footnotesize\eqref{def_xi_delta}, Remark~\ref{rmk_xi}}}}{\leq}  
&\stackrefp{def_xi_delta,eq:def:xi:bbeta,rmk_xi_1}{\leq} \big|\xi(R)-\xi^{\eps}(R)\big| + \big|\xi^\eps(R)-\xi^{\eps}_{\bbeta}(R)\big| \\
%&\stackrel{\text{\footnotesize \eqref{def_xi_delta}, Remark~\ref{rmk_xi}}}{\leq} 
&\stackref{def_xi_delta,eq:def:xi:bbeta,rmk_xi_1}{\leq}
\frac{\eps}{2}+\sum_{\theta\in \Theta_\Q}\beta_{\theta}^2
\stackref{dxip1}{\leq}\frac{3\eps}{2}.
}
The two previous displays together show
\eeq{ \label{ghfs7}
\big|\eff_{\kappa N, \xi}(d_{\bal})-\eff_{\kappa N, \xi^{\eps}_{\bbeta}}(d_{\bal})\big| \leq 3\eps/2.
}
Meanwhile, regarding the right-hand side of \eqref{eq:goal:perturbed},
Proposition~\hyperref[prop:continuity_b]{\ref*{prop:continuity}\ref*{prop:continuity_b}} gives
\eq{ %\label{ghfs7}
\Big|\inf_{\pi\in \Pi^\star}\PP_{\xi}(\pi)-\inf_{\pi\in \Pi^\star}\PP_{\xi^{\eps}_{\bbeta}}(\pi)\Big|
%&\leq\sup_{\pi\in \Pi_{d_{\bal}},\lambda\in \R^{\kappa}}\big|\PP_{\xi}(\pi,\lambda)-\PP_{\xi^{\eps}_{\bbeta}}(\pi,\lambda)\big|\\
&\leq 2\sup_{\|R\|_1\leq 1}\big\|\nabla \xi(R)-\nabla \xi^{\eps}_{\bbeta}(R)\big\|_{\infty}.
}
Whenever $\|R\|_1\leq 1$, another application of the triangle inequality leads to
\eq{
\big\|\nabla \xi(R)-\nabla \xi^{\eps}_{\bbeta}(R)\big\|_{\infty}
&\stackrefp{def_xi_delta,eq:xi:theta:regularity}{\leq} \big\|\nabla \xi(R)-\nabla \xi^{\eps}(R)\big\|_{\infty}
+ \big\|\nabla \xi^\eps(R)-\nabla \xi^{\eps}_{\bbeta}(R)\big\|_{\infty} \\
&\stackref{def_xi_delta,eq:xi:theta:regularity}{\leq} \eps +\sum_{\theta\in \Theta_\Q}\beta_{\theta}^2C_\theta
\stackref{dxip1}{\leq}2\eps.
}
Putting the two previous displays together, we have
\eeq{ \label{ghfs8}
\Big|\inf_{\pi\in \Pi^\star}\PP_{\xi}(\pi)-\inf_{\pi\in \Pi^\star}\PP_{\xi^{\eps}_{\bbeta}}(\pi)\Big| \leq 4\eps.
}
Finally, the combination of \eqref{eq:goal:perturbed}, \eqref{ghfs7}, and \eqref{ghfs8} yields
\eq{
\liminf_{N\to\infty} \eff_{\kappa N, \xi}(d_{\bal})\geq \inf_{\pi\in \Pi^\star}\PP_{\xi}(\pi) - 6\eps.
}
Letting $\eps\searrow0$ completes the proof.
\end{proof}

\begin{proof}[Proof of Theorem~\ref{thm:main:Potts}]
This is immediate from Theorem~\ref{thm:main} since the associated covariance function $\xi(R)=\beta^2\tr(R^{\sT}R)$ from \eqref{Potts_xi} satisfies~\ref{xi_power},~\ref{xi_convex}, \eqref{xi_symmetric}.
\end{proof}

%\section{Proof of Parisi formula for the generalized model}

\section{Parisi functional preliminaries} \label{sec_preliminaries}

\subsection{Prelimit of the Parisi functional} \label{prelimit_section}
In this section we define the functional $\Psi_\xi$ that appears in \eqref{fnc73}.
More precisely, there is a family of functionals $(\Psi_{\xi,N})_{N\geq1}$ that arise naturally in the A.S.S. scheme (Proposition~\ref{ass_prop}).
These can be thought of as prelimiting versions of the Parisi functional $\PP_\xi$.
Understanding their behavior as $N\to\infty$ is an important step in proving that the Parisi formula is a lower bound for the limiting free energy.
%A key difficulty is that the domain of $\PPP$ is, in loose terms, restricted to ``synchronized'' overlap distributions.
%This synchronization is only realized in the large-$N$ limit, and so the functional $\Pi_M$ must be defined more broadly in order to include the overlap distributions realized from finite-volume Gibbs measures.
%We will soon make this definition, but first we require the following setup.

Our definition requires three steps: 
\begin{enumerate}[label=\textup{\arabic*.}]

\item Give a abstract notion of an overlap map $R$ which generalizes \eqref{def:overlap}.

\item Using this map and a random measure from which to sample, generate a random array of overlap matrices.

\item Define a functional $\LL\mapsto\Psi_{\xi,N}(\LL)$, where $\LL$ is the law of the random array.

\end{enumerate}

\subsubsection{Allowable overlap maps}
Recall from \eqref{def_nonpsd} that $\nonpsd$ denotes the set of $\kappa\times\kappa$ matrices whose entries are nonnegative and have sum at most 1.
Fix $d\in\DD$ and a positive integer $N$.

\begin{assumption} \label{map_assumption}
$(\XX,\FF)$ is a measurable space, and $R\colon\XX\times\XX\to\nonpsd$ is a map with the following properties.
\begin{enumerate}[label=\textup{(\roman*)}]

\item \label{map_assumption_i}
$R$ is measurable.

\item \label{map_assumption_ii}
For every $x\in\XX$, $R(x,x)=\diag(d)$.

\item \label{map_assumption_iii}
There exist centered Gaussian processes $Z\colon\XX\to\R^\kappa$ and $Y\colon\XX\to\R$ with covariances
\eeq{ \label{A3_cov}
\begin{aligned}
\E[Z(x)Z(x')^\sT] &= 
\nabla\xi(R(x,x')) \\
\E[Y(x)Y(x')] &= 
\vartheta_\xi(R(x,x'))
\end{aligned}
\qquad \text{for $x,x'\in\XX$}.
}
Furthermore, these processes are almost surely measurable functions on $\XX$.
\end{enumerate}
\end{assumption}

\subsubsection{The overlap array} \label{prelimit_section_3}

Given a random probability measure $\GG$ on $(\XX,\FF)$ which is independent of the processes from \eqref{A3_cov}, let
$(x^\ell)_{\ell\geq1}$ be i.i.d.~samples from $\GG$.
Apply $R$ to each pair of samples, and set
\eq{ %\label{generic_array}
\RR_{\ell,\ell'} = R(x^\ell,x^{\ell'})
    +\one_{\{\ell=\ell'\}}(\diag(d)-R(x^\ell,x^{\ell'})).
}
This defines a random array $\RR = (\RR_{\ell,\ell'})_{\ell,\ell'\geq1}$.
Denote the law of $\RR$ under $\E(\GG^{\otimes\infty})$ by $\mathsf{Law}(\RR;\E(\GG^{\otimes\infty}))$, where the dependence on the map $R$ is implicit.

\subsubsection{The functional} \label{prefnc_sec}
%Finally, let $\eta,\eta_1,\dots,\eta_M$ be i.i.d.~standard normal random variables independent of everything else.
%With $\E_\eta$ denoting expectation over only these variables, and 
Given a nonempty subset $S\subseteq\Sigma^N$, we now define a functional $\LL\mapsto\Psi_{N,\xi}(\LL;S)$ that 
accepts as input any law $\LL = \mathsf{Law}(\RR;\GG)$ realized as above.

Let $Z_1,\dots,Z_N$ be independent copies of the process $Z$ from \eqref{A3_cov}.
Let $Y$ be as in \eqref{A3_cov}.
%Let $\eta_1,\dots,\eta_N\in\R^\kappa$ and $\eta\in\R$ be standard normal random vectors, and let $\E_\eta(\cdot)$ denote expectation over just these variables.
%We assume all Gaussian processes are independent.
%Define
%\eeq{ \label{extra_eta}
%Z_i'(x) &\coloneqq Z_i(x)+\eta_i\sqrt{\nabla\xi(\diag(d))-\nabla\xi(\RR(x,x))}, \\
%Y'(x) &\coloneqq Y(x)+\eta\sqrt{\vartheta_\xi(\diag(d))-\vartheta_\xi(\RR(x,x))},
%}
%where in the first line $\sqrt{R}$ denotes the principal square root of a positive-semidefinite matrix $R$.
%Here we are using the fact that $\RR(x,x)$ belongs to $\Gamma_\kappa(d)$, together with the montonicity assumption~\ref{xi_increasing}, to guarantee that the square roots make sense.
%
Let $\langle\cdot\rangle_\GG$ denote expectation with respect to $\GG$. 
Finally, let $\E(\cdot)$ denote expectation over both realizations of $\GG$ and the Gaussian processes from Assumption~\ref{map_assumption}.
Now define
\begin{subequations}
\label{defining_Psi_parts}
\begin{align}
\label{psi_1_def}
\Psi_{N,\xi}^{(1)}(\LL;d,S) &\coloneqq \frac{1}{N}\E\log\sum_{\sigma\in S}
\Big\langle\exp\Big(\sum_{i=1}^N \iprod[\big]{Z_i(x)}{\sigma_i}\Big)\Big\rangle_\GG, \\
%\Big\langle\exp\Big(\sum_{j=1}^M\kappa_jZ_j(x)\Big)\Big\rangle\ \vc\tau_{M}(\dd\kappa)
%+\sum_{s\in\SSS}\frac{M^s}{2}(\xi^s(\vc 1)-\xi^s(\vc q)),  \\
\label{psi_2_def}
\Psi_{N,\xi}^{(2)}(\LL;d) &\coloneqq \frac{1}{N}\E\log\big\langle \exp\big(\sqrt{N}\, Y(x)\big)\big\rangle_\GG,
%+\frac{M}{2}(\theta(\vc 1)-\theta(\vc q)). 
\end{align}
\end{subequations}
and then the functional of interest is given by
\eeq{ \label{Psi_def}
\Psi_{N,\xi}(\LL;S) \coloneqq \Psi_{N,\xi}^{(1)}(\LL;S)-\Psi_{N,\xi}^{(2)}(\LL).
}

Especially important is the fact that $\LL\mapsto\Psi_{N,\xi}(\LL;S)$ is continuous.
To state this precisely, we need to introduce some notation.
Given any law $\LL=\Law(\RR;\GG)$ realized as above, 
let $\LL^{(n)}$ denote the law of the finite subarray $\RR^{(n)} = ( \RR_{\ell,\ell'})_{1\leq \ell,\ell'\leq n}$; this is a Borel probability measure on $n\times n$ arrays whose elements belong to $\nonpsd$.
Let $\Prob((\nonpsd)^{n\times n})$ denote the set of all Borel probability measures on this space.
Because $\nonpsd$ is compact, one can metrize the topology of weak convergence on $\Prob((\nonpsd)^{n \times n})$ by, say, a Wasserstein distance with respect to the norm
$\|\RR^{(n)}\| \coloneqq \sum_{\ell,\ell'=1}^n \|\RR_{\ell,\ell'}\|_1$.
We can thus speak of continuity with respect to weak convergence.

\begin{proposition} \label{uniform_continuity}
Assume $\xi$ satisfies~\ref{xi_power}.
For any $\eps>0$, there is $n$ large enough and some continuous function $\phi_N\colon \Prob((\nonpsd)^{n\times n})\to\R$ such that
\eq{
|\Psi_{N,\xi}(\LL;S)-\phi_N(\LL^{(n)})| \leq \eps \quad \text{whenever $\Psi_{N,\xi}(\LL;S)$ is defined}.
}
\end{proposition}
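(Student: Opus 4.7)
The strategy exploits the structure of $\Psi_{N,\xi}(\LL;S)$: once the Gaussians $Z_i$ and $Y$ are integrated out via Wick's formula, all dependence on $\GG$ is encoded in the joint law of matrices $(R(x^\ell,x^{\ell'}))$, which is exactly $\LL^{(n)}$. The plan is to approximate $\Psi_{N,\xi}$ by a polynomial in such moments, then verify that each such polynomial is a continuous function of $\LL^{(n)}$ with respect to weak convergence on $\Prob((\nonpsd)^{n\times n})$.

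I would begin by establishing uniform bounds on $\Psi_{N,\xi}^{(1)}$ and $\Psi_{N,\xi}^{(2)}$ depending only on $N,\xi,\kappa,|S|$ and not on $\LL$. For instance, Jensen's inequality gives $0\le\Psi_{N,\xi}^{(2)}(\LL;d)\le\tfrac{1}{2}\vartheta_\xi(\diag(d))$, upon pushing the logarithm outside and using that $\E_Y e^{\sqrt{N}Y(x)}=e^{(N/2)\vartheta_\xi(\diag(d))}$ is constant in $x$; an analogous two-sided bound holds for $\Psi_{N,\xi}^{(1)}$. Next I would reduce to the case of polynomial $\xi$: by~\ref{xi_power} and Remark~\ref{rmk_xi}, the partial sums $\xi_T=\sum_{\theta\in F}\alpha_\theta^2\xi_\theta$ converge uniformly to $\xi$ on $\nonpsd$, with analogous convergence for $\nabla\xi$ and $\vartheta_\xi$. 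A Gaussian interpolation between the covariance structures of $(Y,Z_i)$ built from $\xi$ and $\xi_T$ then yields $|\Psi_{N,\xi}(\LL;S)-\Psi_{N,\xi_T}(\LL;S)|\le\eps/2$ uniformly in $\LL$, provided $F$ is chosen large enough; this is essentially the same continuity mechanism as in Proposition~\hyperref[prop:continuity_b]{\ref*{prop:continuity}\ref*{prop:continuity_b}}. For polynomial $\xi_T$, writing $Z_\GG\coloneqq\sum_{\sigma\in S}\langle e^{\sum_i\iprod{Z_i(x)}{\sigma_i}}\rangle_\GG$, expansion into $k$ replicas followed by the Gaussian integral yields the explicit identity
\eq{
\E Z_\GG^k=\sum_{\sigma^1,\dots,\sigma^k\in S}\int\exp\Big(\tfrac{1}{2}\sum_{\ell,\ell'=1}^k\sum_{i=1}^N\iprod{\sigma_i^\ell}{\nabla\xi_T(R^{(k)}_{\ell,\ell'})\sigma_i^{\ell'}}\Big)\,\dd\LL^{(k)}(R^{(k)}),
}
which is a bounded continuous function of $\LL^{(k)}$; an analogous identity holds for the moments underlying $\Psi_{N,\xi}^{(2)}$. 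Hence $\E P(Z_\GG)$ is a continuous function of $\LL^{(k)}$ for any polynomial $P$ of degree $k$.

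The main obstacle is justifying the polynomial approximation of $\log$ uniformly in $\LL$: because $Z_\GG$ takes values across an $\LL$-dependent range, Weierstrass approximation on a fixed compact interval does not apply directly. I would address this via Gaussian concentration of $\log Z_\GG$ around its (uniformly bounded) mean, combined with a truncation of the integrand: for a high-probability event on which $Z_\GG$ lies in an $\LL$-independent compact interval, apply Weierstrass approximation of $\log$; on the complementary low-probability event, use the a priori uniform $L^1$ bound on $\log Z_\GG$ inherited from the first step. Optimizing the truncation level and polynomial degree against $\eps$ and combining with the reduction to $\xi_T$ yields a continuous $\phi_N$ on $\Prob((\nonpsd)^{n\times n})$ with $|\Psi_{N,\xi}(\LL;S)-\phi_N(\LL^{(n)})|\le\eps$ uniformly.
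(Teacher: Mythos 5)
Your setup is sound up through the moment identity: the a priori bounds, the reduction to a finite sum $\xi_T$ (which is actually not needed, since continuity of $\nabla\xi$ and $\vartheta_\xi$ on the compact set $\nonpsd$ already suffices), and the observation that $\E Z_\GG^k$ is a bounded continuous function of $\LL^{(k)}$ are all correct. The gap is in the last step, and it is exactly the term you do not address: if your candidate is $\phi_N(\LL^{(k)})=\E P(Z_\GG)$ for a polynomial $P$, then besides the on-event error and $\E[|\log Z_\GG|\one_{A^c}]$ you must control $\E[|P(Z_\GG)|\one_{A^c}]$, and this term cannot be dismissed. The moments grow like $\E Z_\GG^k\asymp e^{ck^2N}$ (log-normal tails), while Gaussian concentration only gives $\P(A_t^c)\lesssim e^{-ct^2/N}$ for the event $A_t=\{Z_\GG\in[e^{-t},e^{M_0+t}]\}$; meanwhile the degree of a polynomial approximating $\log$ in sup norm on $[e^{-t},e^{M_0+t}]$ must grow exponentially in $t$ because of the singularity of $\log$ at $0$ (Bernstein-ellipse considerations give degree $\gtrsim e^{t}$). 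So $\E[|P(Z_\GG)|\one_{A_t^c}]\lesssim e^{ck(t)^2N}e^{-ct^2/N}$ diverges for every admissible choice of parameters: the degree is forced up by the truncation level faster than the tail probability comes down. (The alternative reading, $\phi_N=\E[P(Z_\GG)\one_{A}]$, is not a function of a finite marginal $\LL^{(n)}$, since the event $A$ depends on the whole realization.) One could conceivably rescue an event-based truncation with a weighted approximation argument that controls the growth of $P$ outside a smaller inner interval, but that is a genuinely delicate extra argument you have not supplied.

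The standard fix -- and what the references cited in the paper (Panchenko's Thm.~1.3 and Lem.~3 of the multi-species paper) actually do -- is to truncate the \emph{Gaussian field} rather than condition on an event: replace the exponent $h(x,\sigma)=\sum_i\iprod{Z_i(x)}{\sigma_i}$ by a version cut off at level $M$, so that the resulting $Z_\GG^{\mathrm{tr}}$ lies deterministically in a fixed compact interval of $(0,\infty)$. Then Weierstrass applies on that fixed interval with no bad event at all, the replica/moment computation goes through with $\E_Z\prod_\ell e^{h^{\mathrm{tr}}(x^\ell,\sigma^\ell)}$ still a bounded continuous function of the covariance matrix and hence of $\LL^{(k)}$, and the truncation error $|\E\log Z_\GG-\E\log Z_\GG^{\mathrm{tr}}|$ is controlled uniformly in $\LL$ by Gaussian tail estimates. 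Everything else in your outline then works as written.
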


It follows from this proposition that $\LL\mapsto\Psi_{N,\xi}(\LL;S)$ is continuous.
In the statement below, weak convergence is understood in the product sense: a sequence $(\LL_m)_{m\geq1}$ converges weakly to $\LL$ if for every $n$, $(\LL_m^{(n)})_{m\geq1}$ converges weakly to $\LL^{(n)}$.

\begin{corollary} \label{extension_cor}
Assume $\xi$ satisfies~\ref{xi_power}.
If $(\LL_j)_{j\geq1}$ is a weakly convergent sequence of laws such that $\Psi_{N,\xi}(\LL_j;S)$ is defined for every $j$,
then $\lim_{j\to\infty}\Psi_{N,\xi}(\LL_j;S)$ exists and depends only on the limit of $(\LL_j)_{j\geq1}$.  
\end{corollary}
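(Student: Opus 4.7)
The entire point of Proposition~\ref{uniform_continuity} is to reduce the functional $\Psi_{N,\xi}$, which is defined only on laws arising from the concrete construction in Section~\ref{prelimit_section_3}, to something that extends continuously to \emph{all} probability laws on $(\nonpsd)^{n\times n}$. My plan is therefore to deduce Corollary~\ref{extension_cor} by a direct $\eps/3$-style Cauchy argument using that approximation, together with the standard fact that weak convergence in the product sense is exactly weak convergence of each $n$-marginal.

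Concretely, fix $\eps>0$ and apply Proposition~\ref{uniform_continuity} to obtain an integer $n=n(\eps)$ and a continuous function $\phi_N\colon\Prob((\nonpsd)^{n\times n})\to\R$ such that
\eq{
\bigl|\Psi_{N,\xi}(\LL;S)-\phi_N(\LL^{(n)})\bigr|\leq\eps
}
for every $\LL$ on which $\Psi_{N,\xi}(\cdot\,;S)$ is defined. By hypothesis, $\LL_j\to\LL$ weakly in the product sense, which by definition means $\LL_j^{(n)}\to\LL^{(n)}$ in $\Prob((\nonpsd)^{n\times n})$. Since $(\nonpsd)^{n\times n}$ is a compact metric space (with the norm $\|\cdot\|$ displayed just before the proposition), the topology of weak convergence on $\Prob((\nonpsd)^{n\times n})$ is metrizable, and $\phi_N$ is continuous there, so $\phi_N(\LL_j^{(n)})$ is a Cauchy sequence in $\R$. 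Combined with the approximation bound, for all sufficiently large $j,j'$,
\eq{
\bigl|\Psi_{N,\xi}(\LL_j;S)-\Psi_{N,\xi}(\LL_{j'};S)\bigr|\leq 2\eps+\bigl|\phi_N(\LL_j^{(n)})-\phi_N(\LL_{j'}^{(n)})\bigr|\leq 3\eps.
}
Letting $\eps\searrow 0$ shows $(\Psi_{N,\xi}(\LL_j;S))_{j\geq1}$ is Cauchy, hence convergent.

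For the second assertion, suppose $(\LL_j)$ and $(\LL_j')$ both converge weakly to the same limit and each satisfies the hypothesis that $\Psi_{N,\xi}$ is defined along the sequence. Form the interleaved sequence $\LL_1,\LL_1',\LL_2,\LL_2',\ldots$; it still converges weakly (in the product sense), and $\Psi_{N,\xi}$ is defined on every term, so by the argument above its image under $\Psi_{N,\xi}(\cdot\,;S)$ is Cauchy. This forces the two subsequential limits to agree, which is exactly the statement that the limit depends only on the weak limit of $(\LL_j)$.

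The only non-routine step here is Proposition~\ref{uniform_continuity} itself, which is doing all the heavy lifting: the Corollary is essentially a bookkeeping consequence. The one point worth flagging in the write-up is that the limit $\LL$ need not itself arise from any overlap map $R$ and random measure $\GG$, so $\Psi_{N,\xi}(\LL;S)$ as originally defined may be meaningless; the statement is thus best read as defining $\Psi_{N,\xi}$ by continuous extension on the closure (in the product weak topology) of the set of admissible laws, with the Cauchy argument above certifying that this extension is well-defined.
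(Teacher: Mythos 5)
Your argument is correct and is essentially the standard one the paper has in mind (the paper omits the proof, citing \cite[Cor.~2.7]{bates-sohn22a}, which proceeds by exactly this $\eps/3$ Cauchy argument from Proposition~\ref{uniform_continuity} plus an interleaving step for uniqueness of the limit). Your closing remark correctly identifies the one subtlety—that $\Psi_{N,\xi}(\LL;S)$ itself may be undefined at the limit law, so the corollary is really a well-definedness statement for the continuous extension.
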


A proof of Corollary~\ref{extension_cor} can be found in \cite[Cor.~2.7]{bates-sohn22a}.
Proposition~\ref{uniform_continuity} follows from a standard truncation argument that relies only on the compactness of $\nonpsd$ and the continuity of $\nabla\xi$ and $\vartheta$.
Examples of this argument can be found in \cite[Lem.~3]{panchenko14}, \cite[Thm.~1.3]{panchenko13a}, and \cite[Prop.~2.6]{bates-sohn22a}.
Therefore, we omit the proof.

\subsection{Ruelle probability cascades} \label{subsec:prelimit}
In this section we restrict the functional $\Psi_{N,\xi}$ from \eqref{Psi_def} to overlap arrays generated by a Ruelle probability cascade (RPC).
The RPCs are a certain class of random probability measures with ultrametric support.
Each RPC is associated to a 
partition of the unit interval:
\eeq{ \label{eq:m2}
0 = m_0 < m_1 < \cdots < m_{s-1} < m_s = 1.
}
For concreteness, we take the support of the RPC to be the countable set $\N^{s-1}$, which can be regarded as the leaf set of a tree of depth $s-1$:
\begin{itemize}
\item The root vertex is the empty sequence $\varnothing$, and by convention $\N^0 = \{\varnothing\}$.
\item Each $\alpha=(\alpha_1,\dots,\alpha_{r})\in\N^{r}$ is said to have depth $|\alpha| = r$ and has children $\{(\alpha_1,\dots,\alpha_{r},a):\, a\in\N\}$ at depth $r+1$.
\item The path to a given leaf $\alpha=(\alpha_1,\dots,\alpha_{s-1})\in\N^{s-1}$ is the set
\eq{
\mathfrak{p}(\alpha) = \{\varnothing,\alpha_1,(\alpha_1,\alpha_2),\dots,(\alpha_1,\dots,\alpha_{s-1})\}.
}
\end{itemize}
Given two leaves $\alpha = (\alpha_1,\dots,\alpha_{s-1})$ and $\alpha' = (\alpha'_1,\dots,\alpha'_{s-1})$ in $\N^{s-1}$, let $r(\alpha,\alpha')$ denote the smallest depth at which their ancestral paths disagree:
\eq{ %\label{overlap_notation}
r(\alpha,\alpha') \coloneqq
\begin{cases} \inf\{r:\,  \alpha_r \neq \alpha_r'\} &\text{if $\alpha\neq\alpha'$} \\
s &\text{if $\alpha=\alpha'$}.
\end{cases}
}
%In the case $s=1$, we use the convention $\N^0=\{\varnothing\}$ and $r(\varnothing,\varnothing)=1$.
The RPCs are characterized by the following fact.

\begin{theirthm} {\textup{\cite[Thm.~15.2.1]{talagrand06b}}} \label{rpcthm}
For any sequence of the form \eqref{eq:m2}, there is a random measure $\nu$ on $\N^{s-1}$ with the following property.
Given any sequence $0 \leq q_1 \leq \cdots \leq q_s = 1$, define the measure $\mu = \sum_{r=1}^s (m_r-m_{r-1})\delta_{q_r}$.
If $(\alpha^1,\alpha^2,\dots)\sim\E(\nu^{\otimes\infty})$, then the law of the array $(q_{r(\alpha^\ell,\alpha^{\ell'})})_{\ell,\ell'\geq1}$ is equal to $\bar\LL_\mu$ from Theorem~\ref{representation_thm}.
\end{theirthm}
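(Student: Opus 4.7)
My plan is to construct $\nu$ explicitly as an iterated Poisson--Dirichlet cascade keyed to the sequence $(m_r)$, verify directly that the resulting array satisfies the defining properties of $\bar\LL_\mu$ from Theorem~\ref{representation_thm}, and then invoke the uniqueness clause of that theorem.

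I construct $\nu$ recursively as follows. For each interior vertex $\alpha\in\N^{r}$ with $0\le r\le s-2$, draw an independent Poisson point process on $(0,\infty)$ with intensity $m_{r+1}\,x^{-1-m_{r+1}}\,\dd x$, and let $\{\xi_{\alpha,a}\}_{a\in\N}$ be its decreasing enumeration. To each leaf $\alpha=(\alpha_1,\ldots,\alpha_{s-1})$ assign the unnormalized weight $w_\alpha=\prod_{r=1}^{s-1}\xi_{(\alpha_1,\ldots,\alpha_{r-1}),\,\alpha_r}$ and put $\nu(\{\alpha\})=w_\alpha/\sum_\beta w_\beta$. Since each $m_r\in(0,1)$, the normalizing sum is almost surely finite and positive, so $\nu$ is well defined, and Ruelle's classical analysis identifies the normalized weights at each depth as a Poisson--Dirichlet family with the appropriate parameter.

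Next, I check that $\QQ_{\ell,\ell'}\coloneqq q_{r(\alpha^\ell,\alpha^{\ell'})}$, with $(\alpha^\ell)_{\ell\ge 1}$ i.i.d.\ from $\nu$ under $\E(\nu^{\otimes\infty})$, has the structural properties required by Theorem~\ref{representation_thm}. The diagonal identity $\QQ_{\ell,\ell}=q_s=1$ is automatic, symmetry is trivial, and positive-semidefiniteness follows from the ultrametric inequality $r(\alpha,\gamma)\ge r(\alpha,\beta)\wedge r(\beta,\gamma)$ together with monotonicity of $r\mapsto q_r$, which makes $\QQ$ an ultrametric $[0,1]$-valued kernel with unit diagonal---and such kernels are classically PSD. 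Exchangeability under $\E(\nu^{\otimes\infty})$ is immediate from the i.i.d.\ sampling. To pin down the marginal, the standard Poisson--Dirichlet moment identity $\E\sum_a\eta_a^2=1-m$, applied inductively along the cascade, yields $\P(r(\alpha^1,\alpha^2)\le r)=1-m_r$, hence $\P(\QQ_{1,2}=q_r)=m_r-m_{r-1}$, so $\Law(\QQ_{1,2})=\mu$.

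The hardest step, and the one I expect to be the principal obstacle, is verifying the $1$-dimensional Ghirlanda--Guerra identities for $\Law(\QQ)$. The essential input is the Bolthausen--Sznitman invariance: reweighting a PD$(m)$ family by i.i.d.\ positive factors and renormalizing again produces a PD$(m)$ family, with an explicit joint law between old and new labels. Applied depth by depth in the cascade, this invariance allows me to expand $\E[f(\QQ^{(n)})\varphi(\QQ_{1,n+1})]$, for any bounded measurable $f$ on $\QQ^{(n)}$ and any bounded measurable $\varphi\colon[0,1]\to\R$, as a sum indexed by the smallest depth at which $\alpha^{n+1}$ first meets the ancestral tree of $\{\alpha^1,\ldots,\alpha^n\}$. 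Matching this decomposition against the analogous expansion of the right-hand side of \eqref{eq:GG} yields the identity for $\AA_{\ell,\ell'}=\varphi(\QQ_{\ell,\ell'})$; since $\QQ$ is scalar, the polynomial form stated in Definition~\ref{def:GG} (with $\kappa=1$) reduces to this case by polynomial approximation. With exchangeability, PSD, unit diagonal, correct $\QQ_{1,2}$ marginal, and the G.G.\ identities all established, the uniqueness clause of Theorem~\ref{representation_thm} forces $\Law(\QQ)=\bar\LL_\mu$, which is exactly the conclusion.
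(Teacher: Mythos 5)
This statement is quoted from Talagrand \cite[Thm.~15.2.1]{talagrand06b}; the paper gives no proof of its own, so there is nothing internal to compare against. Your plan is the standard textbook route (also Panchenko \cite[Ch.~2]{panchenko13a}): build the cascade from iterated Poisson processes with intensities $m_{r+1}x^{-1-m_{r+1}}\,\dd x$, check that the induced array is Gram--de Finetti with unit diagonal, correct two-replica marginal, and the $1$-dimensional G.G.\ identities, and then invoke the uniqueness clause of Theorem~\ref{representation_thm}. The construction, the PSD-via-ultrametricity observation, and the final appeal to uniqueness are all correct.

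Two points in the marginal computation need repair. First, the inequality is flipped: the correct statement is $\P(r(\alpha^1,\alpha^2)>r)=1-m_r$, equivalently $\P(r(\alpha^1,\alpha^2)\le r)=m_r$; as written, your claim would give $\P(\QQ_{1,2}=q_r)=m_{r-1}-m_r<0$, contradicting the (correct) conclusion you then state. Second, and more substantively, ``the identity $\E\sum_a\eta_a^2=1-m$ applied inductively along the cascade'' does not yield this marginal: levels of the cascade are not conditionally independent in the naive sense, and a level-by-level induction would produce the product $\prod_{r'\le r}(1-m_{r'})$, which is wrong. The actual argument shows that the weights of the marginal of $\nu$ onto depth $r$, rearranged in decreasing order, again form a $\mathrm{PD}(m_r)$ sequence --- this is exactly the Bolthausen--Sznitman invariance you defer to for the G.G.\ identities --- and only then does the one-level moment identity give $\P(r(\alpha^1,\alpha^2)>r)=1-m_r$. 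With that step supplied (and the G.G.\ verification carried out in full, which is where the real work of the theorem lives), the proof closes as you describe.
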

% \begin{theirthm} \label{rpcthm}
% There is a random measure $\nu$ on $\N^{s-1}$ with properties~\ref{rpcthm_a} and~\ref{rpcthm_b} below.
% Here $\langle\cdot\rangle$ denotes expectation with respect to $\nu^{\otimes\infty}$, and $(\alpha^\ell)_{\ell\geq1}$ denotes a sample from $\nu^{\otimes\infty}$. %That is, $(\alpha^\ell)_{\ell\geq1}$ are independent samples from $\nu$.
% \begin{enumerate}[label=\textup{(\alph*)}]

% \item \label{rpcthm_a} For each $r\in\{1,\dots,s\}$, 
% \eeq{ \label{bn83}
% \E(\nu^{\otimes2}\{r(\alpha^1,\alpha^2)=r\}) = m_r-m_{r-1}.
% }

% \item \label{rpcthm_b} For any $n$ and any function $f$ of the array $(r(\alpha^\ell,\alpha^{\ell'}))_{1\leq \ell,\ell'\leq n}$, we have
% \eq{
% \E\langle f\cdot r(\alpha^1,\alpha^{n+1})\rangle
% = \frac{1}{n}\E\langle f\rangle\E\langle r(\alpha^1,\alpha^2)\rangle
% + \frac{1}{n}\sum_{\ell=2}^n\E\langle f \cdot r(\alpha^1,\alpha^\ell)\rangle.
% }

% \end{enumerate}
% Furthermore, if $\tilde\nu$ is any other random measure on $\N^{s-1}$ satisfying~\ref{rpcthm_a} and~\ref{rpcthm_b}, then the array $(r(\alpha^\ell,\alpha^{\ell'}))_{\ell,\ell'\geq1}$ has the same law under $\E(\tilde\nu^{\otimes\infty})$ as under $\E(\nu^{\otimes\infty})$.
% \end{theirthm}

The measure $\nu$ is called the RPC associated to \eqref{eq:m2}, and we denote its probability mass function by $(\nu_\alpha)_{\alpha\in\N^{s-1}}$.
We will now use RPCs to give a different formulation of the Parisi functional from Section~\ref{subsec:Parisi}.

Let $\xi$ satisfy~\ref{xi_power}.
Consider any $d\in\DD$ and any discrete path $\pi\in\Pi_{d}^\disc$, meaning there is a partition of the form \eqref{eq:m2}
and a sequence in $\Gamma_\kappa(d)$,
\begin{subequations} \label{pi_rep}
\eeq{ \label{mat_seq}
0 \preceq \gamma_1 \preceq \cdots \preceq \gamma_{s-1} \preceq \gamma_{s} = \mathrm{diag}(d),
}
such that
\eeq{ \label{pi_rep_b}
\pi(t) = \gamma_r \quad \text{for $t\in(m_{r-1},m_{r}]$, $r\in\{1,\dots,s\}$}.
}
\end{subequations}
%By REF, there exists a centered $\R^\kappa$-valued Gaussian process $(z_\alpha)_{\alpha\in\N^{s-1}}$  with covariance structure
%\eeq{ \label{bni5}
%\E[z_\alpha z_{\alpha'}^\sT] = \nabla\xi(\gamma_{r(\alpha,\alpha')}), \quad \alpha,\alpha'\in\N^{s-1}.
%}
Let $Z_1,\dots,Z_N\colon\N^{s-1}\to\R^\kappa$ and $Y\colon\N^{s-1}\to\R$ be independent centered Gaussian processes with covariances
\begin{equation} \label{bni}
\begin{aligned}
\E[Z_i(\alpha)Z_i(\alpha')^\sT] &= 
\nabla\xi(\gamma_{r(\alpha,\alpha')}) \\
\E[Y(\alpha)Y(\alpha')] &= 
\vartheta_\xi(\gamma_{r(\alpha,\alpha')})
\end{aligned}
\qquad \text{for $\alpha,\alpha'\in\N^{s-1}$}.
\end{equation}
% For each $i\geq1$, let $(Z_{i}(\alpha))_{\alpha\in\N^{s-1}}$ be an independent centered $\R^\kappa$-valued Gaussian process
% with covariance structure
% \eeq{ \label{bni5}
% \E[Z_{i}(\alpha) Z_{i}(\alpha')^\sT] = \nabla\xi(\gamma_{r(\alpha,\alpha')}), \quad \alpha,\alpha'\in\N^{s-1}.
% }
% In addition, let $(Y(\alpha))_{\alpha\in\N^{s-1}}$ be an independent centered Gaussian process with covariance
% \eeq{ \label{bni6}
% \E[Y(\alpha)Y(\alpha')] = \vartheta_\xi(\gamma_{r(\alpha,\alpha')}), \quad \alpha,\alpha'\in\N^{s-1}.
% }
%\textcolor{red}{These processes exist because...}
We assume these processes are also independent of the weights $(\nu_\alpha)_{\alpha\in\N^{s-1}}$;
their existence is addressed in Remark~\ref{rmk_RPCexist}.
Now define, for $\lambda\in\R^\kappa$ and nonempty $S\subseteq\Sigma^N$, the quantity %the quantities
%\begin{subequations} %\label{pre_par}
\eeq{
\PPP_{N,\xi}^{(1)}(\pi,\lambda;S) \label{pre_par_a}
&\coloneqq \frac{1}{N}\E\log\sum_{\alpha\in\N^{s-1}}\sum_{\sigma\in S}\nu_\alpha\exp\Big(\sum_{i=1}^N\iprod[\big]{Z_{i}(\alpha)+\lambda}{\sigma_i}\Big).
}
%\end{subequations}
%where $\vartheta\colon\R^{\kappa\times\kappa}\to\R$ was defined in \eqref{vthet_def}.
%
%But beforehand, there is a more basic issue to which to attend.
%
% Since the representation \eqref{pi_rep} of the path $\pi$ is not unique, we need the following lemma to verify well-definedness of $\PPP_{N,\xi}^{(1)}$.
% For clarity, in the process of verifying well-definedness, we will write $\PPP_{N,\xi}^{(1)}(m,\gamma_1,\dots,\gamma_s,\lambda;S)$ for the right-hand side of \eqref{pre_par_a}.
%
% \begin{lemma} \label{wd_lemma}
% Consider any sequence of integers $0=n_0<n_1 < n_2 < \cdots < n_s$ and any partition
% $0 = \tilde m_0 < \tilde m_1 < \dots < \tilde m_{n_s} = 1$
% such that $\tilde m_{n_r} = m_r$ for each $r\in\{1,\dots,s\}$.
% Then
% \eeq{ \label{m_lemma_eq}
% \PPP_{N,\xi}^{(1)}(m,\gamma_1,\dots,\gamma_s,\lambda;S)
% = \PPP_{N,\xi}^{(1)}(\tilde m,\underbrace{\gamma_1,\dots,\gamma_1}_{\text{$n_1$ }},\underbrace{\gamma_2,\dots,\gamma_2}_{\text{$n_2-n_1$}},\cdots,\underbrace{\gamma_s,\dots,\gamma_s}_{\text{$n_s-n_{s-1}$}},\lambda;S).
% }
% \end{lemma}
%
Also recall the following quantity from \eqref{PPP2_def}:
\eeq{
\PPP_{\xi}^{(2)}(\pi) \label{pre_par_b}
&= \frac{1}{2}\int_0^1\vartheta_\xi(\pi(t))\ \dd t - \frac{1}{2}\vartheta_\xi(\diag(d)), \quad \pi\in\Pi_d.
}
Apart from the Lagrange multiplier $\lambda\in\R^\kappa$ in \eqref{pre_par_a}, the functionals $\PPP_{N,\xi}^{(1)}$ and $\PPP_{N,\xi}^{(2)}$ are simply $\Psi_{N,\xi}^{(1)}$ and $\Psi_{N,\xi}^{(2)}$ from \eqref{defining_Psi_parts} applied to the RPC setting.
%applied to the setting
%$(\XX,R,\GG) = (\N^{s-1},(\alpha,\alpha')\mapsto\gamma_{r(\alpha,\alpha')},\nu)$.
This is captured by the following lemma, which is proved in Appendix~\ref{sec_app_rpc}.

\begin{lemma} \label{PPPPsi_lemma}
Assume $\xi$ satisfies~\ref{xi_power}.
Let $\nu$ be the RPC associated to \eqref{eq:m2}.
With $\pi\in\Pi_d^\disc$ given by \eqref{pi_rep}, let $\LL$ denote the law of the array $(\gamma_{r(\alpha^\ell,\alpha^{\ell'})})_{\ell,\ell'\geq1}$ under $\E(\nu^{\otimes\infty})$. 
Then
\begin{subequations} \label{PPPPsi}
\begin{align}
\PPP_{N,\xi}^{(1)}(\pi,0;S) \label{PPPPsi_a}
&= \Psi_{N,\xi}^{(1)}(\LL; S), \\
\PPP_{\xi}^{(2)}(\pi) \label{PPPPsi_b}
&= -\Psi_{N,\xi}^{(2)}(\LL).
\end{align}
\end{subequations}
In particular, $\PPP_{N,\xi}^{(1)}(\pi,0;S)+\PPP_{\xi}^{(2)}(\pi) = \Psi_{N,\xi}(\LL;S)$.
\end{lemma}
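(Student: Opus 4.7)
The plan is to identify the RPC construction with an instance of the abstract setup in Section~\ref{prelimit_section} and then reduce each of the two identities to either a direct substitution (for~\eqref{PPPPsi_a}) or the standard Ruelle cascade computation (for~\eqref{PPPPsi_b}). First I would set $\XX = \N^{s-1}$ (with power-set $\sigma$-algebra), take $\GG = \nu$ to be the RPC from Theorem~\ref{rpcthm}, and define $R\colon\XX\times\XX\to\nonpsd$ by $R(\alpha,\alpha') = \gamma_{r(\alpha,\alpha')}$. Since $r(\alpha,\alpha) = s$ and $\gamma_s = \diag(d)$, Assumption~\ref{map_assumption} is satisfied: measurability is trivial, the diagonal condition holds by construction, and the Gaussian processes $Z_i$ and $Y$ specified in~\eqref{bni} (whose existence is Remark~\ref{rmk_RPCexist}) play the roles of $Z$ and $Y$ in~\eqref{A3_cov}, since their covariances match on both diagonal and off-diagonal pairs. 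By Theorem~\ref{rpcthm}, the array constructed as in Section~\ref{prelimit_section_3} then has the stated law $\LL$.

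With this identification in hand, the identity~\eqref{PPPPsi_a} is pure bookkeeping: writing $\langle\,\cdot\,\rangle_\nu = \sum_\alpha \nu_\alpha(\,\cdot\,)$ in~\eqref{psi_1_def} and exchanging the nonnegative sums over $\sigma \in S$ and $\alpha \in \N^{s-1}$ reproduces the integrand appearing in~\eqref{pre_par_a} with $\lambda = 0$; applying $\tfrac{1}{N}\E\log$ to both sides then yields the desired equality.

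For~\eqref{PPPPsi_b}, the plan is to evaluate $\Psi_{N,\xi}^{(2)}(\LL) = \tfrac{1}{N}\E\log\langle\exp(\sqrt{N}\,Y(x))\rangle_\nu$ via the standard Ruelle cascade recursion. Using the monotonicity $\vartheta_\xi(\gamma_r) \le \vartheta_\xi(\gamma_{r+1})$ from Proposition~\hyperref[prop:xi:theta_b]{\ref*{prop:xi:theta}\ref*{prop:xi:theta_b}}, decompose
\begin{equation*}
Y(\alpha) \stackrel{\mathrm{law}}{=} \sum_{r=0}^{s-1} \sqrt{\vartheta_\xi(\gamma_{r+1}) - \vartheta_\xi(\gamma_r)}\, g^{(r)}_{\alpha_{1:r}},
\end{equation*}
where $\vartheta_\xi(\gamma_0) \coloneqq 0$ and the $g^{(r)}_\beta$ are independent standard Gaussians indexed by depth-$r$ tree nodes. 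The $r = 0$ term contributes nothing after taking $\E$, and iterating the Poisson--Dirichlet log-Laplace identity $\E\log\sum_a \eta_a e^{X_a} = m^{-1}\log\E e^{mX}$ at tree levels $r = s-1, \dots, 1$ (with parameter $m_r$ at level $r$) yields
\begin{equation*}
\tfrac{1}{N}\E\log\langle\exp(\sqrt{N}\,Y(x))\rangle_\nu = \tfrac{1}{2}\sum_{r=1}^{s-1} m_r\big(\vartheta_\xi(\gamma_{r+1}) - \vartheta_\xi(\gamma_r)\big).
\end{equation*}
A summation by parts using $m_0 = 0$, $m_s = 1$, and $\gamma_s = \diag(d)$ rewrites the right-hand side as $\tfrac{1}{2}\vartheta_\xi(\diag(d)) - \tfrac{1}{2}\int_0^1 \vartheta_\xi(\pi(t))\,\de t = -\PPP_\xi^{(2)}(\pi)$, establishing~\eqref{PPPPsi_b}. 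The final ``in particular'' assertion is immediate from~\eqref{PPPPsi_a}, \eqref{PPPPsi_b}, and the definition~\eqref{Psi_def}. The only care required is the bookkeeping in the Ruelle recursion --- identifying the cascade decomposition of $Y$, pairing each $m_r$ with the correct variance increment, and treating the root-level Gaussian separately --- but this is precisely the recursion already built into the definition~\eqref{eq:def:Psi} of the Parisi functional, and the $s=2$ case serves as a transparent sanity check.
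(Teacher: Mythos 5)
Your proposal is correct and follows essentially the same route as the paper: the same identification $(\XX,R,\GG)=(\N^{s-1},\gamma_{r(\cdot,\cdot)},\nu)$, the same ``by inspection'' matching for \eqref{PPPPsi_a}, and for \eqref{PPPPsi_b} the same tree decomposition of $Y$ as in \eqref{xm334} followed by the iterated Poisson--Dirichlet log-Laplace identity (which is exactly Theorem~\ref{thm:RPC}) and the same summation by parts. No gaps.
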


\begin{remark} \label{rmk_wd}
Because \eqref{mat_seq} allows for equalities, the representation \eqref{pi_rep_b} of the path $\pi$ is not unique.
For instance, one could insert duplicate copies of some $\gamma_r$ and then refine the partition \eqref{eq:m2}.
Consequently, it is not immediately clear that $\PPP_{N,\xi}^{(1)}$ is well-defined, but this actually follows from Lemma~\ref{PPPPsi_lemma}.
Indeed, no matter the representation of $\pi$, the following measure remains the same:
\eq{
\mu = \sum_{r=1}^s(m_r-m_{r-1})\delta_{q_r}, \quad \text{where} \quad
q_r = \sup\{t\in(0,1]:\, \pi(t)=\gamma_r\}.
}
Under this definition of $q_r$, we always have $\pi(q_r) = \gamma_r$ by left-continuity of $\pi$, and so the array $(\gamma_{r(\alpha^\ell,\alpha^{\ell'})})_{\ell,\ell'\geq1}$ in Lemma~\ref{PPPPsi_lemma} is equal to $(\pi(q_{r(\alpha^\ell,\alpha^{\ell'})}))_{\ell,\ell'\geq1}$. 
Since the law of $(q_{r(\alpha^\ell,\alpha^{\ell'})})_{\ell,\ell'\geq1}$ is equal to $\bar\LL_\mu$ by Theorem~\ref{rpcthm}, it follows that the law $\LL$ in Lemma~\ref{PPPPsi_lemma} does not depend on the representation of $\pi$.
\end{remark}

Notice that the left-hand side of \eqref{PPPPsi_b} does not depend on $N$.
Our next lemma shows the same is true of \eqref{PPPPsi_a} when $S$ is the entire product set $\Sigma^N$.
More importantly, the functional $\PPP_{\xi}^{(1)}$ from \eqref{eq:def:Psi} is recovered.
Once again, the proof is postponed to Appendix~\ref{sec_app_rpc}.

\begin{lemma} \label{lem_parisi_recovered}
Assume $\xi$ satisfies~\ref{xi_power}.
For any $N\geq1$, $\pi\in\Pi_{d}^\disc$, and $\lambda\in\R^\kappa$, we have
\eeq{ \label{qk38}
\PPP_{N,\xi}^{(1)}(\pi,\lambda;\Sigma^N) = \PPP_{\xi}^{(1)}(\pi,\lambda).
}
%where the right-hand side is defined in \eqref{eq:def:Psi}.
\end{lemma}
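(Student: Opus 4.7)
The plan is to prove the identity by exploiting two structural facts: the product structure of $\Sigma^N = \{\vv e_1,\dots,\vv e_\kappa\}^N$ and the recursive cavity formula for Ruelle probability cascades. First I would observe that, since $\sigma \in \Sigma^N$ is a product, the $\sigma$-sum in the definition of $\PPP_{N,\xi}^{(1)}(\pi,\lambda;\Sigma^N)$ factorizes across coordinates:
\begin{equation*}
\sum_{\sigma\in\Sigma^N}\exp\Big(\sum_{i=1}^N\iprod{Z_{i}(\alpha)+\lambda}{\sigma_i}\Big)
= \prod_{i=1}^N \sum_{k=1}^\kappa \exp\iprod{Z_i(\alpha)+\lambda}{\vv e_k}.
\end{equation*}
This turns the inside of the log into $\sum_\alpha \nu_\alpha \exp \Phi_s(\alpha)$, where $\Phi_s(\alpha) = \sum_{i=1}^N \log\sum_k \exp\iprod{Z_i(\alpha)+\lambda}{\vv e_k}$.

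Second, I would decompose each $Z_i$ into independent tree-indexed Gaussian increments. Since $\gamma_0 \preceq \gamma_1 \preceq \cdots \preceq \gamma_s$, Proposition~\hyperref[prop:xi:theta_b]{\ref*{prop:xi:theta}\ref*{prop:xi:theta_b}} ensures that the symmetric matrix $\nabla\xi(\gamma_{r+1})-\nabla\xi(\gamma_r)\one\{r>0\}$ is positive-semidefinite. One can therefore construct, jointly with $\nu$, independent centered Gaussians $(\eta_{i,r}(\beta))$, indexed by $i\in\{1,\dots,N\}$, $r\in\{0,\dots,s-1\}$, and $\beta\in\N^r$, each with this covariance, such that $Z_i(\alpha) \stackrel{\text{d}}{=} \sum_{r=0}^{s-1} \eta_{i,r}(\alpha|_r)$ jointly in $\alpha$, thereby matching the covariance structure in \eqref{bni}. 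Crucially, the increments at depth $r$ depend on $\alpha$ only through its prefix $\alpha|_r$, which is precisely the feature the RPC recursion exploits.

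Third, I would apply the standard RPC recursion (see, e.g., \cite[Thm.~2.9]{panchenko13a}): if $F(\alpha) = \exp\Phi_s(\eta_{\bullet,0},\eta_{\bullet,1}(\alpha_1),\dots,\eta_{\bullet,s-1}(\alpha|_{s-1}))$ and one defines $\Phi_r = \frac{1}{m_r}\log\E_r \exp(m_r\Phi_{r+1})$ for $r=s-1,\dots,1$ and $\Phi_0=\E_0\Phi_1$, then
\begin{equation*}
\E\log\sum_\alpha \nu_\alpha F(\alpha) = \Phi_0.
\end{equation*}
Applied to our $\Phi_s$, independence of the $\eta_{i,r}$ across $i$ means each partial integration commutes with the sum over $i$: writing $\Phi_{i,r}$ for the quantity defined using only the $i$-th coordinate's Gaussians via the recursion of \eqref{eq:def:Psi}, induction on $r$ (from $r=s$ downward) gives $\Phi_r = \sum_{i=1}^N \Phi_{i,r}$, because the exponential of a sum of independent random variables factorizes. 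Since each $\Phi_{i,0}$ has the same distribution (hence the same deterministic value) as the quantity $X_0=\PPP_\xi^{(1)}(\pi,\lambda)$ from \eqref{eq:def:Psi}, we get
\begin{equation*}
\E\log\sum_\alpha \nu_\alpha \exp\Phi_s(\alpha) = \sum_{i=1}^N \Phi_{i,0} = N\cdot\PPP_\xi^{(1)}(\pi,\lambda),
\end{equation*}
and dividing by $N$ yields \eqref{qk38}.

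No step presents a serious obstacle: the factorization is algebraic, the Gaussian decomposition is a routine consequence of positive-semidefiniteness of the increments, and the RPC recursion is a well-established tool. The only care needed is bookkeeping: verifying that the joint Gaussian $(\eta_{i,r})$ can indeed be constructed with the correct joint law so that the resulting $Z_i$ process agrees in distribution with the one prescribed in \eqref{bni}, and confirming the induction $\Phi_r = \sum_i \Phi_{i,r}$ so that the factor $N$ produced by this decomposition exactly cancels the $1/N$ in \eqref{pre_par_a}.
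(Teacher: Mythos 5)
Your proposal is correct and follows essentially the same route as the paper: decompose $Z_i(\alpha)$ into tree-indexed Gaussian increments (the paper's \eqref{Zialpha}), invoke the standard RPC recursion (the paper's Theorem~\ref{thm:RPC}), factorize the sum over $\Sigma^N$ across coordinates, and run the downward induction $X_r(\Sigma^N)=\sum_{i=1}^N X_r^{(i)}(\Sigma)$ using independence so that the factor $N$ cancels. The only difference is the order in which these steps are presented, which is immaterial.
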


\begin{remark} \label{rmk_dual}
It would appear from Lemmas~\ref{PPPPsi_lemma} and~\ref{lem_parisi_recovered} that the functional $\Psi_{N,\xi}$ from \eqref{Psi_def} is easily related to $\PPP_\xi$ from \eqref{eq:Parisi:ftl:lagrange}.
The complication is that the A.S.S. scheme presented in Section~\ref{ass_sec} will require we evaluate $\LL\mapsto\Psi_{N,\xi}(\LL;S)$ using the constrained set $S=\Sigma^N(d^N)$ for some $d^N\in\DD_N$.
In contrast, the identity \eqref{qk38} only holds in unconstrained case $S=\Sigma^N$.
This gap can only be bridged once $N\to\infty$, for then the constraint $d^N$ can be replaced by an optimization over the dual variable $\lambda$; see Lemma~\ref{lem:duality}.
The tradeoff is that an additional optimization must be performed over $\lambda$, as in \eqref{def:parisi:ftl}.
\end{remark}

\begin{remark} \label{rmk_RPCexist}
In order for the quantities in \eqref{PPPPsi} to be defined, we need the existence of processes $Z_1,\dots,Z_N$, $Y$ satisfying \eqref{bni}.
Fortunately, it is easy to construct them.
Indeed, let $(\eta_{i,\alpha}:\, i\geq1, \alpha\in\N^0\cup\cdots\cup\N^{s-1})$ be i.i.d.~standard normal random vectors in $\R^\kappa$, and then define
\eeq{ \label{Zialpha}
Z_{i}(\alpha) = \sum_{r=0}^{s-1}\sqrt{\nabla\xi(\gamma_{r+1})-\nabla\xi(\gamma_{r})\one\{r>0\}}\, \eta_{i,(\alpha_1,\dots,\alpha_{r})}.
}
Here we are using \eqref{monotonicity} to ensure that the matrix inside the square root is positive-semidefinite and thus admits a square root.
Since $(\alpha_1,\dots,\alpha_r) = (\alpha'_1,\dots,\alpha'_r)$ if and only if $r<r(\alpha,\alpha')$, we have
\eq{
\E[Z_{i}(\alpha)Z_{i}(\alpha')^{\sT}]
= \sum_{r=0}^{r(\alpha,\alpha')-1}[\nabla\xi(\gamma_{r+1})-\nabla\xi(\gamma_r)\one\{r>0\}] = \nabla\xi(\gamma_{r(\alpha,\alpha')}).
}
%That is, \eqref{bni5} holds.
Similarly, let $(\eta_{\alpha}:\alpha\in\N^0\cup\cdots\cup\N^{s-1})$ be i.i.d.~standard normal random variables, and then
\eeq{ \label{xm334}
Y(\alpha) = \sum_{r=0}^{s-1}\sqrt{\vartheta_\xi(\gamma_{r+1})-\vartheta_\xi(\gamma_{r})}\, \eta_{(\alpha_1,\dots,\alpha_{r})}.
}
The same kind of telescoping calculation as above yields $\E[Y(\alpha)Y(\alpha')]=\vartheta_\xi(\gamma_{r(\alpha,\alpha')})$.
\end{remark}

\section{Differentiability of the Parisi formula}
\label{sec:diff}
In this section, we prove Proposition~\ref{prop:diff}. 
The following lemma provides the key estimate.
Recall the functional $\PPP_{\xi}$ from \eqref{eq:Parisi:ftl:lagrange}.

\begin{lemma}\label{lem:second:derivative}
Assume $\xi$ satisfies~\ref{xi_power} and $\bbeta=(\beta_\theta)_{\theta\in\Theta_\Q}$ satisfies \eqref{beta_decay1}.
    For any $\theta\in \Theta_\Q$, there exists a constant $C$ depending only on $\theta$ and $\ka$, such that
    \eq{
      \Big|\Big(\frac{\partial}{\partial\beta_{\theta}}\Big)^2\PPP_{\xi_{\bbeta}}(\pi,\la)\Big|\leq C(1+\beta_{\theta}^2) \quad \text{for any $\pi\in \Pi_{d}^{\disc}$, $d\in\DD$, $\lambda\in\R^\kappa$}.
    }
\end{lemma}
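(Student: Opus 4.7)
The plan is to decompose $\PPP_{\xi_{\bbeta}}(\pi,\lambda) = \PPP^{(1)}_{\xi_{\bbeta}}(\pi,\lambda) + \PPP^{(2)}_{\xi_{\bbeta}}(\pi)$ as in \eqref{eq:Parisi:ftl:lagrange} and to exploit that $\xi_{\bbeta}$ depends on $\beta_\theta$ only through $u \coloneqq \beta_\theta^2$. Writing $\bbeta^\circ$ for the family obtained from $\bbeta$ by zeroing out the $\theta$-coordinate, we have $\xi_{\bbeta} = \xi_{\bbeta^\circ} + u\,\xi_\theta$, and the chain rule gives
\begin{equation*}
\Big(\frac{\partial}{\partial\beta_\theta}\Big)^2\PPP_{\xi_{\bbeta}}(\pi,\lambda)
= 2\,\partial_u\PPP_{\xi_{\bbeta}}(\pi,\lambda) + 4\beta_\theta^2\,\partial_u^2\PPP_{\xi_{\bbeta}}(\pi,\lambda).
\end{equation*}
It thus suffices to bound both $u$-derivatives by a constant depending only on $\theta$ and $\kappa$, uniformly in $\pi\in\Pi_d^{\disc}$, $d\in\DD$, $\lambda\in\R^\kappa$, $u\geq 0$, and $\bbeta^\circ$. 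For $\PPP^{(2)}$, this is immediate: the linearity of $\vartheta$ in $\xi$ and \eqref{PPP2_def} give $\partial_u\PPP^{(2)}_{\xi_{\bbeta}}(\pi) = \tfrac{1}{2}\int_0^1\vartheta_\theta(\pi(t))\,\dd t - \tfrac{1}{2}\vartheta_\theta(\diag d)$ and $\partial_u^2\PPP^{(2)}_{\xi_{\bbeta}}\equiv 0$. Proposition~\hyperref[prop:xi:theta_a]{\ref*{prop:xi:theta}\ref*{prop:xi:theta_a}} together with \eqref{rmk_xi_1} yields $|\vartheta_\theta(R)| \leq (\deg\theta-1)\|R\|_1^{\deg\theta}$, and since $\pi(t),\diag d \in \Gamma_\kappa(d)$ satisfy $\|\cdot\|_1=1$, the first $u$-derivative is bounded by $\deg\theta-1$.

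For $\PPP^{(1)}$, I would use the RPC representation of Lemma~\ref{lem_parisi_recovered}:
\begin{equation*}
\PPP^{(1)}_{\xi_{\bbeta}}(\pi,\lambda) = \E\log\sum_{\alpha\in\N^{s-1}}\sum_{\sigma\in\Sigma}\nu_\alpha\exp\iprod{Z(\alpha)+\lambda}{\sigma},
\end{equation*}
where $Z$ is a centered Gaussian process on $\N^{s-1}$ with covariance $\nabla\xi_{\bbeta}(\gamma_{r(\alpha,\alpha')})$. Because $\nabla\xi_{\bbeta} = \nabla\xi_{\bbeta^\circ} + u\,\nabla\xi_\theta$ and both summands produce monotone PSD increments (Proposition~\hyperref[prop:xi:theta_b]{\ref*{prop:xi:theta}\ref*{prop:xi:theta_b}}), I realize $Z = Z^\circ + \sqrt u\,\tilde Z$ following Remark~\ref{rmk_RPCexist}, with $Z^\circ,\tilde Z$ independent centered Gaussians of covariances $\nabla\xi_{\bbeta^\circ}(\gamma_{r(\alpha,\alpha')})$ and $\nabla\xi_\theta(\gamma_{r(\alpha,\alpha')})$ respectively. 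A standard Gaussian integration by parts against $\tilde Z$, applied to the random Gibbs measure $\omega$ on $\N^{s-1}\times\Sigma$ proportional to $\nu_\alpha\exp\iprod{Z(\alpha)+\lambda}{\sigma}$, then gives
\begin{equation*}
\partial_u\PPP^{(1)}_{\xi_{\bbeta}}(\pi,\lambda) = \tfrac{1}{2}\,\E\bigl[\bigl\langle\iprod{\nabla\xi_\theta(\diag d)\sigma}{\sigma}\bigr\rangle_\omega - \bigl\langle\iprod{\nabla\xi_\theta(\gamma_{r(\alpha^1,\alpha^2)})\sigma^1}{\sigma^2}\bigr\rangle_{\omega^{\otimes 2}}\bigr].
\end{equation*}
Since $\sigma\in\Sigma$ is a standard basis vector and $\|\gamma\|_1\leq 1$ throughout, each bracket is bounded by $\sup_{\|R\|_1\leq 1}\|\nabla\xi_\theta(R)\|_\infty\leq C_\theta$ from \eqref{eq:xi:theta:regularity}, so $|\partial_u\PPP^{(1)}_{\xi_{\bbeta}}|\leq C_\theta$.

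Differentiating in $u$ once more brings down factors of $\tilde H(\alpha,\sigma)\coloneqq\iprod{\tilde Z(\alpha)}{\sigma}$ from the Gibbs measure, multiplied by the singular prefactor $(2\sqrt u)^{-1}$ coming from $\partial_u(\sqrt u\,\tilde Z) = \tilde Z/(2\sqrt u)$; a second Gaussian integration by parts in $\tilde Z$ removes each stray $\tilde H$ at the cost of an entry of $\nabla\xi_\theta$ and a compensating factor of $\sqrt u$ and an additional Gibbs replica, so that the $u^{-1/2}$ singularity cancels and the expression reduces to a four-replica Gibbs average of products of two entries of $\nabla\xi_\theta$, uniformly bounded by a constant depending only on $C_\theta$ and $\kappa$. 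The main technical step is this second integration by parts and the verification that the $u^{-1/2}$ singularities indeed cancel after collecting all the four-replica contributions; this is where careful bookkeeping of IBP-generated terms is required, but the essential reason it works is that $\nabla\xi_\theta$ is uniformly bounded on $\{\|R\|_1\leq 1\}$. Once both $u$-derivatives are controlled, the chain rule in the first paragraph delivers $|(\partial/\partial\beta_\theta)^2\PPP_{\xi_{\bbeta}}(\pi,\lambda)|\leq C(1+\beta_\theta^2)$ with $C = C(\theta,\kappa)$.
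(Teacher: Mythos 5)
Your proposal is correct and follows essentially the same route as the paper: the same decomposition into $\PPP^{(1)}_{\xi_{\bbeta}}+\PPP^{(2)}_{\xi_{\bbeta}}$, the trivial control of the $\PPP^{(2)}$ term via $\vartheta_\theta=(\deg(\theta)-1)\xi_\theta$, and for $\PPP^{(1)}$ the RPC representation of Lemma~\ref{lem_parisi_recovered} followed by two rounds of Gaussian integration by parts with all resulting replica averages bounded by $\sup_{\|R\|_1\leq1}\|\nabla\xi_\theta(R)\|_\infty$. Your reparametrization $u=\beta_\theta^2$ and the chain rule $(\partial/\partial\beta_\theta)^2=2\partial_u+4\beta_\theta^2\partial_u^2$ is just a repackaging of the paper's product-rule computation (which differentiates directly in $\beta_\theta$ and lists the four IBP identities explicitly, exactly the bookkeeping you defer), and it correctly explains where the $C(1+\beta_\theta^2)$ shape comes from.
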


%Throughout, we assume $\xi$ satisfies~\ref{xi_power} and \eqref{xi_strongly_convex}, and $\bbeta$ correspondingly satisfies \eqref{beta_decay1} and \eqref{beta_decay2}. 
%Lemma~\ref{lem_xi_still_good} ensures that the modified covariance function $\xi_{\bbeta}$ from \eqref{eq:def:xi:bbeta} satisfies~\ref{xi_power} and~\ref{xi_convex}.
 %This ensures that the Parisi functional $\PP_{\xi_{\bbeta}}$ makes sense, and that Theorem~\ref{gen_con_par_thm} applies to the model with covariance function $\xi_{\bbeta}$.

\begin{proof}
During the proof, the value of $C$ may change from line to line, but it will only depend on $\theta$ and $\kappa$.
Assume $\pi$ is given by \eqref{eq:pi:discrete}.
Since $\PPP_{\xi_{\bbeta}}(\pi,\lambda)=\PPP_{\xi_{\bbeta}}^{(1)}(\pi,\lambda)+\PPP_{\xi_{\bbeta}}^{(2)}(\pi)$, it suffices to prove the desired differential inequality for each term on the right-hand side.
We begin with the second term.

Recalling the definition of $\PPP_{\xi_{\bbeta}}^{(2)}$ from \eqref{PPP2_def}, we have
\eq{
\PPP_{\xi_{\bbeta}}^{(2)}(\pi) = \frac{1}{2}\sum_{r=1}^s (m_r-m_{r-1})\vartheta_{\xi_{\bbeta}}(\gamma_r) - \frac{1}{2}\vartheta_{\xi_{\bbeta}}(\diag(d)).
}
The right-hand side can be rewritten using summation by parts.
Since $m_0=0$, $m_s=1$, and $\gamma_s = \diag(d)$, this results in
\eq{
\PPP_{\xi_{\bbeta}}^{(2)}(\pi) = -\frac{1}{2}\sum_{r=2}^{s} m_{r-1}[\vartheta_{\xi_{\bbeta}}(\gamma_{r})-\vartheta_{\xi_{\bbeta}}(\gamma_{r-1})].
}
Now recall from  \eqref{vthet_def} that $\vartheta_{\xi_{\bbeta}}(R)=\iprod{R}{\nabla \xi_{\bbeta}(R)} -\xi_{\bbeta}(R)$, where $\xi_{\bbeta}=\xi+\sum_{\theta\in\Theta_\Q}\beta_{\theta}^2\xi_{\theta}$ as in \eqref{eq:def:xi:bbeta}.
We thus have
\eq{
\Big(\frac{\partial}{\partial\beta_\theta}\Big)^2\vartheta_{\xi_{\bbeta}}(R)
= 2\iprod{R}{\nabla\xi_\theta(R)}-2\xi_\theta(R)
\stackref{vartheta_theta_def}{=} 2\vartheta_\theta(R)
\stackrel{\footnotesize \text{Prop.~\hyperref[prop:xi:theta_a]{\ref*{prop:xi:theta}\ref*{prop:xi:theta_a}}}}{=} 2(\deg(\theta)-1)\xi_\theta(R).
}
Recall from Proposition~\hyperref[prop:xi:theta_b]{\ref*{prop:xi:theta}\ref*{prop:xi:theta_b}} that $\xi_\theta(\gamma_r)\geq\xi_\theta(\gamma_{r-1})\geq0$ for each $r$.
Therefore, the two previous displays together yield
\eeq{ \label{dxk4}
\Big|\Big(\frac{\partial}{\partial\beta_\theta}\Big)^2\PPP_{\xi_{\bbeta}}^{(2)}(\pi)\Big|
&\leq (\deg(\theta)-1)\sum_{r=2}^s [\xi_\theta(\gamma_r)-\xi_\theta(\gamma_{r-1})] \\
&\leq (\deg(\theta)-1)\xi_\theta(\gamma_s)
\leq (\deg(\theta)-1)\sup_{\|R\|_1\leq1}|\xi_\theta(R)| \leq C.
}

Next we consider the quantity $\PPP_{\xi_{\bbeta}}^{(1)}(\pi,\lambda)$ from \eqref{eq:def:Psi}.
By Lemma~\ref{lem_parisi_recovered} with $N=1$,
\eeq{ \label{ghw8ds}
\PPP_{\xi_{\bbeta}}^{(1)}(\pi,\lambda)
= \E \log\sum_{\alpha\in \N^{s-1}}\sum_{\sigma\in \Sigma}\nu_{\alpha}\exp\iprod[\big]{Z_{\xi_{\bbeta}}(\alpha)+\lambda}{\sigma},
}
where $(\nu_{\alpha})_{\alpha\in \N^{s-1}}$ is the probability mass function of the Ruelle probability cascade associated to \eqref{eq:m2},
and $Z_{\xi_{\bbeta}}\colon\N^{s-1}\to\R^\kappa$ is an independent centered Gaussian process with covariance
\eq{
\E[Z_{\xi_{\bbeta}}(\alpha)Z_{\xi_{\bbeta}}(\alpha')^\sT] = \nabla\xi_{\bbeta}(\gamma_{r(\alpha,\alpha')}).
}
We can realize this process as $Z_{\xi_{\bbeta}}(\alpha) = Z_{\xi}(\alpha) + \sum_{\theta\in\Theta_\Q}\beta_\theta Z_\theta(\alpha)$, where all the processes on the right-hand side are independent and centered, with covariances
\eeq{  \label{dzo2}
\E[Z_{\xi}(\alpha)Z_{\xi}(\alpha')^\sT] = \nabla\xi(\gamma_{r(\alpha,\alpha')}), \qquad
\E[Z_{\theta}(\alpha)Z_{\theta}(\alpha')^\sT] = \nabla\xi_\theta(\gamma_{r(\alpha,\alpha')}).
}
These processes exist by Remark~\ref{rmk_RPCexist}.
Then consider the following probability measure on $\N^{s-1}\times\Sigma$:
\eq{ %\label{eq:G:alpha:sigma}
    G(\alpha,\sigma)\propto \nu_{\alpha}\exp\iprod[\big]{Z_{\xi_{\bbeta}}(\alpha)+\lambda}{\sigma}.
}
In what follows, we will write $\big((\alpha^\ell,\sigma^\ell)\big)_{\ell\geq1}$ to denote i.i.d.~samples from $G$,
and $\langle\cdot\rangle_G$ to denote expectation with respect to $G^{\otimes\infty}$.
Differentiating \eqref{ghw8ds} with respect to $\beta_\theta$, we have
\begin{equation*}
    \frac{\partial}{\partial \beta_{\theta}}\PPP^{(1)}_{\xi_{\bbeta}}(\pi,\lambda)=\E \big\langle g_1\big\rangle_G, \quad \text{where} \quad g_\ell = \iprod[\big]{Z_\theta(\alpha^\ell)}{\sigma^\ell}.
\end{equation*}
By Gaussian integration by parts \cite[Lem.~1.1]{panchenko13a}, this can be rewritten as
\begin{equation}\label{eq:differentiate:once}
    \frac{\partial}{\partial \beta_{\theta}}\PPP^{(1)}_{\xi_{\bbeta}}(\pi,\la)=\beta_{\theta}\E \big\langle f_{1,1}-f_{1,2}\big\rangle_{G},
\end{equation}
where $f_{\ell,\ell'}= \CC\big((\alpha^{\ell},\sigma^{\ell}),(\alpha^{\ell^\prime},\sigma^{\ell^\prime})\big)$, and $\CC\colon(\N^{s-1}\times\Sigma)^2\to\R$ is given by
\begin{equation*}
 \CC\big((\alpha,\sigma),(\alpha^\prime,\sigma^{\prime})\big)
 =\E\big[\iprod[\big]{Z_\theta(\alpha)}{\sigma}\cdot \iprod[\big]{Z_\theta(\alpha')}{\sigma'}\big] 
 \stackref{dzo2}{=} 
 \iprod{\nabla\xi_{\theta}(\gamma_{r(\alpha,\alpha^\prime)})\sigma}{\sigma'}.
 %\sigma^{\sT}\nabla\xi_{\theta}(\gamma_{r(\alpha,\alpha^\prime)})\sigma^\prime.
\end{equation*}
Because $\sigma,\sigma'\in\Sigma$ are both standard basis vectors, we have
\begin{equation*}
%\Big|f\big((\alpha,\sigma),(\alpha^\prime,\sigma^{\prime})\big)\Big|\equiv 
%|\sigma^{\sT}\nabla\xi_{\theta}(\gamma_{r(\alpha,\alpha^\prime)})\sigma^\prime|
|\iprod{\nabla\xi_{\theta}(\gamma_{r(\alpha,\alpha^\prime)})\sigma}{\sigma'}|
\leq \|\nabla\xi_{\theta}(\gamma_{r(\alpha,\alpha^\prime)})\|_{\infty}
%\stackref{monotonicity}{\leq} \|\nabla\xi_{\theta}(\diag(d))\|_{\op},
\leq \sup_{\|R\|_1\leq1}\|\nabla\xi_\theta(R)\|_\infty \leq C.
\end{equation*}
%where the last inequality holds by Proposition~\ref{prop:xi:theta}-\ref{prop:xi:theta_a}. 
Therefore, for any $\ell,\ell^\prime\geq 1$ we have 
\eq{
|f_{\ell,\ell^\prime}|\leq C.
}
%where $C$ depends only on $\theta$ and $\kappa$.
By differentiating \eqref{eq:differentiate:once} using the product rule, we obtain
\begin{equation*}
 \Big(\frac{\partial}{\partial \beta_{\theta}}\Big)^2\PPP_{\xi_{\bbeta}}^{(1)}(\pi,\lambda)
 =\E \big\langle f_{1,1}-f_{1,2}\big\rangle_{G}+\beta_{\theta}\frac{\partial}{\partial \beta_{\theta}}\E \big\langle f_{1,1}-f_{1,2}\big\rangle_{G}.
\end{equation*}
Observe that $f_{\ell,\ell'}$ has no dependence on $\beta_\theta$, and so direct calculation yields
\eq{
\frac{\partial}{\partial \beta_{\theta}}\E \big\langle f_{1,1}\big\rangle_{G}
 &= \E\big\langle f_{1,1}\cdot(g_1-g_2)\big\rangle_G, \\
\frac{\partial}{\partial \beta_{\theta}}\E \big\langle f_{1,2}\big\rangle_{G}
 &= \E\big\langle f_{1,2}\cdot(g_1+g_2-2g_3)\big\rangle_G
 = 2\E\big\langle f_{1,2}\cdot(g_1-g_3)\big\rangle_G.
 %&= \E\Big\langle f_{1,2}\cdot\big(\iprod[\big]{Z_\theta(\alpha^1)}{\sigma^1}+\iprod[\big]{Z_\theta(\alpha^2)}{\sigma^2}-\iprod[\big]{Z_\theta(\alpha^3)}{\sigma^3}\big)\Big\rangle_G \\
}
% \eq{
% \frac{\partial}{\partial \beta_{\theta}}\E \big\langle f_{1,1}\big\rangle_{G}
%  &= \E\Big\langle f_{1,1}\cdot\big(\iprod[\big]{Z_\theta(\alpha^1)}{\sigma^1}-\iprod[\big]{Z_\theta(\alpha^2)}{\sigma^2}\big)\Big\rangle_G, \\
% \frac{\partial}{\partial \beta_{\theta}}\E \big\langle f_{1,2}\big\rangle_{G}
%  &= \E\Big\langle f_{1,2}\cdot\big(\iprod[\big]{Z_\theta(\alpha^1)}{\sigma^1}+\iprod[\big]{Z_\theta(\alpha^2)}{\sigma^2}-2\iprod[\big]{Z_\theta(\alpha^3)}{\sigma^3}\big)\Big\rangle_G.
%  %&= \E\Big\langle f_{1,2}\cdot\big(\iprod[\big]{Z_\theta(\alpha^1)}{\sigma^1}+\iprod[\big]{Z_\theta(\alpha^2)}{\sigma^2}-\iprod[\big]{Z_\theta(\alpha^3)}{\sigma^3}\big)\Big\rangle_G \\
% }
Applying Gaussian integration by parts once more (this time using \cite[Lem.~1.2]{panchenko13a}), we have
\eq{ %\label{many_gibp}
\E\big\langle f_{1,1}\cdot g_1\big\rangle_{G}
&= \beta_\theta\E\big\langle f_{1,1}\cdot(f_{1,1}-f_{1,2})\big\rangle_G, \\
\E\big\langle f_{1,1}\cdot g_2\big\rangle_G
&= \beta_\theta\E\big\langle f_{1,1}\cdot(f_{2,1}+f_{2,2}-2f_{2,3})\big\rangle_G, \\
\E\big\langle f_{1,2}\cdot g_1\big\rangle_G
&= \beta_\theta\E\big\langle f_{1,2}\cdot(f_{1,1}+f_{1,2}-2f_{1,3})\big\rangle_G, \\
\E\big\langle f_{1,2}\cdot g_3\big\rangle_G
&= \beta_\theta\E\big\langle f_{1,2}\cdot(f_{3,1}+f_{3,2}+f_{3,3}-3f_{3,4})\big\rangle_G.
}
The four previous displays together show 
\eeq{ \label{dxk5}
\Big|\Big(\frac{\partial}{\partial\beta_\theta}\Big)^2\PPP_{\xi_{\bbeta}}^{(1)}(\pi,\lambda)\Big| \leq C(1+\beta_\theta^2).
}
The combination of \eqref{dxk4} and \eqref{dxk5} yields the desired inequality.
\end{proof}

%As a consequence of Corollary~\ref{cor:parisi:generic}, we have the following lemma.

\begin{lemma}\label{lem:convex}
Assume $\xi$ satisfies~\ref{xi_power}, \eqref{xi_strongly_convex}, and $\bbeta$ satisfies \eqref{beta_decay1}, \eqref{beta_decay2}.
Then for any $d\in\DD$, the following statements hold.
\begin{enumerate}[label=\textup{(\alph*)}]

\item \label{lem:convex_a} 
$\inf_{\pi\in \Pi_d}\PP_{\xi_{\bbeta}}(\pi)\geq 0$.

\item \label{lem:convex_b}
For any $\theta\in \Theta_\Q$, the function $\beta_{\theta}\mapsto \inf_{\pi\in \Pi_d}\PP_{\xi_{\bbeta}}(\pi)$ is convex on an open interval.

\end{enumerate}
   %For $\bbeta\in \BBB(\la_{\sf m})$,  
   %We have $\inf_{\pi\in \Pi^{\bal}_{d_{\bal}}}\PP_{\xi_{\bbeta}}(\pi)\geq 0$ and for any $\theta\in \Theta$, the function $\beta_{\theta}\mapsto \inf_{\pi\in \Pi^{\bal}_{d_{\bal}}}\PP_{\xi_{\bbeta}}(\pi;d_{\bal})$ is convex.
\end{lemma}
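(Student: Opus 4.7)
The idea is to transfer nonnegativity and convexity from the finite-volume free energy $\eff_{N,\xi_{\bbeta}}(d^N)$ up to the Parisi variational formula, using Theorem~\ref{gen_con_par_thm}. Under the hypotheses of Lemma~\ref{lem:convex}, Lemma~\ref{lem_xi_still_good} guarantees that $\xi_{\bbeta}$ satisfies~\ref{xi_power} and~\ref{xi_convex}, so Theorem~\ref{gen_con_par_thm} is available: along any sequence $d^N\in\DD_N$ with $d^N\to d$, we have $\lim_{N\to\infty}\eff_{N,\xi_{\bbeta}}(d^N) = \inf_{\pi\in\Pi_d}\PP_{\xi_{\bbeta}}(\pi)$.

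For part~\ref{lem:convex_a}, I would apply Jensen's inequality: for any $\sigma_0\in\Sigma^N(d^N)$ (nonempty by the definition of $\DD_N$), we have $\log \zee_{N,\xi_{\bbeta}}(d^N)\geq H_{N,\xi_{\bbeta}}(\sigma_0)$, and taking expectations gives $\eff_{N,\xi_{\bbeta}}(d^N)\geq 0$ since $H_{N,\xi_{\bbeta}}$ is centered Gaussian. Passing to the limit via~\eqref{gen_con_par_eq_b} yields the desired bound.

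For part~\ref{lem:convex_b}, I would first note that \eqref{beta_decay1} and \eqref{beta_decay2} are strict inequalities that depend on $\beta_\theta$ only through $\beta_\theta^2$, so there is an open interval $I$ containing the original value of $\beta_\theta$ throughout which both conditions remain valid; hence Theorem~\ref{gen_con_par_thm} remains in force on $I$. Next, using the decomposition $\xi_{\bbeta} = \bar\xi + \beta_\theta^2\, \xi_\theta$, where $\bar\xi$ denotes $\xi_{\bbeta}$ with $\beta_\theta$ set to $0$, I would couple the Hamiltonians as
\begin{equation*}
H_{N,\xi_{\bbeta}}(\sigma) = H_{N,\bar\xi}(\sigma) + \beta_\theta\, H_{N,\theta}(\sigma),
\end{equation*}
with $H_{N,\bar\xi}$ and $H_{N,\theta}$ taken independent (their existence comes from Proposition~\hyperref[prop:xi:theta_c]{\ref*{prop:xi:theta}\ref*{prop:xi:theta_c}} and the remark that follows it). Differentiating $\log \zee_{N,\xi_{\bbeta}}(d^N)$ twice in $\beta_\theta$ produces the Gibbs variance $\langle H_{N,\theta}^2\rangle_{} - \langle H_{N,\theta}\rangle_{}^2\geq 0$, so $\beta_\theta\mapsto \eff_{N,\xi_{\bbeta}}(d^N)$ is convex on $I$. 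A pointwise limit of convex functions is convex, so the limiting Parisi infimum is convex on $I$ as well.

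The only genuine technicalities are (i) verifying that the Gaussian decomposition is consistent despite $\bar\xi$ being an infinite sum, which reduces to matching covariances against \eqref{eq:def:xi:bbeta}, and (ii) confirming the open-interval stability of \eqref{beta_decay1}--\eqref{beta_decay2}. I do not anticipate any serious obstacle: the lemma is essentially a pullback through Theorem~\ref{gen_con_par_thm} of the elementary fact that log-partition functions are convex in each inverse-temperature parameter.
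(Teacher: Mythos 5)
Your proposal is correct and follows essentially the same route as the paper: both parts pull nonnegativity and convexity of the finite-volume free energy through Theorem~\ref{gen_con_par_thm}, with part~\ref{lem:convex_a} via the trivial Jensen bound $\E\log\zee_{N,\xi_{\bbeta}}\geq\E H_{N,\xi_{\bbeta}}(\sigma_0)=0$ and part~\ref{lem:convex_b} via open-interval stability of \eqref{beta_decay1}--\eqref{beta_decay2}, the decomposition \eqref{HNbbeta_expanded}, and convexity of the log-partition function in $\beta_\theta$ (the paper invokes H\"older's inequality where you compute the Gibbs variance; these are the same fact).
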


\begin{proof}
By Theorem~\ref{gen_con_par_thm} we have
\eeq{ \label{4jf0}
\lim_{\eps\searrow0}\limsup_{N\to\infty} \eff_{N,\xi_{\bbeta}}(d,\eps) = \inf_{\pi\in\Pi_{d}}\PP_{\xi_{\bbeta}}(\pi).
} 
Given any $\eps>0$, the set $\Sigma^N(d,\eps)$ from \eqref{eq:def:Sigma:N:d:eps} is nonempty for all large $N$.
For any $\sigma\in\Sigma^N(d,\eps)$, we have the trivial lower bound
\eeq{ \label{4jf1}
\eff_{N,\xi_{\bbeta}}(d,\eps)
%\equiv \frac{1}{N}\E\log \sum_{\sigma \in \Sigma^{\kappa N}(d_{\bal})}\exp\big(H_{\ka N,\xi_{\bbeta}}(\sigma)\big)
\geq\frac{1}{N}\E H_{N, \xi_{\bbeta}}(\sigma)=0.
}
Part~\ref{lem:convex_a} now follows from \eqref{4jf0} and \eqref{4jf1}.

For part~\ref{lem:convex_b}, note that \eqref{4jf0} remains true for all $\beta_\theta$ in some open interval, since \eqref{beta_decay1} and \eqref{beta_decay2} remain true for all $\beta_\theta$ in some open interval. 
From the decomposition \eqref{HNbbeta_expanded},
% observe that
% \eq{
% \big(H_{N,\xi_{\bbeta}}(\sigma)\big)_{\sigma\in \Sigma^N}
% \stackrel{\mathrm{law}}{=}
% \Big(H_{N,\xi}(\sigma)+\sum_{\theta\in\Theta_\Q}\beta_{\theta}H_{N,\theta}(\sigma)\Big)_{\sigma\in \Sigma^N},
% }
% %Further, since $\big(H_{N,\xi_{\bbeta}}(\sigma)\big)_{\sigma\in \Sigma^N}\stackrel{d}{=}\big(H_{N,\xi}(\sigma)+\sum_{\theta^\prime}\beta_{\theta^\prime}H_{N,\theta^\prime}(\sigma)\big)_{\sigma\in \Sigma^N}$ holds, 
% where each $H_{N,\theta}$ is an independent Gaussian process with covariance as in \eqref{eq:H:theta:cov}.
a standard application of H\"{o}lder's inequality shows that whenever $\Sigma^N(d,\eps)$ is nonempty, the map $\beta_{\theta}\mapsto \sF_{N, \xi_{\bbeta}}(d,\eps)$ is convex for any $\theta\in \Theta_\Q$. 
Therefore, $\beta_{\theta}\mapsto \inf_{\pi\in \Pi_d}\PP_{\xi_{\bbeta}}(\pi)$ is a limit of convex functions (via \eqref{4jf0}) and hence convex.
%Therefore, again by Corollary~\ref{cor:parisi:generic}, the function $\beta_{\theta}\to \inf_{\pi\in \Pi^{\bal}_{d_{\bal}}}\PP_{\xi_{\bbeta}}(\pi;d_{\bal})$ is convex.
\end{proof}

Finally, we show that Lemma~\ref{lem:second:derivative} and Lemma~\ref{lem:convex} imply Proposition~\ref{prop:diff}.
\begin{proof}[Proof of Proposition~\ref{prop:diff}]
  Fix $\theta\in \Theta_\Q$ and $d\in\DD$.
  %We may assume for convenience that $\beta_\theta\geq0$, since the sign of $\beta_\theta$ has no effect on the covariance function $\xi_{\bbeta}$.
    For clarity, we make the dependence of $\PP_{\xi_{\bbeta}}(\pi,\la)$ on $\beta_{\theta}$ explicit by writing $\eff_{\pi,\la}(\beta_{\theta})\coloneqq\PP_{\xi_{\bbeta}}(\pi,\la)$, 
    as well as 
    \begin{equation*}
        \eff(\beta_\theta) 
        \coloneqq \inf_{\pi\in \Pi_{d}}\PP_{\xi_{\bbeta}}(\pi) 
        =\inf_{\pi\in\Pi_d,\lambda\in\R^\kappa}\eff_{\pi,\lambda}(\beta_\theta).
    \end{equation*}
    %which also equals $f(\beta_{\theta})\equiv \inf_{\pi\in \Pi_{d},\la\in \R^{\ka}}f_{\pi,\la}(\beta_{\theta})$.
    By Lemma~\hyperref[lem:convex_a]{\ref*{lem:convex}\ref*{lem:convex_a}}, $\eff(\beta_{\theta})$ is a finite number.
    So for any $\eps>0$, there exist $\pi_{\eps}\in \Pi_{d}$ and $\la_{\eps}\in \R^{\ka}$ such that
    \eeq{ \label{dgx3}
        \eff_{\pi_{\eps},\la_{\eps}}(\beta_{\theta})\leq \eff(\beta_{\theta})+\eps.
    }
    Since $\beta_\theta\mapsto\eff(\beta_\theta)$ is convex by Lemma~\hyperref[lem:convex_b]{\ref*{lem:convex}\ref*{lem:convex_b}}, the differentiability we seek is equivalent to the subdifferential $\partial \eff(\beta_{\theta})$ consisting of a single point. 
    So consider $x \in \partial \eff(\beta_{\theta})$. 
    For all small enough $h>0$, we have
    \eeq{ \label{dgx2}
    x\leq \frac{\eff(\beta_{\theta}+h)-\eff(\beta_{\theta})}{h}. 
    }
    % Further, we have that
    % \begin{equation*}
    %     \eff(\beta_{\theta}+h)\equiv \inf_{\pi,\la}\eff_{\pi,\la}(\beta_{\theta}+h)\leq \eff_{\pi_{\eps},\la_{\eps}}(\beta_{\theta}+h).
    % \end{equation*}
    Combining \eqref{dgx3} and \eqref{dgx2} with the trivial inequality $\eff(\beta_\theta+h) \leq \eff_{\pi_\eps,\lambda_\eps}(\beta_\theta+h)$, we see
    \begin{equation}\label{eq:subdifferential:upper}
        x\leq \frac{\eff_{\pi_{\eps},\la_{\eps}}(\beta_{\theta}+h)-\eff_{\pi_{\eps},\la_{\eps}}(\beta_{\theta})+\eps}{h}\leq \frac{\partial}{\partial \beta_{\theta}}\eff_{\pi_{\eps},\la_{\eps}}(\beta_{\theta})+Ch\big(1+(|\beta_{\theta}|+h)^2\big)+\frac{\eps}{h},
    \end{equation}
    where the second inequality follows from Taylor's theorem together with Lemma~\ref{lem:second:derivative}. 
    %for a constant $C\equiv C_{\theta,\ka}>0$ that does not depend on $h$ nor $\eps>0$. 
    By analogous reasoning, %we can lower bound $x$ by
    \begin{equation}\label{eq:subdifferential:lower}
    x 
    %\geq\frac{\eff(\beta_{\theta})-\eff(\beta_{\theta}-h)}{h}
    \geq \frac{\partial}{\partial \beta_{\theta}}\eff_{\pi_{\eps},\la_{\eps}}(\beta_{\theta})-Ch\big(1+(|\beta_{\theta}|+h)^2\big)-\frac{\eps}{h}.
    \end{equation}
    Finally, take $h=\sqrt{\eps}$ and send $\eps\to 0$. 
    The inequalities \eqref{eq:subdifferential:upper} and \eqref{eq:subdifferential:lower} together show that  $x$ can take at most one value.
\end{proof}

\section{Continuity and duality} \label{sec:continuity_and_duality}

The purpose of this section is threefold.
First we prove Proposition~\ref{prop:continuity}, part~\ref{prop:continuity_b} of which gives Lipschitz continuity of the Parisi functional.
Next we use this continuity in the proof of a duality statement (Lemma~\ref{pr456}) that is needed in Section~\ref{sec_lower_bound} to relate the functional appearing the Aizenman--Sims--Starr Scheme to the Parisi functional. %\ref{sec_lbsym}.
Finally, we solve this duality in the balanced case (Lemma~\ref{pr457}), thereby eliminating the infimum appearing in \eqref{def:parisi:ftl} and fulfilling the promise of Remark~\ref{rmk_lambda}.

\subsection{Continuity of free energy and the Parisi functional} \label{sec:continuity}

Both parts of Proposition~\ref{prop:continuity} follow from interpolation arguments.
We begin with part~\ref{prop:continuity_a}.

\begin{proof}[Proof of Proposition {\hyperref[prop:continuity_a]{\ref*{prop:continuity}\ref*{prop:continuity_a}}}]
For $t\in [0,1]$, define the following interpolating free energy:
\begin{equation*}
\phi(t)=\frac{1}{N}\E \log\sum_{\sigma\in \Sigma^N(d)}\exp \HH_{t}(\sigma),
\quad\textnormal{where}\quad \HH_{t}(\sigma)=\sqrt{t}\, H_{N,\xi}(\sigma)+\sqrt{1-t}\, H_{N,\tilde\xi}(\sigma).
\end{equation*}
We assume $H_{N,\xi}$ is independent of $H_{N,\tilde\xi}$.
Let $\langle\cdot\rangle_t$ denote expectation with respect to the Gibbs measure $G_{t}(\sigma)\propto \exp\HH_t(\sigma)$ on $\Sigma^N(d)$.
By differentiating the expression for $\phi(t)$ with respect to $t$, we have
\eq{
\phi'(t) = \frac{1}{N}\E\Big\langle\frac{\partial \HH_t(\sigma)}{\partial t}\Big\rangle_t.
}
By Gaussian integration by parts \cite[Lem.~1.1]{panchenko13a}, this can be rewritten as
\eq{
&\phi^\prime(t)= N^{-1}\E\big\langle \CC(\sigma^1,\sigma^1)-\CC(\sigma^1,\sigma^2)\big\rangle_t,
}
where $\sigma^1,\sigma^2$ denote independent samples from $G_t$, and $\CC\colon\Sigma^N(d)\times\Sigma^N(d)\to\R$ is given by
\eq{
\CC(\sigma,\sigma^\prime)
=\E\Big[\frac{\partial \HH_{t}}{\partial t}(\sigma)\HH_{t}(\sigma')\Big]
\stackref{general_cov}{=}\frac{1}{2}\Big[\xi\Big(\frac{\sigma\sigma'^\sT}{N}\Big) - \tilde\xi\Big(\frac{\sigma\sigma'^\sT}{N}\Big)\Big].
}
%Here, we recall that $R(\sigma,\sigma^\prime)\equiv\sigma\sigma'^{\sT}/N$ in \eqref{def:overlap}. Note that since $\sigma\in \Sigma^N(d)$, we have $R(\sigma,\sigma)=\diag(d)$. 
%Thus, recalling that $\big\|R(\sigma,\sigma^\prime)\big\|_1=1$ for $\sigma,\sigma^\prime \in \Sigma^N$, it follows from the above equality that 
Since $\|\sigma\sigma'^\sT\|_1\leq N$ for any $\sigma,\sigma'\in\Sigma^N$, we deduce that
\begin{equation*}
\sup_{t\in (0,1)}|\phi^\prime(t)|\leq \sup_{\|R\|_1\leq 1}|\xi(R)-\tilde\xi(R)|.
\end{equation*}
In summary,
\begin{equation*}
\big|\eff_{N,\xi}-\eff_{N,\tilde\xi}\big|=|\phi(1)-\phi(0)\big|\leq \sup_{\|R\|_1\leq 1}|\xi(R)-\tilde\xi(R)|. \qedhere
\end{equation*}
\end{proof}

We now turn our attention to Proposition~\hyperref[prop:continuity_b]{\ref*{prop:continuity}\ref*{prop:continuity_b}}, %\hyperref[prop:continuity_b]{\ref*{prop:continuity}\ref*{prop:continuity_b}}.
%We begin by considering the analogous statement for discrete paths.
which will be an easy consequence of the following lemma.
In fact, this lemma is what allows the Parisi functional to be extended to arbitrary paths in the first place.

\begin{lemma} \label{pconlem}
Assume $\xi$ and $\tilde\xi$ satisfy~\ref{xi_power}.
Then for any $d\in\DD$, discrete paths $\pi,\tilde\pi\in\Pi_{d}^{\disc}$, $\lambda\in\R^\kappa$, and nonempty $S\subseteq\Sigma^N$, we have
\eeq{ \label{ne3w}
&|\PPP_{N,\xi}^{(1)}(\pi,\lambda;S) - \PPP_{N,\tilde \xi}^{(1)}(\tilde\pi,\lambda;S)| \\
&\leq \sup_{\|R\|_1\leq1}\|\nabla\xi(R)-\nabla\tilde\xi(R)\|_\infty + \frac{1}{2}\sup_{\|R\|_1\leq1}\|\nabla^2\xi(R)\|_\infty
\int_0^1\|\pi(t) - \tilde\pi(t)\|_1\ \dd t.
}
Similarly,
\eeq{ \label{ne4w}
&|\PPP_{\xi}^{(2)}(\pi) - \PPP_{\tilde\xi}^{(2)}(\tilde\pi)| \\
&\leq \sup_{\|R\|_1\leq1}\|\nabla\xi(R)-\nabla\tilde\xi(R)\|_\infty + \frac{1}{2}\sup_{\|R\|_1\leq1}\|\nabla^2\xi(R)\|_\infty
\int_0^1\|\pi(t) - \tilde\pi(t)\|_1\ \dd t.
}
\end{lemma}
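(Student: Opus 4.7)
My plan is to prove both \eqref{ne3w} and \eqref{ne4w} by Gaussian interpolation, in each case splitting the bound into a ``path change'' step and a ``covariance change'' step. After refining $\pi$ and $\tilde\pi$ to a common partition $0 = m_0 < \cdots < m_s = 1$ by inserting duplicate matrices (which preserves each path by Remark~\ref{rmk_wd}), both sequences $(\gamma_r)$ and $(\tilde\gamma_r)$ lie in $\Gamma_\kappa(d)$ and satisfy $\gamma_s = \tilde\gamma_s = \diag(d)$. In each interpolation I will use the standard Guerra derivative formula: for a path $Z^t$ of Gaussian processes with $t$-dependent covariance $C^t(\alpha,\alpha')$, the derivative of the associated RPC-level free energy is $\frac{1}{2N}\sum_i\E\langle\iprod{\partial_t C^t(\alpha^1,\alpha^1)\sigma_i^1}{\sigma_i^1} - \iprod{\partial_t C^t(\alpha^1,\alpha^2)\sigma_i^1}{\sigma_i^2}\rangle_t$, and analogously for the scalar process $Y$ driving $\PPP^{(2)}$.

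For \eqref{ne3w} I would first fix $\xi$ and interpolate $C^t(\alpha,\alpha') = t\nabla\xi(\gamma_{r(\alpha,\alpha')}) + (1-t)\nabla\xi(\tilde\gamma_{r(\alpha,\alpha')})$. This is a valid covariance because each summand is monotone in $r$ by Proposition~\ref{prop:xi:theta}\ref{prop:xi:theta_b}, and monotonicity in the positive-semidefinite order is preserved under convex combinations. The diagonal term in the Guerra identity vanishes since $r(\alpha^1,\alpha^1) = s$ forces $\gamma_s = \tilde\gamma_s$, while the off-diagonal term is bounded by $\sup\|\nabla^2\xi\|_\infty\cdot\|\gamma_r-\tilde\gamma_r\|_1$ using Lipschitz continuity of $\nabla\xi$ together with $|\iprod{M\sigma_i^1}{\sigma_i^2}|\leq\|M\|_\infty$ for basis vectors. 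A second interpolation $C^t = t\nabla\xi(\tilde\gamma_r) + (1-t)\nabla\tilde\xi(\tilde\gamma_r)$ with the path fixed then controls the covariance change: both terms of the Guerra identity now survive, but each is deterministically bounded by $\sup\|\nabla\xi - \nabla\tilde\xi\|_\infty$, producing the first summand of \eqref{ne3w}. For \eqref{ne4w} the same two-step scheme applies to the scalar process $Y$ of covariance $\vartheta_\xi(\gamma_{r(\alpha,\alpha')})$ (using Lemma~\ref{PPPPsi_lemma} to identify $\PPP^{(2)}_\xi$ with $-\Psi^{(2)}_{N,\xi}$): the path change uses the Lipschitz estimate $\|\nabla\vartheta_\xi(R)\|_\infty = \|\nabla^2\xi(R)R\|_\infty\leq\sup\|\nabla^2\xi\|_\infty$ on $\{\|R\|_1\leq 1\}$, while the covariance change is controlled using the decomposition $\vartheta_\xi(R)-\vartheta_{\tilde\xi}(R) = \iprod{R}{\nabla(\xi-\tilde\xi)(R)} - (\xi-\tilde\xi)(R)$, each piece of which is bounded in terms of $\sup\|\nabla\xi-\nabla\tilde\xi\|_\infty$ after writing $(\xi-\tilde\xi)(R) = \int_0^1\iprod{R}{\nabla(\xi-\tilde\xi)(sR)}\,\dd s$ (valid because each $\xi_\theta$ is homogeneous of positive degree, so $\xi(0) = \tilde\xi(0) = 0$).

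The main obstacle is converting the bound produced by the Guerra derivative, namely $\E\langle\|\gamma_{r(\alpha^1,\alpha^2)}-\tilde\gamma_{r(\alpha^1,\alpha^2)}\|_1\rangle_t$, into the $L^1$ distance $\int_0^1\|\pi(t)-\tilde\pi(t)\|_1\,\dd t$ appearing in the statement. Under the tilted Gibbs measure $\langle\cdot\rangle_t$, the law of $r(\alpha^1,\alpha^2)$ is not simply the RPC law with mass $m_r - m_{r-1}$ at level $r$ (as it would be under the unweighted RPC via Theorem~\ref{rpcthm}), so the identification is not immediate. To get around this, I would change one $\gamma_{r_0}$ at a time while holding the others fixed, using the fact that the sum $z_{r_0-1} + z_{r_0}$ has covariance $\nabla\xi(\gamma_{r_0+1}) - \nabla\xi(\gamma_{r_0-1})\one\{r_0-1>0\}$ (independent of $\gamma_{r_0}$) to localize the $\gamma_{r_0}$-dependence to recursion levels $r_0-1$ and $r_0$. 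The Parisi recursion at those levels then contributes a weight of $(m_{r_0}-m_{r_0-1})$ times the change in $\gamma_{r_0}$, and telescoping over $r_0 = 1,\dots,s$ converts $\sum_r(m_r-m_{r-1})\|\gamma_r-\tilde\gamma_r\|_1$ into $\int_0^1\|\pi-\tilde\pi\|_1\,\dd t$.
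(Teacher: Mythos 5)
The decisive step of your argument rests on a false premise. You assert that under the tilted measure $\langle\cdot\rangle_t$ the law of $r(\alpha^1,\alpha^2)$ is \emph{not} the Ruelle probability cascade law assigning mass $m_r-m_{r-1}$ to level $r$, and you build your workaround on that basis. In fact the law \emph{is} the same: this is the invariance of RPCs under tilts by hierarchical Gaussian fields (\cite[Thm.~4.4]{panchenko13a}), and it is exactly what the paper's proof invokes to get
\[
\E\big\langle\|\gamma_{r(\alpha^1,\alpha^2)}-\tilde\gamma_{r(\alpha^1,\alpha^2)}\|_1\big\rangle_t
=\sum_{r=1}^s\|\gamma_r-\tilde\gamma_r\|_1\,(m_r-m_{r-1})
=\int_0^1\|\pi(t)-\tilde\pi(t)\|_1\ \dd t .
\]
With this in hand your interpolation closes immediately (the paper runs a single interpolation handling the path change and the covariance change at once, which is equivalent to your two-step scheme and gives the same constants for \eqref{ne3w}). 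Your substitute argument does not repair the perceived gap. First, replacing $\gamma_{r_0}$ by $\tilde\gamma_{r_0}$ one index at a time produces intermediate sequences that need not be monotone: $\gamma_{r_0-1}\preceq\tilde\gamma_{r_0}\preceq\gamma_{r_0+1}$ is not implied by the hypotheses, so the split of $\nabla\xi(\gamma_{r_0+1})-\nabla\xi(\gamma_{r_0-1})$ into two positive-semidefinite increments, and hence the Gaussian vectors $z_{r_0-1},z_{r_0}$ you want to redefine, may fail to exist along the deformation. Second, the claim that the recursion ``contributes a weight of $(m_{r_0}-m_{r_0-1})$ times the change in $\gamma_{r_0}$'' is precisely the quantitative content you set out to avoid: that weight is $\P(r(\alpha^1,\alpha^2)=r_0)$ under the tilted measure, and identifying it with $m_{r_0}-m_{r_0-1}$ is again the invariance theorem. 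As written, the crucial step is asserted rather than proved.

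A secondary point: for \eqref{ne4w} no interpolation is needed at all. By the exact Gaussian computation behind Lemma~\ref{PPPPsi_lemma}, $\PPP^{(2)}_\xi(\pi)$ is the explicit deterministic quantity \eqref{PPP2_def}, so one bounds $\tfrac12\int_0^1|\vartheta_\xi(\pi(t))-\vartheta_{\tilde\xi}(\tilde\pi(t))|\ \dd t$ pointwise using the estimates you list (which are correct, including $\xi(R)=\int_0^1\iprod{R}{\nabla\xi(sR)}\,\dd s$ from homogeneity). Running a Guerra interpolation on $Y$ instead reintroduces the tilted-measure issue above, and---because the diagonal term no longer vanishes in the covariance-change step---it yields the coefficient $2\sup_{\|R\|_1\le1}\|\nabla\xi(R)-\nabla\tilde\xi(R)\|_\infty$ on the first term rather than the stated coefficient $1$.
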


\begin{proof}
As in \eqref{pi_rep}, we assume $\pi$ and $\tilde\pi$ are of the form
%Let $\pi$ be given by \eqref{pi_rep}, and $\tilde\pi$ given by
\eq{
\pi(t) = \gamma_r \quad \text{and} \quad \tilde\pi(t) = \tilde\gamma_r \quad \text{for $t\in(m_{r-1},m_{r}]$, $r\in\{1,\dots,s\}$}.
}
There is no loss of generality in using the same partition $0=m_0<m_1<\cdots<m_s=1$ for both paths, thanks to Remark~\ref{rmk_wd}.

First we prove \eqref{ne3w}.
Let $(\tilde Z_{i}(\alpha))_{1\le i\le N,\alpha\in\N^{s-1}}$ be an independent copy of the process $(Z_{i}(\alpha))_{1\le i\le N,\alpha\in\N^{s-1}}$ from \eqref{bni}.
We assume both processes are independent of the RPC weights $(\nu_\alpha)_{\alpha\in\N^{s-1}}$.
Then define an interpolating process
\eeq{ \label{vq12t8}
\HH_t(\alpha,\sigma) = \sum_{i=1}^N \big(\sqrt{t}\,\iprod[\big]{Z_{i}(\alpha)}{\sigma_i} + \sqrt{1-t}\,\iprod{\tilde Z_{i}(\alpha)}{\sigma_i} + \iprod{\lambda}{\sigma_i}\big), \quad t\in[0,1],
}
as well as the corresponding energy
\eeq{ \label{bz4hsk}
\phi(t) = \frac{1}{N}\E\log\sum_{\alpha\in\N^{s-1}}\sum_{\sigma\in S}\nu_\alpha\exp \HH_t(\alpha,\sigma).
}
We then have $\phi(1) = \PPP_{N,\xi}^{(1)}(\pi)$ while $\phi(0)=\PPP_{N,\tilde\xi}^{(1)}(\tilde\pi)$, and so we seek an upper bound on $|\phi(1)-\phi(0)|$.
For \eqref{ne3w} it suffices to show a uniform bound on the derivative of $\phi$:
\eeq{ \label{ne3w_su}
\sup_{t\in(0,1)} |\phi'(t)| \leq \text{R.H.S. of \eqref{ne3w}}.
}
To this end, let $\langle\cdot\rangle_t$ denote expectation with respect to the probability measure $G_t(\alpha,\sigma)\propto\nu_\alpha\exp\HH_t(\alpha,\sigma)$ on $\N^{s-1}\times S$.
%as follows. 
%For any $f\in L^1(G_t)$, the expectation of $f$ with respect to $G_t$ is given by
%\eq{
%\langle f \rangle_t = \frac{\sum_{\alpha\in\N^{s-1}}\nu_\alpha\sum_{\sigma\in S}f(\alpha,\sigma)\exp \HH_t(\alpha,\sigma)}{\sum_{\alpha\in\N^{s-1}}\nu_\alpha\sum_{\sigma\in S}\exp \HH_t(\alpha,\sigma)}.
%}
By differentiating \eqref{bz4hsk} with respect to $t$, we have
\eeq{ \label{xme3}
\phi'(t) = \frac{1}{N}\E\Big\langle\frac{\partial \HH_t(\alpha,\sigma)}{\partial t}\Big\rangle_t.
}
Define $\CC\colon(\N^{s-1}\times S)^2\to\R$ by
\eeq{ \label{wzi8}
\CC\big((\alpha,\sigma),(\alpha',\sigma')\big)
&\stackrefp{vq12t8}{=} \frac{1}{N}\E\Big[\frac{\partial \HH_t(\alpha,\sigma)}{\partial t} \HH_t(\alpha',\sigma')\Big] \\ %\quad \text{for $(\alpha,\sigma),(\alpha',\sigma')\in\N^{s-1}\times S$} \\
&\stackref{vq12t8}{=} \frac{1}{2N}\sum_{i=1}^N \E\big[\iprod[\big]{Z_{i}(\alpha)}{\sigma_i}\iprod[\big]{Z_{i}(\alpha')}{\sigma_i'} - \iprod{\tilde Z_{i}(\alpha)}{\sigma_i}\iprod{\tilde Z_{i}(\alpha')}{\sigma_i'}\big] \\
&\stackref{bni}{=} \frac{1}{2N}\sum_{i=1}^N\big[\iprod{\nabla\xi(\gamma_{r(\alpha,\alpha')})\sigma_i}{\sigma_i'}-\iprod{\nabla\tilde\xi(\tilde\gamma_{r(\alpha,\alpha')})\sigma_i}{\sigma_i'}\big],
}
Then Gaussian integration by parts \cite[Lem.~1.1]{panchenko13a} transforms \eqref{xme3} to 
\eeq{ \label{akl2}
\phi'(t) = \E\Big\langle \CC\big((\alpha^1,\sigma^1),(\alpha^1,\sigma^1)\big)-\CC\big((\alpha^1,\sigma^1),(\alpha^2,\sigma^2)\big)\Big\rangle_t,
}
where $(\alpha^1,\sigma^1),(\alpha^2,\sigma^2)$ denote independent samples from $G_t$.
Since both $\sigma_i$ and $\sigma_i'$ are standard basis vectors, it follows from \eqref{wzi8} that
\eeq{ \label{cn23}
&|\CC\big((\alpha,\sigma),(\alpha',\sigma')\big)|
\leq \frac{1}{2}\|\nabla\xi(\gamma_{r(\alpha,\alpha')}) - \nabla\tilde\xi(\tilde\gamma_{r(\alpha,\alpha')})\|_\infty \\
&\leq \frac{1}{2}\big(\|\nabla\xi(\gamma_{r(\alpha,\alpha')}) - \nabla\xi(\tilde\gamma_{r(\alpha,\alpha')})\|_\infty
+ \|\nabla\xi(\tilde\gamma_{r(\alpha,\alpha')}) - \nabla\tilde\xi(\tilde\gamma_{r(\alpha,\alpha')})\|_\infty\big) \\
&\leq \frac{1}{2}\Big(\sup_{\|R\|_1\leq 1}\|\nabla^2\xi(R)\|_\infty \cdot \|\gamma_{r(\alpha,\alpha')}-\tilde\gamma_{r(\alpha,\alpha')}\|_1 + \sup_{\|R\|_1\le 1}\|\nabla\xi(R)-\nabla\tilde\xi(R)\|_\infty\Big).
}
In the special case $\alpha=\alpha'$, we can use the assumption that both $\pi$ and $\tilde\pi$ belong to $\Pi_d^{\disc}$ in order to conclude that 
\eq{
\gamma_{r(\alpha,\alpha)}=\gamma_s=\tilde\gamma_s=\diag(d).
}
That is, the first term in the final line of \eqref{cn23} vanishes when $\alpha=\alpha'$.
%When $\alpha\neq\alpha'$, both terms may be positive.
Therefore, using \eqref{cn23} in \eqref{akl2} leads to
\eq{
|\phi'(t)| \leq \sup_{\|R\|_1\le 1}\|\nabla\xi(R)-\nabla\tilde\xi(R)\|_\infty
+ \frac{1}{2}\sup_{\|R\|_1\leq 1}\|\nabla^2\xi(R)\|_\infty \E\big\langle\|\gamma_{r(\alpha^1,\alpha^2)}-\tilde\gamma_{r(\alpha^1,\alpha^2)}\|_1\big\rangle_t.
}
By \cite[Thm.~4.4]{panchenko13a}, the marginal of $r(\alpha^1,\alpha^2)$ under $\E(G_t^{\otimes 2})$ is the same with the marginal of $r(\alpha^1,\alpha^2)$ under $\E(\nu^{\otimes 2})$, where $\nu$ is the RPC from  Theorem~\ref{rpcthm}.
In particular,
\eq{
\E\big\langle\|\gamma_{r(\alpha^1,\alpha^2)}-\tilde\gamma_{r(\alpha^1,\alpha^2)}\|_1\big\rangle_t
= \sum_{r=1}^s \|\gamma_r - \tilde\gamma_r\|_1(m_r - m_{r-1})
= \int_0^1\|\pi(t)-\tilde\pi(t)\|_1\ \dd t.
}
Using this calculation in the previous inequality yields \eqref{ne3w_su}.

Now we turn to proving \eqref{ne4w}.
Since $\pi$ and $\tilde\pi$ both belong to $\Pi_d$, we have
%Recall from \eqref{vthet_def} that $\vartheta_\xi(R)=\iprod{R}{\nabla\xi(R)}-\xi(R)$.
%We must then also define $\vartheta_{\tilde\xi}(R)=\iprod{R}{\nabla\tilde\xi(R)}-\tilde\xi(R)$.
%We wish to control the difference
\eeq{ \label{sjc43}
|\PPP_{\xi}^{(2)}(\pi)
- \PPP_{\tilde\xi}^{(2)}(\tilde\pi)|
&\stackref{PPP2_def}{\leq} \frac{1}{2}\int_0^1 |\vartheta_\xi(\pi(t)) - \vartheta_{\tilde\xi}(\tilde\pi(t))|\ \dd t. %\\
%&= \frac{1}{2}\sum_{r=1}^s|\vartheta_\xi(\gamma_r) - \vartheta_\xi(\gamma_r)|(m_r-m_{r-1}) .
}
For any $\gamma$ and $\tilde \gamma$, the triangle inequality gives
\eeq{ \label{cnz1}
|\vartheta_\xi(\gamma)-\vartheta_{\tilde\xi}(\tilde\gamma)|
\leq |\vartheta_\xi(\gamma) - \vartheta_\xi(\tilde\gamma)|
+ |\vartheta_\xi(\tilde \gamma) - \vartheta_{\tilde\xi}(\tilde \gamma)|.
}
When $\|\gamma\|_1,\|\tilde\gamma\|_1\leq 1$, the first term on the right-hand side of \eqref{cnz1} obeys the upper bound
\eeq{ \label{db76}
|\vartheta_\xi(\gamma) - \vartheta_\xi(\tilde \gamma)|
\leq \sup_{\|R\|_1\leq 1} \|\nabla\vartheta_\xi(R)\|_\infty\cdot\|\gamma-\tilde\gamma\|_1.
}
Now recall from \eqref{vthet_def} that $\vartheta_\xi(R)=\iprod{R}{\nabla\xi(R)}-\xi(R)$.
It follows that the $(a,b)$ entry of $\nabla\vartheta_\xi(R)$ is
\eq{
\frac{\partial \vartheta}{\partial R_{a,b}}
= R_{a,b}\sum_{k,k'=1}^\kappa\frac{\partial^2\xi}{\partial R_{a,b}\partial R_{k,k'}},
}
and thus
$\|\nabla\vartheta_\xi(R)\|_\infty
\leq \|R\|_1\cdot\|\nabla^2\xi(R)\|_\infty$.
Using this in \eqref{db76} results in
\eeq{ \label{dn39}
|\vartheta_\xi(\gamma) - \vartheta_\xi(\tilde\gamma)| \leq \sup_{\|R\|_1\leq 1}\|\nabla^2\xi(R)\|_\infty \cdot\|\gamma-\tilde\gamma\|_1.
}
Meanwhile, the second term on the right-hand side of \eqref{cnz1} satisfies
\eeq{ \label{ela1}
|\vartheta_\xi(\tilde\gamma) - \vartheta_{\tilde\xi}(\tilde\gamma)|
&\leq |\iprod{\tilde\gamma}{\nabla\xi(\tilde\gamma)-\nabla\tilde\xi(\tilde\gamma)}| 
+ |\xi(\tilde\gamma)-\tilde\xi(\tilde\gamma)| \\
&\leq \sup_{\|R\|_1 \leq 1}\|\nabla\xi(R)-\nabla\tilde\xi(R)\|_\infty
+ \sup_{\|R\|_1\leq 1}|\xi(R)-\tilde\xi(R)|.
}
Since $\xi(0)=\tilde\xi(0)=0$ by~\ref{xi_power}, the second supremum above can be further controlled as
\eq{
\sup_{\|R\|_1\leq 1}|\xi(R)-\tilde\xi(R)|
\leq \sup_{\|R\|_1\leq 1}\|\nabla\xi(R) - \nabla\tilde\xi(R)\|_\infty.
}
Therefore, \eqref{ela1} can be rewritten as
\eeq{ \label{ej32}
|\vartheta_\xi(\tilde\gamma) - \vartheta_{\tilde\xi}(\tilde\gamma)|
\leq 2\sup_{\|R\|_1\leq1}\|\nabla\xi(R) - \nabla\tilde\xi(R)\|_\infty.
}
Applying both \eqref{dn39} and \eqref{ej32} in \eqref{cnz1} results in
\eq{
|\vartheta_\xi(\gamma)-\vartheta_{\tilde\xi}(\tilde\gamma)|
\leq 2\sup_{\|R\|_1\leq 1}\|\nabla\xi(R) - \nabla\tilde\xi(R)\|_\infty
+ \sup_{\|R\|_1\leq 1}\|\nabla^2\xi(R)\|_\infty \cdot\|\gamma-\tilde\gamma\|_1.
}
Applying this uniform bound to the integrand in \eqref{sjc43} yields \eqref{ne4w} as desired.
\end{proof}

A key consequence of Lemma~\ref{pconlem} is that the map $\Pi_d^\disc\ni\pi\mapsto\PPP_{N,\xi}^{(1)}(\pi,\lambda;S)$ has a unique continuous extension to all of $\Pi_d$.
We continue to write $\PPP_{N,\xi}^{(1)}$ for this extension, including the special case $\PPP_\xi^{(1)}(\pi,\lambda)=\PPP_{1,\xi}^{(1)}(\pi,\lambda;\Sigma)$ used in \eqref{eq:def:Psi}.
Of course, these extensions admit the same estimates \eqref{ne3w} and \eqref{ne4w} for any $\pi,\tilde\pi\in\Pi_d$.
We now record how these estimates lead to the continuity of Parisi functional, as claimed in Proposition~\hyperref[prop:continuity_b]{\ref*{prop:continuity}\ref*{prop:continuity_b}}.

\begin{proof}[Proof of Proposition {\hyperref[prop:continuity_b]{\ref*{prop:continuity}\ref*{prop:continuity_b}}}]
Recall the definition of the Parisi functional from \eqref{def:parisi:ftl}:
\eq{
\PP_{\xi}(\pi)
&\stackrefp{qk38}{=}\inf_{\lambda\in \R^{\kappa}}\big[\PPP_{\xi}^{(1)}(\pi,\lambda)+\PPP_{\xi}^{(2)}(\pi,\lambda)-\iprod{\lambda}{d}\big] \\
&\stackref{qk38}{=}\inf_{\lambda\in \R^{\kappa}}\big[\PPP_{1,\xi}^{(1)}(\pi,\lambda;\Sigma)+\PPP_{\xi}^{(2)}(\pi,\lambda)-\iprod{\lambda}{d}\big].
}
By combining \eqref{ne3w} and \eqref{ne4w}, we have
\eeq{ \label{44rf}
&\big|\PPP_{1,\xi}^{(1)}(\pi,\lambda;\Sigma)+\PPP_{\xi}^{(2)}(\pi,\lambda)
- \PPP_{1,\tilde\xi}^{(1)}(\tilde\pi,\lambda;\Sigma)-\PPP_{\tilde\xi}^{(2)}(\tilde\pi,\lambda)\big| \\
&\leq 2\sup_{\|R\|_1\leq1}\|\nabla\xi(R)-\nabla\tilde\xi(R)\|_\infty + \sup_{\|R\|_1\leq1}\|\nabla^2\xi(R)\|_\infty
\int_0^1\|\pi(t) - \tilde\pi(t)\|_1\ \dd t.
}
Since the right-hand side of \eqref{44rf} does not depend on $\lambda$, it follows that
\eq{
|\PP_\xi(\pi) - \PP_{\tilde\xi}(\tilde\pi)| \leq \text{R.H.S. of \eqref{44rf}}.
}
This is exactly what was claimed in \eqref{dje4}.
\end{proof}

\subsection{Duality of magnetization with Lagrange multiplier}
The following lemma makes precise the discussion in Remark~\ref{rmk_dual}.
It is a generalization of \cite[Lem.~2]{panchenko18a} and is proved similarly.
The proof is included in Appendix~\ref{subsec:lem:duality} for completeness.

\begin{lemma}\label{lem:duality}
Assume $\xi$ satisfies~\ref{xi_power}.
    Assume $d^{N}\in \DD_N$ converges to $d\in \DD$ as $N\to\infty$. 
    Then for any $d^\prime \in \DD$ and $\pi\in \Pi_{d^\prime}^{\disc}$, we have
    \eeq{ \label{mijr}
    \lim_{N\to\infty}\PPP^{(1)}_{N,\xi}(\pi,0;\Sigma^{N}(d^N))= \inf_{\la\in \R^{\ka}}\big[\PPP^{(1)}_{\xi}(\pi,\la)-\iprod{\la}{d}\big].
    }
\end{lemma}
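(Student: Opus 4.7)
My plan is to prove the two inequalities in \eqref{mijr} separately, adapting the Lagrangian duality argument of \cite[Lem.~2]{panchenko18a} to arbitrary $\xi$ satisfying~\ref{xi_power}.

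For the easy direction ($\leq$), I would exploit the fact that every $\sigma\in\Sigma^N(d^N)$ satisfies the \emph{exact} identity $\sum_{i=1}^N\iprod{\lambda}{\sigma_i}=N\iprod{\lambda}{d^N}$. Substituting into \eqref{pre_par_a} and using the inclusion $\Sigma^N(d^N)\subseteq\Sigma^N$ together with Lemma~\ref{lem_parisi_recovered} gives
\[
\PPP^{(1)}_{N,\xi}(\pi,0;\Sigma^N(d^N))+\iprod{\lambda}{d^N}=\PPP^{(1)}_{N,\xi}(\pi,\lambda;\Sigma^N(d^N))\leq\PPP^{(1)}_{N,\xi}(\pi,\lambda;\Sigma^N)=\PPP^{(1)}_\xi(\pi,\lambda).
\]
Taking $\limsup_{N\to\infty}$ (since $d^N\to d$) and then infimizing over $\lambda\in\R^\kappa$ yields $\limsup_N \text{LHS}\leq\text{RHS}$ of \eqref{mijr}.

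For the matching lower bound, I would partition $\Sigma^N=\bigsqcup_{\tilde d\in\DD_N}\Sigma^N(\tilde d)$ and set $p_N(\tilde d):=\PPP^{(1)}_{N,\xi}(\pi,0;\Sigma^N(\tilde d))$. The same substitution on each piece, combined with Lemma~\ref{lem_parisi_recovered}, gives the exact identity
\[
\PPP^{(1)}_\xi(\pi,\lambda)=\frac{1}{N}\E\log\sum_{\tilde d\in\DD_N}e^{N\iprod{\lambda}{\tilde d}}X_N(\tilde d),
\]
where $X_N(\tilde d)$ denotes the RPC-Gibbs inner sum in \eqref{pre_par_a} restricted to $\Sigma^N(\tilde d)$. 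Because $|\DD_N|\leq(N+1)^\kappa$ grows only polynomially in $N$, and $N^{-1}\log X_N(\tilde d)$ concentrates about its mean at scale $O(\sqrt{\log N/N})$ uniformly in $\tilde d$ (by Borell--TIS applied to the Gaussian processes $Z_i(\alpha)$, whose covariances are bounded on $\nonpsd$ by Remark~\ref{rmk_xi}, together with a union bound), a standard Laplace argument collapses the log-sum-exp to
\[
\PPP^{(1)}_\xi(\pi,\lambda)=\max_{\tilde d\in\DD_N}\big[\iprod{\lambda}{\tilde d}+p_N(\tilde d)\big]+o_N(1).
\]

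The remaining ingredient is Fenchel--Moreau duality in the continuum limit. I would establish that the family $\{p_N\}$ is equi-Lipschitz on $\DD_N$ by a coupling argument: nearby magnetizations $\tilde d,\tilde d'$ admit a natural bijection of $\Sigma^N(\tilde d)$ with $\Sigma^N(\tilde d')$ that flips only $N\|\tilde d-\tilde d'\|_1$ coordinates, and each flip perturbs the integrand by a bounded amount in expectation via the bounded Gaussian marginals of $Z_i(\alpha)$. By Arzel\`a--Ascoli, along any subsequence a further subsequence has $p_{N_k}\to p$ uniformly on $\DD$ for some continuous $p$. Concavity of $p$ follows from the injection $\Sigma^N(\tilde d_0)\times\Sigma^N(\tilde d_1)\hookrightarrow\Sigma^{2N}(\tfrac{1}{2}(\tilde d_0+\tilde d_1))$ combined with H\"older's inequality on the replica product. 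Fenchel--Moreau duality on the compact convex simplex $\DD$ then gives
\[
p(d)=\inf_{\lambda\in\R^\kappa}\Big[\sup_{\tilde d\in\DD}\{\iprod{\lambda}{\tilde d}+p(\tilde d)\}-\iprod{\lambda}{d}\Big]=\inf_{\lambda\in\R^\kappa}\big[\PPP^{(1)}_\xi(\pi,\lambda)-\iprod{\lambda}{d}\big],
\]
and since the right-hand side is independent of the subsequence, $\lim_N p_N(d^N)=p(d)$ exists and equals the desired value.

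The main obstacle will be the equi-Lipschitz and concavity estimates on $p_N$, since the RPC-based processes are indexed by the infinite tree $\N^{s-1}$ and the magnetization constraint is combinatorial; however, coupling arguments relying only on finite-dimensional marginals of $Z_i(\alpha)$ should yield a uniform modulus of continuity, after which Fenchel--Moreau takes over.
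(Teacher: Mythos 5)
Your easy direction and your Fenchel--Moreau endgame (concavity and continuity of the limiting constrained functional $p$, then biconjugation on the compact simplex $\DD$) match the paper's strategy, which likewise reduces the lemma to the Legendre-type identity $\PPP^{(1)}_{\xi}(\pi,\lambda)=\lim_N\max_{\tilde d\in\DD_N}\big[\PPP^{(1)}_{N,\xi}(\pi,0;\Sigma^N(\tilde d))+\iprod{\lambda}{\tilde d}\big]$ and then dualizes. The genuine gap is in how you establish that identity. Your Laplace argument requires that $N^{-1}\log X_N(\tilde d)$ concentrate around its mean uniformly over the polynomially many $\tilde d\in\DD_N$, and you justify this by ``Borell--TIS applied to the Gaussian processes $Z_i(\alpha)$.'' But $X_N(\tilde d)=\sum_{\alpha}\sum_{\sigma\in\Sigma^N(\tilde d)}\nu_\alpha\exp(\cdots)$ depends on the Ruelle cascade weights $(\nu_\alpha)_{\alpha\in\N^{s-1}}$ as well as on the Gaussians. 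Borell--TIS, applied conditionally on $\nu$, controls only the Gaussian fluctuations; the conditional mean $\E[\log X_N(\tilde d)\mid\nu]$ is itself a nondegenerate function of the heavy-tailed Poisson--Dirichlet weights, and showing that \emph{its} fluctuations are $o(N)$ (with tails good enough for a union bound over $\DD_N$) is a separate, nontrivial claim that your sketch does not address. Without it, $\E\max_{\tilde d}\log Y_{\tilde d}$ cannot be replaced by $\max_{\tilde d}\E\log Y_{\tilde d}$, and the collapse of the log-sum-exp fails.

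The paper sidesteps concentration entirely with a purely algebraic device: writing $X_r(S)$ for the RPC recursion of Theorem~\ref{thm:RPC} applied to the set $S$, it proves by downward induction on $r$ that $\exp(m_rX_r(\Sigma^N))\le\sum_{\tilde d\in\DD_N}\exp\big(m_rX_r(\Sigma^N(\tilde d))\big)$, using only the subadditivity of $x\mapsto x^{m_r/m_{r+1}}$ for $m_r/m_{r+1}\le1$; together with $|\DD_N|\le(N+1)^\kappa$ this yields the deterministic bound $0\le\PPP^{(1)}_{N,\xi}(\pi,\lambda;\Sigma^N)-\max_{\tilde d}\PPP^{(1)}_{N,\xi}(\pi,\lambda;\Sigma^N(\tilde d))\le\kappa\log(N+1)/(m_1N)$. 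I would replace your concentration step with this recursive inequality (it is exactly why the paper restricts to $\pi\in\Pi_{d'}^{\disc}$, so that the recursion is finite). Two smaller remarks: your claimed equi-Lipschitz modulus for $p_N$ is too optimistic --- the Hamming-projection coupling produces an entropy term of order $\eps\log(1/\eps)$, so one only gets a H\"older-$1/2$ modulus as in Lemma~\ref{lem:PP:1:eps:approx}, which still suffices for Arzel\`a--Ascoli; and your midpoint-concavity embedding $\Sigma^N(\tilde d_0)\times\Sigma^N(\tilde d_1)\hookrightarrow\Sigma^{2N}(\tfrac12(\tilde d_0+\tilde d_1))$ plus H\"older is indeed the standard route to concavity of the limit (the paper defers this to \cite[Lem.~4, 5]{panchenko18a}).
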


On a technical note, we point out that $d'$ does not need to be related to $d$.
We will, however, only need the case $d'=d$. 
Now we bootstrap to a stronger statement.

\begin{lemma} \label{pr456}
Assume $\xi$ satisfies~\ref{xi_power}.
Assume $d^{N}\in \DD_N$ converges to $d\in \DD$ as $N\to\infty$.
Let $\pi\in\Pi_{d'}$, and consider any sequence $(\pi_N)_{N\geq1}$ in $\Pi_{d'}$ satisfying
%Fix $d\in\DD$ and $\pi\in\Pi_d$.
%Consider any sequence $(\pi_N)_{N\geq1}$ in $\Pi_{d}$ satisfying
\eeq{ \label{b7e3}
\lim_{N\to\infty}\int_0^1 \|\pi_N(t)-\pi(t)\|_1\ \dd t = 0,
}
%and any $d^N\in\DD_N$ that converges as $N\to\infty$ to some $d\in\DD$.
We then have
\eeq{ \label{xjq32}
\lim_{N\to\infty}\PPP_{N,\xi}^{(1)}(\pi_N,0;\Sigma^N(d^N)) = \inf_{\lambda\in\R^\kappa}\big[\PPP_{\xi}^{(1)}(\pi,\lambda) - \iprod{\lambda}{d}\big].
}
\end{lemma}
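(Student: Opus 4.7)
The plan is to bootstrap from Lemma~\ref{lem:duality}, which already establishes \eqref{xjq32} in the special case that $\pi \in \Pi_{d'}^{\disc}$ is discrete and $\pi_N = \pi$ is constant in $N$. The extension to the stated generality will be a density/continuity argument that leverages the fact that the Lipschitz estimate in Lemma~\ref{pconlem} is \emph{uniform in the Lagrange parameter} $\lambda$ and in the constraint set $S$.

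Set $C \coloneqq \tfrac{1}{2}\sup_{\|R\|_1\leq 1}\|\nabla^2\xi(R)\|_\infty$, which is finite by Remark~\ref{rmk_xi}. Two consequences of Lemma~\ref{pconlem} (extended to all of $\Pi_{d'}$) drive the argument. First, applied with $\tilde\xi=\xi$, $S = \Sigma^N(d^N)$, and $\lambda = 0$, it yields
\[
  |\PPP_{N,\xi}^{(1)}(\pi_N,0;\Sigma^N(d^N)) - \PPP_{N,\xi}^{(1)}(\pi,0;\Sigma^N(d^N))| \leq C\int_0^1 \|\pi_N(t)-\pi(t)\|_1\ \dd t,
\]
which tends to zero by \eqref{b7e3}. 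Hence it will suffice to prove \eqref{xjq32} in the case $\pi_N = \pi$ for all $N$. Second, applied with $N=1$ and $S=\Sigma$, it gives
\[
  \sup_{\lambda\in\R^\kappa}|\PPP_\xi^{(1)}(\pi,\lambda) - \PPP_\xi^{(1)}(\tilde\pi,\lambda)| \leq C\int_0^1 \|\pi(t)-\tilde\pi(t)\|_1\ \dd t \quad \text{for any } \pi,\tilde\pi \in \Pi_{d'},
\]
so the map $\pi \mapsto \inf_{\lambda}[\PPP_\xi^{(1)}(\pi,\lambda) - \iprod{\lambda}{d}]$ is $L^1$-Lipschitz in $\pi$ with the same constant $C$.

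Since $\Pi_{d'}^{\disc}$ is $L^1$-dense in $\Pi_{d'}$ (recall Section~\ref{subsec:Parisi}), I pick a sequence $\pi^{(k)} \in \Pi_{d'}^{\disc}$ with $\int_0^1 \|\pi^{(k)}(t) - \pi(t)\|_1\,\dd t \to 0$ as $k\to\infty$. For each fixed $k$, Lemma~\ref{lem:duality} yields
\[
  \lim_{N\to\infty}\PPP_{N,\xi}^{(1)}(\pi^{(k)},0;\Sigma^N(d^N)) = \inf_{\lambda\in\R^\kappa}\big[\PPP_\xi^{(1)}(\pi^{(k)},\lambda) - \iprod{\lambda}{d}\big].
\]
A routine $\eps/3$ argument then finishes the proof: given $\eps>0$, I first choose $k$ large enough that the two Lipschitz bounds above, applied with $\tilde\pi = \pi^{(k)}$, each contribute less than $\eps/3$; then choose $N$ large enough that the displayed identity for $\pi^{(k)}$ holds up to $\eps/3$; and combine with the initial reduction that replaces $\pi_N$ by $\pi$.

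Conceptually there is no real obstacle: the substantive work has already been done in Lemma~\ref{pconlem} (uniform Lipschitz continuity) and Lemma~\ref{lem:duality} (duality for discrete paths). The point of the present statement is simply the simultaneous upgrade along two axes---allowing $\pi$ to be a general left-continuous monotone path, and allowing $\pi_N$ to vary with $N$---which is exactly the flexibility needed by the A.S.S.~scheme, where the limiting path arising from a weakly convergent sequence of Gram--de Finetti laws will generally not be discrete and will be approached only by $N$-dependent paths.
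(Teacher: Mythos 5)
Your proposal is correct and follows essentially the same route as the paper: the paper's proof is a single three-term triangle inequality against a discrete approximant $\tilde\pi\in\Pi_{d'}^{\disc}$, using the $\lambda$- and $S$-uniform Lipschitz estimate \eqref{ne3w} from Lemma~\ref{pconlem} together with Lemma~\ref{lem:duality} and the density of $\Pi_{d'}^{\disc}$. Your two-step organization (first replacing $\pi_N$ by $\pi$, then approximating by discrete paths) is just a repackaging of the same argument with the same ingredients.
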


\begin{proof}
%When $\pi$ is a discrete path, the first claim \eqref{xjq32} is established in \cite[Lem.~2]{panchenko18a}.
%It was shown in \cite[Lem.~2]{panchenko18a} that for any discrete path $\tilde\pi\in\Pi_d^\disc$, we have
%\eeq{ \label{mijr}
%\lim_{N\to\infty}\PPP_{N,\xi}^{(1)}(\tilde\pi,0;\Sigma^N(d^N)) = \inf_{\lambda\in\R^\kappa}\big[\PPP_{\xi}^{(1)}(\pi,\lambda) - \iprod{\lambda}{d}\big].
%}
Our goal is to extend \eqref{mijr} to general paths $\pi\in\Pi_{d'}$ and to allow for $\pi$ on the left-hand side of \eqref{mijr} to be replaced with an approximating sequence $(\pi_N)_{N\geq1}$. %because of equicontinuity. 
To this end, take any $\tilde\pi\in\Pi_{d'}^\disc$ and observe that
\eeq{ \label{wkld}
&\Big|\PPP_{N,\xi}^{(1)}(\pi_N,0;\Sigma^N(d^N))
- \inf_{\lambda\in\R^\kappa}\big[\PPP_{\xi}^{(1)}(\pi,\lambda) - \iprod{\lambda}{d}\big]\Big| \\
&\stackrefp{ne3w,mijr}{\leq}\big|\PPP_{N,\xi}^{(1)}(\pi_N,0;\Sigma^N(d^N)) - \PPP_{N,\xi}^{(1)}(\tilde\pi,0;\Sigma^N(d^N))\big| \\
&\qquad\qquad\qquad+\Big|\PPP_{N,\xi}^{(1)}(\tilde\pi,0;\Sigma^N(d^N))-\inf_{\lambda\in\R^\kappa}\big[\PPP_{\xi}^{(1)}(\tilde\pi,\lambda) - \iprod{\lambda}{d}\big]\Big| \\
&\qquad\qquad\qquad+\Big|\inf_{\lambda\in\R^\kappa}\big[\PPP_{\xi}^{(1)}(\tilde\pi,\lambda) - \iprod{\lambda}{d}\big]
- \inf_{\lambda\in\R^\kappa}\big[\PPP_{\xi}^{(1)}(\pi,\lambda) - \iprod{\lambda}{d}\big]\Big| \\
&\stackref{ne3w,mijr}{\leq} o(1)+ \sup_{\|R\|_1\leq1}\|\nabla^2\xi(R)\|_\infty
\int_0^1\big(\|\pi_N(t) - \tilde\pi(t)\|_1+\|\tilde\pi(t)-\pi(t)\|_1\big)\ \dd t \\
&\stackrefpp{b7e3}{ne3w,mijr}{\leq} o(1)+ \sup_{\|R\|_1\leq1}\|\nabla^2\xi(R)\|_\infty
\int_0^1 2\|\tilde\pi(t)-\pi(t)\|_1\ \dd t.
}
Since $\Pi_{d'}^\disc$ is dense in $\Pi_{d'}$ (with respect to the norm \eqref{norm}), we can choose $\tilde\pi$ to make the integral on the final line of \eqref{wkld} arbitrarily small.
The claim \eqref{xjq32} follows.
\end{proof}

\subsection{Duality in the balanced case}
When $d=d_\bal$, we can solve the infimum on the right-hand side of \eqref{xjq32}, at least for paths $\pi$ belonging to the set $\Pi^\star$ from \eqref{def:Pi:star}.
Namely, the infimum is achieved at $\lambda=0$.

\begin{lemma} \label{pr457}
Assume $\xi$ satisfies~\ref{xi_power} and \eqref{xi_symmetric}.
% \eeq{ \label{b4e3}
% \text{$\xi$ is symmetric and $\pi\in\Pi^\star$}, %\cap\Pi_{d_\bal}^\disc$},
% }
Then for any $\pi\in\Pi^\star$, we have
\eeq{ \label{xjq33}
\inf_{\lambda\in\R^\kappa}\big[\PPP_{\xi}^{(1)}(\pi,\lambda) - \iprod{\lambda}{d_{\bal}}\big]
= \PPP_{\xi}^{(1)}(\pi,0).
}
\end{lemma}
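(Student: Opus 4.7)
The plan is first to reduce to the case $\pi \in \Pi^\star \cap \Pi_{d_\bal}^{\disc}$ via the continuity of $\PPP_\xi^{(1)}$ established in Lemma~\ref{pconlem}; for such discrete $\pi$, write $\pi(t) = \gamma_r \in \Gamma^\star$ on $(m_{r-1}, m_r]$. The strategy is then to establish three properties of $\lambda \mapsto \PPP_\xi^{(1)}(\pi, \lambda)$---shift invariance, $S_\kappa$-invariance, and convexity---and combine them via Jensen's inequality over the symmetric group to force $\lambda = 0$ to be a minimizer.

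For \emph{shift invariance}, I would read off directly from the recursion that replacing $\lambda$ by $\lambda + c\bone$ increases $X_s$ by exactly $c$ (each summand in the log-sum-exp picks up a factor $e^c$) and that each step $X_r = m_r^{-1}\log\E_r\exp(m_r X_{r+1})$ preserves additive shifts; thus $\PPP_\xi^{(1)}(\pi, \lambda + c\bone) = \PPP_\xi^{(1)}(\pi, \lambda) + c$, and since $\iprod{c\bone}{d_\bal} = c$, the target $\PPP_\xi^{(1)}(\pi,\lambda) - \iprod{\lambda}{d_\bal}$ is invariant under $\lambda \mapsto \lambda + c\bone$. For \emph{$S_\kappa$-invariance}, differentiating $\xi(\pmu \act R) = \xi(R)$ yields the equivariance $\nabla\xi(\pmu \act R) = \pmu \act \nabla\xi(R)$; since $\gamma_r \in \Gamma^\star$ satisfies $\pmu \act \gamma_r = \gamma_r$, the covariance of each $z_r$ is $\pmu$-invariant, hence $\pmu \act z_r \stackrel{d}{=} z_r$. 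Re-indexing $k = \pmu(k')$ in $X_s(\pmu \act \lambda)$ converts the permutation of $\lambda$ into a permutation of $z_r$, and the invariance of each Gaussian density under coordinate permutation propagates this identity through the recursion to $X_0(\pmu \act \lambda) = X_0(\lambda)$. Together with $\iprod{\pmu \act \lambda}{d_\bal} = \iprod{\lambda}{d_\bal}$, the target is $S_\kappa$-invariant in $\lambda$. For \emph{convexity}, $X_s$ is a log-sum-exp of affine functions of $\lambda$ and hence convex; inductively, $\exp(m_r X_{r+1})$ is log-convex whenever $X_{r+1}$ is convex, and log-convexity is preserved under expectation by H\"older, so $X_r$ stays convex and eventually so does $X_0$.

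Given these properties, for any $\lambda \in \R^\kappa$ the orbit average $\bar\lambda = \kappa^{-1}(\sum_k \lambda_k)\bone$ satisfies
\eq{
\PPP_\xi^{(1)}(\pi,\lambda) - \iprod{\lambda}{d_\bal} = \frac{1}{\kappa!}\sum_{\pmu \in S_\kappa}\bigl[\PPP_\xi^{(1)}(\pi,\pmu\act\lambda) - \iprod{\pmu\act\lambda}{d_\bal}\bigr] \geq \PPP_\xi^{(1)}(\pi,\bar\lambda) - \iprod{\bar\lambda}{d_\bal},
}
where the equality uses $S_\kappa$-invariance and the inequality uses convexity via Jensen. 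By shift invariance the right-hand side equals $\PPP_\xi^{(1)}(\pi, 0)$, matching the trivial upper bound at $\lambda = 0$. Approximating general $\pi \in \Pi^\star$ by discrete paths and appealing once more to Lemma~\ref{pconlem} completes the reduction. The main technical step I anticipate is the careful execution of the $S_\kappa$-invariance argument: deducing a pointwise identity for the deterministic $X_0$ from a merely distributional invariance of each $z_r$ requires tracking the transformation of each Gaussian density explicitly at every level of the nested integration.
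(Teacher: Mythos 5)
Your proposal is correct, and it reaches the conclusion by a route that is recognizably parallel to, but technically distinct from, the paper's. The paper also reduces to discrete paths and also leans on convexity of $\lambda\mapsto\PPP_\xi^{(1)}(\pi,\lambda)$ (cited from Panchenko), but it then verifies the \emph{first-order condition} at $\lambda=0$: it writes $\PPP_\xi^{(1)}$ in its Ruelle-cascade form, differentiates in $\lambda_k$, notes that the partial derivatives sum to $1$ (a normalization identity), and shows they are all equal because the Gaussian process $(\iprod{Z(\alpha)}{\vv e_k})_k$ is exchangeable in $k$ --- exchangeability being a consequence of the equivariance $\nabla\xi(\pmu\act R)=\pmu\act\nabla\xi(R)$ together with $\pmu\act\gamma=\gamma$ for $\gamma\in\Gamma^\star$. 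Your two invariance properties are precisely the ``integrated'' versions of these two facts: shift invariance of $\lambda\mapsto\PPP_\xi^{(1)}(\pi,\lambda)-\iprod{\lambda}{d_\bal}$ under $\lambda\mapsto\lambda+c\bone$ encodes the sum-to-one identity, and $S_\kappa$-invariance encodes exchangeability; the orbit average over $S_\kappa$ followed by Jensen and a shift then replaces the gradient computation. What your route buys is that you never need to differentiate under the nested expectations or invoke Gaussian integration by parts in the RPC representation; what it costs is the extra bookkeeping you yourself flag, namely propagating the distributional identity $\pmu\act z_r\stackrel{d}{=}z_r$ through every level of the recursion to get a pointwise identity for the deterministic $X_0$ (cleanest phrased as: $X_0$ depends on $(\lambda,\{\E z_rz_r^\sT\})$ only, and simultaneously permuting $\lambda$ and all covariances is a pure relabeling). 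One small point of care shared with the paper: the discrete approximants of $\pi\in\Pi^\star$ should be chosen inside $\Pi^\star\cap\Pi_{d_\bal}^{\disc}$ (values $\Phi^\star(q_r)$ with $q_s=1$), since your $S_\kappa$-invariance step genuinely uses $\pmu\act\gamma_r=\gamma_r$.
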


%The proof requires the following fact.
In what follows, it is necessary to recall the permutation action $R\mapsto\pmu\act R$ from \eqref{pmuR_def}.
Namely, the $(k,k')$ entry of $R$ is equal to the $(\pmu(k),\pmu(k'))$ entry of $\pmu\act R$.

\begin{lemma}
Assume $\xi$ satisfies~\ref{xi_power} and \eqref{xi_symmetric}.
Then for any permutation $\pmu\in S_\kappa$,
\eeq{ \label{cb3s}
\pmu\act\nabla\xi(R) = \nabla\xi(\pmu\act R) \quad \text{for all $R\in\R^{\kappa\times\kappa}$}.
}
\end{lemma}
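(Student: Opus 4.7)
The plan is to differentiate the symmetry identity $\xi(\pmu\act R)=\xi(R)$ provided by \eqref{xi_symmetric} and carefully track the permutation of indices. Since $\xi$ is smooth on a neighborhood of every $R$ under consideration (by Remark~\ref{rmk_xi}), this is simply an application of the chain rule, and the whole argument reduces to bookkeeping with $\pmu$.

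Concretely, I would first compute $\partial (\pmu\act R)_{a,b}/\partial R_{k,k'}$ from the definition \eqref{pmuR_def}:
\eq{
\frac{\partial (\pmu\act R)_{a,b}}{\partial R_{k,k'}}
= \one\{\pmu^{-1}(a)=k,\, \pmu^{-1}(b)=k'\}
= \one\{a=\pmu(k),\, b=\pmu(k')\}.
}
Applying the chain rule to $R\mapsto\xi(\pmu\act R)$ then gives
\eq{
\frac{\partial}{\partial R_{k,k'}}\xi(\pmu\act R)
= \sum_{a,b=1}^\kappa [\nabla\xi(\pmu\act R)]_{a,b}\,\one\{a=\pmu(k),\,b=\pmu(k')\}
= [\nabla\xi(\pmu\act R)]_{\pmu(k),\pmu(k')}.
}
On the other hand, by \eqref{xi_symmetric} the left-hand side equals $[\nabla\xi(R)]_{k,k'}$. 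This proves the index identity
\eeq{\label{eq:index:identity}
[\nabla\xi(R)]_{k,k'} = [\nabla\xi(\pmu\act R)]_{\pmu(k),\pmu(k')} \quad \text{for all $k,k'\in\{1,\dots,\kappa\}$}.
}

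Finally, I would read off \eqref{cb3s} from \eqref{eq:index:identity} by substituting $k=\pmu^{-1}(a)$ and $k'=\pmu^{-1}(b)$ and invoking \eqref{pmuR_def} once more:
\eq{
[\pmu\act\nabla\xi(R)]_{a,b}
\stackref{pmuR_def}{=} [\nabla\xi(R)]_{\pmu^{-1}(a),\pmu^{-1}(b)}
\stackref{eq:index:identity}{=} [\nabla\xi(\pmu\act R)]_{a,b}.
}
Since $a,b$ are arbitrary, this yields the desired matrix identity. I do not foresee any real obstacle: the only subtle point is to apply $\pmu$ versus $\pmu^{-1}$ consistently in the two conventions \eqref{pmuR_def} (for matrices) and the implicit one on indices, and the argument above makes that bookkeeping transparent.
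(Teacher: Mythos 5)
Your proof is correct and follows essentially the same route as the paper's: both arguments combine the chain-rule identity $[\nabla(\xi\circ(\pmu\,\act\,\cdot))(R)]_{k,k'}=[\nabla\xi(\pmu\act R)]_{\pmu(k),\pmu(k')}$ with the symmetry $\xi(\pmu\act R)=\xi(R)$ and then relabel indices via \eqref{pmuR_def}. You merely make the chain-rule bookkeeping more explicit than the paper does, which is fine.
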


\begin{proof}
First make a general observation not requiring symmetry:
if we define the function $\xi^\pmu(R) = \xi(\pmu\act R)$, then
\eeq{ \label{qpa9}
\text{$(k,k')$ entry of $\nabla\xi^\pmu(R)$}
= \text{$(\pmu(k),\pmu(k')$) entry of $\nabla\xi(\pmu\act R)$}.
}
The assumption that $\xi$ is symmetric means $\xi^\pmu = \xi(R)$.
In this case, we can now chase definitions from the left-hand side of \eqref{cb3s} to the right-hand side of \eqref{cb3s}:
\begin{align*}
\text{$(k,k')$ entry of $\pmu\act\nabla\xi(R)$} \notag
&\stackrefpp{pmuR_def}{qpa9}{=}\text{$(\pmu^{-1}(k),\pmu^{-1}(k'))$ entry of $\nabla\xi(R)$} \\
&\stackref{qpa9}{=}\text{$(k,k')$ entry of $\nabla\xi(\pmu\act R)$}. \qedhere
\end{align*}
\end{proof}

\begin{proof}[Proof of Lemma~\ref{pr457}]
%We now prove the second claim \eqref{xjq33}.
It suffices to verify \eqref{xjq33} for discrete paths $\pi\in\Pi_d^\disc$, since such paths are dense in $\Pi_d$, and %it follows from \eqref{dje4} and \eqref{ne3w} that 
both sides of \eqref{xjq33}
% \eq{
% \pi\mapsto\PPP_\xi^{(1)}(\pi,\lambda) \quad \text{and} \quad
% \pi\mapsto\inf_{\lambda\in\R^\kappa}[\PPP_\xi^{(1)}(\pi,\lambda)-\iprod{\lambda}{d_\bal}]
% }
are continuous in $\pi$ %with respect to the norm \eqref{norm}, 
thanks to \eqref{ne3w}.
So let us assume $\pi\in\Pi_d^\disc$ is given by \eqref{pi_rep}.
It follows from \cite[Lem.~6]{panchenko18a} that $\lambda\mapsto\PPP_{\xi}^{(1)}(\pi,\lambda)$ is convex.
This implies $\lambda\mapsto\PPP_{\xi}^{(1)}(\pi,\lambda)-\iprod{\lambda}{d_{\bal}}$ is convex and thus achieves a global minimum wherever its gradient is zero.
Therefore, it suffices to show
\eq{ %\label{xi33}
\frac{\partial\PPP_{\xi}^{(1)}(\pi,\lambda)}{\partial \lambda_k}\Big|_{\lambda=0} 
= \frac{\partial}{\partial\lambda_k}\iprod{\lambda}{d_\bal}
%= \frac{1}{\kappa} 
\quad \text{for each $k\in\{1,\dots,\kappa\}$}.
}
Since $d_\bal = \kappa^{-1}\bone$, this amounts to showing
\eeq{ \label{xi33}
\frac{\partial\PPP_{\xi}^{(1)}(\pi,\lambda)}{\partial \lambda_k}\Big|_{\lambda=0} 
%\frac{\partial}{\partial\lambda_k}\iprod{\lambda}{d_\bal}
= \frac{1}{\kappa} 
\quad \text{for each $k\in\{1,\dots,\kappa\}$}.
}
With $(\nu_\alpha)_{\alpha\in\N^{s-1}}$ denoting the RPC associated to \eqref{eq:m2}, we have
\eeq{ \label{xme32}
\PPP_{\xi}^{(1)}(\pi,\lambda) 
\stackref{qk38}{=} \PPP_{1,\xi}^{(1)}(\pi,\lambda;\Sigma) 
\stackref{pre_par_a}{=} \E\log\sum_{\alpha\in\N^{s-1}}\sum_{k=1}^\kappa\nu_\alpha\exp \iprod[\big]{Z(\alpha)+\lambda}{\mathbf{e}_k},
}
where $(Z(\alpha))_{\alpha\in\N^{s-1}}$ is a centered $\R^\kappa$-valued Gaussian process with covariance structure
\eeq{ \label{wr4t24}
\E[Z(\alpha) Z(\alpha')^\sT] = \nabla\xi(\gamma_{r(\alpha,\alpha')}), \quad \alpha,\alpha'\in\N^{s-1}.
}
Now differentiate \eqref{xme32} with respect to $\lambda_k$ and evaluate at $\lambda=0$:
\eeq{ \label{je8a}
\frac{\partial\PPP_{\xi}^{(1)}(\pi,\lambda)}{\partial \lambda_k}\Big|_{\lambda=0}
= \E\Big[\frac{\sum_{\alpha\in\N^{s-1}}\nu_\alpha\exp\iprod[\big]{Z(\alpha)}{\mathbf{e}_k}}{\sum_{\alpha\in\N^{s-1}}\sum_{k'=1}^\kappa\nu_\alpha\exp\iprod[\big]{Z(\alpha)}{\mathbf{e}_{k'}}}\Big].
}
If we sum over $k$, then the numerators on the right-hand side add up to the denominator:
\eeq{ \label{xe45}
\sum_{k=1}^\kappa \frac{\partial\PPP_{\xi}^{(1)}(\pi,\lambda)}{\partial \lambda_k}\Big|_{\lambda=0} = 1. 
}
On the other hand, we make the following claim.
\begin{claim} \label{exch_cl}
$(\iprod[\big]{Z(\alpha)}{\mathbf{e}_k})_{\alpha\in\N^{s-1},1\leq k\leq\kappa}$ is exchangeable in $k$.
\end{claim}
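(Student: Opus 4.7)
The plan is to exploit the combination of the symmetry assumption \eqref{xi_symmetric} with the fact that every matrix $\gamma_r \in \Gamma^\star$ (which comes from $\pi \in \Pi^\star$) is itself fixed under the $S_\kappa$-action $R \mapsto \pmu \act R$. Since $(Z(\alpha))_{\alpha \in \N^{s-1}}$ is a centered Gaussian process, exchangeability in $k$ reduces to checking that its covariance is invariant when the coordinate index is relabeled by any permutation $\pmu \in S_\kappa$.

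By the covariance formula \eqref{wr4t24}, the quantity to control is
\eq{
\E\big[\iprod{Z(\alpha)}{\vv e_k}\iprod{Z(\alpha')}{\vv e_{k'}}\big] = \big(\nabla\xi(\gamma_{r(\alpha,\alpha')})\big)_{k,k'}.
}
Next I would recall from \eqref{def:Gamma:star} that each $\gamma \in \Gamma^\star$ has the form $\frac{q}{\kappa}\bI_\kappa + \frac{1-q}{\kappa^2}\bone\bone^\sT$, which is manifestly fixed by every $\pmu \in S_\kappa$, i.e.\ $\pmu \act \gamma = \gamma$. Combining this with the derivative-symmetry identity \eqref{cb3s} gives
\eq{
\pmu \act \nabla\xi(\gamma) = \nabla\xi(\pmu \act \gamma) = \nabla\xi(\gamma),
}
so $\nabla\xi(\gamma)$ is also fixed by the $S_\kappa$-action on matrices. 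Unfolding \eqref{pmuR_def}, this entrywise reads $(\nabla\xi(\gamma))_{k,k'} = (\nabla\xi(\gamma))_{\pmu(k),\pmu(k')}$ for every $k, k' \in \{1,\dots,\kappa\}$ and every $\pmu \in S_\kappa$.

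Plugging this invariance back into the covariance formula, for any $\pmu \in S_\kappa$,
\eq{
\E\big[\iprod{Z(\alpha)}{\vv e_{\pmu(k)}}\iprod{Z(\alpha')}{\vv e_{\pmu(k')}}\big] = \big(\nabla\xi(\gamma_{r(\alpha,\alpha')})\big)_{\pmu(k),\pmu(k')} = \big(\nabla\xi(\gamma_{r(\alpha,\alpha')})\big)_{k,k'}.
}
Thus the two centered Gaussian processes $(\iprod{Z(\alpha)}{\vv e_k})_{\alpha,k}$ and $(\iprod{Z(\alpha)}{\vv e_{\pmu(k)}})_{\alpha,k}$ share the same covariance, hence the same law, which is exactly exchangeability in $k$. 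There is no real obstacle here: the heavy lifting was already done by \eqref{cb3s} and by the very definition of $\Gamma^\star$ as the $S_\kappa$-fixed locus inside $\Gamma_\kappa(d_\bal)$, and the claim is a direct verification once those ingredients are combined.
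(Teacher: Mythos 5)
Your proof is correct and follows essentially the same route as the paper's: both reduce exchangeability of a centered Gaussian family to invariance of the covariance, and both obtain that invariance by combining the identity \eqref{cb3s} with the fact that every $\gamma_{r(\alpha,\alpha')}\in\Gamma^\star$ is fixed by the $S_\kappa$-action \eqref{pmuR_def}. The only cosmetic difference is that you first record $\pmu\act\nabla\xi(\gamma)=\nabla\xi(\gamma)$ and then read off the entrywise identity, whereas the paper chains the same identities directly inside the covariance computation.
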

\begin{proofclaim}
For any permutation $\pmu\in S_\kappa$, we have
\eq{
\E[\iprod[\big]{Z(\alpha)}{\mathbf{e}_{\pmu(k)}}\iprod[\big]{Z(\alpha')}{\mathbf{e}_{\pmu(k')}}] 
&\stackrefpp{wr4t24}{cb3s}{=} \iprod{\nabla\xi(\gamma_{r(\alpha,\alpha')})\mathbf{e}_{\pmu(k)}}{\mathbf{e}_{\pmu(k')}} \\
&\stackrefpp{pmuR_def}{cb3s}{=} \iprod{(\pmu^{-1}\act\nabla\xi(\gamma_{r(\alpha,\alpha')}))\mathbf{e}_{k}}{\mathbf{e}_{k'}} \\
&\stackref{cb3s}{=} \iprod{(\nabla\xi(\pmu^{-1}\act\gamma_{r(\alpha,\alpha')}))\mathbf{e}_{k}}{\mathbf{e}_{k'}}.
}
Since $\pi\in\Pi^\star$, the matrix $\gamma_{r(\alpha,\alpha')}$ has identical diagonal entries and identical off-diagonal entries.
In particular, it is invariant under the group action \eqref{pmuR_def}: $\pmu^{-1}\act\gamma_{r(\alpha,\alpha')} = \gamma_{r(\alpha,\alpha')}$.
Therefore, the previous display shows
\eq{
\E[\iprod[\big]{Z(\alpha)}{\mathbf{e}_{\pmu(k)}}\iprod[\big]{Z(\alpha')}{\mathbf{e}_{\pmu(k')}}]
= \E[\iprod[\big]{Z(\alpha)}{\mathbf{e}_{k}}\iprod[\big]{Z(\alpha')}{\mathbf{e}_{k'}}].
}
Since $(\iprod[\big]{Z(\alpha)}{\mathbf{e}_k})_{\alpha\in\N^{s-1},1\leq k\leq\kappa}$ is a centered Gaussian process, this equivalence of covariances is enough to establish exchangeability.
\end{proofclaim}

Recall that $(Z(\alpha))_{\alpha\in\N^{s-1}}$ is independent of the RPC weights $(\nu_\alpha)_{\alpha\in\N^{s-1}}$.
Consequently, it follows from Claim~\ref{exch_cl} that the right-hand side of \eqref{je8a} does not depend on $k$.
In light of \eqref{xe45}, this forces \eqref{xi33}.
\end{proof}

\section{Lower bound} \label{sec_lower_bound}
In this section we prove Proposition~\ref{prop:symmetric:parisi}.
The A.S.S. scheme is presented in Section~\ref{ass_sec}, and the G.G. identities are proved in Section~\ref{sec_GGgen}.
These two inputs are then combined with symmetry to prove the proposition in Section~\ref{sec_lbsym}. 

Since the A.S.S. scheme is based on the cavity method, we must work with a cavity Hamiltonian $(H_{N,M,\xi}(\sigma))_{\sigma\in\Sigma^N}$.
Lemma~\ref{lem_cav_Ham} stated below guarantees its existence.
Throughout Section~\ref{sec_lower_bound}, we will write $G_{N,M,\xi}$ for the associated Gibbs measure.
More precisely, given some $d^N\in\DD_N$, $G_{N,M,\xi}$ is the probability measure on $\Sigma^N(d^N)$ defined by
\eeq{ \label{GNM_def}
G_{N,M,\xi}(\sigma) \coloneqq \frac{\exp H_{N,M,\xi}(\sigma)}{\zee_{N,M,\xi}(d^N)} \quad \text{where} \quad 
\zee_{N,M,\xi}(d^N) \coloneqq \sum_{\sigma\in\Sigma^N(d^N)}\exp H_{N,M,\xi}(\sigma).
}
We will write $\Law\big(\RR;\E(G_{N,M,\xi}^{\otimes \infty})\big)$ to denote the law of the array $\RR=(\RR_{\ell,\ell'})_{\ell,\ell'\geq1}$, where %under $\E G_{N,M}^{\otimes \infty}$, where
\eq{ %\label{GNM_array}
\RR_{\ell,\ell'} = \frac{\sigma^\ell(\sigma^{\ell'})^\sT}{N}, \quad
(\sigma^1,\sigma^2,\dots)\sim\E(G_{N,M,\xi}^{\otimes\infty}).
}

\begin{lemma} \label{lem_cav_Ham}
Assume $\xi$ satisfies~\ref{xi_power}.
Then for any positive integers $N$ and $M$, there exists a centered Gaussian process $(H_{N,M,\xi}(\sigma))_{\sigma\in\Sigma^N}$ with covariance
\eeq{ \label{comma_covar}
\E[H_{N,M,\xi}(\sigma)H_{N,M,\xi}(\sigma')] = (N+M)\xi\Big(\frac{\sigma\sigma'^\sT}{N+M}\Big).
}
\end{lemma}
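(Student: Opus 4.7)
The plan is to exhibit the process by the standard route: verify that the proposed covariance
\eq{
K(\sigma,\sigma') \coloneqq (N+M)\xi\Big(\frac{\sigma\sigma'^\sT}{N+M}\Big), \qquad \sigma,\sigma'\in\Sigma^N,
}
is a symmetric, positive-semidefinite kernel, and then invoke either the Kolmogorov extension theorem or an explicit series construction to produce a centered Gaussian process on $\Sigma^N$ with this covariance. Note that the matrix $\sigma\sigma'^\sT/(N+M)$ always lies in $\nonpsd$, so~\ref{xi_power} and Remark~\ref{rmk_xi} make $\xi$ well-defined and absolutely convergent at these points.

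For symmetry $K(\sigma,\sigma')=K(\sigma',\sigma)$, it suffices to show $\xi(R^\sT)=\xi(R)$ for every $R\in\R^{\kappa\times\kappa}$, and by the decomposition $\xi=\sum_{\theta\in\Theta_0}\alpha_\theta^2\xi_\theta$ this reduces to the termwise identity $\xi_\theta(R^\sT)=\xi_\theta(R)$. Reading off \eqref{def:xi:theta}, each building block $\iprod{R^{\circ p}w_j}{w_j}$ equals $\iprod{(R^{\circ p})^\sT w_j}{w_j}=\iprod{(R^\sT)^{\circ p}w_j}{w_j}$, so symmetry is immediate.

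For positive-semidefiniteness, I would again work termwise in $\theta$. Each $\xi_\theta$ is homogeneous of degree $\dgr(\theta)$, so
\eq{
(N+M)\xi_\theta\Big(\frac{\sigma\sigma'^\sT}{N+M}\Big) = \Big(\frac{N}{N+M}\Big)^{\dgr(\theta)-1}\cdot N\xi_\theta\Big(\frac{\sigma\sigma'^\sT}{N}\Big).
}
The kernel on the right-hand side is the covariance of the process $H_{N,\theta}$ supplied by Proposition~\hyperref[prop:xi:theta_c]{\ref*{prop:xi:theta}\ref*{prop:xi:theta_c}}, hence positive-semidefinite on $\Sigma^N$; scaling by the positive constant $(N/(N+M))^{\dgr(\theta)-1}$ preserves this. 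Summing with the nonnegative weights $\alpha_\theta^2$ then yields positive-semidefiniteness of $K$, provided the series converges. This convergence holds because $\|\sigma\sigma'^\sT/(N+M)\|_1\leq N/(N+M)\leq1$, whence $|(N+M)\xi_\theta(\sigma\sigma'^\sT/(N+M))|\leq N$ by Remark~\ref{rmk_xi}, and $\sum_\theta\alpha_\theta^2<\infty$ by~\ref{xi_power}.

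Concretely, taking independent copies $(H_{N,\theta})_{\theta\in\Theta_0}$ of the processes from Proposition~\hyperref[prop:xi:theta_c]{\ref*{prop:xi:theta}\ref*{prop:xi:theta_c}}, one may define
\eq{
H_{N,M,\xi}(\sigma) \coloneqq \sum_{\theta\in\Theta_0}\alpha_\theta\Big(\frac{N}{N+M}\Big)^{(\dgr(\theta)-1)/2}H_{N,\theta}(\sigma),
}
with $L^2$-convergence (uniform in $\sigma\in\Sigma^N$) guaranteed by the same bound. A direct computation then recovers \eqref{comma_covar}. There is no serious obstacle here; the lemma is a routine consequence of the structure assumed in~\ref{xi_power} combined with Proposition~\hyperref[prop:xi:theta_c]{\ref*{prop:xi:theta}\ref*{prop:xi:theta_c}}, and the only thing to keep track of is the homogeneity scaling between the $N$-normalized and $(N{+}M)$-normalized covariance kernels.
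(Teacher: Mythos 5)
Your construction is correct and is essentially the paper's proof: both reduce to a single $\theta$ via the decomposition in~\ref{xi_power}, use homogeneity of $\xi_\theta$ to write the $(N{+}M)$-normalized kernel as $(N/(N+M))^{\deg(\theta)-1}$ times the $N$-normalized one, and rescale the process $H_{N,\theta}$ from Proposition~\hyperref[prop:xi:theta_c]{\ref*{prop:xi:theta}\ref*{prop:xi:theta_c}} accordingly before summing independent copies with weights $\alpha_\theta$. The extra remarks on symmetry and $L^2$-convergence are fine but not a different route.
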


\begin{proof}
By the decomposition in~\ref{xi_power}, it suffices the prove the following for each $\theta$ in the parameter set $\Theta$ from \eqref{eq:def:Theta}:
there exists a centered Gaussian processes $(H_{N,M,\theta}(\sigma))_{\sigma\in\Sigma^N}$ such that
\eeq{\label{HNMtheta_cov}
\E[H_{N,M,\theta}(\sigma)H_{N,M,\theta}(\sigma')] = (N+M)\xi_\theta\Big(\frac{\sigma\sigma'^\sT}{N+M}\Big),
}
where $\xi_\theta$ is defined in \eqref{def:xi:theta}.
To this end, let $(H_{N,\theta}(\sigma))_{\sigma\in\Sigma^N}$ be the Gaussian process from Proposition~\hyperref[prop:xi:theta_c]{\ref*{prop:xi:theta}\ref*{prop:xi:theta_c}}, and then set
\eq{
H_{N,M,\theta}(\sigma) = \Big(\frac{N}{N+M}\Big)^{(\deg(\theta)-1)/2}H_{N,\theta}(\sigma),
}
where $\deg(\theta)$ is defined in \eqref{deg_def}.
This trivially leads to
\eq{ %\label{HNMtheta_cov2}
\E[H_{N,M,\theta}(\sigma)H_{N,M,\theta}(\sigma')]
&\stackrefp{eq:H:theta:cov}{=} \Big(\frac{N}{N+M}\Big)^{\deg(\theta)-1}\E[H_{N,\theta}(\sigma)H_{N,\theta}(\sigma')] \\
&\stackref{eq:H:theta:cov}{=} \Big(\frac{N}{N+M}\Big)^{\deg(\theta)-1}N\xi_\theta\Big(\frac{\sigma\sigma'^\sT}{N}\Big).
}
Then, by the fact that $\xi_\theta$ is a homogeneous polynomial, we achieve \eqref{HNMtheta_cov}:
% multiply the last line of \eqref{HNMtheta_cov2} by $\frac{N+M}{N+M}$ and use the fact that $\xi_\theta$ is a homogeneous polynomial:
\begin{align}
\E[H_{N,M,\theta}(\sigma)H_{N,M,\theta}(\sigma')] \label{HNMtheta_cov3}
%&= \Big(\frac{N}{N+M}\Big)^{\deg(\theta)-1}N\xi_\theta\Big(\frac{\sigma\sigma'^\sT}{N}\Big) \\
&= (N+M)\Big(\frac{N}{N+M}\Big)^{\deg(\theta)}\xi_\theta\Big(\frac{\sigma\sigma'^\sT}{N}\Big) \\
&= (N+M)\xi_\theta\Big(\frac{\sigma\sigma'^\sT}{N+M}\Big). \notag \qedhere
\end{align}
\end{proof}

\subsection{Aizenman--Sims--Starr scheme} \label{ass_sec}
The A.S.S. scheme relates ratios of partition functions to the functional $\Psi_{N,\xi}$ from \eqref{Psi_def}.
In what follows, we use the following notation:
\eeq{ \label{gw4hg4}
f = O_{\nabla^2\xi}(g) \quad \text{means that}\quad |f| \leq g \cdot C\sup_{\|R\|_1\leq1}\|\nabla^2\xi(R)\|_\infty,
}
where $C$ is some universal constant.

\begin{proposition}\textup{(A.S.S. scheme)} \label{ass_prop}
Assume $\xi$ and $\tilde\xi$ satisfy~\ref{xi_power}.
Let $M$ be a positive integer, and suppose $d^N\in\DD_N$, $d^{N+M}\in\DD_{N+M}$, $\delta^M\in\DD_M$ satisfy
\eeq{ \label{perfect_cav}
N d^N + M\delta^M = (N+M)d^{N+M}.
}
We then have
\eeq{ \label{ass_lower}
\E\log\frac{\zee_{N+M,\tilde\xi}(d^{N+M})}{\zee_{N,\xi}(d^{N})} + O_{\nabla^2\xi}\Big(\frac{M^2}{N+M}\Big) 
&+ \frac{N+M}{2}\sup_{\|R\|_1\leq 1}|\xi(R)-\tilde\xi(R)| \\
&\geq M\Psi_{M,\xi}(\LL_{N,M};d^N,\Sigma^M(\delta^M)).
%\\
%&\geq\E\log\Big\langle\sum_{\tau\in\Sigma^M(\delta^M)}\exp\Big(\sum_{j=1}^M\iprod[\big]{Z_j(\sigma)}{\tau_j}\Big)\Big\rangle_{N,M}
%- \E\log\big\langle\exp(\sqrt{M}\, Y(\sigma))\big\rangle_{N,M},
}
%where $\langle\cdot\rangle_{N,M}$ denotes expectation with respect to $G_{N,M}$.
\end{proposition}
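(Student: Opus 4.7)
The plan is to implement the classical Aizenman--Sims--Starr cavity scheme, adapted to the constrained magnetization setting. By the balance condition \eqref{perfect_cav}, any pair $(\sigma,\rho) \in \Sigma^N(d^N) \times \Sigma^M(\delta^M)$ concatenates to an element of $\Sigma^{N+M}(d^{N+M})$, giving the trivial lower bound
\begin{equation*}
\zee_{N+M,\tilde\xi}(d^{N+M}) \geq \sum_{\sigma,\rho} \exp H_{N+M,\tilde\xi}(\sigma \oplus \rho).
\end{equation*}
A short Gaussian interpolation between $\tilde\xi$ and $\xi$ as covariance functions (identical to the argument in the proof of Proposition~\hyperref[prop:continuity_a]{\ref*{prop:continuity}\ref*{prop:continuity_a}}) then replaces $H_{N+M,\tilde\xi}$ by $H_{N+M,\xi}$ at a cost of at most $\tfrac{N+M}{2}\sup_{\|R\|_1 \leq 1}|\xi(R)-\tilde\xi(R)|$ in expected log-partition-function, producing the third error term in \eqref{ass_lower}.

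The central step is a Guerra-style interpolation connecting $H_{N+M,\xi}(\sigma \oplus \rho)$ to a cavity Hamiltonian $H_{N,M,\xi}(\sigma) + \sum_{i=1}^M \iprod{Z_i(\sigma)}{\rho_i} - \sqrt{M}\,Y(\sigma)$, where $Z_i$ and $Y$ are the independent centered Gaussian processes from Proposition~\hyperref[prop:xi:theta_d]{\ref*{prop:xi:theta}\ref*{prop:xi:theta_d}} with covariances $\nabla\xi$ and $\vartheta_\xi$ of the $1/N$-scale overlap $\sigma(\sigma')^\sT/N$. The relevant algebraic identity is the second-order Taylor expansion
\begin{equation*}
(N+M)\,\xi\big(R_1 + R_2\big) = (N+M)\,\xi(R_1) + \iprod{\nabla\xi(R_1)}{\rho(\rho')^\sT} + \textnormal{(second-order remainder)},
\end{equation*}
with $R_1 = \sigma(\sigma')^\sT/(N+M)$ and $R_2 = \rho(\rho')^\sT/(N+M)$, whose remainder is controlled by $(N+M)\cdot\|R_2\|_1^2\cdot\|\nabla^2\xi\|_\infty = O_{\nabla^2\xi}(M^2/(N+M))$ since $\|R_2\|_1 \leq M/(N+M)$. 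Convexity of $\xi$ on $\nonpsd$ (assumption~\ref{xi_convex}) endows this remainder with a definite sign, which after Gaussian integration by parts makes the interpolation monotone in the correct direction and yields the desired one-sided inequality rather than merely an equality up to error. The $\vartheta_\xi$-correction $\sqrt{M}\,Y(\sigma)$ and the scale swap $1/(N+M) \to 1/N$ inside $\nabla\xi$ are designed to absorb the diagonal $\sigma = \sigma'$ contribution, where $\sigma\sigma^\sT/N = \diag(d^N)$ is constant on $\Sigma^N(d^N)$ and would otherwise leave an order-one residual.

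Once the cavity Hamiltonian has replaced $H_{N+M,\xi}$, the sum factorizes under the Gibbs measure $G_{N,M,\xi}$: the only $\rho$-dependence lives in $\sum_i \iprod{Z_i(\sigma)}{\rho_i}$, while $H_{N,M,\xi}(\sigma)$ and $Y(\sigma)$ depend only on $\sigma$. Taking $\E\log$ and matching against the definitions in \eqref{defining_Psi_parts}--\eqref{Psi_def}, this produces $\E\log\zee_{N,M,\xi}(d^N) + M\bigl[\Psi_{M,\xi}^{(1)}(\LL_{N,M};d^N,\Sigma^M(\delta^M)) - \Psi_{M,\xi}^{(2)}(\LL_{N,M};d^N)\bigr]$, where $\LL_{N,M}$ is the overlap array law under $G_{N,M,\xi}^{\otimes\infty}$; the bracketed quantity is exactly $\Psi_{M,\xi}(\LL_{N,M};d^N,\Sigma^M(\delta^M))$. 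A final short interpolation between the covariances $(N+M)\xi(\cdot/(N+M))$ and $N\xi(\cdot/N)$ on $\Sigma^N(d^N)$ relates $\E\log\zee_{N,M,\xi}(d^N)$ back to $\E\log\zee_{N,\xi}(d^N)$ with another $O_{\nabla^2\xi}(M^2/(N+M))$ error, completing the bound. The main technical obstacle will be careful bookkeeping of the competing scales $1/(N+M)$ and $1/N$ across all covariance functions, so that the various second-order Taylor remainders --- arising both from linearizing $\xi(R_1+R_2)$ in $R_2$ and from the scale swaps inside $\nabla\xi$ and $\vartheta_\xi$ --- collectively fit within the single $O_{\nabla^2\xi}(M^2/(N+M))$ budget stated in \eqref{ass_lower}.
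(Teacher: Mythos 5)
Your overall architecture is the right one and coincides with the paper's: the inclusion $\Sigma^{N+M}(d^{N+M})\supseteq\Sigma^N(d^N)\times\Sigma^M(\delta^M)$ coming from \eqref{perfect_cav} supplies the one-sided inequality, and Gaussian interpolations whose covariance mismatches are second-order Taylor remainders of size $O_{\nabla^2\xi}(M^2/(N+M))$ do the rest. But the central step, as written, would fail. The process $Y$ cannot be placed in the same Hamiltonian as the $Z_i$'s with a minus sign. At the level of covariances, $H_{N,M,\xi}(\sigma)+\sum_i\iprod{Z_i(\sigma)}{\rho_i}-\sqrt{M}\,Y(\sigma)$ has covariance $(N+M)\xi\big(\tfrac{\sigma\sigma'^\sT}{N+M}\big)+\iprod[\big]{\nabla\xi\big(\tfrac{\sigma\sigma'^\sT}{N}\big)}{\rho\rho'^\sT}+M\vartheta_\xi\big(\tfrac{\sigma\sigma'^\sT}{N}\big)$ (the sign on $Y$ does not affect its covariance), whereas $H_{N+M,\xi}(\sigma\oplus\rho)$ has covariance equal to the first two of these terms up to $O_{\nabla^2\xi}(M^2/(N+M))$, by \eqref{after_1st_linearization}. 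The leftover $M\vartheta_\xi(\tfrac{\sigma\sigma'^\sT}{N})$ is of order $M$, so a single interpolation costs $O(1)$ per cavity spin --- fatal. Moreover, even granting the replacement, $\E\log$ of the resulting sum does not split into $\E\log\zee_{N,M,\xi}(d^N)+M\Psi^{(1)}-M\Psi^{(2)}$: the $Z$-field would be averaged against the Gibbs measure of $H_{N,M,\xi}-\sqrt{M}\,Y$ rather than $G_{N,M,\xi}$, and no $-M\Psi^{(2)}$ is produced, since $\E\log$ of a sum containing $\exp(-\sqrt{M}\,Y)$ is not the negative of $\E\log$ of the sum containing $\exp(+\sqrt{M}\,Y)$. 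The correct structure is to handle numerator and denominator separately: compare $\sum_{\sigma,\rho}\exp H_{N+M,\tilde\xi}(\sigma\oplus\rho)$ with $\sum_{\sigma,\rho}\exp\big(H_{N,M,\xi}(\sigma)+\sum_i\iprod{Z_i(\sigma)}{\rho_i}\big)$, and separately compare $\zee_{N,\xi}(d^N)$ with $\sum_\sigma\exp\big(H_{N,M,\xi}(\sigma)+\sqrt{M}\,Y(\sigma)\big)$ using \eqref{after_2nd_linearization}; the quotient then yields $M\Psi^{(1)}-M\Psi^{(2)}=M\Psi_{M,\xi}$ with the correct reference measure $G_{N,M,\xi}$. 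For the same reason, your ``final short interpolation'' relating $\zee_{N,M,\xi}(d^N)$ to $\zee_{N,\xi}(d^N)$ directly has an order-$M$ covariance mismatch (namely $M\vartheta_\xi$); only after inserting $\sqrt{M}\,Y$ does it drop to $O_{\nabla^2\xi}(M^2/(N+M))$.

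Two secondary corrections. Convexity~\ref{xi_convex} is neither assumed in the proposition nor needed in its proof: the one-sidedness of \eqref{ass_lower} comes entirely from discarding configurations via the set inclusion, and all interpolation errors are controlled two-sidedly in absolute value. (Convexity powers the Guerra upper bound, Proposition~\ref{prop:upper:interpolation}, which is a different statement; invoking it here would also silently strengthen the hypotheses of the proposition.) Likewise, the $\sqrt{M}\,Y$ correction is not there to ``absorb the diagonal'': the identity \eqref{after_2nd_linearization} matching $(N+M)\xi(\tfrac{\sigma\sigma'^\sT}{N+M})+M\vartheta_\xi(\tfrac{\sigma\sigma'^\sT}{N})$ with $N\xi(\tfrac{\sigma\sigma'^\sT}{N})$ holds for all pairs $(\sigma,\sigma')$ by Taylor expansion, not just when $\sigma=\sigma'$.
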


Proposition~\ref{ass_prop} follows from a standard application of Aizenman-Sims-Starr Scheme~\cite{aizenman-sims-starr03, chen13}. The proof is included in Appendix~\ref{sec_ass_proof} for completeness. In order to prove Proposition~\ref{prop:symmetric:parisi}, we will take $N,M$ to be a multiple of $\ka$ and use Proposition~\ref{ass_prop} with $d^N=d^{N+M}=\delta^M=d_{\bal}$.

\begin{remark} %(Existence of relevant processes)
For the right-hand side of \eqref{ass_lower} to make sense,
it is essential that the Gibbs measure $G_{N,M,\xi}$ in \eqref{GNM_def} is restricted to the space $\Sigma^N(d^N)$, so that $N^{-1}\sigma\sigma^\sT = \diag(d^N)$ for all $\sigma$ in the support of $G_{N,M,\xi}$. 
This allows for Assumption~\hyperref[map_assumption_ii]{\ref*{map_assumption}\ref*{map_assumption_ii}} to hold with 
\eq{
(\XX,R,d)=\big(\Sigma^N(d^N),(\sigma,\sigma')\mapsto N^{-1}\sigma(\sigma')^\sT,d^N\big).
}
In this setting, Assumption~\hyperref[map_assumption_i]{\ref*{map_assumption}\ref*{map_assumption_i}} is obvious, and~\hyperref[map_assumption_iii]{\ref*{map_assumption}\ref*{map_assumption_iii}} follows from Proposition~\hyperref[prop:xi:theta_d]{\ref*{prop:xi:theta}\ref*{prop:xi:theta_d}}.
%meaning the right-hand side of \eqref{ass_lower} is actually defined.
%Also, because $\RR(\sigma,\sigma)=\diag(d^N)$ in this setting, the additional variables $\eta_i$ and $\eta$ in \eqref{extra_eta} have no effect (see also Remark~\ref{rmk_conR}).
%This observation will be relevant in the proof of Proposition~\ref{ass_prop}, specifically at equation \eqref{desired_fun}.
\end{remark}

\subsection{Ghirlanda--Guerra identities for generic models} \label{sec_GGgen}
In this section we prove that differentiability of the Parisi formula leads to the G.G. identities for generic models, thus fulfilling the outcomes discussed after Proposition~\ref{prop:diff}.
But in order to discuss the G.G. identies, we must first check that the relevant arrays are symmetric and positive semi-definite.

\begin{lemma} \label{lem_gramtype}
Given $\sigma^1,\dots,\sigma^n\in\Sigma^N$, denote $\RR_{\ell,\ell'} = N^{-1}\sigma^\ell(\sigma^{\ell'})^\sT$.
Then for any $w\in\R^\kappa$ and positive integer $p$, the array $(\QQ_{\ell,\ell'})_{1\leq\ell,\ell'\leq n}$ given by $\QQ_{\ell,\ell'}=\iprod{\RR^{\circ p}_{\ell,\ell'}w}{w}$ is symmetric and positive-semidefinite.
\end{lemma}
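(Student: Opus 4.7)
The plan is to exhibit $(\QQ_{\ell,\ell'})_{\ell,\ell'=1}^n$ directly as a Gram matrix; this simultaneously delivers symmetry and positive-semidefiniteness. First I would unpack the definition of $\RR_{\ell,\ell'}$: its $(k,k')$ entry is $N^{-1}\sum_{i=1}^N(\sigma_i^\ell)_k(\sigma_i^{\ell'})_{k'}$, so fully expanding the $p$-th power as a multinomial sum over $p$-tuples of indices gives
\eq{
\big((\RR_{\ell,\ell'})_{k,k'}\big)^p = N^{-p}\sum_{i_1,\dots,i_p=1}^N\prod_{r=1}^p(\sigma_{i_r}^\ell)_k(\sigma_{i_r}^{\ell'})_{k'}.
}

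The key observation is that after this expansion, the factors depending on $k$ separate cleanly from those depending on $k'$. Substituting into $\QQ_{\ell,\ell'}=\sum_{k,k'}w_kw_{k'}((\RR_{\ell,\ell'})_{k,k'})^p$ and interchanging the order of summation, I would obtain
\eq{
\QQ_{\ell,\ell'}=N^{-p}\sum_{(i_1,\dots,i_p)\in\{1,\dots,N\}^p}f_\ell(i_1,\dots,i_p)\,f_{\ell'}(i_1,\dots,i_p),
}
where
\eq{
f_\ell(i_1,\dots,i_p)\coloneqq \sum_{k=1}^\kappa w_k\prod_{r=1}^p(\sigma_{i_r}^\ell)_k.
}
Viewing each $f_\ell$ as a vector in $\R^{N^p}$ indexed by $p$-tuples, the displayed identity reads $\QQ_{\ell,\ell'}=N^{-p}\iprod{f_\ell}{f_{\ell'}}_{\R^{N^p}}$, so $\QQ$ is the Gram matrix of the $n$ vectors $\{N^{-p/2}f_\ell\}_{\ell=1}^n$.

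Since any Gram matrix is symmetric and positive-semidefinite, the conclusion follows. I do not anticipate any obstacle: this is essentially a direct and concrete instantiation of the Schur product theorem applied to the block matrix $(\RR_{\ell,\ell'})_{\ell,\ell'}$, and the only arithmetic needed is the clean factorization of the multinomially expanded $p$-th power into a sum of products over $\ell$ and $\ell'$.
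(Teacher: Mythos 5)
Your proof is correct, but it takes a different route from the paper. The paper argues indirectly: it specializes to the parameter $\theta=(p,1,1,w)\in\Theta$, for which $\xi_\theta(R)=\iprod{R^{\circ p}w}{w}$, and then invokes Proposition~\hyperref[prop:xi:theta_c]{\ref*{prop:xi:theta}\ref*{prop:xi:theta_c}} to produce a centered Gaussian process $H\colon\Sigma^N\to\R$ with covariance $\E[H(\sigma)H(\sigma')]=N\iprod{R(\sigma,\sigma')^{\circ p}w}{w}$; symmetry and positive-semidefiniteness of $(\QQ_{\ell,\ell'})$ then follow because any covariance kernel has these properties. Your argument instead exhibits the Gram factorization $\QQ_{\ell,\ell'}=N^{-p}\iprod{f_\ell}{f_{\ell'}}_{\R^{N^p}}$ explicitly, with $f_\ell(i_1,\dots,i_p)=\sum_k w_k\prod_r(\sigma^\ell_{i_r})_k$, and your multinomial expansion and the separation of the $k$ and $k'$ sums check out. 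It is worth noting that your $f_\ell$ is exactly the polynomial $S_w(\sigma_{i_1},\dots,\sigma_{i_p})$ used in Appendix~\ref{sec:app:generic} (Lemma~\ref{lem:gaussian:existence}) to build the Gaussian process $H_{N,\theta}$ in the first place, so in effect you have inlined the relevant part of that construction. What the paper's route buys is brevity given machinery already established (and uniformity across all $\theta\in\Theta$); what your route buys is a self-contained, elementary proof that does not pass through the existence of a Gaussian process. One cosmetic point: the set $\Theta$ in \eqref{eq:def:Theta} restricts to $w\in[-1,1]^\kappa$ while the lemma allows any $w\in\R^\kappa$; the paper's citation implicitly uses homogeneity of $\xi_\theta$ in $w$ to rescale, whereas your argument needs no such normalization.
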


\begin{proof}
%As usual, we denote $R(\sigma,\sigma')=N^{-1}\sigma\sigma'^\sT$ for $\sigma,\sigma'\in\Sigma^N$.
Fix $p\geq1$ and $w\in\R^\kappa$.
Consider the parameter $\theta=(p,1,1,w)\in\Theta$, for which \eqref{def:xi:theta} becomes
$\xi_\theta(R) = \iprod{R^{\circ p} w}{w}$.
By Proposition~\hyperref[prop:xi:theta_c]{\ref*{prop:xi:theta}\ref*{prop:xi:theta_c}}, there exists a centered Gaussian process $H\colon\Sigma^N\to\R$ with covariance
\eq{
\E[H(\sigma)H(\sigma')] =\xi_\theta\Big(\frac{\sigma\sigma'^\sT}{N}\Big)
= N\iprod{R(\sigma,\sigma')^{\circ p} w}{w},
}
where $R(\sigma,\sigma')=N^{-1}\sigma\sigma'^\sT$.
It follows that $(\sigma,\sigma')\mapsto\iprod{R(\sigma,\sigma')^{\circ p} w}{w}$ is a symmetric and positive-semidefinite map $\Sigma^N\times\Sigma^N\to\R$.
\end{proof}

%The following result fulfills the outcomes discussed after Proposition~\ref{prop:diff}.

\begin{proposition}\label{sym_gg}
Assume $\xi$ satisfies~\ref{xi_power}, \eqref{xi_strongly_convex}, and $\bbeta=(\beta_\theta)_{\theta\in\Theta_\Q}$ satisfies \eqref{beta_decay1}, \eqref{beta_decay2}.
Provided $\beta_{\theta}\neq 0$ for every $\theta\in \Theta_\Q$, the following holds for any positive integer $M$. 
Let $\LL_{N,M} = \Law(\RR;\E(G_{N,M,\xi_{\bbeta}}^{\otimes \infty}))$.
Then any subsequential weak limit of $\LL_{N,M}$ as $N\to\infty$ satisfies the $\kappa$-dimensional Ghirlanda--Guerra identities in Definition~\ref{def:GG}.
\end{proposition}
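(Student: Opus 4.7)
The plan is to derive the $\kappa$-dimensional Ghirlanda--Guerra identities from the differentiability of the Parisi formula (Proposition~\ref{prop:diff}), following the template developed for generic models in \cite[Sec.~3.2, 3.7]{panchenko13a}. The first step will be to promote Proposition~\ref{prop:diff} to a finite-volume statement. Fix any $\theta\in\Theta_\Q$ and vary only $\beta_\theta$. By H\"older's inequality applied to the analogue of \eqref{HNbbeta_expanded} for the cavity Hamiltonian provided by Lemma~\ref{lem_cav_Ham}, the map $\beta_\theta\mapsto \tfrac{1}{N+M}\E\log\zee_{N,M,\xi_{\bbeta}}(d_\bal)$ is convex on the open interval of admissible parameters; by Theorem~\ref{gen_con_par_thm} it converges pointwise as $N\to\infty$ to $\inf_{\pi\in\Pi_{d_\bal}}\PP_{\xi_{\bbeta}}(\pi)$, whose differentiability in $\beta_\theta$ is granted by Proposition~\ref{prop:diff}. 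The classical fact that pointwise limits of convex functions have convergent derivatives at points where the limit is differentiable then yields
\eq{
\partial_{\beta_\theta}\tfrac{1}{N+M}\E\log\zee_{N,M,\xi_{\bbeta}}(d_\bal)\;\longrightarrow\;\partial_{\beta_\theta}\inf_{\pi\in\Pi_{d_\bal}}\PP_{\xi_{\bbeta}}(\pi).
}

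The second step will be to convert this derivative convergence into the Gibbs-measure concentration
\eeq{\label{conc_goal_plan}
\lim_{N\to\infty}\E\Big\langle \Big|\tfrac{1}{N+M}H_{N,M,\theta}(\sigma)-\E\big\langle\tfrac{1}{N+M}H_{N,M,\theta}\big\rangle\Big|\Big\rangle_{G_{N,M,\xi_{\bbeta}}}=0,
}
via the standard combination of Gaussian concentration of $\tfrac{1}{N+M}\log\zee_{N,M,\xi_{\bbeta}}$ around its disorder-mean with a convexity-based second-moment bound (as in \cite[Thm.~3.8]{panchenko13a}) that controls the thermal variance of $H_{N,M,\theta}$ by the gap between one-sided $\beta_\theta$-derivatives of the free energy. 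The hypothesis $\beta_\theta\neq 0$ is precisely what makes the derivative register $H_{N,M,\theta}$ linearly, and a countable intersection over $\theta\in\Theta_\Q$ secures \eqref{conc_goal_plan} simultaneously for all such $\theta$. Given \eqref{conc_goal_plan}, applying Gaussian integration by parts to $\E[f(\RR^{(n)})H_{N,M,\theta}(\sigma^{n+1})]$ with the covariance $(N+M)\xi_\theta(\sigma(\sigma')^{\sT}/(N+M))$ supplied by Lemma~\ref{lem_cav_Ham}, and then passing to $N\to\infty$ along any weakly convergent subsequence $\LL_{N,M}\to\LL$, produces the scalar Ghirlanda--Guerra identity for the array $(\xi_\theta(\RR_{\ell,\ell'}))_{\ell,\ell'\geq 1}$ under $\LL$, for every $\theta\in\Theta_\Q$.

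The final step is to upgrade these scalar identities to Definition~\ref{def:GG}. For $\theta=(p,m,n_1,\dots,n_m,w_1,\dots,w_m)$, the functional $\xi_\theta$ is exactly the monomial $\prod_{j=1}^m\iprod{R^{\circ p}w_j}{w_j}^{n_j}$, so ranging $\theta$ over $\Theta_\Q$ and invoking linearity gives the identity under $\LL$ for every polynomial in the inner products $\iprod{R^{\circ p}w}{w}$ with rational coefficients and rational $w\in([-1,1]\cap\Q)^\kappa$. The Stone--Weierstrass theorem on the compact set $\nonpsd$, together with density of $\Q$ in $[-1,1]$ and the continuity of all quantities in $w_j$, extends this to any continuous $\varphi$ and any $w_j\in[-1,1]^\kappa$ in Definition~\ref{def:GG}; a monotone-class argument then handles bounded measurable $\varphi$. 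The main obstacle will be the second step: rigorously extracting \eqref{conc_goal_plan} from the differentiability of the Parisi limit. The classical template requires combining Gaussian concentration of the free energy as a function of the disorder with an integration-by-parts identity that relates the thermal variance of $H_{N,M,\theta}$ to the second $\beta_\theta$-derivative of the log partition function, itself controlled by the difference of one-sided first derivatives. Importantly, the restriction of the Gibbs measure $G_{N,M,\xi_{\bbeta}}$ to the constrained configuration space $\Sigma^N(d^N)$ preserves both convexity in $\beta_\theta$ and Gaussian concentration in the disorder, so the classical argument should transfer with only cosmetic changes.
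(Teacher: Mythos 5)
Your proposal is correct and follows essentially the same route as the paper: isolate the $\beta_\theta$-dependence of the cavity Hamiltonian, use convexity of the finite-volume free energy in $\beta_\theta$ together with the differentiability of the limiting Parisi formula (Proposition~\ref{prop:diff}) and Gaussian concentration to obtain concentration of $H_{N,M,\theta}$ under the Gibbs measure, then conclude via Gaussian integration by parts and density of $\Theta_\Q$ plus polynomial/measurable approximation. The only cosmetic differences are that the paper phrases the concentration step through the ratio $\zee_{N,M,\xi_{\bbeta}}/\zee_{N,M,\xi_{\bbeta^-}}$ and cites Panchenko's theorem directly rather than re-deriving it, and it handles the small discrepancy between the cavity covariance $(N+M)\xi_{\bbeta}(\cdot/(N+M))$ and $N\xi_{\bbeta}(\cdot/N)$ explicitly via Proposition~\hyperref[prop:continuity_a]{\ref*{prop:continuity}\ref*{prop:continuity_a}}, a detail you should make sure to include.
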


\begin{proof}
To simplify notation, let us assume $\LL_{N,M}$ converges weakly as $N\to\infty$.
Even if this convergence occurs only along a subsequence, the argument we give below can be restricted to that subsequence.
%Suppose $(N_k)_{k\geq1}$ is a sequence along which $\LL_{N_k,M}$ converges weakly as $k\to\infty$.
Under the assumption that $\LL_{N,M}$ converges, we must have that $d^N$ converges to some $d\in\DD$, since at volume $N$ the self-overlap matrix $\RR_{\ell,\ell}=N^{-1}\sigma^\ell(\sigma^\ell)^\sT$ is equal to $\diag(d^N)$ with probability one under $G_{N,M,\xi_{\bbeta}}^{\otimes\infty}$.

We divide the remainder of the proof into five steps.
In the first four steps, we consider a fixed $\theta\in\Theta_\Q$.

\medskip
\noindent \textit{Step 1. Isolate the role of $\theta$ in the Hamiltonian.}
\medskip

% By assumption we have
% \eeq{ \label{cnr8}
% \E[H_{N,M,\xi_{\bbeta}}(\sigma)H_{N,M,\xi_{\bbeta}}(\sigma')] = (N+M)\xi_{\bbeta}\Big(\frac{\sigma\sigma'^\sT}{N+M}\Big), \quad \sigma,\sigma'\in\Sigma^N.
% }
Similar to \eqref{HNbbeta_expanded}, we can realize $H_{N,M,\xi_{\bbeta}}$ as a sum:
\eeq{ \label{nj3e}
H_{N,M,\xi_{\bbeta}}(\sigma) = H_{N,M,\xi}(\sigma) + \sum_{\theta'\in\Theta_\Q}\beta_{\theta'} H_{N,M,\theta'}(\sigma),
}
where $H_{N,M,\xi}$ is as in \eqref{comma_covar}, $H_{N,M,\theta'}$ is as in \eqref{HNMtheta_cov}, and all Gaussian processes on the right-hand side of \eqref{nj3e} are independent.
We also consider the Hamiltonian without the contribution of $H_{N,M,\theta}$, as follows.
Define $\bbeta^- = (\beta^-_{\theta'})_{\theta'\in\Theta_\Q}$ by
\eq{ %\label{wrgij}
\beta^-_{\theta'} \coloneqq \begin{cases} 
\beta_{\theta'} &\text{if $\theta'\neq\theta$} \\
0 &\text{if $\theta'=\theta$}.
\end{cases}
}
We then have
\eeq{ \label{fjv8}
%H_{N,M,\xi_{\bbeta}}^-(\sigma) \coloneqq 
H_{N,M,\xi_{\bbeta^-}}(\sigma)
= H_{N,M,\xi}(\sigma) + \sum_{\substack{\theta'\in\Theta_{\Q} \\ \theta'\neq\theta}}\beta_{\theta'} H_{N,M,\theta'}(\sigma)
= H_{N,M,\xi_{\bbeta}}(\sigma) - \beta_{\theta} H_{N,M,\theta}(\sigma).
}
From our definitions,
\eeq{ \label{qkd8}
G_{N,M,\xi_{\bbeta}}(\sigma) 
&\stackrefpp{GNM_def}{fjv8}{=} \frac{\exp H_{N,M,\xi_{\bbeta}}(\sigma)}{\zee_{N,M,\xi_{\bbeta}}(d^N)} \\
&\stackref{fjv8}{=} \frac{\exp(\beta_\theta H_{N,M,\theta}(\sigma))}{\zee_{N,M,\xi_{\bbeta}}(d^N)/\zee_{N,M,\xi_{\bbeta^-}}(d^N)}\cdot\frac{\exp H_{N,M,\xi_{\bbeta^-}}(\sigma)}{\zee_{N,M,\xi_{\bbeta^-}}(d^N)} \\
&\stackrefpp{GNM_def}{fjv8}{=} \frac{\exp(\beta_{\theta}H_{N,M,\theta}(\sigma))}{\zee_{N,M,\xi_{\bbeta}}(d^N)/\zee_{N,M,\xi_{\bbeta^-}}(d^N)}\ G_{N,M,\xi_{\bbeta^-}}(\sigma).
}
Note that $H_{N,M,\xi_{\bbeta^-}}(\sigma)$ in independent of $H_{N,M,\theta}$.
Therefore, \eqref{qkd8} expresses the measure of interest $G_{N,M,\xi_{\bbeta}}$ as the Gibbs measure associated to $\beta_\theta H_{N,M,\theta}$ with respect to the independent reference measure $G_{N,M,\xi_{\bbeta^-}}$.
We thus write
\eeq{ \label{jdnv7}
\zee_{N,M,\theta}(\beta_{\theta}) \coloneqq \frac{\zee_{N,M,\xi_{\bbeta}}(d^N)}{\zee_{N,M,\xi_{\bbeta^-}}(d^N)} \quad \text{and} \quad
\eff_{N,M,\theta}(\beta_{\theta}) \coloneqq \frac{1}{N}\E\log\zee_{N,M,\theta}(\beta_{\theta}).
}
We think of these as functions of just $\beta_{\theta}$, keeping $\beta_{\theta'}$ fixed for every $\theta'\neq\theta$.

\medskip
\noindent \textit{Step 2. Show that $\eff_{N,M,\theta}(\beta_{\theta})$ converges as $N\to\infty$.}
\medskip

Upon defining $\tilde\xi_{\bbeta}\colon\R^{\kappa\times\kappa}\to\R$ by
\eeq{ \label{sxi3}
\tilde\xi_{\bbeta}(R) = \frac{N+M}{N}\xi_{\bbeta}\Big(\frac{N}{N+M}R\Big),
}
we can rewrite \eqref{comma_covar} for $\xi_{\bbeta}$ as
\eeq{ \label{cnr9}
\E[H_{N,M,\xi_{\bbeta}}(\sigma)H_{N,M,\xi_{\bbeta}}(\sigma')] = N\tilde\xi_{\bbeta}\Big(\frac{\sigma\sigma'^\sT}{N}\Big).
}
We can then use the notation of \eqref{explicit_xi} to write
\eq{
%\eff_{N,\tilde\xi_{\bbeta}}(d^N) 
\frac{1}{N}\E\log \zee_{N,M,\xi_{\bbeta}}(d^N)
= \eff_{N,\tilde\xi_{\bbeta}}(d^N).
%= \E\log\zee_{N,\xi_{\bbeta}}, \quad \text{where} \quad
%\zee_{N,\tilde\xi_{\bbeta}}(d^N) = \sum_{\sigma\in\Sigma^N(d^N)}\exp H_{N,M,\xi_{\bbeta}}(\sigma).
}
By Proposition~\hyperref[prop:continuity_a]{\ref*{prop:continuity}\ref*{prop:continuity_a}}, we have
\eq{
\big| \eff_{N,\xi_{\bbeta}}(d^N)&-\eff_{N,\tilde\xi_{\bbeta}}(d^N)\big|
\leq \sup_{\|R\|_1\leq1}|\xi_{\bbeta}(R)-\tilde\xi_{\bbeta}(R)|.
% &\leq \sup_{\|R\|_1\leq1}\Big|\xi_{\bbeta}(R) - \xi_{\bbeta}\Big(\frac{N}{N+M}R\Big)\Big|
% + \sup_{\|R\|_1\leq1}\Big|\xi_{\bbeta}(R) - \frac{N+M}{N}\xi_{\bbeta}(R)\Big| \\
% &\leq \frac{M}{N+M}\sup_{\|R\|_1\leq1}\|\nabla\xi_{\bbeta}(R)\|_\infty
% + \frac{M}{N}\sup_{\|R\|_1\leq1}|\xi_{\bbeta}(R)|.
}
Assuming $\|R\|_1\leq1$, we now recall \eqref{sxi3} and apply the triangle inequality to obtain
\eq{
|\xi_{\bbeta}(R)-\tilde\xi_{\bbeta}(R)|
&\leq \Big|\xi_{\bbeta}(R) - \xi_{\bbeta}\Big(\frac{N}{N+M}R\Big)\Big|
+ \frac{M}{N}\Big|\xi(\beta)\Big(\frac{N}{N+M}R\Big)\Big| \\
&\leq \frac{M}{N+M}\sup_{\|Q\|_1\leq1}\|\nabla\xi_{\bbeta}(Q)\|_\infty
+ \frac{M}{N}\sup_{\|Q\|_1\leq1}|\xi_{\bbeta}(Q)|.
}
Since $M$ is fixed, we conclude from the two previous displays that
\eq{ %\label{djv8}
\lim_{N\to\infty} \big| \eff_{N,\xi_{\bbeta}}(d^N)-\eff_{N,\tilde\xi_{\bbeta}}(d^N)\big| = 0.
}
Hence Theorem~\ref{gen_con_par_thm} gives %(specfically \eqref{gen_con_par_eq_b}) gives
\eq{
\lim_{N\to\infty} \frac{1}{N}\E\log\zee_{N,M,\xi_{\bbeta}}(d^N) = \inf_{\pi\in\Pi_d}\PP_{\xi_{\bbeta}}(\pi).
}
By the same argument (just replacing $\bbeta$ with $\bbeta^-$, which still satisfies \eqref{beta_decay1} and \eqref{beta_decay2}),
\eq{
\lim_{N\to\infty} \frac{1}{N}\E\log\zee_{N,M,\xi_{\bbeta^-}}(d^N) = \inf_{\pi\in\Pi_d}\PP_{\xi_{\bbeta^-}}(\pi).
}
In light of definition \eqref{jdnv7}, the two previous displays together yield
\eeq{ \label{zeij2}
\lim_{N\to\infty} \eff_{N,M,\theta}(\beta_\theta)
= \inf_{\pi\in\Pi_d}\PP_{\xi_{\bbeta}}(\pi) - \inf_{\pi\in\Pi_d}\PP_{\xi_{\bbeta^-}}(\pi).
}

\medskip
\noindent \textit{Step 3. Show that $Z_{N,M,\theta}(\beta_{\theta})$ concentrates.}
\medskip

By \eqref{cnr9} we have
\eq{
\E[H_{N,M,\xi_{\bbeta}}(\sigma)^2] = N\tilde\xi_{\bbeta}(\diag(d^N)) \quad \text{for all $\sigma\in\Sigma^N(d^N)$}.
}
Hence Gaussian concentration (see \cite[Thm.~1.2]{panchenko13a}) implies that for all $x\geq 0$,
\eq{
\P\Big(\Big|\frac{\log \zee_{N,M,\xi_{\bbeta}}(d^N)}{N} - \frac{\E\log \zee_{N,M,\xi_{\bbeta}}(d^N)}{N}\Big| \geq x\Big)
\leq 2\exp\Big(\frac{-x^2N}{4\tilde\xi_{\bbeta}(\diag(d^N))}\Big)
}
In particular, we have the following limit:
\eq{
\lim_{N\to\infty}\E\Big|\frac{\log \zee_{N,M,\xi_{\bbeta}}(d^N)}{N} - \frac{\E\log \zee_{N,M,\xi_{\bbeta}}(d^N)}{N}\Big| = 0. % \xrightarrow{N\to\infty}0 \quad \text{a.s.~and in $L^p$ for all $p\in[1,\infty)$}.
}
By the same argument (but replacing $\bbeta$ with $\bbeta^-$),
\eq{
\lim_{N\to\infty}\E\Big|\frac{\log \zee_{N,M,\xi_{\bbeta^-}}(d^N)}{N} - \frac{\E\log \zee_{N,M,\xi_{\bbeta^-}}(d^N)}{N}\Big| =0. %\xrightarrow{N\to\infty}0 \quad \text{a.s.~and in $L^p$ for all $p\in[1,\infty)$}.
}
In light of the definition \eqref{jdnv7}, the two previous displays together yield
\eeq{ \label{qld7}
\lim_{N\to\infty}\E\Big|\frac{\log\zee_{N,M,\theta}(\beta_{\theta})}{N} - \frac{\E\log\zee_{N,M,\theta}(\beta_{\theta})}{N}\Big| =0. %\xrightarrow{N\to\infty}0 \quad \text{a.s.~and in $L^p$ for all $p\in[1,\infty)$}.
}

\medskip
\noindent  \textit{Step 4. Conclude that $H_{N,M,\theta}(\sigma)$ from \eqref{nj3e} concentrates around $\E\langle H_{N,M,\theta}(\sigma)\rangle_{N}$, where $\langle\cdot\rangle_{N}$ denotes expectation with respect to $G_{N,M,\xi_{\bbeta}}^{\otimes \infty}$.}
\medskip

%Since $\BBB(\la_{\sf m})$ is an open set, 
Since \eqref{beta_decay1} and \eqref{beta_decay2} remain true if $\beta_\theta$ is varied slightly, the limits \eqref{zeij2} and \eqref{qld7} remain true for all choices of $\beta_\theta$ in some open interval.
Furthermore, by Proposition~\ref{prop:diff}, the right-hand side of \eqref{zeij2} is differentiable with respect to $\beta_{\theta}$.
Therefore, by applying the result of \cite{panchenko10} (see the remark after Thm.~1) to the representation \eqref{qkd8}, we have
\eeq{ \label{dveq6}
\lim_{N\to\infty}\frac{1}{N}\E\big\langle\big|H_{N,M,\theta}(\sigma) - \E\langle H_{N,M,\theta}(\sigma)\rangle_{N}\big|\big\rangle_{N} = 0.
}

\medskip
\noindent \textit{Step 5. Conclude that the Ghirlanda--Guerra identities are satisfied in the large-$N$ limit.}
\medskip

Given any positive integer $n$, consider any bounded measurable function $f$ of the finite subarray $\RR^{(n)}= (\RR_{\ell,\ell'})_{1\leq\ell,\ell'\leq n}$.
With $\langle\cdot\rangle_{N}$ denoting expectation with respect to $G_{N,M,\xi_{\bbeta}}^{\otimes\infty}$, we have the following for any $\theta\in\Theta_\Q$:
\eq{
&\Big|\E\Big\langle f \cdot\Big(H_{N,M,\theta}(\sigma^1)-\E\langle H_{N,M,\theta}(\sigma^1)\rangle_{N}\Big)\Big\rangle_{N}\Big| 
\leq \|f\|_\infty\big\langle\big|H_{N,M,\theta}(\sigma^1) - \E\langle H_{N,M,\theta}(\sigma^1)\rangle_{N}\big|\big\rangle_{N}.
}
The right-hand side is $o(N)$ by \eqref{dveq6}, and so the left-hand side is $o(N)$ as well:
%In particular, it follows immediately from \eqref{dveq6} that
\eeq{ \label{dci9}
\lim_{N\to\infty} \frac{1}{N}\Big|\E\Big\langle f \cdot\Big(H_{N,M,\theta}(\sigma^1)-\E\langle H_{N,M,\theta}(\sigma^1)\rangle_{N}\Big)\Big\rangle_{N}\Big| = 0.
}
On the other hand, Gaussian integration by parts \cite[Lem.~1.2]{panchenko13a} gives
\eeq{ \label{xmwi2}
\E\Big\langle f \cdot H_{N,M,\theta}(\sigma^1)\Big\rangle_{N}
= \beta_\theta\E\Big\langle f\cdot\Big(\sum_{\ell=1}^n\CC(\sigma^1,\sigma^\ell)-n\CC(\sigma^1,\sigma^{n+1})\Big)\Big\rangle_N,
}
where $\CC\colon\Sigma^N(d^N)\times\Sigma^N(d^N)\to\R$ is given by
\eq{
\CC(\sigma,\sigma') = \E[H_{N,M,\theta}(\sigma)H_{N,M,\theta}(\sigma')]
\stackref{HNMtheta_cov3}{=} 
(N+M)\Big(\frac{N}{N+M}\Big)^{\deg(\theta)}\xi_\theta\Big(\frac{\sigma\sigma'^\sT}{N}\Big).
}
Hence \eqref{xmwi2} can be rewritten as
\eq{
\frac{1}{N}\E\Big\langle f \cdot H_{N,M,\theta}(\sigma^1)\Big\rangle_{N}
= \beta_\theta\Big(\frac{N}{N+M}\Big)^{\deg(\theta)-1}\E\Big\langle f\cdot\Big(\sum_{\ell=1}^n\xi_\theta(\RR_{1,\ell})-n\xi_\theta(\RR_{1,n+1})\Big)\Big\rangle_N.
}
Since $\RR_{1,1}=\diag(d^N)$ with probability one under $G_{N,M,\xi_{\bbeta}}$, we can further rewrite the right-hand side to obtain
\eeq{ \label{djc7}
\frac{1}{N}\E\Big\langle f \cdot H_{N,M,\theta}(\sigma^1)\Big\rangle_{N}
&= \beta_\theta\Big(\frac{N}{N+M}\Big)^{\deg(\theta)-1}\bigg[\E\langle f\rangle_N\cdot \xi_\theta(\diag(d^N)) \\
&\phantom{=}+ \E\Big\langle f\cdot\Big(\sum_{\ell=2}^n\xi_\theta(\RR_{1,\ell})-n\xi_\theta(\RR_{1,n+1})\Big)\Big\rangle_N\bigg].
}
In the special case of the constant function $f\equiv1$, we have
\eq{
\frac{\E\langle H_{N,M,\theta}(\sigma^1)\rangle_{N}}{N}
&= \beta_\theta\Big(\frac{N}{N+M}\Big)^{\deg(\theta)-1}\Big[\xi_\theta(\diag(d^N))
+ \E\Big\langle \sum_{\ell=2}^n\xi_\theta(\RR_{1,\ell})-n\xi_\theta(\RR_{1,n+1})\Big\rangle_N\Big] \\
&= \beta_\theta\Big(\frac{N}{N+M}\Big)^{\deg(\theta)-1}\Big[\xi_\theta(\diag(d^N))
- \E\langle\xi_\theta(\RR_{1,2})\rangle_N\Big].
}
Upon multiplying this last equation by $\E\langle f\rangle_N$, we obtain
\eeq{ \label{djc8}
&\frac{1}{N}\E\Big\langle f\cdot \E\langle H_{N,M,\theta}(\sigma^1)\rangle_N\Big\rangle_{N} \\
%&= \beta_\theta\frac{N^{\deg(\theta)}}{(N+M)^{\deg(\theta)-1}}\Big[\E\langle f\rangle_N\cdot \xi_\theta(\diag(d^N))
%+ \E\langle f\rangle_N\cdot\E\Big\langle \sum_{\ell=2}^n\xi_\theta(\RR_{1,\ell})-n\xi_\theta(\RR_{1,n+1})\Big\rangle_N\Big].
&= \beta_\theta\Big(\frac{N}{N+M}\Big)^{\deg(\theta)-1}\Big[\E\langle f\rangle_N\cdot \xi_\theta(\diag(d^N))
- \E\langle f\rangle_N\cdot \E\langle\xi_\theta(\RR_{1,2})\rangle_N\Big].
}
Subtracting \eqref{djc8} from \eqref{djc7} results in
\eq{
&\frac{1}{N}\E\Big\langle f \cdot\Big(H_{N,M,\theta}(\sigma^1)-\E\langle H_{N,M,\theta}(\sigma^1)\rangle_{N}\Big)\Big\rangle_{N} \\
&= \beta_\theta\Big(\frac{N}{N+M}\Big)^{\deg(\theta)-1}\Big[\E\Big\langle f\cdot\Big(\sum_{\ell=2}^n\xi_\theta(\RR_{1,\ell})-n\xi_\theta(\RR_{1,n+1})\Big)\Big\rangle_N + \E\langle f\rangle_N\cdot\E\langle\xi_\theta(\RR_{1,2})\rangle_N\Big].
}
By \eqref{dci9}, the left-hand side of this identity tends to $0$ as $N\to\infty$. 
By our crucial assumption that $\beta_\theta\neq0$, it follows that
\eq{
\lim_{N\to\infty}\bigg|\E\big\langle f\cdot \xi_\theta(\RR_{1,n+1})\big\rangle_N -
\frac{1}{n}\E\langle f\rangle_N\cdot\E\langle\xi_\theta(\RR_{1,2})\rangle_N - \frac{1}{n}\sum_{\ell=2}^n\E\big\langle f\cdot \xi_\theta(\RR_{1,\ell})\big\rangle_N\bigg| = 0.
}
Now let $\LL$ denote the weak limit of $\LL_{N,M}$ as $N\to\infty$.
The previous display means that when the array $\RR$ is distributed according to $\LL$, we have
\eeq{ \label{jfnv}
    \E[f(\RR^{(n)})\cdot \xi_{\theta}(\RR_{1,n+1})]=\frac{1}{n}\E[f(\RR^n)]\cdot \E\xi_{\theta}(\RR_{1,2})+\frac{1}{n}\sum_{\ell=2}^{n}\E[f(\RR^n)\cdot \xi_{\theta}(\RR_{1,\ell})].
}
This is a special case of the G.G. identity \eqref{eq:GG}.
All that remains is to argue that this special case implies the general case. 
%implies the $\kappa$-dimensional G.G. identities from Definition~\ref{def:GG}.

Recall that $\xi_{\theta}(R)=\prod_{j=1}^m \iprod{R^{\circ p}w_j}{w_j}^{n_j}$, where $p,m,n_1,\ldots, n_m\geq 1$ and $w_1,\ldots, w_m\in [-1,1]^{\kappa}$ are the parameters defining $\theta$.
Since $(\Q\cap[-1,1])^{\kappa}$ is dense in $[-1,1]^\kappa$, and \eqref{jfnv} holds for every $\theta\in\Theta_\Q$, it follows that \eqref{jfnv} holds for every $\theta\in\Theta$.
That is, \eqref{eq:GG} holds whenever the function $\vphi$ in \eqref{Qphi} is a polynomial of the form $\vphi(x_1,\ldots, x_m)=\prod_{j=1}^{m}x_j^{n_j}$. 
%That is, \eqref{eq:GG} holds whenever the function $\vphi$ in \eqref{Qphi} is a polynomial of the form $\vphi(x_1,\ldots, x_m)=\prod_{j=1}^{m}x_j^{n_j}$. 
By approximating continuous functions with linear combinations of such polynomials (together with constant functions, for which \eqref{eq:GG} is trivial), we deduce the same statement for any bounded continuous $\vphi\colon\R^{m}\to \R$. 
Finally, by approximating bounded measurable functions with bounded continuous functions (e.g.~using Lusin's theorem \cite[Thm.~2.24]{rudin87}), we obtain the identity \eqref{eq:GG} for any bounded measurable $\vphi$.
Note that these approximations are over compact domains, since every overlap matrix $\RR_{\ell,\ell'}$ almost surely belongs to the compact set $\nonpsd$ from \eqref{def_nonpsd}.
\end{proof}

Before we can make use of Proposition~\ref{sym_gg} in the next section, we need one more basic fact about the G.G. identities.

\begin{lemma} \label{lem_kappa_to_1}
Assume $\LL=\Law(\RR)$ is a Gram--de Finetti law that satisfies the $\kappa$-dimensional Ghirlanda--Guerra identities in Definition~\ref{def:GG}.
Then the scalar array $\QQ=(\QQ_{\ell,\ell'})_{\ell,\ell'\geq1}$ given by
$\QQ_{\ell,\ell'} = \tr(\RR_{\ell,\ell'})$
satisfies the $1$-dimensional Ghirlanda--Guerra identities.
That is, for any bounded measurable function $f$ of the finite subarray $\QQ^{(n)}= (\QQ_{\ell,\ell'})_{1\leq\ell,\ell'\leq n}$, and any bounded measurable $\psi\colon\R\to\R$, we have
    \begin{equation}\label{eq:GGT}
    \E[f(\QQ^{(n)})\psi(\QQ_{1,n+1})]=\frac{1}{n}\E[f(\QQ^{(n)})]\E[\psi(\QQ_{1,2})]+\frac{1}{n}\sum_{\ell=2}^{n}\E[f(\QQ^{(n)})\psi(\QQ_{1,\ell})].
    \end{equation}
\end{lemma}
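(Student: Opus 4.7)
The plan is to derive \eqref{eq:GGT} as an immediate specialization of \eqref{eq:GG}, by choosing the parameters $(p,m,w_1,\dots,w_m,\vphi)$ in Definition~\ref{def:GG} so that the scalar array $\AA_{\ell,\ell'}$ defined in \eqref{Qphi} coincides with $\psi(\QQ_{\ell,\ell'})$. Specifically, I would take $p=1$, $m=\kappa$, $w_j = \vv e_j$ for $j\in\{1,\dots,\kappa\}$, so that
\[
\iprod{\RR_{\ell,\ell'}^{\circ 1}\,\vv e_j}{\vv e_j} = \RR_{\ell,\ell'}^{j,j},
\]
and then pick $\vphi\colon\R^\kappa\to\R$ to be $\vphi(x_1,\dots,x_\kappa) = \psi(x_1+\cdots+x_\kappa)$, which is bounded and measurable whenever $\psi$ is. With these choices,
\[
\AA_{\ell,\ell'} = \vphi\big(\RR_{\ell,\ell'}^{1,1},\dots,\RR_{\ell,\ell'}^{\kappa,\kappa}\big) = \psi\Big(\sum_{k=1}^\kappa \RR_{\ell,\ell'}^{k,k}\Big) = \psi(\QQ_{\ell,\ell'}).
\]

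Next, to match the function $f$ appearing in \eqref{eq:GGT} (a bounded measurable function of the scalar subarray $\QQ^{(n)}$) against the one appearing in \eqref{eq:GG} (a bounded measurable function of the matrix subarray $\RR^{(n)}$), I would simply note that $\QQ^{(n)}$ is itself a measurable function of $\RR^{(n)}$ via entrywise traces. Hence $\tilde f(\RR^{(n)}) \coloneqq f(\QQ^{(n)}) = f((\tr \RR_{\ell,\ell'})_{1\leq\ell,\ell'\leq n})$ is a bounded measurable function of $\RR^{(n)}$ and is therefore an allowable choice in Definition~\ref{def:GG}.

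Applying \eqref{eq:GG} with $\tilde f$ in place of $f$ and with $\AA_{\ell,\ell'} = \psi(\QQ_{\ell,\ell'})$ as above then yields exactly \eqref{eq:GGT}, completing the proof. There is no real obstacle here — the argument is a direct unpacking of the definitions, and the $\kappa$-dimensional G.G. identities are designed precisely so that such linear-in-$w_j$ choices recover standard scalar observables like $\tr(\RR_{\ell,\ell'})$. The only minor point to verify is that the Gram--de Finetti hypothesis is not needed beyond what is already assumed, since the identity \eqref{eq:GGT} concerns joint moments and does not require any further positivity or symmetry beyond what $\QQ$ inherits from $\RR$.
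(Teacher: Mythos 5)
Your proposal is correct and is essentially identical to the paper's proof: the paper also sets $\vphi(x_1,\dots,x_\kappa)=\psi(x_1+\cdots+x_\kappa)$ with $w_j=\vv e_j$ (which lie in $[-1,1]^\kappa$ as required) so that $\AA_{\ell,\ell'}=\psi(\tr(\RR_{\ell,\ell'}))$, and then reads \eqref{eq:GGT} off as a special case of \eqref{eq:GG}. The observation that $f(\QQ^{(n)})$ is a bounded measurable function of $\RR^{(n)}$ is the same (implicit) step the paper uses.
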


\begin{proof}
Define $\vphi\colon\R^\kappa\to\R$ by $ \vphi(x_1,\dots,x_\kappa) = \psi(x_1+\dots+x_\kappa)$.
We then have
\eq{
\psi(\tr(R))=\vphi(\iprod{R\vv e_1}{\vv e_1},\dots,\iprod{R\vv e_\kappa}{\vv e_\kappa}),
}
and so \eqref{eq:GGT} is a special case of \eqref{eq:GG}.
\end{proof}

\subsection{Proof of lower bound from symmetry} \label{sec_lbsym}
For here to the end of Section~\ref{sec_lower_bound}, we always assume $d^N=d_\bal$.
Recall that $\Sigma^N(d_\bal)$ is nonempty if and only if $N$ is a multiple of $\kappa$, and so we will frequently replace $N$ with $\kappa N$ so that $N$ continues to be a generic positive integer.
For instance, we will write $G_{\kappa N,M,\xi}$ as in \eqref{GNM_def}, but now with the understanding that $d^{\kappa N}=d_\bal$.

\begin{lemma} \label{lem:symmetry_cavity}
Assume $\xi$ satisfies~\ref{xi_power} and \eqref{xi_symmetric}.
Then the following statements hold.
\begin{enumerate}[label=\textup{(\alph*)}]

\item \label{lem:symmetry_cavity_a}
For any permutation $\pmu\in S_\kappa$, we have
\eq{ %\label{eq:symmetry_later}
\big(H_{N,M,\xi}(\sigma)\big)_{\sigma\in \Sigma^{N}}\stackrel{\mathrm{law}}{=}\big(H_{N,M,\xi}(\pmu\act\sigma)\big)_{\sigma\in \Sigma^{N}}.
}

\item \label{lem:symmetry_cavity_b}
If $d^{\kappa N}=d_\bal$, then $\Law\big(\RR;\E(G_{\kappa N,M,\xi}^{\otimes \infty})\big)$ is symmetric in the sense of Definition~\ref{def:L:symmetry}.

\end{enumerate}
\end{lemma}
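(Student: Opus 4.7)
The plan is to prove (a) first by reducing to a covariance identity, and then obtain (b) as a direct consequence of (a) together with the fact that $\sigma\mapsto\pmu\act\sigma$ preserves the balanced configuration space.

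For part \ref{lem:symmetry_cavity_a}, both $(H_{N,M,\xi}(\sigma))_{\sigma\in\Sigma^N}$ and $(H_{N,M,\xi}(\pmu\act\sigma))_{\sigma\in\Sigma^N}$ are centered Gaussian processes, so it suffices to verify that their covariances agree. By \eqref{comma_covar}, this reduces to showing
\eq{
\xi\Big(\frac{(\pmu\act\sigma)(\pmu\act\sigma')^\sT}{N+M}\Big) = \xi\Big(\frac{\sigma\sigma'^\sT}{N+M}\Big), \quad \sigma,\sigma'\in\Sigma^N.
}
The key algebraic identity is
\eq{
(\pmu\act\sigma)(\pmu\act\sigma')^\sT = \pmu\act(\sigma\sigma'^\sT),
}
which I would verify at the level of rank-one summands: if $\sigma_i=\vv e_a$ and $\sigma'_i=\vv e_b$, then $(\pmu\act\sigma_i)(\pmu\act\sigma'_i)^\sT = \vv e_{\pmu(a)}\vv e_{\pmu(b)}^\sT$, whose entries match $\pmu\act(\vv e_a\vv e_b^\sT)$ by definition \eqref{pmuR_def}. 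Summing over $i$ gives the display. Then assumption \eqref{xi_symmetric} finishes the proof, since $\xi(\pmu\act R) = \xi(R)$.

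For part \ref{lem:symmetry_cavity_b}, the crucial observation is that when $d^{\kappa N} = d_\bal$, the map $\sigma \mapsto \pmu\act\sigma$ is a bijection $\Sigma^{\kappa N}(d_\bal) \to \Sigma^{\kappa N}(d_\bal)$, as noted in the discussion after \eqref{dvd2_b}. Let $\pmu_* G_{\kappa N,M,\xi}$ denote the pushforward of $G_{\kappa N,M,\xi}$ under this bijection. From the Gibbs definition \eqref{GNM_def} and part \ref{lem:symmetry_cavity_a}, we obtain the equality in distribution of random measures
\eq{
G_{\kappa N, M, \xi} \stackrel{\mathrm{law}}{=} \pmu_* G_{\kappa N, M, \xi}.
}
Sampling $(\sigma^\ell)_{\ell\geq1}$ from $G_{\kappa N, M, \xi}^{\otimes\infty}$ is therefore distributionally equivalent to sampling $(\tilde\sigma^\ell)_{\ell\geq1}$ from $G_{\kappa N, M, \xi}^{\otimes\infty}$ and setting $\sigma^\ell = \pmu\act\tilde\sigma^\ell$. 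Applying the identity $(\pmu\act\sigma)(\pmu\act\sigma')^\sT = \pmu\act(\sigma\sigma'^\sT)$ from above to the overlap $\RR_{\ell,\ell'} = (\kappa N)^{-1}\sigma^\ell(\sigma^{\ell'})^\sT$ yields $\RR_{\ell,\ell'} = \pmu\act\tilde\RR_{\ell,\ell'}$, and hence $\Law(\RR;\E(G_{\kappa N, M, \xi}^{\otimes\infty})) = \Law(\pmu\act\RR;\E(G_{\kappa N, M, \xi}^{\otimes\infty}))$, which is exactly the symmetry in Definition~\ref{def:L:symmetry}.

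Neither step poses a real obstacle: the only substantive content is the rank-one calculation identifying $(\pmu\act\sigma)(\pmu\act\sigma')^\sT$ with $\pmu\act(\sigma\sigma'^\sT)$, after which part \ref{lem:symmetry_cavity_a} is immediate from \eqref{xi_symmetric} and \eqref{comma_covar}, and part \ref{lem:symmetry_cavity_b} is a straightforward pushforward argument that relies on the bijectivity of the permutation action on $\Sigma^{\kappa N}(d_\bal)$.
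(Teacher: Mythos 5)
Your proof is correct and follows essentially the same route as the paper: part (a) is reduced to the identity $(\pmu\act\sigma)(\pmu\act\sigma')^\sT=\pmu\act(\sigma\sigma'^\sT)$ (which the paper checks via coordinate inner products and you check via rank-one summands) combined with \eqref{xi_symmetric}, and part (b) is the paper's argument with the permuted Gibbs measure $G^\pmu$ rephrased as a pushforward, using the bijectivity of $\sigma\mapsto\pmu\act\sigma$ on $\Sigma^{\kappa N}(d_\bal)$. No gaps.
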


\begin{proof} 
% The assumption that $\xi$ is symmetric means
% \eeq{ \label{xisym}
% \xi(\pmu\act R) = \xi(R) \quad \text{for every $\pmu\in S_\kappa$, $R\in\R^{\kappa\times\kappa}$}.
% }
For $k\in\{1,\dots,\kappa\}$ and $\sigma=(\sigma_1,\dots,\sigma_N)\in\Sigma^N$, we isolate the $k^\mathrm{th}$ coordinates in $\sigma$ by writing $\sigma(k) = (\sigma_1(k),\dots,\sigma_N(k))^\sT\in\R^N$.
Recall the overlap map $R(\sigma,\sigma') = N^{-1}\sigma\sigma'^\sT$ from \eqref{def:overlap}.
That is, the $(k,k')$ entry of $R(\sigma,\sigma')$ is given by an inner product:
\eeq{ \label{entries_R}
R(\sigma,\sigma)_{k,k'} = N^{-1}\iprod{\sigma(k)}{\sigma'(k')}.
}
% Equation \eqref{fhcnv} says that
% \eeq{ \label{fhcnv_new}
% \E[H_{N,M}(\sigma)H_{N,M}(\sigma)]=(N+M)\xi\Big(\frac{N}{N+M}R(\sigma,\sigma')\Big).
% }
For any $\pmu\in S_\kappa$ and $\sigma,\sigma'\in\Sigma^N$, we claim that
\eeq{ \label{nvc8}
\pmu\act R(\sigma,\sigma') = R(\pmu\act\sigma,\pmu\act\sigma').
}
The action by $\pmu$ on the left-hand side of \eqref{nvc8} is defined in \eqref{pmuR_def}, whereas the action on the right-hand side is defined in \eqref{dvd2}.
%For $k\in\{1,\dots,\kappa\}$ and $\sigma=(\sigma_1,\dots,\sigma_N)\in\Sigma^N$, we isolate the $k^\mathrm{th}$ coordinates in $\sigma$ by writing $\sigma(k) = (\sigma_1(k),\dots,\sigma_N(k))^\sT\in\R^N$.
For any $k,k'\in\{1,\dots,\kappa\}$, we have
\eq{
\text{$(k,k')$ entry of $\pmu\act R(\sigma,\sigma')$}
&\stackref{pmuR_def}{=} \text{$(\pmu^{-1}(k),\pmu^{-1}(k'))$ entry of $R(\sigma,\sigma')$} \\
&\stackrefpp{entries_R}{dvd2}{=} \iprod{\sigma(\pmu^{-1}(k))}{\sigma'(\pmu^{-1}(k'))} \\
&\stackref{dvd2}{=} \iprod{[\pmu\act\sigma](k)}{[\pmu\act\sigma'](k')} \\
&\stackrefpp{entries_R}{dvd2}{=} \text{$(k,k')$ entry of $R(\pmu\act\sigma,\pmu\act\sigma')$}.
}
This verifies \eqref{nvc8}.
We can now write
\eq{
\E[H_{N,M,\xi}(\pmu\act\sigma)H_{N,M,\xi}(\pmu\act\sigma)]
&\stackrefpp{comma_covar}{nvc8}{=} (N+M)\xi\Big(\frac{N}{\kappa N+M}R(\pmu\act\sigma,\pmu\act\sigma')\Big) \\
&\stackref{nvc8}{=} (N+M)\xi\Big(\frac{N}{N+M}\big(\pmu\act R(\sigma,\sigma')\big)\Big) \\
&\stackrefpp{xi_symmetric}{nvc8}{=} (N+M)\xi\Big(\frac{ N}{N+M}R(\sigma,\sigma')\Big) \\
&\stackrefpp{comma_covar}{nvc8}{=}\E[H_{N,M,\xi}(\sigma)H_{N,M,\xi}(\sigma)].
}
This proves part~\ref{lem:symmetry_cavity_a}.

For part~\ref{lem:symmetry_cavity_b}, consider the following Gibbs measure:
\eeq{ \label{djvnr_pmu}
G_{\kappa N,M,\xi}^\pmu(\sigma) \coloneqq \frac{\exp H_{\kappa N,M,\xi}(\pmu^{-1}\act \sigma)}{\sum_{\sigma'\in\Sigma^{\kappa N}(d_\bal)}\exp H_{\kappa N,M,\xi}(\pmu^{-1}\act\sigma')}, \quad \sigma\in\Sigma^{\kappa N}(d_\bal).
}
Part~\ref{lem:symmetry_cavity_a} implies $\E(G_{\kappa N,M,\xi}^{\otimes\infty})=\E((G_{\kappa N,M,\xi}^\pmu)^{\otimes\infty})$.
In particular,
\eeq{ \label{dcop1}
\Law\big(\RR;\E(G_{\kappa N,M,\xi}^{\otimes\infty})\big)
= \Law\big(\RR;\E((G_{\kappa N,M,\xi}^\pmu)^{\otimes\infty})\big).
}
But notice that $\sigma'\mapsto\pmu^{-1}\act\sigma'$ is a bijection on $\Sigma^{\kappa N}(d_\bal)$ since $\pmu\act\diag(d_\bal)=\diag(d_\bal)$.
Therefore, the denominator in \eqref{djvnr_pmu} can be rewritten to give
\eq{
G_{\kappa N,M,\xi}^\pmu(\sigma) = \frac{\exp H_{\kappa N,M,\xi}(\pmu^{-1}\act \sigma)}{\sum_{\sigma'\in\Sigma^{\kappa N}(d_\bal)}\exp H_{\kappa N,M,\xi}(\sigma')}.
}
Drawing from $G_{\kappa N,M,\xi}^\pmu$ is thus equivalent to sampling from $G_{\kappa N,M,\xi}$ and then applying $\pmu$ to the sample.
Consequently,
\eeq{ \label{dcop2}
%\Law\big(\pmu\act\RR;\E(G_{N,M,\xi}^{\otimes\infty})\big)=
\Law\big(\RR;\E((G_{\kappa N,M,\xi}^\pmu)^{\otimes\infty})\big)
= \Law\big(\RR^\pmu;\E(G_{\kappa N,M,\xi}^{\otimes\infty})\big),
}
where $\RR^\pmu = (\RR_{\ell,\ell'}^\pmu)_{\ell,\ell'\geq1}$ is the array given by
$\RR_{\ell,\ell'}^\pmu = R(\pmu\act\sigma^\ell,\pmu\act\sigma^{\ell'})$.
On the other hand, \eqref{nvc8} says that $\RR^\pmu = \pmu\act\RR$, hence
\eeq{ \label{dcop3}
\Law\big(\RR^\pmu;\E(G_{\kappa N,M,\xi}^{\otimes\infty})\big)
= \Law\big(\pmu\act\RR;\E(G_{\kappa N,M,\xi}^{\otimes\infty})\big).
}
Chaining together \eqref{dcop1}--\eqref{dcop3} yields $\Law\big(\RR;\E(G_{\kappa N,M,\xi}^{\otimes\infty})\big)=\Law\big(\pmu\act\RR;\E(G_{\kappa N,M,\xi}^{\otimes\infty})\big)$.
\end{proof}

%\subsection{Proof of Proposition~\ref{prop:symmetric:parisi}}

We are now ready to complete the main objective of this section.

\begin{proof}[Proof of Proposition~\ref{prop:symmetric:parisi}]

It is an elementary fact that for any real-valued sequence $(a_N)_{N\geq1}$ and an integer $M\geq 1$,
\eq{ %\label{elementary_observation_m}
\liminf_{N\to\infty} \frac{a_N}{N} \geq \frac{1}{M}\liminf_{N\to\infty} (a_{N+M}-a_N).
}
Applying this observation to $a_N = \kappa^{-1}\E\log \zee_{\kappa N,\xi_{\bbeta}}(d_\bal)$ and replacing $M$ by $\ka M$ results in
\eq{ %\label{ass_begins_m}
\liminf_{N\to\infty}\eff_{\kappa N,\xi_{\bbeta}}(d_\bal)
\geq \frac{1}{\kappa M}\liminf_{N\to\infty}\E\log\frac{\zee_{\kappa(N+M),\xi_{\bbeta}}(d_\bal)}{\zee_{\kappa N,\xi_{\bbeta}}(d_\bal)}.
}
Now apply Proposition~\ref{ass_prop} with $d^{\kappa N} = d^{\kappa(N+M)} =\delta^{\kappa M}=d_\bal$, so that \eqref{ass_lower} yields
\eeq{ \label{iwp00}
&\liminf_{N\to\infty}\eff_{\kappa N,\xi_{\bbeta}}(d_\bal)
\geq \liminf_{N\to\infty}\Psi_{\kappa M,\xi_{\bbeta}}\Big(\Law\big(\RR;\E(G_{\kappa N,\kappa M,\xi_{\bbeta}}^{\otimes\infty})\big);\Sigma^{\kappa M}(d_\bal)\Big).
}
Note that because $G_{\kappa N_k,\kappa M,\xi_{\bbeta}}$ is supported on $\Sigma^{\kappa N_k}(d_\bal)$, we trivially have
\eeq{ \label{pre_deterministic}
\RR_{\ell,\ell} = R(\sigma^\ell,\sigma^\ell) = \diag(d_\bal) \quad \text{for all $\ell,\ell'\geq1$, with probability one}.
}
By Lemma~\ref{lem_gramtype}, we know
\eeq{ \label{pre_type}
\Law\big(\RR;\E(G_{\kappa N,\kappa M,\xi_{\bbeta}}^{\otimes\infty})\big) \text{ is Gram--de Finetti (Definition~\ref{type_def})}.
}
By Lemma~\ref{lem:symmbeta}, $\xi_{\bbeta}$ satisfies the symmetry condition \eqref{xi_symmetric}.
So by Lemma~\hyperref[lem:symmetry_cavity_b]{\ref*{lem:symmetry_cavity}\ref*{lem:symmetry_cavity_b}}, we know 
\eeq{ \label{pre_sym}
\Law\big(\RR;\E(G_{\kappa N,\kappa M,\xi_{\bbeta}}^{\otimes\infty})\big) \text{ is symmetric (Definition~\ref{def:L:symmetry})}.
}
Now pass to a subsequence $(N_j)_{j\geq1}$ that achieves the infimum on the right-hand side of \eqref{iwp00}:
\eeq{ \label{iwp0}
\liminf_{N\to\infty}\eff_{\kappa N,\xi_{\bbeta}}(d_\bal)
\geq \lim_{j\to\infty}
\Psi_{\kappa M,\xi_{\bbeta}}\Big(\Law\big(\RR;\E(G_{\kappa N_j,\kappa M,\xi_{\bbeta}}^{\otimes\infty})\big);\Sigma^{\kappa M}(d_\bal)\Big).
}
By passing to a further subsequence, we may also assume there is some Gram--de Finetti law $\LL_M$ such that
\eeq{ \label{fjgx2}
\text{$\Law\big(\RR;\E(G_{\kappa N_j,\kappa M,\xi_{\bbeta}}^{\otimes\infty})\big)$ converges weakly to $\LL_M$ as $j\to\infty$}.
}

Let us check that $\LL_M$ satisfies the hypotheses of Lemma~\ref{lemma:symmetric}.
First, \eqref{pre_deterministic} obviously remains true under $\LL_M$.
Second, $\LL_M$ must be a Gram--de Finetti law by \eqref{pre_type}, because $\RR\mapsto(\iprod{\RR_{\ell,\ell'}^{\circ p}w}{w})_{\ell,\ell'\geq1}$ is a continuous operation on arrays for any $p\geq1$ and $w\in\R^\kappa$.
Similarly, $\LL_M$ inherits symmetry from \eqref{pre_sym}, since $\RR\mapsto\pmu\act\RR$ is a continuous operation on arrays for any $\pmu\in S_\kappa$.
Finally, according to Proposition~\ref{sym_gg}, the $\kappa$-dimensional G.G. identities are satisfied by $\LL_M$.

We can now invoke Lemma~\ref{lemma:symmetric}: if $\LL_M = \Law(\RR)$, then almost surely
\eeq{ \label{dje21}
R_{\ell,\ell^\prime}=\Phi^\star\big(\tr(R_{\ell,\ell^\prime})\big), \quad \text{where} \quad
\Phi^\star(q)=\frac{q}{\kappa}\bI_{\kappa}+\frac{1-q}{\kappa^2}\bone\bone^{\sT}.
}
In other words, if we denote the array of traces by $\QQ=(\QQ_{\ell,\ell'})_{\ell,\ell'\geq1}=(\tr(\RR_{\ell,\ell'}))_{\ell,\ell'\geq1}$ and define $\bar\LL_M = \Law(\QQ)$, then $\LL_M$ is the pushforward\footnote{Here there is a slight abuse of notation: in \eqref{dje21}, $\Phi^\star$ is a map $[0,1]\to\R^{\kappa\times\kappa}$, whereas in \eqref{pushforward}, $\Phi^\star$ is thought of as a map $[0,1]^{\N\times\N}\to(\R^{\kappa\times\kappa})^{\N\times\N}$ defined by performing \eqref{dje21} to every element in an array.}
of $\bar\LL_M$ under $\Phi^\star$:
\eeq{ \label{pushforward}
\LL_M = \bar\LL_M\circ(\Phi^\star)^{-1}.
}
Furthermore, $\bar\LL_M$ satisfies the $1$-dimensional G.G. identities by Lemma~\ref{lem_kappa_to_1}.
Therefore, Theorem~\ref{representation_thm} says that $\bar\LL_M = \bar\LL_{\mu_M}$, where $\mu_M=\Law(\tr(\RR_{1,2}))$.
This is a probability measure on $[0,1]$, and we consider its quantile function
\eq{
Q_{\mu_M}(t) \coloneqq \inf\{q\geq 0:\, \mu([0,q]) \geq t\}, \quad t\in(0,1].
}
Then define $\pi_M\colon(0,1]\to\Gamma^\star$ by $\pi_M(t) = \Phi^\star(Q_{\mu_M}(t))$.
Since $Q_{\mu_M}$ is left-continuous and $\Phi^\star$ is continuous, this $\pi_M$ is an element of the path space $\Pi^\star$ defined in \eqref{def:Pi:star}.

Now let $(\mu_{M,\,j})_{j\geq1}$ be a sequence of probability measures on $[0,1]$ such that
\begin{enumerate}[label=\textup{(\roman*)}]

\item \label{pi_prop_i}
$\mu_{M,\,j}$ is supported on finitely many points.

\item \label{pi_prop_ii}
$\mu_{M,\,j}(\{1\})>0$.

\item \label{pi_prop_iii}
$\mu_{M,\,j}$ converges weakly to $\mu_M$ as $j\to\infty$.

\end{enumerate}
Then define $\pi_{M,\,j}\colon(0,1]\to\Gamma^\star$ by $\pi_{M,\,j}(t)=\Phi^\star(Q_{\mu_{M,\,j}}(t))$.
By properties~\ref{pi_prop_i} and~\ref{pi_prop_ii}, this $\pi_{M,\,j}$ is an element of the path space $\Pi_{d_\bal}^\disc$ from \eqref{def_Pi_disc}.
Meanwhile, property~\ref{pi_prop_iii} implies
\begin{subequations} \label{vgwe}
\eeq{
\lim_{j\to\infty}\int_0^1|Q_{\mu_M}(t)-Q_{\mu_{M,\,j}}(t)|\ \dd t = 0.
}
As $\Phi^\star$ is Lipschitz and $\pi_M(t)-\pi_{M,\,j}(t) = \Phi^\star(Q_{\mu_M}(t))-\Phi^\star(Q_{\mu_{M,\,j}}(t))$, this limit implies
\eeq{
\lim_{j\to\infty}\int_0^1\| \pi_M(t)-\pi_{M,\,j}(t)\|_1\ \dd t = 0.
}
\end{subequations}
From this convergence and Lemma~\ref{pconlem}, it follows that
\eeq{ \label{qspd5}
\lim_{j\to\infty}\PPP_{\kappa M,\xi_{\bbeta}}^{(1)}(\pi_{M,\,j},0;\Sigma^{\kappa M}(d_\bal))
&= \PPP_{\kappa M,\xi_{\bbeta}}(\pi_M,0;\Sigma^{\kappa M}(d_\bal)), \\
\text{and} \quad
\lim_{j\to\infty}\PPP_{\xi_{\bbeta}}^{(2)}(\pi_{M,\,j}) &= \PPP_{\xi_{\bbeta}}^{(2)}(\pi_M).
}
On the other hand, we claim the following equality:
%since $\pi_{M,\,j}$ is a discrete path Lemma~\ref{PPPPsi_lemma} gives the following equality:
\eeq{ \label{dcj7}
\PPP_{\kappa M,\xi_{\bbeta}}^{(1)}(\pi_{M,\,j},0;\Sigma^{\kappa M}(d_\bal))
+ \PPP_{\xi_{\bbeta}}^{(2)}(\pi_{M,\,j})
= \Psi_{\kappa M,\xi_{\bbeta}}(\bar\LL_{\mu_{M,\,j}}\circ(\Phi^\star)^{-1};\Sigma^{\kappa M}(d_\bal)),
}
where $\bar\LL_{\mu_{M,\,j}}$ is the $1$-dimensional Gram--de Finetti law from Theorem~\ref{representation_thm}.
Indeed, suppose $\pi_{M,\,j}$ has the form
\eq{
\pi_{M,\,j}(t) = \gamma_r \quad \text{for $t\in(m_{r-1},m_r]$, $r\in\{1,\dots,s\}$},
}
where $0 = m_0<m_1<\cdots<m_s=1$ and $0\preceq\gamma_1\prec\cdots\prec\gamma_s=\diag(d_\bal)$.
This means $\mu_{M,\,j}$ has the form
\eq{
\mu_{M,\,j} = \sum_{r=1}^s (m_r-m_{r-1})\delta_{q_r}, \quad \text{where} \quad \Phi^\star(q_r) = \gamma_r.
}
Now let $\nu$ be the RPC from Theorem~\ref{rpcthm}, so that when $(\alpha^1,\alpha^2,\dots)$ is sampled from $\E(\nu^{\otimes\infty})$, the induced array $(q_{r(\alpha^\ell,\alpha^{\ell'})})_{\ell,\ell'\geq1}$ has law $\bar\LL_{\mu_{M,\,j}}$.
Applying $\Phi^\star$ to every entry, we obtain an array of matrices $(\gamma_{r(\alpha^\ell,\alpha^{\ell'})})_{\ell,\ell'\geq1}$ whose law is $\bar\LL_{\mu_{M,\,j}}\circ(\Phi^\star)^{-1}$.
Then \eqref{dcj7} follows from Lemma~\ref{PPPPsi_lemma}.

By Property~\ref{pi_prop_iii} and the final statement in Theorem~\ref{representation_thm}, 
$\bar\LL_{\mu_{M,\,j}}$ converges weakly to $\bar\LL_{\mu_M}$ as $j\to\infty$.
Since $\Phi^\star$ is continuous, it follows that 
\eeq{ \label{jfnc3}
\text{$\bar\LL_{\mu_{M,\,j}}\circ(\Phi^\star)^{-1}$ converges weakly to $\bar\LL_{\mu_{M}}\circ(\Phi^\star)^{-1}\stackref{pushforward}{=}\LL_M$ as $j\to\infty$}.
}
Since \eqref{jfnc3} and \eqref{fjgx2} have the same limit, Corollary~\ref{extension_cor} implies
\eeq{ \label{fjcq8}
&\lim_{j\to\infty}\Psi_{\kappa M,\xi_{\bbeta}}\Big(\Law\big(\RR;\E(G_{\kappa N_j,\kappa M,\xi_{\bbeta}}^{\otimes\infty})\big);\Sigma^{\kappa M}(d_\bal)\Big) \\
&=\lim_{j\to\infty}\Psi_{\kappa M,\xi_{\bbeta}}(\bar\LL_{\mu_{M,\,j}}\circ(\Phi^\star)^{-1};\Sigma^{\kappa M}(d_\bal)).
}
Now we put our various observations together:
\eq{
\liminf_{N\to\infty} \eff_{\kappa N,\xi_{\bbeta}}(d_\bal) 
&\stackref{iwp0}{\geq} \lim_{j\to\infty}\Psi_{\kappa M,\xi_{\bbeta}}\Big(\Law\big(\RR;\E(G_{\kappa N_j,\kappa M,\xi_{\bbeta}}^{\otimes\infty})\big);\Sigma^{\kappa M}(d_\bal)\Big) \\
%&\stackref{4xdfe}{\geq} \Psi_{\kappa M,\xi_{\bbeta}}(\LL_M;\Sigma^{\kappa M}(d_\bal)) \\
&\stackref{fjcq8}{=} \lim_{j\to\infty} \Psi_{\kappa M,\xi_{\bbeta}}(\bar\LL_{M,\,j}\circ(\Phi^\star)^{-1};\Sigma^{\kappa M}(d_\bal)) \\
&\stackref{dcj7}{=}  
\lim_{j\to\infty}\big[\PPP_{\kappa M,\xi_{\bbeta}}^{(1)}(\pi_{M,\,j},0;\Sigma^{\kappa M}(d_\bal))
+ \PPP_{\xi_{\bbeta}}^{(2)}(\pi_{M,\,j})\big] \\
&\stackref{qspd5}{=} \PPP_{\kappa M,\xi_{\bbeta}}(\pi_M,0;\Sigma^{\kappa M}(d_\bal))
+ \PPP^{(2)}_{\xi_{\bbeta}}(\pi_M).
}
Finally, we must send $M\to\infty$.

By passing to a subsequence, we may assume $\mu_M$ converges weakly as $M\to\infty$ to some probability measure $\mu$ on $[0,1]$.
Define $\pi\colon(0,1]\to\Gamma^\star$ by $\pi(t) = \Phi^\star(Q_\mu(t))$.
As before, $\pi$ belongs to $\Pi^\star$ because of the left-continuity of $Q_\mu$ together with the continuity of $\Phi^\star$.
Using the same logic as in \eqref{vgwe}, we must have $\pi_M\to\pi$ in the $L^1$ norm \eqref{norm}.
Hence Lemmas~\ref{pr456} and~\ref{pr457} give
\eq{
\lim_{M\to\infty}\PPP_{\kappa M,\xi_{\bbeta}}^{(1)}(\pi_M,0;\Sigma^{\kappa M}(d_\bal))
\stackref{xjq32}{=} \inf_{\lambda\in\R^\kappa}[\PPP_{\xi_{\bbeta}}^{(1)}(\pi,\lambda) - \iprod{\lambda}{d_\bal}]
\stackref{xjq33}{=}\PPP_{\xi_{\bbeta}}^{(1)}(\pi,0).
}
Applying Lemma~\ref{pconlem} once more, we also have
\eq{
\lim_{M\to\infty}\PPP^{(2)}_{\xi_{\bbeta}}(\pi_M) = \PPP^{(2)}_{\xi_{\bbeta}}(\pi).
}
Combining the three previous displays, we obtain
%and recalling the definition of $\PPP_{\xi_{\bbeta}}$ from \eqref{def:parisi:ftl}, 
\begin{equation*}
\liminf_{N\to\infty} \eff_{\kappa N,\xi_{\bbeta}}(d_\bal) 
\geq \PPP_{\xi_{\bbeta}}^{(1)}(\pi,0)+\PPP_{\xi_{\bbeta}}^{(2)}(\pi)
\stackref{eq:Parisi:ftl:lagrange}{=}\PPP_{\xi_{\bbeta}}(\pi,0). \qedhere
\end{equation*}
%which implies \eqref{prop:symmetric:parisi_eq}.
\end{proof}

% %: ACKNOWLEDGMENTS
% %\section{Acknowledgments}

% %: BIBLIOGRAPHY
% %\nocite{*} Uncomment to include all entries in the bibliography.
% \bibliographystyle{acm}
% \bibliography{erikbib}{}
% %\bibliography{NAME}

\appendix
%\newpage
\section{Gaussian processes for generic model}\label{sec:app:generic}
Here we prove Proposition~\ref{prop:xi:theta}. 
We fix $\theta=(p,m,n_1,\ldots, n_m, w_{1},\ldots, w_{m})\in \Theta$ throughout this appendix and denote the coordinates of $w_j\in [-1,1]^{\ka}$ by $w_j=\big(w_j(k)\big)_{k=1}^\kappa$.
We first establish two lemmas.
%We first establish parts~\ref{prop:xi:theta_a} and~\ref{prop:xi:theta_b} of the proposition.
\begin{lemma}\label{lem:xi:theta:increasing}
    For $Q,R\in \Gamma_{\ka}$ such that $Q\preceq R$, we have 
    \begin{equation*}
        0\leq \xi_{\theta}(Q)\leq\xi_{\theta}(R),\;\;\;\;0\preceq\nabla\xi_{\theta}(Q)\preceq\nabla\xi_{\theta}(R).
    \end{equation*}
    Furthermore, $\vartheta_{\theta}(R)=(\deg(\theta)-1)\xi_\theta(R)$ %\big(p\sum_{j=1}^{m}n_{j}-1)\cdot \xi_{\theta}(Q)$ 
    for any $R\in \R^{\ka\times \ka}$.
\end{lemma}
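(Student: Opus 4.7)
The second claim is immediate from Euler's identity for homogeneous functions: as noted in \eqref{deg_def}, $\xi_\theta$ is a homogeneous polynomial of degree $\deg(\theta) = p(n_1+\cdots+n_m)$, so $\iprod{R}{\nabla\xi_\theta(R)} = \deg(\theta)\,\xi_\theta(R)$, and the formula $\vartheta_\theta(R) = (\deg(\theta)-1)\xi_\theta(R)$ follows directly from \eqref{vartheta_theta_def}.

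For the first claim, the plan is to reduce everything to the Schur product theorem (Hadamard products of PSD matrices are PSD) plus one algebraic factorization. First I would record the identity
\eq{
R^{\circ p} - Q^{\circ p} = (R - Q) \circ \sum_{k=0}^{p-1} R^{\circ k} \circ Q^{\circ (p-1-k)},
}
which holds entrywise. When $Q,R\in\Gamma_\kappa$ with $Q\preceq R$, every $R^{\circ k}$ and $Q^{\circ (p-1-k)}$ is PSD by iterated application of Schur's theorem, so the bracketed sum is PSD and its Hadamard product with the PSD matrix $R-Q$ is again PSD. This gives both $R^{\circ p}\succeq 0$ (take $Q=0$) and $R^{\circ p}-Q^{\circ p}\succeq 0$. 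Setting $f_j(R) \coloneqq \iprod{R^{\circ p}w_j}{w_j}$, we then obtain $0\leq f_j(Q)\leq f_j(R)$, and raising to the power $n_j$ and multiplying over $j$ yields $0\leq \xi_\theta(Q)\leq\xi_\theta(R)$.

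For the gradient, I would first compute explicitly
\eq{
\nabla\xi_\theta(R) = p\sum_{j=1}^m n_j\, g_j(R)\, \bigl(R^{\circ(p-1)} \circ w_j w_j^\sT\bigr),
\quad\text{where}\quad g_j(R) \coloneqq f_j(R)^{n_j-1}\prod_{i\neq j} f_i(R)^{n_i},
}
using $\partial f_j/\partial R_{k,k'} = p R_{k,k'}^{p-1} w_j(k)w_j(k')$. On $\Gamma_\kappa$, each $g_j(R)\geq 0$ by the previous step, and each $R^{\circ(p-1)}\circ w_jw_j^\sT$ is PSD as a Schur product of PSD matrices; hence $\nabla\xi_\theta(R)\succeq 0$. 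For monotonicity, given $Q\preceq R$ in $\Gamma_\kappa$, the same factorization identity (with exponent $p-1$) gives $R^{\circ(p-1)}-Q^{\circ(p-1)}\succeq 0$, so Hadamarding with $w_jw_j^\sT$ yields
\eq{
A_j \coloneqq Q^{\circ(p-1)}\circ w_jw_j^\sT \ \preceq\ R^{\circ(p-1)}\circ w_jw_j^\sT \eqqcolon B_j,
}
while $a_j\coloneqq g_j(Q)\leq g_j(R)\eqqcolon b_j$ by the scalar monotonicity already established. The elementary identity $b_jB_j - a_jA_j = b_j(B_j-A_j)+(b_j-a_j)A_j$ exhibits $b_jB_j-a_jA_j$ as a sum of two PSD matrices, and summing over $j$ gives $\nabla\xi_\theta(Q)\preceq\nabla\xi_\theta(R)$.

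The only mildly subtle point is the monotonicity of $\nabla\xi_\theta$, where one has to combine scalar monotonicity of the $g_j$'s with PSD monotonicity of the $A_j$'s; the decomposition $bB-aA=b(B-A)+(b-a)A$ handles this cleanly. Everything else is a direct consequence of the Schur product theorem applied to the factorization of $R^{\circ p}-Q^{\circ p}$.
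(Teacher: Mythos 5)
Your proof is correct and follows essentially the same route as the paper's: Schur's product theorem applied to a factorization of $R^{\circ p}-Q^{\circ p}$, the explicit gradient formula $\nabla\xi_\theta(R)=p\sum_j n_j\,g_j(R)\,W_jR^{\circ(p-1)}W_j$, and the combination of scalar monotonicity of the coefficients with PSD monotonicity of the matrices (your decomposition $b_jB_j-a_jA_j=b_j(B_j-A_j)+(b_j-a_j)A_j$ makes explicit a step the paper only calls ``apparent''). Your use of Euler's identity for homogeneous polynomials to get $\vartheta_\theta=(\deg(\theta)-1)\xi_\theta$ is a slightly slicker packaging of the paper's direct computation, which amounts to verifying that identity by hand.
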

\begin{proof}
First we make an elementary claim about Hadamard products: 
\eeq{ \label{elem_hada}
0\preceq Q\preceq R,\ 0\preceq Q'\preceq R' \quad \implies \quad 0\preceq Q\circ Q' \preceq R\circ R'.
}
Indeed, we have the decomposition
\eq{
R\circ R' - Q\circ Q'
= R\circ(R'-Q') + Q'\circ(R-Q),
}
and the Schur product theorem tells us that both $R\circ(R'-Q')$ and $Q'\circ(R-Q)$ are positive-semidefinite.
Hence \eqref{elem_hada} holds.

By repeatedly applying \eqref{elem_hada} with $Q'=Q$ and $R'=R$, we see that
\eq{ %\label{dom_anypq}
0\preceq Q\preceq R \quad \implies \quad
0\leq \iprod{Q^{\circ p}u}{u} \leq \iprod{R^{\circ p}u}{u} \quad \text{for any $p\geq 1$ and $u\in\R^\kappa$}.
}
In light of the definition of $\xi_\theta$ in \eqref{def:xi:theta}, it follows that $0\leq \xi_{\theta}(Q)\leq \xi_{\theta}(R)$.
%The second conclusion of \eqref{xi_increasing} immediately follows from noticing that
%\eq{
%\vartheta_\theta(R) = \big(p(n_1+\cdots+n_m)-1)\xi_\theta(R).
%}

Next we argue $0\preceq \nabla\xi_{\theta}(Q)\preceq \nabla\xi_{\theta}(R)$. 
By differentiating \eqref{def:xi:theta} with respect to $R_{k,k'}$, we compute the $(k,k^\prime)$ entry of $\nabla \xi_{\theta}(R)$ to be
\eeq{ \label{nablaxi_ab}
\nabla\xi_\theta(R)_{k,k^\prime}
= \sum_{j=1}^m\Big[\prod_{\ell \neq j}\iprod{R^{\circ p}w_\ell}{w_\ell}^{n_\ell}\Big] n_j\iprod{R^{\circ p}w_j}{w_j}^{n_j-1}\cdot pR_{k,k^\prime}^{p-1}w_j(k)w_j(k^\prime).
}
If we write $W_j=\diag(w_j)$, then \eqref{nablaxi_ab} says
\begin{equation}\label{eq:express:nabla:xi}
   \nabla\xi_\theta(R)=p\sum_{j=1}^{m}n_j\xi_{\theta_j}(R)\cdot W_j R^{\circ(p-1)}W_j,
\end{equation}
where $\theta_j\in \Theta$ is obtained from $\theta=(p,m,n_1,\ldots, n_m,w_1,\ldots, w_m)$ by modifying $n_j$ to $n_j-1$. 
For each $j$ we have $0\leq \xi_{\theta_j}(Q)\leq \xi_{\theta_j}(R)$ by the argument of the previous paragraph. 
In addition, \eqref{elem_hada} gives $0\preceq Q^{\circ (p-1)}\preceq R^{\circ(p-1)}$, and so $0\preceq WQ^{\circ (p-1)}W\preceq WR^{\circ(p-1)}W$ for any diagaonal matrix $W$.
It is thus apparent from \eqref{eq:express:nabla:xi} that $0\preceq \nabla\xi_{\theta}(Q)\preceq \nabla\xi_{\theta}(R)$.

To see the final assertion of the lemma, insert \eqref{nablaxi_ab} into the definition of $\vartheta_\theta(R)= \iprod{R}{\nabla\xi_{\theta}(R)}-\xi_{\theta}(R)$.
This results in
\eq{
\vartheta_\theta(R) 
=p\sum_{j=1}^m n_j \Big[\prod_{\ell \neq j}\iprod{R^{\circ p}w_{\ell}}{w_\ell}^{n_\ell}\Big] \iprod{R^{\circ p}w_j}{w_j}^{n_j}-\xi_{\theta}(R)
\stackref{def:xi:theta}{=} p\sum_{j=1}^m n_j\xi_\theta(R) - \xi_\theta(R),
}
which is exactly as desired.
\end{proof}

%Next we prove the two existence claims in Proposition~\hyperref[prop:xi:theta]{\ref*{prop:xi:theta}\ref*{prop:xi:theta_c}}.
%The first claim  %regarding the existence of the process $\big(H_{N,\theta}(\sigma)\big)_{\sigma\in \R^{\ka\times N}}$ 
%follows from the construction in \cite[Sec.~5]{panchenko18b}. 
In the next lemma, we follow the construction in \cite[Sec.~5]{panchenko18b}. 
%Nevertheless, we provide the details since it is helpful to establish the second claim.
Our $H_{N,\theta}$ is equal to $\sqrt{N}\,h_{N,\theta}$ in the notation of \cite{panchenko18b}.

\begin{lemma}\label{lem:gaussian:existence}
For each $N\geq 1$, there exist a centered Gaussian process $\big(H_{N,\theta}(\sigma)\big)_{\sigma\in \R^{\kappa\times N}}$ with covariance
    \eeq{ \label{lem:gaussian:existence_1}
   \E\big[H_{N,\theta}(\sigma)H_{N,\theta}(\sigma')\big]
     = N\xi_\theta\Big(\frac{\sigma\sigma'^{\sT}}{N}\Big),
     }
     and a centered $\ka$-dimensional Gaussian process $\big(Z_{N,\theta}(\sigma)\big)_{\sigma\in \R^{\kappa\times N}}$ with covariance
    \begin{equation}\label{eq:Z:theta:cov}
        \E\Big[Z_{N,\theta}(\sigma)Z_{N,\theta}(\sigma')^{\sT}\Big]
     = \nabla \xi_\theta\Big(\frac{\sigma\sigma'^{\sT}}{N}\Big).
    \end{equation}
\end{lemma}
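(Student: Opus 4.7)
The plan is to construct both processes explicitly as finite Gaussian linear combinations; this simultaneously establishes existence and verifies the required covariances, avoiding any need to invoke Kolmogorov extension.

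For $H_{N,\theta}$, I will expand the defining formula \eqref{def:xi:theta} at $R=\sigma\sigma'^{\sT}/N$ using the elementary identity
\[
R_{k,k'}^p \;=\; N^{-p}\sum_{\vec i \in \{1,\dots,N\}^p}\prod_{l=1}^{p}\sigma_{i_l}(k)\,\sigma'_{i_l}(k'),
\]
which gives $\iprod{R^{\circ p}w_j}{w_j} = N^{-p}\sum_{\vec i}u_{j,\vec i}(\sigma)\,u_{j,\vec i}(\sigma')$ where $u_{j,\vec i}(\sigma):=\sum_{k}w_j(k)\prod_{l}\sigma_{i_l}(k)$. Raising to the $n_j$-th power and multiplying over $j$ yields a finite sum-of-squares decomposition
\[
\xi_\theta(\sigma\sigma'^{\sT}/N) \;=\; N^{-\deg(\theta)}\sum_{\vec I}f_{\vec I}(\sigma)\,f_{\vec I}(\sigma'),
\]
where $\vec I = (\vec i^{(j,s)})_{j,s}$ ranges over $(\{1,\dots,N\}^{p})^{n_1+\cdots+n_m}$ and $f_{\vec I}(\sigma):=\prod_{j,s}u_{j,\vec i^{(j,s)}}(\sigma)$. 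Letting $(g_{\vec I})$ be i.i.d.\ standard Gaussians indexed by this finite set, I will then set
\[
H_{N,\theta}(\sigma)\;:=\;N^{(1-\deg(\theta))/2}\sum_{\vec I}g_{\vec I}\,f_{\vec I}(\sigma),
\]
which is a finite sum for each $\sigma$; its covariance matches \eqref{lem:gaussian:existence_1} by termwise computation.

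For the vector-valued process $Z_{N,\theta}$, I will feed the same expansion into the identity \eqref{eq:express:nabla:xi}. The key observation is that $W_j R^{\circ(p-1)}W_j$ evaluated at $R=\sigma\sigma'^{\sT}/N$ factors as
\[
N^{-(p-1)}\sum_{\vec i\in \{1,\dots,N\}^{p-1}}v_{j,\vec i}(\sigma)\,v_{j,\vec i}(\sigma')^{\sT}, \qquad v_{j,\vec i}(\sigma)_{k}:=w_j(k)\prod_{l}\sigma_{i_l}(k).
\]
Multiplying by the scalar decomposition of $\xi_{\theta_j}(\sigma\sigma'^{\sT}/N)$ from the first step (applied with $\theta_j$ in place of $\theta$) and summing over $j$ weighted by $p\,n_j$ yields
\[
\nabla\xi_{\theta}(\sigma\sigma'^{\sT}/N) \;=\; \sum_{\alpha}c_{\alpha}\,h_{\alpha}(\sigma)\,h_{\alpha}(\sigma')^{\sT}
\]
with finitely many $h_{\alpha}\colon\R^{\kappa\times N}\to\R^{\kappa}$ and $c_{\alpha}\ge 0$ (the powers of $N$ collapse correctly because $\deg(\theta_j)+(p-1)=\deg(\theta)-1$). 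Setting $Z_{N,\theta}(\sigma):=\sum_{\alpha}\sqrt{c_{\alpha}}\,\eta_{\alpha}h_{\alpha}(\sigma)$ with independent standard normals $(\eta_{\alpha})$, independent of the $(g_{\vec I})$, then reproduces \eqref{eq:Z:theta:cov} termwise.

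The proof has no conceptual obstacle; it is essentially an exercise in disciplined combinatorial bookkeeping. Every sum that appears is finite for fixed $N$ (the index $\vec I$ has size $N^{\deg(\theta)}$), so measurability and the covariance identities are immediate once the algebraic decompositions are recorded. The main place where care is required is in tracking powers of $N$ and the multiplicative factor $p\,n_j$ so that the scalar decomposition of $\xi_{\theta_j}$ and the matrix decomposition of $W_j R^{\circ(p-1)}W_j$ assemble cleanly into the claimed formula for $\nabla\xi_{\theta}$; this is where most of the work will live.
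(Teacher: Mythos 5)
Your construction is correct. For $H_{N,\theta}$ it coincides with the paper's proof: your $u_{j,\vec i}$ is exactly the paper's $S_{w_j}(\sigma_{\vec i})$, your index set $(\{1,\dots,N\}^p)^{n_1+\cdots+n_m}$ is the same product set over which the paper sums, and the normalization $N^{(1-\deg(\theta))/2}$ agrees. For $Z_{N,\theta}$ you take a genuinely different route. The paper introduces an auxiliary vector process $z_{N,p}^j$ with covariance $(\sigma\sigma'^{\sT}/N)^{\circ(p-1)}$ and an independent scalar process $H_{N,\theta_j}$, and defines $Z_{N,\theta}$ as a weighted sum of the products $H_{N,\theta_j}(\sigma)\cdot W_j z_{N,p}^{j}(\sigma)$, so that the covariance factors term by term into \eqref{eq:express:nabla:xi}. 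You instead expand $\nabla\xi_\theta(\sigma\sigma'^{\sT}/N)$ completely into a finite nonnegative combination of rank-one outer products $h_\alpha(\sigma)h_\alpha(\sigma')^{\sT}$ of deterministic vector-valued polynomials and attach one i.i.d.\ standard normal to each term; your exponent bookkeeping is right, since $\deg(\theta_j)+(p-1)=\deg(\theta)-1$ makes the coefficients collapse to $p\,n_j N^{-(\deg(\theta)-1)}\geq 0$. Your variant buys a small but real advantage: a deterministic linear combination of i.i.d.\ normals is manifestly a jointly Gaussian process, whereas a product of two independent Gaussian processes (as in the paper's definition of $Z_{N,\theta}$) is centered with the correct covariance but is not itself Gaussian — strictly speaking the paper's construction certifies that the covariance kernel is positive semidefinite, after which a Gaussian process with that covariance exists by general theory. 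Your rank-one expansion makes the certification and the Gaussianity simultaneous and explicit, at the cost of somewhat heavier combinatorial indexing.
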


\begin{proof}
    %We associate to $\theta$ a Gaussian process realized as follows.
For any $p$-tuple of indices ${I}=(i_1,\dots,i_p)\in\{1,\dots,N\}^p$, let us write
\eq{
\sigma_{{I}} = (\sigma_{i_1},\dots,\sigma_{i_p})\in(\R^\kappa)^p \quad \text{for $\sigma\in(\R^\kappa)^N$}.
}
Furthermore, for any $n$-tuple of $p$-tuples $\II = ({I}_1,\dots,{I}_n)\in(\{1,\dots,N\}^p)^n$, we will write
\eq{
\sigma_{\II} = (\sigma_{{I}_1},\dots,\sigma_{{I}_n})\in((\R^\kappa)^p)^n \quad \text{for $\sigma\in(\R^\kappa)^N$}.
}
Given such $\II$ and some $w= \big(w(k)\big)_{1\leq k\leq \ka}\in\R^\kappa$, define the following polynomial:
\eq{
S_w(\sigma_\II) &\coloneqq S_w(\sigma_{{I}_1})\cdots S_w(\sigma_{{I}_n}), \quad \text{where} \quad
S_w(\sigma_{i_1},\ldots \sigma_{i_p}) \coloneqq \sum_{k=1}^\kappa w(k)\sigma_{i_1}(k)\cdots\sigma_{i_p}(k).
}
Finally, we sum over all choices of $\II_1,\dots,\II_m$ with $\II_j\in(\{1,\dots,N\}^p)^{n_j}$:
\eq{
H_{N,\theta}(\sigma) \coloneqq \frac{1}{N^{(\dgr(\theta)-1)/2}}\sum_{\II_1,\dots,\II_m}g_{\II_1,\dots,\II_m}S_{w_1}(\sigma_{\II_1})\cdots S_{w_m}(\sigma_{\II_m}),
}
%where we recall that $\deg(\theta)= p\sum_{j=1}^{m}n_j$ and 
where each $g_{\II_1,\dots,\II_m}$ is an independent standard normal random variable. 
Recall that $\deg(\theta)=p\sum_{j=1}^m n_j$.
The covariance of the Gaussian process $\big(H_{N,\theta}(\sigma)\big)_{\sigma\in \R^{\ka\times N}}$ is then
\eq{
\E\big[H_{N,\theta}(\sigma)H_{N,\theta}(\sigma')\big]
&=N\prod_{j=1}^{m}\bigg[\frac{1}{N^{pn_j}}\sum_{\II_j}S_{w_j}(\sigma_{\II_j})S_{w_j}(\sigma^\prime_{\II_j})\bigg]\\
&=N\prod_{j=1}^{m}\bigg(\frac{1}{N^{p}}\sum_{I\in \{1,\ldots, N\}^p}S_{w_j}(\sigma_{I})S_{w_j}(\sigma^\prime_{I})\bigg)^{n_j}.
}
If we write $R$ for the matrix $R(\sigma,\sigma')=\sigma\sigma'^\sT/N$, then
%Denote by the matrix $R\equiv R(\sigma,\sigma^\prime)\equiv \sigma\sigma'^{\sT}/N$ for $\sigma, \sigma'\in \R^{\ka\times N}$. Then, we can rewrite
\eq{
    \frac{1}{N^{p}}\sum_{I\in \{1,\ldots, N\}^p}S_{w_j}(\sigma_{I})S_{w_j}(\sigma^\prime_{I})&=\sum_{k,k^\prime=1}^{\ka}w_j(k)w_j(k^\prime)\bigg(\frac{1}{N}\sum_{i=1}^{N}\sigma_i(k)\sigma'_i(k^\prime)\bigg)^p\\
    &= \iprod{R^{\circ p} w_j}{w_j}.
}
By combining the two previous displays and recalling the definition of $\xi_\theta$ from \eqref{def:xi:theta}, we obtain \eqref{lem:gaussian:existence_1}.
% \begin{equation*}
%     \E\big[H_{N,\theta}(\sigma)H_{N,\theta}(\sigma')\big]=N\prod_{j=1}^{m}\big\langle R^{\circ p} w_j, w_j\rangle^{n_j}=N\xi_{\theta}\Big(\frac{\sigma\sigma'^{\sT}}{N}\Big).
% \end{equation*}

To prove the existence of the Gaussian process $Z_{N,\theta}$, we first claim that for any $p\geq 1$, there exists a $\ka$-dimensional centered Gaussian process $\big(z_{N,p}(\sigma)\big)_{\sigma \in \R^{\ka\times N}}$ with covariance
\begin{equation}\label{eq:cov:z:p}
    \E\Big[z_{N,p}(\sigma)z_{N,p}(\sigma^\prime)^{\sT}\Big]=\Big(\frac{\sigma\sigma'^{\sT}}{N}\Big)^{\circ (p-1)}.
\end{equation}
If $p=1$, then the right-hand side of \eqref{eq:cov:z:p} is interpreted as the $\kappa\times\kappa$ identity matrix, and so it suffices to take $z_{N,p}(\sigma)$ equal to a standard normal random vector not depending on $\sigma$.
If $p\geq2$, then we make the following construction. 
For $k\in\{1,\dots,\kappa\}$ define
%, let the Gaussian process $\big(z_{N,p,k}(\sigma)\big)_{\sigma \in \R^{\ka\times N}}$ be defined by
\begin{equation*}
    z_{N,p,k}(\sigma)=\frac{1}{N^{p-1}}\sum_{i_1,\ldots i_{p-1}=1}^{N}g_{i_1,\ldots, i_{p-1}}\sigma_{i_1}(k)\cdots \sigma_{i_{p-1}}(k),
\end{equation*}
where each $g_{i_1,\ldots, i_{p-1}}$ is an independent standard normal random variable. 
By setting $z_{N,p}(\sigma)= \big(z_{N,p,k}(\sigma)\big)_{k=1}^\kappa$, we obtain a $\ka$-dimensional Gaussian process satisfying \eqref{eq:cov:z:p}.

Now let $z_{N,p}^1,\dots,z_{N,p}^m$ 
%$\big(z_{N,p}^{j}(\sigma)\big)_{\sigma\in \R^{\ka\times N}}$ be $m$ 
be independent copies of $z_{N,p}$.
%$\big(z_{N,p}(\sigma)\big)_{\sigma \in \R^{\ka\times N}}$. Then, 
As in the proof of Lemma~\ref{lem:xi:theta:increasing}, let $\theta_j\in \Theta$ be obtained from $\theta=(p,m,n_1,\ldots, n_m,w_1,\ldots, w_m)$ by modifying $n_j$ to $n_j-1$.
From \eqref{lem:gaussian:existence_1}, consider centered Gaussian processes $H_{N,\theta_1},\dots,H_{N,\theta_m}$, which we assume are independent of each other and of each $z_{N,p}^1,\dots,z_{N,p}^m$.
Finally, recalling the notation $W_j = \diag(w_j)$, we define
\begin{equation*}
Z_{N,\theta}(\sigma)\coloneqq p^{1/2}\sum_{j=1}^{m}n_j^{1/2}N^{-1/2}H_{N,\theta_j}(\sigma)\cdot W_j z_{N,p}^{j}(\sigma).
\end{equation*}
%where we take $\big(H_{N,\theta_j}(\sigma)\big)_{\sigma\in \R^{\ka\times N}}$ to be independent for $1\leq j \leq m$ and independent with  $\big(z_{N,p}^{j}(\sigma)\big)_{\sigma\in \R^{\ka\times N}}$. Then,
It follows from \eqref{eq:express:nabla:xi} and \eqref{eq:cov:z:p} that the covariance of $\big(Z_{N,\theta}(\sigma)\big)_{\sigma\in \R^{\kappa\times N}}$ is given by \eqref{eq:Z:theta:cov}.
\end{proof}

\begin{proof}[Proof of Proposition~\ref{prop:xi:theta}.]
Parts~\ref{prop:xi:theta_a} and~\ref{prop:xi:theta_b} follow immediately from Lemma~\ref{lem:xi:theta:increasing}. Part~\ref{prop:xi:theta_c} follows from Lemma~\ref{lem:gaussian:existence}. 
Finally, part~\ref{prop:xi:theta_d} follows from combining Lemmas~\ref{lem:xi:theta:increasing} and~\ref{lem:gaussian:existence} since we can set $Y_{N,\theta}(\sigma)=\big(\frac{\deg(\theta)-1}{N}\big)^{1/2}H_{N,\theta}(\sigma)$.
% This follows immediately from Lemma~\ref{lem:xi:theta:increasing} and Lemma~\ref{lem:gaussian:existence}.
\end{proof}
% \section{Synchronization plus balanced implies one-parameter family}

The following lemma was used in Remark~\ref{rmk_compare_defs}.

\begin{lemma} \label{lem_dominance}
For any $d\in\DD$ and any $\gamma$ belonging to the set $\Gamma_\kappa(d)$ from \eqref{Gamma_d_def}, we have $\gamma\preceq\diag(d)$.
\end{lemma}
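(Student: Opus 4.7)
The plan is to observe that $M \coloneqq \diag(d)-\gamma$ is essentially the weighted graph Laplacian associated to $\gamma$, and thus is positive-semidefinite by a standard argument. Concretely, I would proceed as follows.

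First, I would note that since $\gamma\in\Gamma_\kappa(d)\subseteq\Gamma_\kappa$ is symmetric, $M$ is symmetric as well. The off-diagonal entries of $M$ are $M_{k,k'}=-\gamma_{k,k'}\le 0$ because $\gamma\in[0,1]^{\kappa\times\kappa}$. Moreover, using the row-sum condition that defines $\Gamma_\kappa(d)$,
\eq{
M_{k,k} = d_k - \gamma_{k,k} = \sum_{k'=1}^{\kappa}\gamma_{k,k'} - \gamma_{k,k} = \sum_{k'\neq k}\gamma_{k,k'} = \sum_{k'\neq k}|M_{k,k'}|.
}
Thus $M$ has nonnegative diagonal entries and is weakly diagonally dominant, with row sums exactly zero (i.e.\ $M\bone = 0$).

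From here, positive-semidefiniteness follows immediately from Gershgorin's theorem: every eigenvalue $\lambda$ of $M$ satisfies $|\lambda - M_{k,k}| \le \sum_{k'\neq k}|M_{k,k'}| = M_{k,k}$ for some $k$, which forces $\lambda\ge 0$. Equivalently, one can exhibit $M$ as the Laplacian of the weighted graph on $\{1,\dots,\kappa\}$ with edge weights $w_{k,k'}=\gamma_{k,k'}\ge 0$ for $k\neq k'$, which yields the quadratic-form identity
\eq{
\iprod{Mx}{x} = \frac{1}{2}\sum_{k,k'=1}^{\kappa}\gamma_{k,k'}(x_k-x_{k'})^2 \ge 0 \quad \text{for all } x\in\R^{\kappa}.
}
Either viewpoint gives $M=\diag(d)-\gamma\succeq 0$, as required. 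There is no real obstacle here; the only thing to check is that the row-sum condition in the definition of $\Gamma_\kappa(d)$ together with nonnegativity of the off-diagonal entries is precisely what makes $\diag(d)-\gamma$ a Laplacian-type matrix.
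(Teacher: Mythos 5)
Your proof is correct, and your second viewpoint --- the Laplacian quadratic-form identity $\iprod{(\diag(d)-\gamma)x}{x}=\tfrac{1}{2}\sum_{k,k'}\gamma_{k,k'}(x_k-x_{k'})^2$ --- is exactly the computation in the paper's proof, which expands $\iprod{\diag(d)u}{u}-\iprod{\gamma u}{u}$ via the row-sum condition, symmetry of $\gamma$, and nonnegativity of its entries. Your Gershgorin/diagonal-dominance argument is a valid alternative route to the same conclusion, but the substance is the same.
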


\begin{proof}
For any $u=(u_1,\dots,u_\kappa)\in\R^\kappa$, we have
\eq{
\iprod{\diag(d)u}{u} - \iprod{\gamma u}{u}
&\stackrefp{Gamma_d_def}{=} \sum_{k=1}^\kappa d_ku_k^2 - \sum_{k,k'=1}^\kappa \gamma_{k,k'}u_ku_{k'} \\
&\stackref{Gamma_d_def}{=} \sum_{k,k'=1}^\kappa \gamma_{k,k'}(u_k^2 - u_ku_{k'}) \\
&\stackrefp{Gamma_d_def}{=}\sum_{k<k'}\gamma_{k,k'}(u_k^2 + u_{k'}^2 - 2u_ku_{k'})
= \sum_{k<k'}\gamma_{k,k'}(u_k-u_{k'})^2 \geq 0,
}
where the final inequality uses the fact that every entry $\gamma_{k,k'}$ is nonnegative.
\end{proof}

\section{Proof of the Aizenman--Sims--Starr scheme} \label{sec_ass_proof}
In this appendix, we prove Proposition~\ref{ass_prop}.
% We first check that the right-hand side of \eqref{ass_lower} is defined.
% That is, we check that Assumption~\ref{map_assumption} holds in the setting 
% \eq{
% (\XX,\RR,d)=(\Sigma^N(d^N),(\sigma,\sigma')\mapsto N^{-1}\sigma(\sigma')^\sT,d^N).
% }
To begin, we deduce from \eqref{perfect_cav} the inclusion
\eeq{ \label{cav_inclusion}
\Sigma^{N+M}(d^{N+M}) \supseteq \Sigma^N(d^N) \times \Sigma^M(\delta^M).
}
Indeed, for $\sigma = (\sigma_1,\dots,\sigma_N)\in\Sigma^N(d^N)$ and $\tau=(\tau_1,\dots,\tau_M)\in\Sigma^M(\delta^M)$, let us write $(\sigma,\tau)$ for the $\kappa\times (N+M)$ matrix whose first $N$ columns are $\sigma_1,\dots,\sigma_N$ and whose last $M$ columns are $\tau_1,\dots,\tau_M$.
In this notation, we have
\eq{
(\sigma,\tau)(\sigma,\tau)^\sT = \sigma\sigma^\sT + \tau\tau^\sT
= N\diag(d^N) + M\diag(\delta^M)
\stackref{perfect_cav}{=} (N+M)\diag(d^{N+M}),
}
which by definition means $(\sigma,\tau)\in\Sigma^{N+M}(d^{N+M})$.
From \eqref{cav_inclusion}, it immediately follows that
\eeq{ \label{inclusion_lower}
\zee_{N+M,\tilde\xi}(d^{N+M}) \geq \sum_{\sigma\in\Sigma^N(d^N)}\sum_{\tau\in\Sigma^M(\delta^M)}\exp H_{N+M,\tilde\xi}(\sigma,\tau).
}
Recall from \eqref{general_cov} that
\eq{ %\label{N_plus_M_covar}
\E[H_{N+M,\tilde\xi}(\sigma,\tau)H_{N+M,\tilde\xi}(\sigma',\tau')]
&= (N+M)\tilde\xi\Big(\frac{(\sigma,\tau)(\sigma',\tau')^\sT}{N+M}\Big) \\
&= (N+M)\tilde\xi\Big(\frac{\sigma\sigma'^\sT+\tau\tau'^\sT}{N+M}\Big).
}
Replacing $\tilde\xi$ with $\xi$ on the right-hand side incurs an error:
\eeq{ \label{ghd3}
\Big|\E[H_{N+M,\tilde\xi}(\sigma,\tau)H_{N+M,\tilde\xi}(\sigma',\tau')] &- (N+M)\xi\Big(\frac{\sigma\sigma'^\sT+\tau\tau'^\sT}{N+M}\Big)\Big| \\
&\leq (N+M)\sup_{\|R\|_1\leq1}|\xi(R)-\tilde\xi(R)|.
}
By smoothness (see Remark~\ref{rmk_xi}), we have the following linearization about $(\sigma\sigma'^\sT)/(N+M)$:
\eq{
\xi\Big(\frac{\sigma\sigma'^\sT+\tau\tau'^\sT}{N+M}\Big)
&= \xi\Big(\frac{\sigma\sigma'^\sT}{N+M}\Big)
+ \iprod[\Big]{\nabla\xi\Big(\frac{\sigma\sigma'^\sT}{N+M}\Big)}{\frac{\tau\tau'^\sT}{N+M}} 
+ O_{\nabla^2\xi}\Big(\Big(\frac{M}{N+M}\Big)^2\Big),
}
where we are using the notation from \eqref{gw4hg4}.
Furthermore, we have the approximation
\eq{
\nabla\xi\Big(\frac{\sigma\sigma'^\sT}{N+M}\Big)
&= \nabla\xi\Big(\frac{\sigma\sigma'^\sT}{N}\Big)
+ O_{\nabla^2\xi}\Big(\frac{M}{N+M}\Big).
}
Combining the two previous displays, we arrive to
\eeq{ \label{after_1st_linearization}
%\E[H_{N+M}(\sigma,\tau)H_{N+M}(\sigma',\tau')]
(N+M)\xi\Big(\frac{\sigma\sigma'^\sT+\tau\tau'^\sT}{N+M}\Big)
= (N+M)\xi\Big(\frac{\sigma\sigma'^\sT}{N+M}\Big)
+ \iprod[\Big]{\nabla\xi\Big(\frac{\sigma\sigma'^\sT}{N}\Big)}{\tau\tau'^\sT} + O_{\nabla^2\xi}\Big(\frac{M^2}{N+M}\Big).
}
Regarding the first term on the right-hand side, we have the further linearization
\eq{
&(N+M)\xi\Big(\frac{\sigma\sigma'^\sT}{N+M}\Big) \\
&= (N+M)\Big[\xi\Big(\frac{\sigma\sigma'^\sT}{N}\Big) - \iprod[\Big]{\nabla\xi\Big(\frac{\sigma\sigma'^\sT}{N}\Big)}{\frac{M\sigma\sigma'^\sT}{N(N+M)}} + O_{\nabla^2\xi}\Big(\Big(\frac{M}{N+M}\Big)^2\Big)\Big] \\
&= N\xi\Big(\frac{\sigma\sigma'^\sT}{N}\Big) - M\Big[\iprod[\Big]{\nabla\xi\Big(\frac{\sigma\sigma'^\sT}{N}\Big)}{\frac{\sigma\sigma'^\sT}{N}} - \xi\Big(\frac{\sigma\sigma'^\sT}{N}\Big)\Big] + O_{\nabla^2\xi}\Big(\frac{M^2}{N+M}\Big).
}
The term being subtracted on the final line is simply $M\vartheta_\xi\big(\frac{\sigma\sigma'^\sT}{N}\big)$.
Moving this term to the left-hand side, we arrive to
\eeq{ \label{after_2nd_linearization}
(N+M)\xi\Big(\frac{\sigma\sigma'^\sT}{N+M}\Big)
+ M\vartheta_\xi\Big(\frac{\sigma\sigma'^\sT}{N}\Big)
=N\xi\Big(\frac{\sigma\sigma'^\sT}{N}\Big) + O_{\nabla^2\xi}\Big(\frac{M^2}{N+M}\Big).
}
Equipped with the approximations \eqref{after_1st_linearization} and \eqref{after_2nd_linearization}, we resume our probabilistic argument. %we are now ready to proceed with the proof of \eqref{ass_lower}. %to analyze the quantity of interest:

Let $H_{N,M,\xi}$ be the cavity Hamiltonian from \eqref{comma_covar}.
Let $Z_1,\dots,Z_M\colon\Sigma^N\to\R^\kappa$ and $Y\colon\Sigma^N\to\R$ be centered Gaussian processes with covariances
\eeq{ \label{opdqg}
\E\Big[Z_{i}(\sigma)Z_{i}(\sigma')^\sT\Big] &= 
\nabla\xi\Big(\frac{\sigma\sigma'^\sT}{N}\Big), \qquad
\E[Y(\sigma)Y(\sigma')] = 
\vartheta_{\xi}\Big(\frac{\sigma\sigma'^\sT}{N}\Big).
}
Such processes exist by Proposition~\hyperref[prop:xi:theta_d]{\ref*{prop:xi:theta}\ref*{prop:xi:theta_d}} and assumption~\ref{xi_power}.
We assume all these processes are independent of each other and of $H_{N,M,\xi}$.
Now write
\eeq{ \label{initial_ineq_from_inclusion}
\frac{\zee_{N+M,\tilde\xi}(d^{N+M})}{\zee_{N,\xi}(d^N)}
&\stackref{inclusion_lower}{\geq} \frac{\sum_{\sigma\in\Sigma^N(d^N)}\sum_{\tau\in\Sigma^M(\delta^M)}\exp H_{N+M,\tilde\xi}(\sigma,\tau)}{\sum_{\sigma\in\Sigma^N(d^N)}\exp H_{N,\xi}(\sigma)}
= \frac{Q_1Q_1^{\err}}{Q_2Q_2^{\err}},
}
where $Q_1,Q_1^\err,Q_2,Q_2^\err$ are the following quotients:
\begin{align}
Q_1 &\coloneqq \notag
\frac{\sum_{\sigma\in\Sigma^N(d^N)}\sum_{\tau\in\Sigma^M(\delta^M)}\exp(H_{N,M,\xi}(\sigma)+\sum_{j=1}^M \iprod[\big]{Z_j(\sigma)}{\tau_j})}{\sum_{\sigma\in\Sigma^N(d^N)}\exp H_{N,M,\xi}(\sigma)}, \\
Q_1^\err &\coloneqq \label{Q1err_def}
\frac{\sum_{\sigma\in\Sigma^N(d^N)}\sum_{\tau\in\Sigma^M(\delta^M)}\exp H_{N+M,\tilde\xi}(\sigma,\tau)}{\sum_{\sigma\in\Sigma^N(d^N)}\sum_{\tau\in\Sigma^M(\delta^M)}\exp(H_{N,M,\xi}(\sigma)+\sum_{j=1}^M \iprod[\big]{Z_j(\sigma)}{\tau_j})}, \\
Q_2 &\coloneqq \notag
\frac{\sum_{\sigma\in\Sigma^N(d^N)}\exp (H_{N,M,\xi}(\sigma)+\sqrt{M}\, Y(\sigma))}{\sum_{\sigma\in\Sigma^{N}(d^N)}\exp H_{N,M,\xi}(\sigma)}, \\
Q_2^\err &\coloneqq \label{Q2err_def}
\frac{\sum_{\sigma\in\Sigma^N(d^N)}\exp H_{N,\xi}(\sigma)}{\sum_{\sigma\in\Sigma^{N}(d^N)}\exp(H_{N,M,\xi}(\sigma)+\sqrt{M}\, Y(\sigma))}.
\end{align}
Observe that $Q_1$ and $Q_2$ have the desired form for realizing the functional $\Psi_{M,\xi}$ from \eqref{Psi_def}.
Namely, if $\langle\cdot\rangle_N$ denotes expectation with respect to the measure $G_{N,M,\xi}$ from \eqref{GNM_def}, then
\eeq{ \label{desired_fun} \raisetag{1.5\baselineskip}
\E\log Q_1 = \E\log\sum_{\tau\in\Sigma^M(\delta^M)}\Big\langle\exp\Big(\sum_{j=1}^M\iprod[\big]{Z_j(\sigma)}{\tau_j}\Big)\Big\rangle_N
&\stackref{psi_1_def}{=}M\Psi_{M,\xi}^{(1)}(\LL_{N,M};d^N,\Sigma^M(\delta^M)), \\
\text{while} \quad \E\log Q_2 = \E\log\big\langle\exp(\sqrt{M}\, Y(\sigma))\big\rangle_N
&\stackref{psi_2_def}{=}M\Psi_{M,\xi}^{(2)}(\LL_{N,M}).
}
Therefore, the remainder of the proof is to show that
\begin{subequations} \label{Qerr_conclusion}
\begin{align}
|\E\log Q_1^\err| \label{Qerr_conclusion_a}
&\leq O_{\nabla^2\xi}\Big(\frac{M^2}{N+M}\Big)+\frac{N+M}{2}\sup_{\|R\|_1\leq1}|\xi(R)-\tilde\xi(R)|, \\ \qquad \text{and} \qquad \label{Qerr_conclusion_b}
\E\log Q_2^\err 
&= O_{\nabla^2\xi}\Big(\frac{M^2}{N+M}\Big).
\end{align}
\end{subequations}
Indeed, the claim \eqref{ass_lower} follows from using \eqref{desired_fun} and \eqref{Qerr_conclusion} in the initial inequality \eqref{initial_ineq_from_inclusion}.

We now argue each estimate in \eqref{Qerr_conclusion} separately, although the two arguments are very similar.
Since the one for $Q_2^\err$ is slightly simpler, we begin there.
\medskip

\noindent \textbf{Control of $Q_2^\err$.}
Define an interpolating Hamiltonian on $\Sigma^N(d^N)$:
\eeq{ \label{Q2_interpolate}
\HH_t(\sigma) = \sqrt{t}\, H_{N,\xi}(\sigma) + \sqrt{1-t}\big(H_{N,M,\xi}(\sigma)+\sqrt{M}\, Y(\sigma)\big), \quad t\in[0,1].
}
%where we assume the three Gaussian processes $H_N$, $H_{N,M}$, and $Y$ are independent.
Consider the free energy associated to $\HH_t$:
\eeq{ \label{Q1_phi_def}
\phi(t) = \E\log \sum_{\sigma\in\Sigma^N(d^N)}\exp \HH_t(\sigma).
}
Recalling the definition of $Q_2^\err$ from \eqref{Q2err_def}, we see that
\eeq{ \label{Q2_difference}
\E\log Q_2^\err = \phi(1) - \phi(0).
}
We control this difference by studying the derivative of \eqref{Q1_phi_def} with respect to $t$, which is easily computed to be
\eq{
\phi'(t) = \E\Big\langle\frac{\partial}{\partial t}\HH_t(\sigma,\tau)\Big\rangle_t,
}
where $\langle\cdot\rangle_t$ denotes expectation with respect to the Gibbs measure $G_t(\sigma)\propto \exp\HH_t(\sigma)$ on $\Sigma^N(d^N)$.
Thanks to Gaussian integration by parts \cite[Lem.~1.1]{panchenko13a}, we can rewrite this as
\eq{
\phi'(t) &= \E\big\langle \CC(\sigma^1,\sigma^1) - \CC(\sigma^1,\sigma^2)\big\rangle_t, 
}
where $\sigma^1,\sigma^2$ denote independent samples from $G_t$, and $\CC\colon\Sigma^N(d^N)\times\Sigma^N(d^N)\to\R$ is defined by
\eeq{ \label{Q2_covar}
&\CC(\sigma,\sigma') 
= \E\Big[\frac{\partial\HH_t(\sigma)}{\partial t}\HH_t(\sigma')\Big].
}
Since the Gaussian processes on the right-hand side of \eqref{Q2_interpolate} are independent and centered, the right-hand side of \eqref{Q2_covar} reduces to a linear combination of their respective covariances:
%(which can be recalled from \eqref{HN_covar}, \eqref{comma_covar}, and \eqref{Y_covar}):
\eq{
%&\E\Big[\frac{\partial\HH_t(\sigma)}{\partial t}\HH_t(\sigma')\Big] \\
&\CC(\sigma,\sigma')
%&\stackrefpp{Q2_interpolate}{general_cov,comma_covar,opdqg}{=} 
\stackref{Q2_interpolate}{=}
\frac{1}{2}\E[H_{N,\xi}(\sigma)H_{N,\xi}(\sigma')] - \frac{1}{2}\E[H_{N,M,\xi}(\sigma)H_{N,M,\xi}(\sigma')] - \frac{M}{2}\E[Y(\sigma)Y(\sigma')] \\
&\hspace{0.19in}\stackref{general_cov,comma_covar,opdqg}{=}
\frac{1}{2}\Big[N\xi\Big(\frac{\sigma\sigma'^\sT}{N}\Big)-(N+M)\xi\Big(\frac{\sigma\sigma'^\sT}{N+M}\Big) - M\vartheta_\xi\Big(\frac{\sigma\sigma'^\sT}{N}\Big)\Big] \\
&\hspace{0.19in}\stackrefpp{after_2nd_linearization}{general_cov,comma_covar,opdqg}{=} O_{\nabla^2\xi}\Big(\frac{M^2}{N+M}\Big).
}
Reading the three previous displays together, we have established the following estimate:
\eq{
\sup_{t\in(0,1)}|\phi'(t)| = O_{\nabla^2\xi}\Big(\frac{M^2}{N+M}\Big).
}
Therefore, \eqref{Q2_difference} leads to \eqref{Qerr_conclusion_b}.
% \eeq{ \label{Q2_conclusion}
% \lim_{N\to\infty}\E\log Q_2^\err = 0.
% }
\medskip

\noindent \textbf{Control of $Q_1^\err$.}
Define an interpolating Hamiltonian on $\Sigma^N(d^N)\times\Sigma^M(\delta^M)$:
\eeq{ \label{Q1_interpolate}
\HH_t(\sigma,\tau) = \sqrt{t}\, H_{N+M}(\sigma,\tau) + \sqrt{1-t}\Big(H_{N,M,\xi}(\sigma)+\sum_{j=1}^M \iprod[\big]{Z_j(\sigma)}{\tau_j}\Big), \quad t\in[0,1].
}
%where we assume the Gaussian processses $H_{N+M}$, $H_{N,M,\xi}$, $z_1,\dots,z_M$ are independent.
From \eqref{opdqg}, one can calculate the covariance of the final term on the right-hand side:
\eeq{ \label{sum_z_covar}
\E\Big[\sum_{j=1}^M\iprod[\big]{Z_j(\sigma)}{\tau_j}\iprod[\big]{Z_j(\sigma')}{\tau_j'}\Big]
= \iprod[\Big]{\nabla\xi\Big(\frac{\sigma\sigma'^\sT}{N}\Big)}{\tau\tau'^\sT}.
}
Now consider the free energy associated to $\HH_t$:
\eq{
\phi(t) = \E\log \sum_{\sigma\in\Sigma^N(d^N)}\sum_{\tau\in\Sigma^M(\delta^M)}\exp \HH_t(\sigma,\tau).
}
Recalling the definition of $Q_1^\err$ from \eqref{Q1err_def}, we see that
\eeq{ \label{Q1_difference}
\E\log Q_1^\err = \phi(1) - \phi(0).
}
We control this difference by studying the derivative of \eqref{Q1_phi_def} with respect to $t$, which is easily computed to be
\eq{
\phi'(t) = \E\Big\langle\frac{\partial}{\partial t}\HH_t(\sigma,\tau)\Big\rangle_t,
}
where $\langle\cdot\rangle_t$ denotes expectation with respect to the Gibbs measure $G_t(\sigma,\tau)\propto\exp\HH_t(\sigma,\tau)$ on $\Sigma^N(d^N)\times\Sigma^M(\delta^M)$. %associated to $\HH_t$.
By Gaussian integration by parts \cite[Lem.~1.1]{panchenko13a}, we can rewrite this as
\eq{
\phi'(t) &= \E\Big\langle \CC\big((\sigma^1,\tau^1),(\sigma^1,\tau^1)\big) - \CC\big((\sigma^1,\tau^1),(\sigma^2,\tau^2)\big)\Big\rangle_t, 
}
where $(\sigma^1,\tau^1),(\sigma^2,\tau^2)$ denote independent samples from $G_t$, and $\CC\colon(\Sigma^N(d^N)\times\Sigma^M(\delta^M))^2\to\R$ is defined by
\eeq{ \label{Q1_covar}
&\CC\big((\sigma,\tau),(\sigma',\tau')\big) 
= \E\Big[\frac{\partial\HH_t(\sigma,\tau)}{\partial t}\HH_t(\sigma',\tau')\Big].
}
Since the Gaussian processes on the right-hand side of \eqref{Q1_interpolate} are independent and centered, the right-hand side of \eqref{Q1_covar} reduces to a linear combination of their respective covariances:
%(which can be recalled from \eqref{N_plus_M_covar}, \eqref{comma_covar}, and \eqref{sum_z_covar}):
% \eq{
% &\bigg|\E\Big[\frac{\partial\HH_t(\sigma,\tau)}{\partial t}\HH_t(\sigma',\tau')\Big]\bigg| \\
% \stackrefpp{Q1_interpolate}{ghd3,comma_covar,sum_z_covar}{=} \bigg|\frac{1}{2}\E[H_{N+M,\tilde\xi}(\sigma,\tau)H_{N+M,\tilde\xi}(\sigma',\tau')] \\
% &\phantom{\stackrefp{ghd3,comma_covar,sum_z_covar}{=}\bigg|}- \frac{1}{2}\E[H_{N,M,\xi}(\sigma)H_{N,M,\xi}(\sigma')] - \frac{1}{2}\E\Big[\sum_{j=1}^M\iprod[\big]{Z_j(\sigma)}{\tau_j}\iprod[\big]{Z_j(\sigma')}{\tau'_j}\Big]\bigg| \\
% &\stackref{ghd3,comma_covar,sum_z_covar}{\leq} \frac{N+M}{2}\sup_{\|R\|_1\leq1}|\xi(R)-\tilde\xi(R)| + \frac{1}{2}\bigg|(N+M)\xi\Big(\frac{\sigma\sigma'^\sT+\tau\tau'^\sT}{N+M}\Big) \\
% &\hspace{2in}- (N+M)\xi\Big(\frac{\sigma\sigma'^\sT}{N+M}\Big)- \iprod[\Big]{\nabla\xi\Big(\frac{\sigma\sigma'^\sT}{N}\Big)}{\tau\tau'^\sT}\bigg| \\
% &\stackrefpp{after_1st_linearization}{ghd3,comma_covar,sum_z_covar}{=} \frac{N+M}{2}\sup_{\|R\|_1\leq1}|\xi(R)-\tilde\xi(R)|+ O_{\nabla^2\xi}\Big(\frac{M^2}{N+M}\Big).
% }
\eq{
&\bigg|\E\Big[\frac{\partial\HH_t(\sigma,\tau)}{\partial t}\HH_t(\sigma',\tau')\Big]\bigg|
\stackref{Q1_interpolate}{=} \bigg|\frac{1}{2}\E[H_{N+M,\tilde\xi}(\sigma,\tau)H_{N+M,\tilde\xi}(\sigma',\tau')] \\
&\hspace{1.8in}- \frac{1}{2}\E[H_{N,M,\xi}(\sigma)H_{N,M,\xi}(\sigma')] - \frac{1}{2}\E\Big[\sum_{j=1}^M\iprod[\big]{Z_j(\sigma)}{\tau_j}\iprod[\big]{Z_j(\sigma')}{\tau'_j}\Big]\bigg| \\
&\stackref{ghd3,comma_covar,sum_z_covar}{\leq} \frac{N+M}{2}\sup_{\|R\|_1\leq1}|\xi(R)-\tilde\xi(R)| + \frac{1}{2}\bigg|(N+M)\xi\Big(\frac{\sigma\sigma'^\sT+\tau\tau'^\sT}{N+M}\Big) \\
&\hspace{2in}- (N+M)\xi\Big(\frac{\sigma\sigma'^\sT}{N+M}\Big)- \iprod[\Big]{\nabla\xi\Big(\frac{\sigma\sigma'^\sT}{N}\Big)}{\tau\tau'^\sT}\bigg| \\
&\stackrefpp{after_1st_linearization}{ghd3,comma_covar,sum_z_covar}{=} \frac{N+M}{2}\sup_{\|R\|_1\leq1}|\xi(R)-\tilde\xi(R)|+ O_{\nabla^2\xi}\Big(\frac{M^2}{N+M}\Big).
}
Reading the three previous displays together, we have established the following estimate:
\eq{
\sup_{t\in(0,1)}|\phi'(t)| \leq O_{\nabla^2\xi}\Big(\frac{M^2}{N+M}\Big)+\frac{N+M}{2}\sup_{\|R\|_1\leq1}|\xi(R)-\tilde\xi(R)|.
}
Therefore, \eqref{Q1_difference} leads to \eqref{Qerr_conclusion_b}, which concludes the proof of Proposition~\ref{ass_prop}.
% \eeq{ \label{Q1_conclusion}
% \lim_{N\to\infty}\E\log Q_1^\err = 0.
% }

\section{Properties of Ruelle probability cascades} \label{sec_app_rpc}
This appendix proves Lemmas~\ref{PPPPsi_lemma} and~\ref{lem_parisi_recovered}.
% Since the representation \eqref{pi_rep} of the path $\pi$ is not unique, we need the following lemma to verify well-definedness of $\PPP_{N,\xi}^{(1)}$.
% For clarity, in the process of verifying well-definedness, we will write $\PPP_{N,\xi}^{(1)}(m,\gamma_1,\dots,\gamma_s,\lambda;S)$ for the right-hand side of \eqref{pre_par_a}.
%
% \begin{lemma} \label{wd_lemma}
% Consider any sequence of integers $0=n_0<n_1 < n_2 < \cdots < n_s$ and any partition
% $0 = \tilde m_0 < \tilde m_1 < \dots < \tilde m_{n_s} = 1$
% such that $\tilde m_{n_r} = m_r$ for each $r\in\{1,\dots,s\}$.
% Then
% \eeq{ \label{m_lemma_eq}
% \PPP_{N,\xi}^{(1)}(m,\gamma_1,\dots,\gamma_s,\lambda;S)
% = \PPP_{N,\xi}^{(1)}(\tilde m,\underbrace{\gamma_1,\dots,\gamma_1}_{\text{$n_1$ }},\underbrace{\gamma_2,\dots,\gamma_2}_{\text{$n_2-n_1$}},\cdots,\underbrace{\gamma_s,\dots,\gamma_s}_{\text{$n_s-n_{s-1}$}},\lambda;S).
% }
% \end{lemma}
%
%The proofs of all three lemmas from above rely on the following property of Ruelle probability cascades.
Both proofs rely on the following fact. %of Ruelle probability cascades.

\begin{theirthm}\label{thm:RPC}
\textup{\cite[Thm.~14.2.1]{talagrand11b}}
Let $\eta^{(0)},\dots,\eta^{(s-1)}$ be independent random variables taking values in a metric space $T$. 
Assume $f\colon T^{s}\to\R$ is a deterministic function such that
\eq{
\E\exp f(\eta^{(0)},\eta^{(1)},\dots,\eta^{(s-1)}) < \infty \quad \text{and} \quad
\E|f(\eta^{(0)},\eta^{(1)},\dots,\eta^{(s-1)})| < \infty.
}
Let $\E_r$ denote expectation with respect to $\eta^{(r)}$.
Given the sequence \eqref{eq:m2}, inductively define
\eeq{ \label{following_just_zs}
X_s &\coloneqq f(\eta^{(0)},\eta^{(1)},\dots,\eta^{(s-1)}), \\
X_r &\coloneqq \frac{1}{m_r}\log \E_r\exp(m_r X_{r+1}) \quad \text{for $r\in\{1,\dots,s-1\}$}, \\
X_0 &\coloneqq \E_0(X_1).
}
On the other hand, let $\nu$ be the RPC associated to \eqref{eq:m2}.
For each $r\in\{0,\dots,s-1\}$, let $(\eta_\beta)_{\beta\in\N^r}$ be i.i.d.~copies of $\eta^{(r)}$ that are independent of $\nu$.
% }
We then have
\eeq{ \label{magic}
X_0 = \E\log\sum_{\alpha\in\N^{s-1}}\nu_\alpha \exp f(\eta_\varnothing,\eta_{(\alpha_1)},\eta_{(\alpha_1,\alpha_2)},\dots,\eta_{(\alpha_1,\dots,\alpha_{s-1})}).
}
\end{theirthm}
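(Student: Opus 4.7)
The plan is to prove Theorem~\ref{thm:RPC} by induction on $s$, exploiting the hierarchical Poisson--Dirichlet structure of the Ruelle probability cascade. The central technical ingredient is the following moment identity for Poisson--Dirichlet weights: if $(V_j)_{j\geq 1}$ are the weights of a Poisson--Dirichlet distribution with parameter $m \in (0,1)$ and $(U_j)_{j\geq 1}$ are i.i.d.\ positive random variables independent of $(V_j)$ with $\E U_1^{m} < \infty$, then
\[
\E\log\sum_{j\geq 1} V_j U_j \;=\; \frac{1}{m}\log\E U_1^{m}.
\]
This identity follows from the representation $V_j = u_j/\sum_i u_i$, where $(u_j)$ is a Poisson point process on $(0,\infty)$ with intensity $m\,x^{-m-1}\,\dd x$, combined with the distributional identity $(u_jU_j)_j \stackrel{\mathrm{law}}{=} \big((\E U_1^{m})^{1/m}u_j\big)_j$ delivered by the Poisson mapping theorem.

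The base case $s=1$ is immediate: $\N^0=\{\varnothing\}$, $\nu_\varnothing=1$, and both sides of \eqref{magic} reduce to $\E f(\eta^{(0)})$.

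For the inductive step, I would use the standard recursive construction of the RPC: the children of the root carry Poisson--Dirichlet weights $(V_j)_{j\geq1}$ with parameter $m_1$, and conditional on them the subtrees are independent copies of an RPC on $\N^{s-2}$ associated with the reduced partition whose values are $0,m_2,\dots,m_{s-1},1$. Setting
\[
U_j \;\coloneqq\; \sum_{\alpha' \in \N^{s-2}}\nu^{(j)}_{\alpha'}\exp f\!\left(\eta_\varnothing,\eta_{(j)},\eta_{(j,\alpha'_1)},\dots,\eta_{(j,\alpha'_1,\dots,\alpha'_{s-2})}\right),
\]
the right-hand side of \eqref{magic} becomes $\E\log\sum_j V_j U_j$. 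Conditional on $\eta_\varnothing$, the $(U_j)$ are i.i.d.\ and independent of $(V_j)$, so the Poisson--Dirichlet identity applied conditionally yields
\[
\E\log\sum_j V_j U_j \;=\; \E_0\Big[\tfrac{1}{m_1}\log\E\big[U_1^{m_1}\bigm|\eta_\varnothing\big]\Big].
\]
Matching this against $X_0 = \E_0 X_1 = \E_0\big[m_1^{-1}\log\E_1\exp(m_1 X_2)\big]$ reduces the theorem to the conditional fractional-moment identity $\E[U_1^{m_1}\mid\eta_\varnothing] = \E_1\exp(m_1 X_2(\eta_\varnothing,\eta^{(1)}))$.

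The main obstacle is that this reduction demands control of the fractional moment $\E U_1^{m_1}$, whereas the bare inductive hypothesis \eqref{magic} at depth $s-1$ controls only $\E\log U_1$. I would circumvent this by inducting on a strengthened statement that handles all admissible fractional powers simultaneously: prove that for every $m\in(0,1)$,
\[
\E\Big[\Big(\sum_{\alpha \in \N^{s-1}}\nu_\alpha\exp f(\eta_\varnothing,\dots,\eta_{(\alpha_1,\dots,\alpha_{s-1})})\Big)^{m}\Big] \;=\; \E_0\exp(m X_1),
\]
with $X_1$ defined by the recursion \eqref{following_just_zs}. Applied to each subtree with outer exponent $m_1$, this strengthened identity delivers $\E[U_1^{m_1}\mid\eta_\varnothing]=\exp(m_1 X_1(\eta_\varnothing))$ immediately (after identifying the subtree's leading cascade value with $X_2$ in the original indexing), while the original statement \eqref{magic} is recovered from the strengthened one by letting $m\downarrow 0$ and invoking $\lim_{m\downarrow 0}m^{-1}\log\E Y^{m}=\E\log Y$. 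The remaining technical issues---finiteness of the fractional moments along the cascade and conditional Fubini-type measurability---are routine under the hypotheses $\E\exp f<\infty$ and $\E|f|<\infty$.
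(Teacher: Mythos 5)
Your toolkit is the right one --- the recursive Poisson--Dirichlet decomposition of the cascade, the conditional i.i.d.\ structure of the subtree sums $U_j$ given $\eta_\varnothing$, and the one-level identity $\E\log\sum_jV_jU_j=\frac1m\log\E U_1^m$ are exactly the ingredients of the standard proof --- but the strengthened induction hypothesis you introduce to close the argument is false. The claim
\[
\E\Big[\Big(\sum_{\alpha\in\N^{s-1}}\nu_\alpha\exp f\Big)^{m}\Big]=\E_0\exp(mX_1)
\]
already fails at depth $s=2$. Take $f$ depending only on the leaf variable, with $\exp f=1+\eps g$, $\E g=0$ and $g$ bounded. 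Writing $(V_j)$ for the $\mathrm{PD}(m_1)$ weights and using $\E\sum_jV_j^2=1-m_1$, the left-hand side expands as $1+\tfrac{m(m-1)}{2}(1-m_1)\eps^2\,\E g^2+O(\eps^3)$, whereas the right-hand side $\big(\E(1+\eps g)^{m_1}\big)^{m/m_1}$ expands as $1-\tfrac{m}{2}(1-m_1)\eps^2\,\E g^2+O(\eps^3)$. The two second-order coefficients agree only in the limit $m\downarrow0$ (which is why the $\E\log$ version of the identity is true) and disagree for every fixed $m\in(0,1)$. Consequently the reduction your induction rests on, $\E[U_1^{m_1}\mid\eta_\varnothing]=\E_1\exp(m_1X_2)$ with $U_1$ a \emph{normalized} sub-cascade sum, is also false as soon as $s\geq3$.

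The structural reason is that $\sum_jV_jU_j$ is the ratio $\sum_ju_jU_j/\sum_ju_j$ of two dependent quantities. The Poisson mapping theorem gives the distributional identity $(u_jU_j)_j\stackrel{\mathrm{law}}{=}\big((\E U_1^{m})^{1/m}u_j\big)_j$ for the numerator \emph{alone}; this transfers to $\E\log$ of the ratio, because the logarithm of a ratio is a difference and the denominator contributes a constant, but it does not transfer to fractional moments of the ratio. The proof in \cite{talagrand11b} therefore runs the recursion on the unnormalized weights $w_\alpha=\prod_{r}u_{(\alpha_1,\dots,\alpha_r)}$, for which the full distributional identity $\sum_ju_{(\beta,j)}W_{(\beta,j)}\stackrel{\mathrm{law}}{=}\big(\E_{r}\exp(m_{r}X_{r+1})\big)^{1/m_{r}}\sum_ju_{(\beta,j)}$ does hold; iterating it from the leaves up produces the recursion $X_r=\frac1{m_r}\log\E_r\exp(m_rX_{r+1})$ plus constant factors, which are then cancelled by subtracting $\E\log\sum_\alpha w_\alpha$, i.e.\ the same computation with $f\equiv0$. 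Your argument can be repaired along those lines, but not with the normalized weights as written.
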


\begin{proof}[Proof of Lemma~\ref{PPPPsi_lemma}]
%For the left-hand side of \eqref{PPPPsi_a}, the dependence on $d$ is inferred via $\pi$, since \eqref{mat_seq} terminates at the matrix $\diag(d)$. 
%We start with the right-hand sides of \eqref{PPPPsi}, which are defined in \eqref{defining_Psi_parts}.
%Since $\gamma_{r(\alpha,\alpha)} = \gamma_s = \diag(d)$ for any $\alpha\in\N^{s-1}$, the additional variables in \eqref{extra_eta} are multiplied by zero.
%Therefore, the inner expectation $\E_\eta$ in \eqref{defining_Psi_parts} does nothing (compare to Remark~\ref{rmk_conR}).
%In particular, \eqref{psi_1_def} reduces to \eqref{pre_par_a} with $\lambda=0$.
%This proves \eqref{PPPPsi_a}.
The right-hand sides of \eqref{PPPPsi} are defined in \eqref{defining_Psi_parts} with
\eeq{ \label{rpc_setting}
\XX = \N^{s-1}, 
\qquad R(\alpha,\alpha') = \gamma_{r(\alpha,\alpha')}, 
\qquad \GG = \nu.
%(\XX,R,\GG) = (\N^{s-1},(\alpha,\alpha')\mapsto\gamma_{r(\alpha,\alpha')},\nu)
}
By inspection, \eqref{psi_1_def} is the same as \eqref{pre_par_a} with $\lambda=0$.
This proves \eqref{PPPPsi_a}.

Meanwhile, in the setting \eqref{rpc_setting}, definition \eqref{psi_2_def} becomes
\begin{subequations}
\label{xf573}
\eeq{
\Psi_{N,\xi}^{(2)}(\LL; d) = \frac{1}{N}\E\log\sum_{\alpha\in\N^{s-1}}\nu_\alpha\exp\big(\sqrt{N}\, Y(\alpha)\big).
}
Using the representation \eqref{xm334} of $Y$, we can rewrite this as
\eq{
\Psi_{N,\xi}^{(2)}(\LL; d) = \frac{1}{N}\E\log\sum_{\alpha\in\N^{s-1}}\nu_\alpha\exp\Big(\sqrt{N}\sum_{r=0}^{s-1}\sqrt{\vartheta_\xi(\gamma_{r+1})-\vartheta_\xi(\gamma_r)}\, \eta_{(\alpha_1,\dots,\alpha_r)}\Big).
}
By \eqref{magic}, the right-hand side can be transformed to yield
\eeq{ \label{cwje3}
\Psi_{N,\xi}^{(2)}(\LL;d)
= \frac{1}{N} X_0,
}
where $X_0, X_1,\dots,X_s$ are related inductively as in \eqref{following_just_zs}, and
\eq{
X_s = \sqrt{N}\sum_{r=0}^{s-1}\sqrt{\vartheta_\xi(\gamma_{r+1})-\vartheta_\xi(\gamma_r)}\, \eta^{(r)}.
}
Here $\eta^{(0)},\dots,\eta^{(s-1)}$ are i.i.d.~standard normal random variables.
Therefore, using the fact that $\E\exp(c\eta^{(r)}) = \frac{1}{2}\exp(c^2)$ for any $c\in\R$, it is straightforward to calculate
\eq{
X_0 = \frac{N}{2}\sum_{r=1}^{s-1}m_r\big(\vartheta_\xi(\gamma_{r+1})-\vartheta_\xi(\gamma_r)\big).
}
Rewriting the right-hand side using summation by parts, we obtain
\eeq{
X_0 &= \frac{N}{2}\vartheta_\xi(\gamma_s)-\frac{N}{2}\sum_{r=1}^{s} (m_r-m_{r-1})\vartheta_\xi(\gamma_r) \\
&= \frac{N}{2}\vartheta_\xi(\diag(d)) - \frac{N}{2}\int_0^1 \vartheta_\xi(\pi(t))\ \dd t
\stackref{pre_par_b}{=} -N\PPP_{\xi}^{(2)}(\pi).
}
\end{subequations}
Therefore, \eqref{cwje3} is exactly the desired statement \eqref{PPPPsi_b}.
\end{proof}

\begin{proof}[Proof of Lemma~\ref{lem_parisi_recovered}]
Inserting \eqref{Zialpha} into \eqref{pre_par_a}, we can express $\PPP_{N,\xi}^{(1)}(\pi,\lambda;S)$ as
\eq{
%&\PPP_{N,\xi}^{(1)}(\pi,\lambda;S) \label{pre_par_a_new} \\
%&= 
\frac{1}{N}\E\log\sum_{\alpha\in\N^{s-1}}\sum_{\sigma\in S}\nu_\alpha\exp\Big(\sum_{i=1}^N\iprod[\Big]{\sum_{r=0}^{s-1}\sqrt{\nabla\xi(\gamma_{r+1})-\nabla\xi(\gamma_{r})\one\{r>0\}}\, \eta_{i,(\alpha_1,\dots,\alpha_{r})}+\lambda}{\sigma_i}\Big).
}
By \eqref{magic}, we have the alternative representation
\begin{subequations} \label{z9dl2}
\eeq{ \label{mnqwe3_new}
\PPP_{N,\xi}^{(1)}(\pi,\lambda;S) = \frac{1}{N}X_0(S),
}
where $X_0(S), X_1(S),\dots,X_s(S)$ are related inductively as in \eqref{following_just_zs}, and
\eeq{ \label{nb71}
X_s(S) = \log\sum_{\sigma\in S}\exp\Big(\sum_{i=1}^N\iprod[\Big]{\sum_{r=0}^{s-1}z_{i}^{(r)}+\lambda}{\sigma_i}\Big).
}
Here each $z_i^{(r)}$ is an independent~centered Gaussian vector in $\R^\kappa$ with covariance matrix
\eeq{ \label{nxe8}
\E[z_{i}^{(r)}(z_i^{(r)})^\sT] = \nabla\xi(\gamma_{r+1})-\nabla\xi(\gamma_r)\one\{r>0\}.
}
\end{subequations}
%This provides a way to compute the left-hand side of \eqref{m_lemma_eq}.
When $S$ is the entire product set $\Sigma^N$, \eqref{nb71} can be rewritten as
\eq{
X_s(\Sigma^N)
&= \log \sum_{\sigma_1,\dots,\sigma_N\in\Sigma}\prod_{i=1}^N\exp\Big(\sum_{r=0}^{s-1}\iprod{z_i^{(r)}+\lambda}{\sigma_i}\Big) \\
&= \sum_{i=1}^N \log \sum_{\sigma\in\Sigma}\exp\Big(\sum_{r=0}^{s-1}\iprod{z_i^{(r)}+\lambda}{\sigma}\Big)
= \sum_{i=1}^N X_s^{(i)}(\Sigma),
}
where $X_s^{(1)}(\Sigma),\dots,X_s^{(N)}(\Sigma)$ are i.i.d.~random variables.
The inductive procedure \eqref{following_just_zs} can be applied to each one of these variables separately, resulting in i.i.d.~random variables $X_r^{(1)}(\Sigma),\dots,X_r^{(N)}(\Sigma)$ for each $r\in\{0,1,\dots,s-1\}$.
Assuming inductively that $X_{r+1}(\Sigma^N) = \sum_{i=1}^N X_{r+1}^{(i)}(\Sigma)$, we have
\eq{
X_r(\Sigma^N) 
&\stackref{following_just_zs}{=}\frac{1}{m_r}\log\E_r\exp\Big(m_r\sum_{i=1}^N X_{r+1}^{(i)}(\Sigma)\Big) \\
&\stackrefp{following_just_zs}{=} \frac{1}{m_r}\log\prod_{i=1}^N \E_r\exp(m_rX_{r+1}^{(i)}(\Sigma)) 
\stackref{following_just_zs}{=} \sum_{i=1}^N X_r^{(i)}(\Sigma),
}
where the middle equality uses independence.
%and because of independence across $i$, this is the same as applying \eqref{following_just_zs} to their sum:
% \eq{
% \frac{1}{m_r}\log\E_r\exp\Big(m_r\sum_{i=1}^N X_{r+1}^{(i)}(\Sigma)\Big)
% = \frac{1}{m_r}\log\prod_{i=1}^N \E_r\exp(m_rX_{r+1}^{(i)}(\Sigma)) = \sum_{i=1}^N X_r^{(i)}(\Sigma).
% }
Taking $r=0$, we conclude that
\eeq{ \label{ibjwe}
\PPP_{N,\xi}^{(1)}(\pi,\lambda;\Sigma^N) \stackref{mnqwe3_new}{=} \frac{1}{N} X_0(\Sigma^N) = \frac{1}{N}\sum_{i=1}^NX_0^{(i)}(\Sigma) = X_0^{(1)}(\Sigma) \stackref{mnqwe3_new}{=} \PPP_{1,\xi}^{(1)}(\pi,\lambda;\Sigma).
}
Moreover, a comparison of \eqref{mnqwe3_new} and \eqref{eq:def:Psi} reveals that
\eq{
\PPP_{1,\xi}^{(1)}(\pi,\lambda;\Sigma) = \PPP_{\xi}^{(1)}(\pi,\lambda),
}
and so \eqref{ibjwe} is exactly \eqref{qk38}.
%\eq{
%\PPP_{N,\xi}^{(1)}(\pi,\lambda;\Sigma^N) = \Psi_{\xi}(\pi,\lambda) \quad \text{for every $N\geq1$}.
%}
\end{proof}

\section{Duality}
\label{subsec:lem:duality}
In this appendix, we prove Lemma~\ref{lem:duality}. We closely follow the proof of \cite[Lem.~2]{panchenko18a}. 
%Throughout, we fix $\xi$ that satisfies~\ref{xi_power}. In addition, we fix $d'\in \DD$ and consider $\pi\in \Pi_{d^\prime}^{\disc}$ given by \eqref{eq:pi:discrete}. %encoded by the sequences $m=(m_r)_{0\leq r \leq s}$ and $\gamma=(\gamma_r)_{0\leq r \leq s}$ in \eqref{eq:m} and \eqref{eq:gamma:discrete}. 
The key ingredient is the following result.
Recall the set $\Sigma^N(d,\eps)$ from \eqref{eq:def:Sigma:N:d:eps}:
using the notation $R(\sigma,\sigma') = \sigma\sigma'^\sT/N$, we have
\eeq{ \label{Sigma_N_redef}
\Sigma^N(d,\eps) = \{\sigma\in\Sigma^N:\, \|R(\sigma,\sigma)-\diag(d)\|_\infty\leq\eps\}.
}

\begin{lemma}\label{lem:PP:1:eps:approx}
Assume $\xi$ satisfies~\ref{xi_power}.
    There exists a constant $C$ depending only on $\xi$ and $\ka$, such that for every $N\geq 1$, $\pi\in\Pi_{d'}$, $\lambda\in\R^\kappa$, and $\eps>0$, we have
    \eeq{ \label{bq3t}
        \sup_{d\in \DD_N}\Big|\PPP^{(1)}_{N,\xi}\big(\pi,\lambda; \Sigma^{N}(d,\eps)\big)-\PPP^{(1)}_{N,\xi}\big(\pi,\lambda; \Sigma^{N}(d)\big)\Big|\leq C\sqrt{\eps}.
    }
\end{lemma}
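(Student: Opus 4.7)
The lower bound is immediate from the inclusion $\Sigma^N(d)\subseteq\Sigma^N(d,\eps)$. For the matching upper bound, I plan to decompose
\[
\Sigma^N(d,\eps)=\bigsqcup_{d'\in B_\eps}\Sigma^N(d'),\qquad B_\eps\coloneqq\DD_N\cap\{d'\in\DD:\|d'-d\|_\infty\leq\eps\},
\]
and use the lattice estimate $|B_\eps|\leq(1+2\lfloor\eps N\rfloor)^\kappa$ coming from $\DD_N\subseteq(\Z/N)^\kappa$. With $f_\alpha(\sigma)\coloneqq\sum_{i=1}^N\iprod{Z_i(\alpha)+\lambda}{\sigma_i}$ and $W_N(d')\coloneqq\sum_\alpha\nu_\alpha\sum_{\sigma\in\Sigma^N(d')}e^{f_\alpha(\sigma)}$, the elementary bound $\log(\sum_j a_j)\leq\log|B_\eps|+\max_j\log a_j$ inside $\frac{1}{N}\E\log$ gives
\[
\PPP^{(1)}_{N,\xi}(\pi,\lambda;\Sigma^N(d,\eps))\leq\frac{\log|B_\eps|}{N}+\frac{1}{N}\E\max_{d'\in B_\eps}\log W_N(d').
\]

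Next I plan to apply Gaussian concentration: conditional on the RPC weights, each $\log W_N(d')$ is a Lipschitz functional of the Gaussian vector $(Z_i(\alpha))$ whose squared Lipschitz constant is at most $N\sup_{\|R\|_1\leq1}\|\nabla\xi(R)\|_{\op}$, so by Borell's inequality it is sub-Gaussian with variance proxy $O(N)$. A standard sub-Gaussian maximal inequality across $|B_\eps|$ events (followed by an outer expectation over the RPC weights) then yields
\[
\E\max_{d'\in B_\eps}\big[\log W_N(d')-\E\log W_N(d')\big]\leq C\sqrt{N\log|B_\eps|}.
\]
Dividing by $N$ and using $\log(1+2\eps N)\leq 2\eps N$, both $\log|B_\eps|/N$ and $\sqrt{\log|B_\eps|/N}$ are bounded by $C\sqrt\eps$, producing
\[
\PPP^{(1)}_{N,\xi}(\pi,\lambda;\Sigma^N(d,\eps))\leq\max_{d'\in B_\eps}\PPP^{(1)}_{N,\xi}(\pi,\lambda;\Sigma^N(d'))+C\sqrt\eps.
\]

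The main obstacle is the remaining H\"older-$1/2$ estimate
\[
\max_{d'\in B_\eps}\PPP^{(1)}_{N,\xi}(\pi,\lambda;\Sigma^N(d'))\leq\PPP^{(1)}_{N,\xi}(\pi,\lambda;\Sigma^N(d))+C\sqrt\eps,
\]
which asserts $\sqrt{\cdot}$-continuity of $d\mapsto\PPP^{(1)}_{N,\xi}(\pi,\lambda;\Sigma^N(d))$ in the $\|\cdot\|_\infty$ metric, uniformly in $N,\pi,\lambda$. My plan here is to connect any $d'\in B_\eps$ to $d$ by at most $\kappa\eps N$ single-spin swaps $d\mapsto d^\circ\coloneqq d+(\vv e_{k^+}-\vv e_{k^-})/N$, using at each step the natural position-marked bijection sending $(\sigma,i)$ with $\sigma_i=\vv e_{k^-}$ to the configuration obtained by flipping position $i$ to $\vv e_{k^+}$. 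This produces an exact per-step identity relating $\sum_{\sigma\in\Sigma^N(d)}e^{f_\alpha(\sigma)}$ to $\sum_{\sigma^\circ\in\Sigma^N(d^\circ)}e^{f_\alpha(\sigma^\circ)}$ with prefactor $d_{k^-}/d^\circ_{k^+}$ and local weight $\big\langle e^{\iprod{Z_i(\alpha)+\lambda}{\vv e_{k^+}-\vv e_{k^-}}}\big\rangle_{\{i:\,\sigma_i=\vv e_{k^-}\}}$. Telescoping along the path decomposes $\log W_N(d')-\log W_N(d)$ into an entropy contribution, controlled by the Stirling-type bound $|x\log x|\leq 2\sqrt x$ for $x\in[0,1]$ (the source of the $\sqrt\eps$ rate), and an energy contribution, controlled by Gaussian moment bounds on the increments $\iprod{Z_i(\alpha)+\lambda}{\vv e_{k^+}-\vv e_{k^-}}$ whose variance is bounded uniformly in $\alpha$. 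The chief subtlety is uniformity in $d$ near the boundary of $\DD$, where the derivative of $-x\log x$ diverges and precludes a linear rate; the square-root rate is exactly what accommodates this.
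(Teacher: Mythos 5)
Your overall architecture differs from the paper's, and two of its steps have genuine gaps. First, the $\E$--$\max$ interchange: Borell's inequality conditional on the RPC weights controls $\log W_N(d')-\E_g[\log W_N(d')\mid\nu]$, where $\E_g$ integrates only the Gaussian field. After the maximal inequality and the outer expectation over $\nu$ you are left with $\E_\nu\max_{d'\in B_\eps}\E_g[\log W_N(d')\mid\nu]$, not $\max_{d'}\E\log W_N(d')$: the conditional means are themselves random functionals of the Poisson--Dirichlet weights, which are heavy-tailed and not Gaussian, so pulling the max outside requires a separate concentration statement for the $\nu$-randomness that you have not supplied. Second, and more seriously, the ``energy contribution'' in your telescoping step does not close. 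Your swap identity holds for each $\alpha$ separately, with an $\alpha$-dependent ratio $r_\alpha=\frac{d_{k^-}}{d^\circ_{k^+}}\big\langle e^{\iprod{Z_i(\alpha)+\lambda}{\vv e_{k^+}-\vv e_{k^-}}}\big\rangle_i$; reassembling gives $W_N(d^\circ)=\sum_\alpha\nu_\alpha\big(\sum_\sigma e^{f_\alpha(\sigma)}\big)r_\alpha$, and bounding $\E\log W_N(d^\circ)-\E\log W_N(d)$ then requires controlling something like $\E\log\sup_\alpha r_\alpha$ over the countably infinite support of the cascade. Uniform-in-$\alpha$ variance bounds on the increments do not give this (the supremum of a Gaussian over $\N^{s-1}$ is not controlled by its variance), and since you take up to $\kappa\eps N$ swaps, each step must be accurate to $O(1/N)$ with no slack. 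What would rescue this step is a Gaussian interpolation between the fields before and after the swaps---which is essentially the paper's proof, so the detour through exact-magnetization shells buys nothing.

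For comparison, the paper argues directly: it fixes a Hamming-distance projection $P\colon\Sigma^N(d,\eps)\to\Sigma^N(d)$, so that $\sigma$ and $P\sigma$ differ in at most $\eps N$ columns. The preimage count $\NN(\sigma)\leq\binom{N}{\lceil\eps N\rceil}\kappa^{\eps N}$ yields $\frac1N\log\NN(\sigma)\leq C_\kappa\sqrt\eps$ via $\eps\log(1/\eps)\lesssim\sqrt\eps$ (the same entropy mechanism as your $|x\log x|\leq2\sqrt x$, and the sole source of the square-root rate), while a one-parameter Gaussian interpolation between $\sum_i\iprod{Z_i(\alpha)}{\sigma_i}$ and $\sum_i\iprod{Z_i(\alpha)}{(P\sigma)_i}$ has derivative $O(\eps)$ because the covariance discrepancy is supported on the $O(\eps N)$ differing coordinates. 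This handles the Gaussian-field comparison globally, with no per-$\alpha$ suprema and no $\E$--$\max$ interchange.
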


\begin{proof}
We assume without loss of generality that $\eps\in(0,1/2]$.
Note for later that there exists a constant $C_0$ such that
\eeq{ \label{dhwe4r}
\eps\log(1/\eps) \leq C_0\sqrt{\eps} \quad \text{for all $\eps\in(0,1/2]$}.
}
For any $\lambda\in\R^\kappa$ and $S\subseteq\Sigma^N$, the map $\Pi_{d'}\ni\pi\mapsto\PPP^{(1)}_{N,\xi}(\pi,\lambda;S)$ is by definition a continuous extension of its restriction to the set $\Pi_{d'}^\disc$ (see the text following the proof of Lemma~\ref{pconlem}).
Therefore, it suffices to verify the lemma for $\pi\in\Pi_{d'}^\disc$.
We assume $\pi$ is given by \eqref{eq:pi:discrete}, with $d'$ replacing $d$ in \eqref{eq:gamma:discrete}.

    Given any $d\in\DD_N$, let $P\colon\Sigma^N(d,\eps)\to \Sigma^N(d)$ be any projection with respect to Hamming distance. 
    That is, for each $\sigma\in \Sigma^N(d,\eps)$, we choose some $P\sigma\in\Sigma^N(d)$ that minimizes $\sum_{i=1}^{N}\one\{\sigma_i \neq (P\sigma)_i\}$. 
Changing a single column $\sigma_i$ (from one standard basis vector to another) moves the self-overlap matrix $R(\sigma,\sigma)$ by distance $N^{-1}$ in the $\ell^\infty$ norm.
Therefore, $\sigma$ and $P\sigma$ differ in at most $\eps N$ columns: 
    %Then, note that there exists a constant $C_{\ka}>0$ such that for any $\sigma\in \Sigma^N(d,\eps)$,
    \begin{equation}\label{eq:projection:guarantee}
        \sum_{i=1}^{N}\one\{\sigma_i \neq (P\sigma)_i\}\leq \eps N \quad \text{for every $\sigma\in \Sigma^N(d,\eps)$}.
    \end{equation}
    In particular, if $\eps N<1$, then $P$ is the identity map and so $\Sigma^N(d,\eps)=\Sigma^N(d)$.
    The right-hand side of \eqref{bq3t} is zero in this case, and so we assume henceforth that $\eps N \geq 1$.
    
    Consider the size of the preimage,
    \eeq{ \label{eq:def:NN}
\NN(\sigma)\coloneqq\big|\{\tau\in\Sigma^N(d,\eps):\, P\tau=\sigma\}\big|.
}
If $P\tau=\sigma$, then \eqref{eq:projection:guarantee} says there are at most $\eps N$ many columns in which $\tau$ and $\sigma$ differ.
Each such column of $\tau$ takes one of $\kappa$ possible values, and so
\eq{
\NN(\sigma) 
\leq {N \choose \lceil \eps N\rceil}\kappa^{\eps N}
\leq \Big(\frac{\exp(1)N}{\lceil \eps N\rceil}\Big)^{\lceil \eps N\rceil}\kappa^{\eps N}
\leq \Big(\frac{\exp(1) \kappa}{\eps}\Big)^{\eps N+1}
\leq \Big(\frac{\exp(1) \kappa}{\eps}\Big)^{2\eps N}.
}
Taking logarithms and dividing by $N$, we arrive to
\eeq{ \label{NN_bound}
\frac{1}{N}\log \NN(\sigma)
\leq 2\eps\big(1+\log \kappa+\log(1/\eps)\big)
\stackref{dhwe4r}{\leq} C_\kappa\sqrt{\eps},
}
where $C_\kappa = 2(1+\log \kappa+C_0)$.
%     From this inequality, a standard large deviations calculation (see the proof of \cite[Lem.~3]{panchenko18a}) yields
%     \eeq{ \label{eq:def:NN}
% \frac{1}{N}\log \NN(\sigma) \leq C_\kappa\sqrt{\eps}, \quad \text{where} \quad \NN(\sigma)\coloneqq\big|\{\tau\in\Sigma^N(d,\eps):\, P\tau=\sigma\}\big|,
% }
%and $C_\kappa$ is a constant depending only on $\kappa$.

    Next let $Z_1,\dots,Z_N,\wt Z_1,\dots,\wt Z_N\colon\N^{s-1}\to\R^\kappa$ be i.i.d.~centered Gaussian processes with covariance given by \eqref{bni}. 
    Define the interpolating process $Z_t\colon\N^{s-1}\times\Sigma^N(d,\eps)\to\R$ by
\eeq{ \label{wh4w45}
        Z_{t}(\alpha,\sigma)=\sum_{i=1}^N \Big[\sqrt{t}\iprod[\big]{Z_i(\alpha)+\lambda}{\sigma_i}+\sqrt{1-t}\iprod[\big]{\wt Z_i(\alpha)+\lambda}{(P\sigma)_i}\Big], \quad t\in[0,1].
}
    Upon defining the associated free energy,
    \eeq{ \label{hdnw73}
        \phi(t)=\frac{1}{N}\E \log\sum_{\alpha\in \N^{s-1}}\sum_{\sigma\in \Sigma^{N}(d,\eps)}\nu_\alpha\exp Z_t(\alpha,\sigma),
    }
    % where the Gaussian process $\big(Z_{t}(\alpha,\sigma)\big)_{\alpha\in \N^{s-1},\sigma\in \Sigma^N(d,\eps)}$ is defined by
    % \begin{equation*}
    %     Z_{t}(\alpha,\sigma)=\sum_{i=1}^N \Big\langle Z_i(\alpha), \sqrt{t}\sigma_i+\sqrt{1-t}(P\sigma)_i\Big\rangle.
    % \end{equation*}
    %Here, we recall that the independent Gaussian processes $\big(Z_i(\alpha)\big)_{\alpha\in \N^{s-1}}$ have covariance structure $\E[Z_i(\alpha)Z_i(\alpha')^{\sT}]=\nabla \xi(\gamma_{r(\alpha,\alpha')})$. Then, 
    we have
\eeq{ \label{gwe4g}
\phi(1)&=\PPP_{N,\xi}^{(1)}\big(\pi,\lambda;\Sigma^N(d,\eps)\big), \\
\phi(0)&=\frac{1}{N}\E \log\sum_{\alpha\in \N^{s-1}}\sum_{\sigma\in \Sigma^{N}(d,\eps)}\exp\Big(\sum_{i=1}^N \iprod[\big]{\wt Z_i(\alpha)+\lambda}{(P\sigma)_i}\Big).
}
     %We now approximate $\phi(0)$ by $\PPP_{N,\xi}^{(1)}\big(\pi,0;\Sigma^N(d)\big)$ as follows. 
     % For $\sigma \in \Sigma^N(d)$, let
     % \begin{equation}\label{eq:def:NN}
     %     \NN(\sigma)\coloneqq\Big|\big\{\tau\in \Sigma^{N}(d,\eps):P\tau=\sigma\big\}\Big|.
     % \end{equation}
     Note that $\NN(\sigma)\geq 1$ since $P\sigma=\sigma$, and thus
     \eeq{ \label{eq:phi:zero:approx}
         &\PPP_{N,\xi}^{(1)}\big(\pi,\lambda;\Sigma^N(d)\big)
         \stackref{pre_par_a}{=} \frac{1}{N}\E\log\sum_{\alpha\in\N^{s-1}}\sum_{\sigma\in\Sigma^N(d)}\nu_\alpha\exp\Big(\sum_{i=1}^N\iprod[\big]{Z_{i}(\alpha)+\lambda}{\sigma_i}\Big) \\
         &\qquad\stackrefp{NN_bound}{\leq} \frac{1}{N}\E \log\sum_{\alpha\in \N^{s-1}}\sum_{\sigma\in \Sigma^{N}(d)}\NN(\sigma)\nu_\alpha\exp\Big(\sum_{i=1}^N \iprod[\big]{Z_i(\alpha)+
         \lambda }{\sigma_i}\Big)\stackref{gwe4g}{=}\phi(0)\\
         &\qquad\stackref{NN_bound}{\leq} \PPP_{N,\xi}^{(1)}\big(\pi,\lambda;\Sigma^N(d)\big)+C_\kappa\sqrt{\eps}.
}
Putting these observations together and using a triangle inequality, we have
\eeq{ \label{gxdw8}
&\big|\PPP_{N,\xi}^{(1)}\big(\pi,\lambda;\Sigma^N(d,\eps)\big)-\PPP_{N,\xi}^{(1)}\big(\pi,\lambda;\Sigma^N(d)\big)\big| \\
&\qquad\qquad\stackref{gwe4g}{=} \big|\phi(1) - \PPP_{N,\xi}^{(1)}\big(\pi,\lambda;\Sigma^N(d)\big)\big| \\
&\qquad\qquad\stackrefp{eq:phi:zero:approx}{\leq} |\phi(1)-\phi(0)| + \big|\phi(0) - \PPP_{N,\xi}^{(1)}\big(\pi,\lambda;\Sigma^N(d)\big)\big| \\
&\qquad\qquad\stackref{eq:phi:zero:approx}{\leq} |\phi(1)-\phi(0)|+C_\kappa\sqrt{\eps}.
}
     % Moreover, since the projection $P$ satisfies \eqref{eq:projection:guarantee}, it was shown in the proof of \cite[Lem.~3]{panchenko18a} that $N^{-1}\max_{\sigma\in \Sigma(d)}\log \NN(\sigma)\leq C_\kappa\sqrt{\eps}$ for some constant $C$ depending only on $\kappa$.
     % Hence
     % \begin{equation}\label{eq:phi:zero:approx}
     %     \Big|\phi(0)-\PPP_{N,\xi}^{(1)}\big((\pi,0;\Sigma^N(d)\big)\Big|\leq C\sqrt{\eps}.
     % \end{equation}
    To control the first term in the last line, we consider the probability measure $G_t(\alpha,\sigma)\propto \nu_{\alpha}\exp Z_t(\alpha,\sigma)$ on $\N^{s-1}\times \Sigma^{N}(d,\eps)$, and
    let $\langle \cdot \rangle_t$ denote expectation with respect to  $G_t$. 
    Differentiation of \eqref{hdnw73} results in
    %Meanwhile, by Gaussian integration by parts, we can compute the derivative of $\phi(t)$ by
    \eq{
        \phi^\prime(t)=\frac{1}{N}\E\Big\langle \frac{\partial Z_t(\alpha,\sigma) }{\partial t}\Big\rangle_t.
        }
    Applying Gaussian integration by parts \cite[Lem.~1.1]{panchenko13a}, we can rewrite this as
    \eq{
    \phi'(t)
    =\frac{1}{N}\E\Big\langle \CC\big((\alpha^1,\sigma^1),(\alpha^1,\sigma^1)\big)-\CC\big((\alpha^1,\sigma^1),(\alpha^1,\sigma^1)\big)\Big\rangle_t,
    }
    where $(\alpha^1,\sigma^1),(\alpha^2,\sigma^2)$ denote independent samples from $G_t$, and $\CC\colon(\N^{s-1}\times\Sigma^N(d,\eps))^2\to\R$ is given by
   % $\CC\big((\alpha,\sigma),(\alpha^\prime,\sigma^\prime)\big)$ is defined by
\eq{
        \CC\big((\alpha,\sigma),(\alpha^\prime,\sigma^\prime)\big)
        &\stackrefp{wh4w45}{=}\E\Big[\frac{\partial Z_t(\alpha,\sigma)}{\partial t} Z_t(\alpha^\prime,\sigma^\prime)\Big] \\
        &\stackref{wh4w45}{=} \frac{1}{2}\sum_{i=1}^N\E\Big[\iprod[\big]{Z_i(\alpha)}{\sigma_i}\iprod[\big]{Z_i(\alpha')}{\sigma_i'}
        -\iprod[\big]{\wt Z_i(\alpha)}{(P\sigma)_i}\iprod[\big]{\wt Z_i(\alpha')}{(P\sigma')_i}\Big] \\
        &\stackrefpp{bni}{wh4w45}{=}\frac{1}{2}\sum_{i=1}^{N}\Big(\sigma_i^{\sT} \nabla \xi(\gamma_{r(\alpha,\alpha')})\sigma_i^\prime-(P\sigma)_i^{\sT} \nabla\xi(\gamma_{r(\alpha,\alpha')})(P\sigma^\prime)_i\Big).
}
    By \eqref{eq:projection:guarantee}, there are at most $2\eps N$ many values of $i$ for which
    %Observe that if $(P\sigma)_i=\sigma_i$ and $(P\sigma')_i=\sigma^\prime_i$, then 
    the $i^\mathrm{th}$ summand on the final line is nonzero.
    Since $\sigma_i,\sigma_i',(P\sigma)_i,(P\sigma')_i$ are all standard basis vectors, the nonzero summands trivially satisfy
    \eq{
    \Big|\sigma_i^{\sT} \nabla \xi(\gamma_{r(\alpha,\alpha')})\sigma_i^\prime-(P\sigma)_i^{\sT} \nabla\xi(\gamma_{r(\alpha,\alpha')})(P\sigma^\prime)_i\Big| \leq 2\sup_{\|R\|_1\leq1} \|\nabla\xi(R)\|_\infty.
}
    %Thus, by invoking \eqref{eq:projection:guarantee}, there exists a constant $C\equiv C_{\xi,\ka}>0$ such that 
    Therefore, the four previous displays together imply
    % \begin{equation*}
    %     \sup_{(\alpha,\sigma),(\alpha^\prime,\sigma^\prime)\in \N^{s-1}\times \Sigma^{N}(d,\eps)}\big|\CC\big((\alpha,\sigma),(\alpha^\prime,\sigma^\prime)\big)\big|\leq C \eps.
    % \end{equation*}
    \eq{
|\phi(1)-\phi(0)|
\leq\sup_{t\in(0,1)}\big|\phi^\prime(t)\big| 
\leq 4\eps \sup_{\|R\|_1\leq1}\|\nabla\xi(R)\|_\infty.
}
    Inserting this inequality into \eqref{gxdw8} results in
    \begin{equation*}
        \big|\PPP^{(1)}_{N,\xi}\big(\pi,\lambda; \Sigma^{N}(d,\eps)\big)-\PPP^{(1)}_{N,\xi}\big(\pi,\lambda; \Sigma^{N}(d)\big)\big|
        %\leq |\phi(1)-\phi(0)|+C_\kappa\sqrt{\eps}
        \leq C\sqrt{\eps},
    \end{equation*}
    where $C = 4\sup_{\|R\|_1\leq1}\|\nabla\xi(R)\|_\infty+C_\kappa$. 
    %This concludes the proof.
\end{proof}
With Lemma~\ref{lem:PP:1:eps:approx} in hand, the next result follows by the exact same argument as \cite[Lem.~4 and Lem.~5]{panchenko18a}, and thus we omit the proof.
\begin{lemma}\label{lem:convergence:f}
Assume $\xi$ satisfies~\ref{xi_power}.
Fix any $\pi\in\Pi_{d'}$.
    Then for any $d\in \DD$ and $\eps>0$, the following limit exists and is a continuous concave function of $d$:
    \begin{equation*}
        f_{\eps}(d)\coloneqq\lim_{N\to\infty}\PPP^{(1)}_{N,\xi}\big(\pi,0; \Sigma^{N}(d,\eps)\big).
    \end{equation*}
    Furthermore, the limit $f(d)\coloneqq\lim_{\eps\searrow 0}f_{\eps}(d)$ exists and satisfies the following statement. 
    If $d^N\in \DD_N$ converges to $d\in \DD$ as $N\to\infty$, then
    \eeq{ \label{bw843}
        f(d)=\lim_{N\to\infty}\PPP^{(1)}_{N,\xi}\big(\pi,0; \Sigma^{N}(d^N)\big).
    }
    Finally, there exists a constant $C$ depending only on $\xi$ and $\ka$, such that
    \begin{equation*}
        |f(d^1)-f(d^2)|\leq C\|d^1-d^2\|_{\infty}^{1/2} \quad \text{for all $d^1,d^2\in \DD$}.
    \end{equation*}
\end{lemma}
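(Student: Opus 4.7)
The plan is to mirror the four distinct assertions in the lemma with four corresponding arguments, held together by two core ingredients: (i) a superadditivity argument combining the product inclusion $\Sigma^{N_1}(d,\eps)\times\Sigma^{N_2}(d,\eps)\subseteq\Sigma^{N_1+N_2}(d,\eps)$ with the classical subadditivity property of Ruelle probability cascade free energies, and (ii) the quantitative approximation of Lemma~\ref{lem:PP:1:eps:approx}.

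For existence of $f_\eps(d)$, I would note that since the self-overlap $R((\sigma^1,\sigma^2),(\sigma^1,\sigma^2))$ is a convex combination of $R(\sigma^1,\sigma^1)$ and $R(\sigma^2,\sigma^2)$, the product inclusion above holds. Writing the defining sum in \eqref{pre_par_a} as $\E\log\sum_\alpha\nu_\alpha A(\alpha)B(\alpha)$, where $A$ depends on the first $N_1$ spins via $Z_1,\dots,Z_{N_1}$ and $B$ on the last $N_2$ via the independent copies $Z_{N_1+1},\dots,Z_{N_1+N_2}$, a standard subadditivity property of RPCs gives $\E\log\sum_\alpha\nu_\alpha AB\geq\E\log\sum_\alpha\nu_\alpha A+\E\log\sum_\alpha\nu_\alpha B$. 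This makes $N\mapsto N\PPP^{(1)}_{N,\xi}(\pi,0;\Sigma^N(d,\eps))$ superadditive along $N$ for which $\Sigma^N(d,\eps)\neq\varnothing$; Fekete's lemma then delivers the limit $f_\eps(d)$ (after handling nonemptiness via some $d^N\in\DD_N$ converging to $d$). Concavity of $f_\eps$ in $d$ follows from an analogous product inclusion $\Sigma^{N_1}(d^1,\eps)\times\Sigma^{N_2}(d^2,\eps)\subseteq\Sigma^N(\alpha d^1+(1-\alpha)d^2,\eps+o(1))$ with $N_1/N\to\alpha$; some care is needed to pass the small loss $o(1)$ to zero using the monotonicity of $f_\eps$ in $\eps$. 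Continuity on $\DD$ then follows from concavity on the interior plus a standard boundary argument using slight enlargement of $\eps$, and existence of $f(d)=\lim_{\eps\searrow 0}f_\eps(d)$ is immediate from the monotonicity $\eps\mapsto f_\eps(d)$.

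The identification \eqref{bw843} comes from a sandwich argument. For fixed $\eps>0$ and $N$ large enough that $\|d^N-d\|_\infty\leq\eps/2$, the inclusion $\Sigma^N(d,\eps/2)\subseteq\Sigma^N(d^N,\eps)\subseteq\Sigma^N(d,2\eps)$ yields
\eq{
\PPP^{(1)}_{N,\xi}\big(\pi,0;\Sigma^N(d,\eps/2)\big)
\leq\PPP^{(1)}_{N,\xi}\big(\pi,0;\Sigma^N(d^N,\eps)\big)
\leq\PPP^{(1)}_{N,\xi}\big(\pi,0;\Sigma^N(d,2\eps)\big),
}
and Lemma~\ref{lem:PP:1:eps:approx} applied to $d^N$ controls the gap between the middle term and $\PPP^{(1)}_{N,\xi}(\pi,0;\Sigma^N(d^N))$ by $C\sqrt{\eps}$. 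Sending $N\to\infty$ and then $\eps\searrow 0$ squeezes the limit to $f(d)$. For the H\"older-$1/2$ estimate, I would pick $d^{i,N}\in\DD_N$ with $d^{i,N}\to d^i$, observe the trivial inclusion $\Sigma^N(d^{1,N})\subseteq\Sigma^N(d^{2,N},\|d^{1,N}-d^{2,N}\|_\infty)$, apply Lemma~\ref{lem:PP:1:eps:approx} to obtain $\PPP^{(1)}_{N,\xi}(\pi,0;\Sigma^N(d^{1,N}))\leq\PPP^{(1)}_{N,\xi}(\pi,0;\Sigma^N(d^{2,N}))+C\sqrt{\|d^{1,N}-d^{2,N}\|_\infty}$, and pass to the limit via \eqref{bw843}; the symmetric inequality completes the bound.

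The main obstacle I expect is not any single step but the bookkeeping: three interlocking approximations (the $N\to\infty$ limit, the $\eps\searrow 0$ limit, and the $d^N\to d$ convergence) must be threaded together so that the $\sqrt{\eps}$ error from Lemma~\ref{lem:PP:1:eps:approx} survives intact and delivers the H\"older-$1/2$ modulus. The quantitative form of Lemma~\ref{lem:PP:1:eps:approx} is essential: a merely qualitative version would yield continuity but not the explicit exponent.
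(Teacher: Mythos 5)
Your proposal reconstructs exactly the argument the paper defers to (it cites \cite[Lem.~4 and Lem.~5]{panchenko18a} and omits the proof): superadditivity of $N\mapsto N\PPP^{(1)}_{N,\xi}(\pi,0;\Sigma^N(d,\eps))$ from the product inclusion together with additivity of the RPC functional over independent Gaussian fields, plus the quantitative $C\sqrt{\eps}$ estimate of Lemma~\ref{lem:PP:1:eps:approx} for \eqref{bw843} and the H\"older bound. The steps are all sound; the only places needing the extra care you already flag are removing the $o(1)$ in the concavity step (e.g.\ by taking rational $\alpha$ so the convex combination is exact) and continuity of $f_\eps$ up to the boundary of $\DD$, both of which are handled by standard means.
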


We are now ready to complete the objective of this appendix.
The following proof is very similar to that of \cite[Lem.~6]{panchenko18a}.

\begin{proof}[Proof of Lemma~\ref{lem:duality}]
%By Lemma~\ref{pconlem}, it suffices to consider a discrete path $\pi\in \Pi_{d}^{\disc}$ encoded by $m\equiv (m_r)_{0\leq r \leq s}$ and $\gamma\equiv (\gamma_r)_{0\leq r\leq s}$.
As in the proof of Lemma~\ref{lem:PP:1:eps:approx}, let $\pi$ be given by \eqref{eq:pi:discrete}, with $d'$ replacing $d$ in \eqref{eq:gamma:discrete}.
We will show that
%Using properties of Ruelle Probability Cascades, it was shown in the proof of \cite[Lem.~6]{panchenko18a} that
 \begin{equation}\label{eq:pre:duality}
     % 0\leq \PPP^{(1)}_{N,\xi}(\pi,\la; \Sigma^N)-\max_{d\in \DD_{N}}\Big[\PPP^{(1)}_{N,\xi}(\pi,0; \Sigma^N(d))+\iprod[\big]{\la}{d}\Big]\leq \frac{\ka \log (N+1)}{m_1 N}. 
     0\leq \PPP^{(1)}_{N,\xi}(\pi,\la; \Sigma^N)-\max_{d\in \DD_{N}}\PPP^{(1)}_{N,\xi}(\pi,\la; \Sigma^N(d))\leq \frac{\ka \log (N+1)}{m_1 N}.      
 \end{equation}
Before proving \eqref{eq:pre:duality}, we use it to prove the desired identity \eqref{mijr}.
For every $\sigma\in\Sigma^N(d)$, we have $\sum_{i=1}^n\iprod{\lambda}{\sigma_i} = N\iprod{\lambda}{d}$.
Therefore, when $S=\Sigma^N(d)$, definition \eqref{pre_par_a} results in
\eq{
\PPP_{N,\xi}^{(1)}(\pi,\lambda;\Sigma^N(d)) 
= \PPP_{N,\xi}^{(1)}(\pi,0,\Sigma^N(d))+\iprod{\lambda}{d} \quad \text{for $d\in\DD_N$}.
}
Using this identity and $\PPP_{N,\xi}^{(1)}(\pi,\lambda;\Sigma^N)=\PPP_\xi^{(1)}(\pi,\lambda)$ from \eqref{qk38}, we infer from \eqref{eq:pre:duality} that
\eeq{\label{hgnv21}
\PPP_\xi^{(1)}(\pi,\lambda) 
%&\stackrefpp{qk38}{eq:pre:duality}{=}\lim_{N\to\infty} \PPP^{(1)}_{N,\xi}(\pi,\la; \Sigma^N) 
%\stackref{eq:pre:duality}{=}
=\lim_{N\to\infty}\max_{d\in\DD_N}\big[\PPP^{(1)}_{N,\xi}(\pi,0; \Sigma^N(d))+\iprod{\la}{d}\big].
}
We then claim that
\eeq{ \label{hfdw133}
     \PPP^{(1)}_{\xi}(\pi,\la)
     %=\PPP^{(1)}_{N,\xi}(\pi,\la; \Sigma^N)
     =\sup_{d\in \DD}\big[ f(d)+\iprod{\lambda}{d} \big].
}
Indeed, let $d^N\in\DD_N$ be a maximizer in \eqref{hgnv21}. 
By compactness of $\DD$, we may assume (by passing to a subsequence) that $d^N$ converges to some $d$ as $N\to\infty$. 
It then follows that
\eeq{ \label{wwjg225}
\PPP^{(1)}_{\xi}(\pi,\lambda)
\stackref{hgnv21}{=} \lim_{N\to\infty}\big[\PPP^{(1)}_{N,\xi}(\pi,0; \Sigma^N(d^N))+\iprod{\la}{d^N}\big]
\stackref{bw843}{=} f(d) + \iprod{\lambda}{d}.
}
On the other hand, if we choose any other convergent sequence $d^N\to d$, not necessarily a sequence of maximizers, then the first equality in \eqref{wwjg225} becomes $\geq$, and \eqref{hfdw133} follows.
Finally, since $f$ is concave and continuous by Lemma~\ref{lem:convergence:f}, the Fenchel--Moreau theorem (e.g. \cite[Thm.~12.2]{rockafellar70}) implies the following dual version of \eqref{hfdw133}:
\eq{
f(d) = \inf_{\lambda\in\R^\kappa}\big[\PPP_\xi^{(1)}(\pi,\lambda) - \iprod{\lambda}{d}\big].
}
Once we use \eqref{bw843} to rewrite $f(d)$, this identity is exactly \eqref{mijr}.

 %Here, we recall the proof of \eqref{eq:pre:duality} for completeness. First, 
 It remains to establish the two inequalities in \eqref{eq:pre:duality}.
 The first inequality is immediate from definition \eqref{pre_par_a}, since $\Sigma^N(d)\subseteq\Sigma^N$.
 For the second inequality, recall the notation in \eqref{z9dl2}.
 We claim
 \eeq{ \label{gxqng2}
\exp\big(m_rX_r(\Sigma^N)\big)\leq \sum_{d\in\DD_N}\exp\Big(m_r X_r\big(\Sigma^N(d)
\big)\Big) \quad \text{for each $r\in\{1,\dots,s\}$}.
}
To see this claim, first observe that there is equality when $r=s$, simply by inspecting \eqref{nb71} together with the fact that $m_s=1$.
Now proceed by downward induction.
Assuming the claim is true in the $(r+1)^\text{th}$ case, and noting that $m_r/m_{r+1}\leq1$, we have 
 % for each $1\leq i \leq N$, let $\big(z^{(r)}_i)_{0\leq r \leq s-1}$ be a sequence of Gaussian vector in $\R^{\ka}$, which are i.i.d. across $1\leq i \leq N$, with mean $0$ and covariance matrix given by~\eqref{nxe8}. Then, for $S\subseteq \Sigma^N$, define
 % \begin{equation*}
 %     X_s(\la,S)= \log \sum_{\sigma\in S}\exp\Bigg(\sum_{i=1}^N\bigg\langle \sum_{r=0}^{s-1}z_i^{(r)}+\la, \sigma \bigg\rangle \Bigg).
 % \end{equation*}
 % and $X_0(\la,S),X_1(\la,S),\ldots X_s(\la,S)$ are related inductively as in \eqref{following_just_zs}.  Then, by Theorem~\ref{thm:RPC}, we have
 % \begin{equation}\label{eq:tech:1}
 %     \PPP^{(1)}_{N,\xi}(\pi,\la;S)=\frac{1}{N}X_0(\la,S).
 % \end{equation}
 % Now, note that since $\exp\big(X_s(\la,\Sigma^N)\big)=\sum_{d\in \DD_N}\exp\big(X_s(\la,\Sigma^N(d)\big)$ holds, we have
 % \begin{equation*}
 % \begin{split}
 %     \exp\Big(m_{s-1} X_{s-1}\big(\la,\Sigma^N\big)\Big)
 %     &=\E_{s}\bigg(\sum_{d\in \DD_N}\exp\Big( X_{s}\big(\la,\Sigma^N(d)\big)\Big)\bigg)^{m_{s-1}}\\
 %     &\leq \sum_{d\in \DD_N}\E_{s}\exp\Big(m_{s-1} X_{s}\big(\la,\Sigma^N(d)\big)\Big)\\
 %     &=\sum_{d\in \DD_N}\exp\Big(m_{s-1}X_{s-1}\big(\la,\Sigma^N(d)\big)\Big),
 % \end{split}
 % \end{equation*}
 % where the inequality holds since $\big(\sum_{i=1}^{n} x_i\big)^{\alpha}\leq \sum_{i=1}^{n}x_i^{\alpha}$ holds for arbitrary $\alpha\leq 1$ and positive real numbers $x_1,\ldots, x_n$. Starting from the base case $r=s-1$ shown above, for $1\leq r \leq s-2$, it follows inductively that
\eq{
     \exp\big(m_{r} X_{r}(\Sigma^N)\big)
     &\stackref{following_just_zs}{=}\E_{r}\exp\big(m_r X_{r+1}(\Sigma^N)\big)\\
     &\stackrefp{following_just_zs}{=}\E_{r}\Big[\exp\big(m_{r+1}X_{r+1}(\Sigma^N)\big)^{m_{r+1}/m_r}\Big]\\
     &\stackrefp{following_just_zs}{\leq} \E_{r}\bigg[\bigg(\sum_{d\in \DD_N}\exp\Big(m_{r+1} X_{r+1}\big(\Sigma^N(d)\big)\Big)\bigg)^{m_r/m_{r+1}}\bigg]\\
     &\stackrefp{following_just_zs}{\leq} \E_{r}\bigg[\sum_{d\in \DD_N}\exp\Big(m_{r} X_{r+1}\big(\Sigma^N(d)\big)\Big)\bigg]\\
     %&\stackrefp{following_just_zs}{\leq} \E_{r} \bigg(\sum_{d\in \DD_N}\exp\Big(m_{r+1} X_{r+1}\big(\la,\Sigma^N(d)\big)\Big)\bigg)^{m_r/m_{r+1}}\\
     %&\leq \sum_{d\in \DD_N}\E_{r}\exp\Big(m_{r} X_{r+1}\big(\la,\Sigma^N(d)\big)\Big)\\
    &\stackref{following_just_zs}{=}\sum_{d\in \DD_N}\exp\Big(m_rX_{r}\big(\Sigma^N(d)\big)\Big).
}
 With \eqref{gxqng2} established, we use the final $r=1$ case:
 %Using the inequality above for $r=1$, we have that
\eq{
      \PPP^{(1)}_{N,\xi}(\pi,\la; \Sigma^N)
      &\stackref{mnqwe3_new}{=}\frac{1}{N}\E_0 X_1(\Sigma^N)
      \stackref{gxqng2}{=} \frac{1}{m_1 N}\E_0\log\sum_{d\in \DD_N}\exp\Big(m_1 X_1\big(\Sigma^N(d)\big)\Big).
}
By definition of $\DD_N$ in \eqref{DN_def}, each $d=(d_1,\dots,d_\kappa)\in\DD_N$ has $Nd_i\in\{0,1,\dots,N\}$ for each $i$. 
Hence $|\DD_N|\leq(N+1)^\kappa$, and so the previous display yields
\eq{
 \PPP^{(1)}_{N,\xi}(\pi,\la; \Sigma^N)
      &\stackrefp{gxqng2}{\leq}\frac{\ka \log(N+1)}{m_1 N}+\max_{d\in \DD_N}\frac{1}{N}\E_0 X_1\big(\Sigma^N(d)\big)\\
      &\stackref{mnqwe3_new}{=}\frac{\ka \log(N+1)}{m_1 N}+\max_{d\in \DD_N}\PPP^{(1)}_{N,\xi}(\pi,\la; \Sigma^N(d)).
}
 %where in the last follows by \eqref{eq:tech:1}. 
This completes the proof of \eqref{eq:pre:duality}, and so we are done.
\end{proof}

\section{Parisi formula for general model} \label{sec_gen_par_proof}
In this appendix, we prove Theorem~\ref{gen_con_par_thm} by generalizing the strategy used by Panchenko~\cite{panchenko18a,panchenko18b} for Theorem~\ref{thm:Panchenko}.
%Throughout, we fix $d\in \DD$ and $\xi$ that satisfies~\ref{xi_power},~\ref{xi_convex}.
%The proof is similar to the proof of Theorem~\ref{thm:Panchenko} by Panchenko \cite{panchenko18a,panchenko18b}, which is provided for completeness. 
%The goal is to establish matching upper and lower bounds.
The following upper bound uses Guerra-style interpolation~\cite{guerra03} and thus requires the convexity assumption~\ref{xi_convex}.

\begin{proposition}\label{prop:upper:interpolation}
     Assume $\xi$ satisfies~\ref{xi_power} and~\ref{xi_convex}. 
     %For any $d\in \DD$, 
     %There exists a constant $C$ such that for every $N\geq 1$, $d\in\DD$, and $\eps>0$, we have
     For every $d\in\DD$, we have
\eeq{ \label{ldw3er}
  \lim_{\eps\searrow0}\limsup_{N\to\infty}\eff_{N,\xi}(d,\eps)
  \leq \inf_{\pi\in \Pi_d} \PP_{\xi}(\pi).
}
\end{proposition}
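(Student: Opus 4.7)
The plan is a Guerra-type interpolation adapted to the matrix-valued overlap setting, with a Lagrange multiplier introduced to handle the magnetization constraint. First, by the continuity of the Parisi functional (Proposition~\hyperref[prop:continuity_b]{\ref*{prop:continuity}\ref*{prop:continuity_b}}) and the density of $\Pi_d^{\disc}$ in $\Pi_d$, it suffices to establish the bound for an arbitrary discrete path $\pi\in\Pi_d^{\disc}$ of the form \eqref{eq:pi:discrete}, and then take the infimum on the right-hand side of \eqref{ldw3er}.

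Fix such a $\pi$ and let $\nu$ be the RPC associated to its weights $0=m_0<\dots<m_s=1$. Let $(Z_i(\alpha))_{1\le i\le N,\alpha\in\N^{s-1}}$ and $(Y(\alpha))_{\alpha\in\N^{s-1}}$ be the independent Gaussian processes from \eqref{bni}, and fix a Lagrange parameter $\lambda\in\R^\kappa$. For $t\in[0,1]$ define the interpolating Hamiltonian on $\Sigma^N(d,\eps)\times\N^{s-1}$ by
\eq{
\HH_t(\sigma,\alpha)\coloneqq \sqrt{t}\,H_{N,\xi}(\sigma)+\sqrt{t}\,\sqrt{N}\,Y(\alpha)+\sqrt{1-t}\sum_{i=1}^N\iprod{Z_i(\alpha)}{\sigma_i}+\iprod[\big]{\lambda}{\textstyle\sum_i\sigma_i-Nd},
}
and the corresponding free energy
\eq{
\phi(t)\coloneqq \frac{1}{N}\E\log\sum_{\sigma\in\Sigma^N(d,\eps)}\sum_{\alpha\in\N^{s-1}}\nu_\alpha\exp\HH_t(\sigma,\alpha).
}

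The next step is to differentiate $\phi$ and apply Gaussian integration by parts. A direct computation using \eqref{general_cov}, \eqref{bni}, and the covariance of $Z_i$ gives, with $\langle\cdot\rangle_t$ denoting expectation under the obvious Gibbs measure,
\eq{
\phi'(t)=\tfrac{1}{2}\E\big\langle A(\sigma^1,\alpha^1;\sigma^1,\alpha^1)-A(\sigma^1,\alpha^1;\sigma^2,\alpha^2)\big\rangle_t,
}
where $A\big((\sigma,\alpha),(\sigma',\alpha')\big)=\xi(R(\sigma,\sigma'))-\iprod{\nabla\xi(\gamma_{r(\alpha,\alpha')})}{R(\sigma,\sigma')}+\vartheta_\xi(\gamma_{r(\alpha,\alpha')})$. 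The diagonal term $A\big((\sigma,\alpha),(\sigma,\alpha)\big)$ involves $R(\sigma,\sigma)$ and $\gamma_s=\diag(d)$; since $\sigma\in\Sigma^N(d,\eps)$ forces $\|R(\sigma,\sigma)-\diag(d)\|_\infty\le\eps$, a smoothness estimate on $\xi$, $\nabla\xi$, $\vartheta_\xi$ on $\nonpsd$ (available from Remark~\ref{rmk_xi}) shows that this term is $O(\eps)$ uniformly in $\sigma,\alpha$. The off-diagonal term $A\big((\sigma^1,\alpha^1),(\sigma^2,\alpha^2)\big)$ is precisely the Bregman-type quantity $\xi(R)-\xi(\gamma)-\iprod{\nabla\xi(\gamma)}{R-\gamma}$ with $R=R(\sigma^1,\sigma^2)\in\nonpsd$ and $\gamma=\gamma_{r(\alpha^1,\alpha^2)}\in\Gamma_\kappa(d)\subseteq\nonpsd$, which is nonnegative by~\ref{xi_convex}. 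Hence $\phi'(t)\le C\eps$ for some constant $C$ depending only on $\xi$, so that $\phi(1)\le \phi(0)+C\eps$.

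To finish, identify the endpoints. Because the Lagrange term $\iprod{\lambda}{\sum_i\sigma_i-Nd}$ satisfies $|\iprod{\lambda}{\sum_i\sigma_i-Nd}|\le \kappa N\eps\|\lambda\|_\infty$ on $\Sigma^N(d,\eps)$, Lemma~\ref{PPPPsi_lemma} (applied to the $Y$-part of $\HH_1$, which decouples from $\sigma$) yields
\eq{
\phi(1)=\eff_{N,\xi}(d,\eps)-\PPP^{(2)}_\xi(\pi)+O(\eps\|\lambda\|_\infty).
}
Meanwhile, enlarging the sum over $\Sigma^N(d,\eps)$ to the full product set $\Sigma^N$ and invoking Lemma~\ref{lem_parisi_recovered} gives $\phi(0)\le \PPP^{(1)}_\xi(\pi,\lambda)-\iprod{\lambda}{d}$. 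Combining these and letting $N\to\infty$ and then $\eps\searrow 0$, we obtain
\eq{
\lim_{\eps\searrow0}\limsup_{N\to\infty}\eff_{N,\xi}(d,\eps)\le \PPP^{(1)}_\xi(\pi,\lambda)+\PPP^{(2)}_\xi(\pi)-\iprod{\lambda}{d}=\PPP_\xi(\pi,\lambda)-\iprod{\lambda}{d}.
}
Taking the infimum over $\lambda\in\R^\kappa$ produces $\PP_\xi(\pi)$ by \eqref{def:parisi:ftl}, and then the infimum over $\pi\in\Pi_d^\disc$ (with density and continuity handling the passage to $\Pi_d$) yields \eqref{ldw3er}.

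The main technical obstacle I anticipate is the bookkeeping around the self-overlap term: the derivative $\phi'(t)$ only has the correct sign because the discrete path is chosen so that $\gamma_s=\diag(d)$ matches the (approximate) self-overlap on $\Sigma^N(d,\eps)$, and because convexity of $\xi$ is guaranteed on all of $\nonpsd$ (rather than merely $\Gamma_\kappa$). Care is needed to verify that every overlap matrix $R(\sigma^1,\sigma^2)$ appearing in the off-diagonal Bregman term lies in $\nonpsd$ so that \ref{xi_convex} applies, and that the $O(\eps)$ diagonal error does not blow up uniformly in $\lambda$ before the infimum over $\lambda$ is taken.
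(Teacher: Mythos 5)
Your proposal is correct and follows essentially the same Guerra interpolation as the paper's proof in Appendix~F: reduction to discrete paths, the same interpolating Hamiltonian built from the RPC processes of \eqref{bni}, the same Bregman-divergence sign argument using convexity on $\nonpsd$ together with the $O(\eps)$ diagonal error, and the same endpoint identifications followed by the infimum over $\lambda$. The only difference is cosmetic: you carry the Lagrange term $\iprod{\lambda}{\sum_i\sigma_i - Nd}$ in the Hamiltonian for all $t$ (so the $O(\eps\|\lambda\|_1)$ error surfaces at $t=1$), whereas the paper adds and subtracts it only in the $t=0$ endpoint; both are handled identically by fixing $\lambda$, taking $N\to\infty$ and $\eps\searrow0$, and only then optimizing over $\lambda$.
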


Meanwhile, the lower bound does not require any convexity assumption.
The following result follows from the Aizenman--Sims--Starr scheme as 
in \cite[Sec.~7]{panchenko18a}.

\begin{proposition} \label{prop_general_lower_bound}
Assume $\xi$ satisfies~\ref{xi_power}.
Given $d=(d_k)_{k=1}^\kappa\in\DD$ and any constant $L$, assume $d^N\in\DD_N$ is such that 
\eeq{ \label{fw424}
\|d^N-d\|_{\infty}\leq L/N \quad \text{and} \quad \text{$d^N_k=0$ whenever $d_k=0$}.
}
Then
\eeq{ \label{eq_prop_general_lower_bound}
\liminf_{N\to\infty} \eff_{N,\xi}(d^N) \geq \inf_{\pi\in\Pi_d}\PP_\xi(\pi).
}
\end{proposition}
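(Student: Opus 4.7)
The plan is to follow the lower-bound approach of \cite[Sec.~7]{panchenko18a} adapted to the general covariance $\xi$. Since symmetry is not assumed here, the Ghirlanda--Guerra identities will be obtained through the classical perturbation mechanism of \cite{ghirlanda-guerra98, panchenko13a}, rather than through the differentiability route used in Proposition~\ref{prop:symmetric:parisi}.

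First I would apply the elementary fact $\liminf_N a_N/N \geq M^{-1}\liminf_N(a_{N+M}-a_N)$ to $a_N = \E\log\zee_{N,\xi}(d^N)$. To feed the ratio $\zee_{N+M,\xi}(d^{N+M})/\zee_{N,\xi}(d^N)$ into Proposition~\ref{ass_prop}, I would set $\delta^{M,N} \coloneqq ((N+M)d^{N+M}-Nd^N)/M$, which automatically satisfies \eqref{perfect_cav}. A short check using the hypotheses \eqref{fw424} shows that $\delta^{M,N}\in\DD_M$ for all sufficiently large $M$ (the entries indexed by $k$ with $d_k=0$ vanish because $d^N_k=d^{N+M}_k=0$, while those with $d_k>0$ are nonnegative once $M$ dominates the $O(L)$ fluctuations of $Nd^N$ and $(N+M)d^{N+M}$ around $Nd$ and $(N+M)d$). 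Since $\DD_M$ is finite, I would pass to a subsequence in $N$ along which $\delta^{M,N}\equiv\delta^M\in\DD_M$, with $\|\delta^M-d\|_\infty=O(L/M)$. After absorbing the $O(M^2/(N+M))$ cavity error into the liminf, Proposition~\ref{ass_prop} yields
\[
\liminf_{N\to\infty} \eff_{N,\xi}(d^N) \;\geq\; \liminf_{N\to\infty}\Psi_{M,\xi}\!\big(\LL_{N,M};\, d^N,\, \Sigma^M(\delta^M)\big),
\]
where $\LL_{N,M} = \Law(\RR;\E(G_{N,M,\xi}^{\otimes\infty}))$.

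Next I would perturb $H_{N,\xi}$ by lower-order terms $N^{-s}\sum_{\theta\in\Theta_\Q}u_\theta H_{N,\theta}(\sigma)$ with $s\in(0,1/2)$ and $u_\theta$ ranging over a compact box, following \cite[Sec.~3]{panchenko18a} or \cite[Ch.~3]{panchenko13a}. Averaging over the coefficients preserves the limiting free energy (by Proposition~\hyperref[prop:continuity_a]{\ref*{prop:continuity}\ref*{prop:continuity_a}} applied to the perturbed covariance) but forces any subsequential weak limit $\LL_M$ of $\LL_{N,M}$ to satisfy the $\kappa$-dimensional Ghirlanda--Guerra identities of Definition~\ref{def:GG}. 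By Lemma~\ref{lem_gramtype} $\LL_M$ is Gram--de Finetti, and since $G_{N,M,\xi}$ is supported on $\Sigma^N(d^N)$ with $d^N\to d$, we have $\RR_{\ell,\ell}=\diag(d)$ almost surely under $\LL_M$. Theorem~\ref{sync_thm} then yields a Lipschitz map $\Phi_M\colon[0,1]\to\Gamma_\kappa(d)$ such that $\RR_{\ell,\ell'}=\Phi_M(\tr(\RR_{\ell,\ell'}))$ almost surely; combined with the law $\mu_M$ of $\tr(\RR_{1,2})$, this produces a path $\pi_M = \Phi_M\circ Q_{\mu_M}\in\Pi_d$. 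Approximating $\pi_M$ by discrete paths in $\Pi_d^\disc$ and using Corollary~\ref{extension_cor}, Lemma~\ref{PPPPsi_lemma}, and Lemma~\ref{pconlem}, we obtain
\[
\liminf_{N\to\infty}\Psi_{M,\xi}(\LL_{N,M}; d^N, \Sigma^M(\delta^M)) \;\geq\; \PPP^{(1)}_{M,\xi}(\pi_M,0;\Sigma^M(\delta^M))+\PPP^{(2)}_\xi(\pi_M).
\]

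Finally, I would send $M\to\infty$. Since each $\mu_M$ is a probability measure on $[0,1]$, passing to a further subsequence gives $\mu_M\to\mu$ weakly, and the paths $\pi_M$ converge in the $L^1$ norm \eqref{norm} to some $\pi\in\Pi_d$. Because $\delta^M\to d$, Lemma~\ref{pr456} gives
\[
\lim_{M\to\infty}\PPP^{(1)}_{M,\xi}(\pi_M,0;\Sigma^M(\delta^M)) = \inf_{\lambda\in\R^\kappa}\big[\PPP^{(1)}_\xi(\pi,\lambda)-\iprod{\lambda}{d}\big],
\]
while Lemma~\ref{pconlem} yields $\PPP^{(2)}_\xi(\pi_M)\to\PPP^{(2)}_\xi(\pi)$. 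Combining with \eqref{def:parisi:ftl} produces $\liminf_N\eff_{N,\xi}(d^N)\geq \PP_\xi(\pi)\geq\inf_{\pi'\in\Pi_d}\PP_\xi(\pi')$, establishing \eqref{eq_prop_general_lower_bound}. The central technical obstacle is the design of the perturbation: it must be low enough order in $N$ to preserve the free energy asymptotics and the synchronization structure, yet rich enough that averaging over its parameters forces the full Ghirlanda--Guerra family of Definition~\ref{def:GG} on any subsequential limit of the overlap arrays.
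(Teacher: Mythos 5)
Your proposal is correct and is essentially the argument the paper intends: the paper omits this proof precisely because it "proceeds in exactly the same way" as \cite[Sec.~7]{panchenko18a}, namely the A.S.S.-plus-perturbation route you reconstruct (cavity step with $\delta^{M}$ chosen to satisfy \eqref{perfect_cav}, Ghirlanda--Guerra identities via averaged lower-order perturbations, synchronization, then the duality of Lemma~\ref{pr456} as $M\to\infty$). The one step you elide is that concluding $L^1$ convergence of $\pi_M=\Phi_M\circ Q_{\mu_M}$ from weak convergence of $\mu_M$ also requires extracting a uniformly convergent subsequence of the synchronization maps $\Phi_M$, which is immediate from the $M$-independent Lipschitz bound in Theorem~\ref{sync_thm} but does not follow from the convergence of $\mu_M$ alone.
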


Although \cite{panchenko18a} considers only the case $\xi(R)=\tr(R^{\sT}R)$, the argument for Proposition~\ref{prop_general_lower_bound} proceeds in exactly the same way, and thus we omit the proof. 
Proposition~\ref{prop:upper:interpolation}, however, is inherently more sensitive to the covariance function $\xi$, and so we do provide its proof.

%In Section~\ref{subsec:general:upper}, we prove the upper bound of the free energy in Theorem~\ref{gen_con_par_thm} by Guerra's interpolation~\cite{guerra03}. 
%In Section~\ref{subsec:general:lower}, we prove the lower bound in Theorem~\ref{gen_con_par_thm} by perturbing the Hamiltonian in the A.S.S. scheme so that Ghirlanda--Guerra identities hold in the limit.

% \subsection{Upper bound}
% \label{subsec:general:upper}
% In this section, we prove the following proposition. 
% \begin{proposition}\label{prop:upper:interpolation}
%      Assume $\xi$ satisfies~\ref{xi_power} and~\ref{xi_convex}. For a fixed $d\in \DD$, there exists a constant $C>0$ that does not depend on $\eps$ nor $N$ such that for every $\eps>0$ and $N\geq 1$,
%   \begin{equation*}
%   \eff_{N,\xi}(d,\eps)\leq \inf_{\pi\in \Pi_d} \PP_{\xi}(\pi)+C\eps.
%   \end{equation*}
% \end{proposition}

\begin{proof}[Proof of Proposition~\ref{prop:upper:interpolation}]
Since $\Pi_{d}^{\disc}$ is a dense subset of $\Pi_d$ with respect to the $L^1$ norm \eqref{norm}, and $\pi \mapsto \PPP_{\xi}(\pi)$ is continuous by Proposition~\hyperref[prop:continuity_b]{\ref*{prop:continuity}\ref*{prop:continuity_b}}, we have $\inf_{\pi\in \Pi_d} \PP_{\xi}(\pi)=\inf_{\pi\in \Pi_d^{\disc}}\PP_{\xi}(\pi)$. 
Therefore, to establish \eqref{ldw3er}, it suffices to prove that for any path $\pi$ of the form \eqref{eq:pi:discrete}, we have
\eeq{ \label{hd234o}
  \lim_{\eps\searrow0}\limsup_{N\to\infty}\eff_{N,\xi}(d,\eps)
  \leq \PP_{\xi}(\pi).
}
%for the proof of Proposition~\ref{prop:upper:interpolation}. To this end, let a discrete path $\pi\in \Pi_d^{\disc}$ be encoded by $m\equiv (m_{r})_{0\leq r\leq s}$ in \eqref{eq:m} and $\gamma\equiv (\gamma_r)_{0\leq r \leq s}$ in \eqref{eq:gamma:discrete}.
%
% As before, we let $(\nu_{\alpha})_{\alpha\in \N^{s-1}}$ be the weights of Ruelle Probability Cascades associated with $m$. Recall that the sequences $\big(\nabla \xi(\gamma_r)\big)_{0\leq r\leq s}$ and $\big(\vartheta_\xi(\gamma_r)\big)_{0\leq r\leq s}$ are increasing by our assumption~\ref{xi_power} and Proposition~\ref{prop:xi:theta}~\ref{prop:xi:theta_b}. It follows that there exist centered Gaussian processes $(Z_{\alpha})_{\alpha\in \N^{s-1}}\in \R^{\ka}$ and $(Y_{\alpha})_{\alpha\in \N^{s-1}}\in \R$ with covariances
% \begin{equation}\label{eq:cov:Z:Y}
%         \E Z_{\alpha_1}Z_{\alpha_2}^{\sT}=\nabla\xi(\gamma_{r(\alpha_1,\alpha_2)}),\;\;\;\;\E Y_{\alpha_1}Y_{\alpha_2}=\vartheta_\xi(\gamma_{r(\alpha_1,\alpha_2)}),\;\;\;\;\alpha_1,\alpha_2\in \N^{s-1},
% \end{equation}
% where we recall that $r(\alpha_1,\alpha_2)$ is defined as  the smallest depth at which their ancetral paths disagree \eqref{overlap_notation}. For $1\leq i\leq N$, let $(Z_{i}(\alpha))_{\alpha \in \N^{s-1}}$ denote i.i.d. copies of $(Z_{\alpha})_{\alpha\in \N^{s-1}}$, which are independent of $(Y_{\alpha})_{\alpha\in \N^{s-1}}$. 
To this end, let $Z_1,\dots,Z_N\colon\N^{s-1}\to\R^\kappa$ and $Y\colon\N^{s-1}\to\R$ be independent centered Gaussian processes with covariances given by \eqref{bni}, and
define the following process
%interpolating Hamiltonian 
on $\N^{s-1}\times\Sigma^N$:
\eeq{ \label{25bx1}
\HH_t(\alpha,\sigma)=\sqrt{t}\, H_{N,\xi}(\sigma)+\sqrt{1-t}\sum_{i=1}^{N}\iprod[\big]{Z_{i}(\alpha)}{\sigma_i} +\sqrt{t}\sqrt{N}\,Y(\alpha), \quad t\in[0,1].
}
We assume that $H_{N,\xi}$ is independent of $Z_1,\dots,Z_N,Y$, and that all of these Gaussian processes are independent of the RPC weights $(\nu_{\alpha})_{\alpha\in \N^{s-1}}$ associated with \eqref{eq:m}.
Given $\eps>0$, consider the associated constrained free energy 
\begin{equation}\label{eq:interpolating:free:energy}
    \phi_N(t)=\frac{1}{N}\E \log \sum_{\alpha\in \N^{s-1}}\sum_{\sigma\in \Sigma^{N}(d,\eps)}\nu_{\alpha} \exp \HH_t(\alpha,\sigma).
\end{equation}
%The core of the proof for Proposition~\ref{prop:upper:interpolation} lies in the subsequent lemma.

\begin{claim}\label{lem:monotone}
For any $N\geq 1$ and $t\in (0,1)$, the derivative of $\phi_N$ satisfies $\phi_N^\prime(t)\leq C\eps$ for some constant $C$ not depending $N$, $d$, or $\eps$.
\end{claim}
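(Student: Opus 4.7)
The plan is a standard Guerra-style interpolation bound: differentiate $\phi_N$ in $t$, simplify via Gaussian integration by parts, and then exploit convexity assumption~\ref{xi_convex} of $\xi$ on $\nonpsd$. First I would write
\[
\phi_N'(t) = \frac{1}{N}\E\Big\langle\frac{\partial \HH_t(\alpha,\sigma)}{\partial t}\Big\rangle_t,
\]
where $\langle\cdot\rangle_t$ denotes expectation under the Gibbs measure $G_t(\alpha,\sigma)\propto\nu_\alpha\exp\HH_t(\alpha,\sigma)$ on $\N^{s-1}\times\Sigma^N(d,\eps)$, and then apply Gaussian integration by parts \cite[Lem.~1.1]{panchenko13a} to reach
\[
\phi_N'(t) = \frac{1}{N}\E\big\langle\CC\big((\alpha^1,\sigma^1),(\alpha^1,\sigma^1)\big)-\CC\big((\alpha^1,\sigma^1),(\alpha^2,\sigma^2)\big)\big\rangle_t,
\]
where $\CC((\alpha,\sigma),(\alpha',\sigma')) \coloneqq \E[\partial_t\HH_t(\alpha,\sigma)\cdot \HH_t(\alpha',\sigma')]$ and $(\alpha^1,\sigma^1),(\alpha^2,\sigma^2)$ are independent samples from $G_t$.

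Next I would compute $\CC$ directly from the covariances in \eqref{general_cov} and \eqref{bni}. The three Gaussian contributions to $\HH_t$ are independent, and the $\sqrt{t}$ and $\sqrt{1-t}$ factors cancel neatly, so after applying the identity $\vartheta_\xi(\gamma)=\iprod{\gamma}{\nabla\xi(\gamma)}-\xi(\gamma)$ from \eqref{vthet_def}, the routine calculation should yield
\[
\CC\big((\alpha,\sigma),(\alpha',\sigma')\big) = \frac{N}{2}\big[\xi(R)-\xi(\gamma)-\iprod{\nabla\xi(\gamma)}{R-\gamma}\big],
\]
with $R\coloneqq R(\sigma,\sigma')$ and $\gamma\coloneqq\gamma_{r(\alpha,\alpha')}$. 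Both $R$ and $\gamma$ lie in $\nonpsd$: the former since any $\sigma,\sigma'\in\Sigma^N$ has $R(\sigma,\sigma')\in\nonpsd$, and the latter since $\gamma\in\Gamma_\kappa(d)$ forces $\|\gamma\|_1=\sum_k d_k=1$.

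The final step is to bound the two resulting contributions to $\phi_N'(t)$ separately. For the off-diagonal term, convexity~\ref{xi_convex} on the convex set $\nonpsd$ gives $\xi(R)\geq\xi(\gamma)+\iprod{\nabla\xi(\gamma)}{R-\gamma}$, hence $\CC((\alpha^1,\sigma^1),(\alpha^2,\sigma^2))\geq 0$. For the diagonal term, $\gamma=\gamma_s=\diag(d)$ by \eqref{eq:gamma:discrete} while $R=R(\sigma^1,\sigma^1)=\diag(d^\sigma)$ for some $d^\sigma\in\DD_N$ satisfying $\|d^\sigma-d\|_\infty\leq\eps$, since $\sigma^1\in\Sigma^N(d,\eps)$. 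The smoothness of $\xi$ on $\nonpsd$ from Remark~\ref{rmk_xi} then yields $|\CC((\alpha^1,\sigma^1),(\alpha^1,\sigma^1))|\leq CN\eps$ with $C$ controlled by $\kappa$ and $\sup_{\|R\|_1\leq 1}\|\nabla\xi(R)\|_\infty$, and in particular independent of $N,d,\eps$. Dividing by $N$ gives the desired $\phi_N'(t)\leq C\eps$. The argument presents no real obstacle; the one indispensable ingredient is convexity~\ref{xi_convex}, which is precisely why Proposition~\ref{prop:upper:interpolation} relies on it.
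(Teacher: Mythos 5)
Your proposal is correct and follows essentially the same route as the paper: differentiate, apply Gaussian integration by parts, recognize the integrand as the first-order convexity gap $\xi(R)-\xi(\gamma)-\iprod{\nabla\xi(\gamma)}{R-\gamma}$, discard the off-diagonal term by convexity~\ref{xi_convex}, and bound the diagonal term by $O(\eps)$ using $\|R(\sigma^1,\sigma^1)-\diag(d)\|_1\leq\kappa\eps$ on $\Sigma^N(d,\eps)$. The only differences are cosmetic (whether the $1/N$ is absorbed into $\CC$, and the exact constant).
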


\begin{proofclaim}
    Denote by $\langle \cdot \rangle_t$ the average with respect to the following probability measure on $\N^{s-1}\times \Sigma^{N}(d,\eps)$:
    \begin{equation*}
    G_{t}(\alpha,\sigma)\propto \nu_{\alpha}\exp \HH_t(\alpha,\sigma).
    \end{equation*}
Differentiation of \eqref{eq:interpolating:free:energy} results in
\eq{ 
    \phi_N^\prime(t)
    =\frac{1}{N}\Big\langle\frac{\partial\HH_t(\alpha,\sigma)}{\partial t}\Big\rangle_{t}.
}
    Applying Gaussian integration by parts \cite[Lem.~1.1]{panchenko13a}, we rewrite this as
\eeq{ \label{eq:calculate:derivative}
    \phi_N'(t)
    =\E \Big\langle \CC\big((\alpha^1,\sigma^1),(\alpha^1,\sigma^1)\big)-\CC\big((\alpha^1,\sigma^1),(\alpha^2,\sigma^2)\big)\Big\rangle_t,
}
where $(\alpha^1,\sigma^1),(\alpha^2,\sigma^2)$ denote independent samples from $G_t$, and $\CC\colon(\N^{s-1}\times \Sigma^{N}(d,\eps))^2\to\R$ is defined by
    \begin{equation*}
    \CC\big((\alpha,\sigma), (\alpha',\sigma')\big)
    =\frac{1}{N}\E\Big[\frac{\partial\HH_t(\alpha,\sigma)}{\partial t}\HH_t(\alpha',\sigma')
\Big].
    \end{equation*}
    To compute this expectation, recall that the three Gaussian processes on the right-hand side of \eqref{25bx1} are independent, centered, and have covariances given by \eqref{general_cov} and \eqref{bni}.
    Therefore,
    %Note that by a direct calculation, we can express the right hand side by
\eq{
    \CC\big((\alpha^1,\sigma^1), (\alpha^2,\sigma^2)\big)
    &\stackrefp{vthet_def}{=}\frac{1}{2}\Big(\xi(\RR_{1,2})-\iprod[\big]{\nabla \xi(\gamma_{r(\alpha^1,\alpha^2)})}{\RR_{1,2}} +\vartheta_\xi(\gamma_{r(\alpha^1,\alpha^2)})\Big)\\
    &\stackref{vthet_def}{=}\frac{1}{2}\Big(\xi(\RR_{1,2})-\xi(\gamma_{r(\alpha^1,\alpha^2)})-\iprod[\big]{\nabla \xi(\gamma_{r(\alpha^1,\alpha^2)})}{\RR_{1,2}-\gamma_{r(\alpha^1,\alpha^2)}} \Big),
}
    where $\RR_{1,2}=R(\sigma^1,\sigma^2)$ as in \eqref{def:overlap}. 
    By the convexity assumption~\ref{xi_convex}, 
    %$\xi$ is convex on $\RRR_{\ka}\equiv \{R\in \R^{\ka\times \ka}:\|R\|_1\leq 1\}$ . 
    we have
    \eeq{ \label{hxw42}
\CC\big((\alpha^1,\sigma^1), (\alpha^2,\sigma^2)\big)\geq 0.
}
    In the special case $(\alpha^1,\sigma^1)=(\alpha^2,\sigma^2)$, we have $\gamma_{r(\alpha^1,\alpha^1)}=\gamma_{s}=\diag(d)$ by \eqref{eq:gamma:discrete}, and
    $\|\RR_{1,1}-\diag(d)\|_{1}\leq \kappa\eps$
     by definition of $\Sigma^{N}(d,\eps)$ in~\eqref{Sigma_N_redef}.
    It follows that
\eeq{ \label{cg72kf}
        \CC\big((\alpha^1,\sigma^1), (\alpha^1,\sigma^1)\big)\leq \frac{1}{2}\sup_{\|R\|_1\leq1}\|\nabla\xi(R)\|_\infty\cdot\kappa\eps.
}
    Combining \eqref{eq:calculate:derivative}, \eqref{hxw42}, and \eqref{cg72kf} concludes the proof.
\end{proofclaim}

% Having Lemma~\ref{lem:monotone} in hand, we now prove Proposition~\ref{prop:upper:interpolation}.
%\begin{proof}[Proof of Proposition~\ref{prop:upper:interpolation}]
Claim~\ref{lem:monotone} implies
\eeq{ \label{5d6f1m}
\phi_N(1) \leq \phi_N(0)+C\eps.
}
     %Consider the interpolating free energy $\phi_N(t)$ defined in \eqref{eq:interpolating:free:energy}. 
     When $t=1$, the $\alpha$ and $\sigma$ terms in \eqref{eq:interpolating:free:energy} fully decouple, resulting in
\eeq{ \label{5d6f2m}
         \phi_N(1)= \eff_{N,\xi}(d,\eps)+\frac{1}{N}\E\log \sum_{\alpha\in \N^{s-1}}\nu_{\alpha}\exp\big(\sqrt{N}\, Y_{\alpha}\big)
         \stackref{xf573}{=}\eff_{N,\xi}(d,\eps)-\PPP_{\xi}^{(2)}(\pi). 
}
     % Since the Gaussian process $(Y_{\alpha})_{\alpha\in \N^{s-1}}$ has covariance $\E Y_{\alpha_1} Y_{\alpha_2}^{\sT}$, it follows from the standad properties of Ruelle Probability Cascades (see Theorem~\ref{thm:RPC} and also \cite[Section 2.3]{panchenko13a}) that
     % \begin{equation*}
     %     \frac{1}{N}\E\log \sum_{\alpha\in \N^{s-1}}\nu_{\alpha}\exp\big(\sqrt{N}Y_{\alpha}\big)=\frac{1}{2}\sum_{r=1}^{s-1}m_r\big(\vartheta_\xi(\gamma_{r+1})-\vartheta_\xi(\gamma_r)\big).
     % \end{equation*}
     On the other hand, evaluating \eqref{eq:interpolating:free:energy} at $t=0$ yields
\eeq{ \label{hs4w2m}
     \phi_N(0)
     =\frac{1}{N}\E\log \sum_{\alpha\in \N^{s-1}}\sum_{\sigma\in \Sigma^{N}(d,\eps)}\nu_{\alpha}\exp\Big(\sum_{i=1}^N\iprod[\big]{Z_{i}(\alpha)}{\sigma_i}\Big).
}
     For any $\sigma\in\Sigma^N(d,\eps)$ and $\lambda\in\R^\kappa$, we have
     \eeq{ \label{hs4w2n}
\frac{1}{N}\sum_{i=1}^N\iprod{\lambda}{\sigma_i}
= \iprod{\lambda}{d} + \iprod[\Big]{\lambda}{\frac{1}{N}\sum_{i=1}^N\sigma_i - d}
\stackref{eq:def:Sigma:N:d:eps}{\geq} \iprod{\lambda}{d} - \eps\|\lambda\|_1.
}
     %Since the set $\Sigma^N(d,\eps)$ can not be decomposed of products, Theorem~\ref{thm:RPC} does not immediately apply here. 
     %Thus, in the exponent above,  
     Now add and subtract $\iprod{\lambda}{\sigma_i}$ within the exponent appearing in \eqref{hs4w2m}.
     Using \eqref{hs4w2n} and the trivial containment $\Sigma^N(d,\eps)\subseteq\Sigma^N$, we deduce that
     %largrange multpliers $\sum_{i=1}^{N}\big\langle \la, \sigma_i \big\rangle$, which equals $\langle \la, d\rangle$ for $\sigma\in \Sigma^{N}(d,\eps)$ up to an error $C\eps$. We then have that 
\eeq{ \label{5d6f3m}
         \phi_N(0)
         &\stackrefp{pre_par_a}{\leq} \frac{1}{N}\E\log \sum_{\alpha\in \N^{s-1}}\sum_{\sigma\in \Sigma^{N}}\nu_{\alpha}\exp\Big(\sum_{i=1}^N\iprod[\big]{Z_{i}(\alpha)+\la }{\sigma_i}\Big)-\iprod{\la}{d} + \eps\|\lambda\|_1\\
         &\stackref{pre_par_a}{=} \PPP_{N,\xi}^{(1)}(\pi,\la;\Sigma^N)-\iprod{\la}{d} + \eps\|\lambda\|_1
         \stackref{qk38}{=}\PPP_{\xi}^{(1)}(\pi,\la)-\iprod{\la}{d} + \eps\|\lambda\|_1.
}
     %where we recall the definition of $\Psi_{\xi}(\pi,\la)$ in \eqref{eq:def:Psi} and the last equality is again due to the standard property of RPC (cf. Theorem~\ref{thm:RPC}). Meawhile, by Lemma~\ref{lem:monotone}, we have $\phi_N(1)\leq \phi_N(0)+C\eps$. Therefore, it follows that for $\pi\in \Pi_d^{\disc}$, we have
     Inserting \eqref{5d6f2m} and \eqref{5d6f3m} into \eqref{5d6f1m}, we arrive to
 \eq{
         \eff_{N,\xi}(d,\eps)
         &\stackrefp{eq:Parisi:ftl:lagrange}{\leq} \PPP_{\xi}^{(1)}(\pi,\la)+\PPP_{\xi}^{(2)}(\pi) -\iprod{\lambda}{d}+(C+\|\lambda\|_1)\eps \\
         &\stackref{eq:Parisi:ftl:lagrange}{=}\PPP_\xi(\pi,\lambda)-\iprod{\lambda}{d}+(C+\|\lambda\|_1)\eps.
}
As this inequality holds for any $N$, we conclude
\eeq{ \label{wef4tw}
         \lim_{\eps\searrow0}\limsup_{N\to\infty}\eff_{N,\xi}(d,\eps)
         \leq \PPP_{\xi}(\pi,\lambda)-\iprod{\lambda}{d}.
}
    Finally, recall that $\PP_\xi(\pi) = \inf_{\lambda\in\R^\kappa}[\PPP_\xi(\pi,\lambda)-\iprod{\lambda}{d}]$, and so \eqref{wef4tw} implies \eqref{hd234o}.
\end{proof}

%\subsection{Lower bound}
%\label{subsec:general:lower}
%In this section, we finish the proof of Theorem~\ref{gen_con_par_thm}. 
% The following result follows from \cite[Sec.~7]{panchenko18a} by Panchenko.

% \begin{proposition} \label{prop_general_lower_bound}
% Assume $\xi$ satisfies~\ref{xi_power}.
% Given $d=(d_k)_{k=1}^\kappa\in\DD$ and any constant $L$, assume $d^N\in\DD_N$ is such that $\|d^N-d\|_{\infty}\leq L/N$, and $d^N_k=0$ whenever $d_k=0$. 
% Then
% \eeq{ \label{eq_prop_general_lower_bound}
% \liminf_{N\to\infty} \eff_{N,\xi}(d^N) \geq \inf_{\pi\in\Pi_d}\PP_\xi(\pi).
% }
% \end{proposition}
% We remark that although \cite{panchenko18a} only considered the case where $\xi(R)=\tr(R^{\sT}R)$, the proof of Proposition~\ref{prop_general_lower_bound} follows by exactly the same argument, thus we omit the proof. 
We are almost ready to prove Theorem~\ref{gen_con_par_thm}.
One technical detail that needs to be resolved is relaxing \eqref{fw424} to the weaker condition $d^N\to d$.
%We only need to mention how the conditions in Proposition~\ref{prop_general_lower_bound} that $\|d^{N}-d\|_{\infty}=O(N^{-1})$ and $d^N_k=0$ whenever $d_{k}=0$ can be relaxed to the condition $d^N\to d$ as $N\to\infty$. 
The crucial lemma is the following analogue of Lemma~\ref{lem:PP:1:eps:approx} for free energy.
\begin{lemma}\label{lem:free:energy:eps:approx}
Assume $\xi$ satisfies~\ref{xi_power}.
There exists a constant $C$ depending only on $\xi$ and $\ka$, such that for every $N\geq 1$ and $\eps>0$, we have
\eeq{ \label{cbw24}
        \sup_{d\in \DD_N}\big|\eff_{N,\xi}(d,\eps)-\eff_{N,\xi}(d)\big|\leq C\sqrt{\eps}.
}
\end{lemma}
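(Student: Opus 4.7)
The plan is to mirror the proof of Lemma~\ref{lem:PP:1:eps:approx}, but applied to the free energy $\eff_{N,\xi}$ rather than to the prelimit Parisi functional $\PPP_{N,\xi}^{(1)}$. The containment $\Sigma^N(d)\subseteq\Sigma^N(d,\eps)$ yields $\eff_{N,\xi}(d,\eps)\geq\eff_{N,\xi}(d)$ for free, so all the work lies in establishing the matching upper bound $\eff_{N,\xi}(d,\eps)\leq\eff_{N,\xi}(d)+C\sqrt\eps$.

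To this end, I will reuse the Hamming-nearest projection $P\colon\Sigma^N(d,\eps)\to\Sigma^N(d)$ constructed in Lemma~\ref{lem:PP:1:eps:approx}, together with its two key properties: each $\sigma$ differs from $P\sigma$ in at most $\eps N$ columns, and the preimage sizes $\NN(\sigma)$ defined as in~\eqref{eq:def:NN} satisfy $N^{-1}\log\NN(\sigma)\leq C_\kappa\sqrt\eps$. Introducing an independent copy $H'_{N,\xi}$ of $H_{N,\xi}$, I will then consider the Guerra-type interpolation
\[
\HH_t(\sigma)=\sqrt{t}\,H_{N,\xi}(\sigma)+\sqrt{1-t}\,H'_{N,\xi}(P\sigma),\qquad t\in[0,1],
\]
and study $\phi(t)=N^{-1}\E\log\sum_{\sigma\in\Sigma^N(d,\eps)}\exp\HH_t(\sigma)$, which satisfies $\phi(1)=\eff_{N,\xi}(d,\eps)$.

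The $t=0$ endpoint is easy: grouping configurations by their projection gives $\sum_{\sigma\in\Sigma^N(d,\eps)}\exp H'_{N,\xi}(P\sigma)=\sum_{\sigma'\in\Sigma^N(d)}\NN(\sigma')\exp H'_{N,\xi}(\sigma')$, and the preimage bound yields $\phi(0)\leq\eff_{N,\xi}(d)+C_\kappa\sqrt\eps$. For the derivative, Gaussian integration by parts \cite[Lem.~1.1]{panchenko13a} produces $\phi'(t)=\E\langle \CC(\sigma^1,\sigma^1)-\CC(\sigma^1,\sigma^2)\rangle_t$ with $\CC(\sigma,\sigma')=\tfrac12[\xi(R(\sigma,\sigma'))-\xi(R(P\sigma,P\sigma'))]$. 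Since flipping a single column alters each entry of $R(\cdot,\cdot)$ by at most $1/N$, the Hamming bound on $P$ gives $\|R(\sigma,\sigma')-R(P\sigma,P\sigma')\|_1\leq 4\eps$, and the smoothness of $\xi$ from Remark~\ref{rmk_xi} then yields $|\CC|\leq C\eps$, hence $|\phi(1)-\phi(0)|\leq C\eps$. Combining the two bounds produces $\eff_{N,\xi}(d,\eps)\leq\eff_{N,\xi}(d)+(C+C_\kappa)\sqrt\eps$.

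I do not anticipate a serious obstacle; the argument is in fact simpler than the one for Lemma~\ref{lem:PP:1:eps:approx}, since there is no RPC cascade to track and the convexity assumption~\ref{xi_convex} plays no role. The only point requiring brief verification is the ``diagonal'' contribution $\CC(\sigma,\sigma)$, which is also $O(\eps)$ because the self-overlap of any $\sigma\in\Sigma^N(d,\eps)$ lies within $\eps$ of $\diag(d)=R(P\sigma,P\sigma)$, so the same $\ell^1$ estimate applies.
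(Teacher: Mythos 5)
Your proposal is correct and follows essentially the same route as the paper: the same Hamming projection $P$, the same interpolation $\sqrt{t}\,H_{N,\xi}(\sigma)+\sqrt{1-t}\,\wt H_{N,\xi}(P\sigma)$, the same entropy bound $N^{-1}\log\NN(\sigma)\leq C_\kappa\sqrt\eps$ at $t=0$, and the same $O(\eps)$ derivative bound via Gaussian integration by parts and $\|R(\sigma,\sigma')-R(P\sigma,P\sigma')\|_1\leq4\eps$. The only cosmetic difference is that you dispose of the lower bound by monotonicity of the partition function, whereas the paper extracts both inequalities from the two-sided estimate on $\phi(0)$; this changes nothing of substance.
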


\begin{proof}
    As in the proof of Lemma~\ref{lem:PP:1:eps:approx}, assume without loss of generality that $\eps\in[1/N,1/2]$, and consider any map $P\colon\Sigma^N(d,\eps)\to \Sigma^N(d)$ that is a projection with respect to  Hamming distance. %, and denote by $P\sigma \equiv \big((P\sigma)_i\big)_{1\leq i\leq N}$. Then, 
    %As shown in~\ref{eq:projection:guarantee}, for any $\sigma\in \Sigma^N(d,\eps)$, $N^{-1}\sum_{i=1}^N\one(\sigma_i\neq (P\sigma)_i)\leq C\eps$ holds for some constant $C\equiv C_{\ka}>0$. 
    Let $\wt H_{N,\xi}$ be an independent copy of $H_{N,\xi}$, and then define an interpolating Hamiltonian on $\Sigma^N(d,\eps)$:
    \eq{
\HH_t(\sigma)=\sqrt{t}\, H_{N,\xi}(\sigma)+\sqrt{1-t}\, \wt{H}_{N,\xi}(P\sigma), \quad t\in[0,1].
}
    Define the associated free energy
\eeq{ \label{wrg23}
        \phi(t)=\frac{1}{N}\E\log \sum_{\sigma\in \Sigma^N(d,\eps)} \exp \HH_t(\sigma),
}
    so that
     %Here, $\wt{H}_{N,\xi}$ is an independent copy of $H_{N,\xi}$. 
     %Then, we have that
     \begin{subequations} \label{rh3wv1}
    \begin{equation}\label{eq:varphi:endpoint}
    \phi(1)=\eff_{N,\xi}(d,\eps) \quad \text{and} \quad
    \phi(0)=\frac{1}{N}\E \log \sum_{\sigma\in \Sigma^N(d)}\NN(\sigma)\exp\wt H_{N,\xi}(\sigma),
    \end{equation}
    where $\NN(\sigma)$ is defined in~\eqref{eq:def:NN}. 
    %Thus, applying the same argument in~\eqref{eq:phi:zero:approx}, we have for some constant $C\equiv C_{\ka}>0$ that
    Since every $\sigma\in\Sigma^N(d)$ satisfies $P(\sigma)=\sigma$, we have $\NN(\sigma)\geq1$.
    Also $\NN(\sigma)\leq\exp(C_\kappa\sqrt{\eps}N)$ by \eqref{NN_bound}, and so
    \begin{equation}\label{eq:varphi:endpoint:approx}
    \eff_{N,\xi}(d) \leq \phi(0) \leq \eff_{N,\xi}(d) + C_\kappa\sqrt{\eps}.
         %\big|\phi(0)-\eff_{N,\xi}(d)\big|\leq C\sqrt{\eps}.
    \end{equation}
    \end{subequations}
    Meanwhile, differentiation of \eqref{wrg23} results in
    \eq{
\phi'(t) = \frac{1}{N}\Big\langle \frac{\partial \HH_t(\sigma)}{\partial t}\Big\rangle_{t},
}
where $\langle\cdot\rangle_t$ denotes expectation with respect to the Gibbs measure 
$G_t(\sigma)\propto \exp \HH_t(\sigma)$ on $\Sigma^N(d,\eps)$.
    Applying Gaussian integration by parts \cite[Lem.~1.1]{panchenko13a}, we rewrite this as
    \begin{equation*}
        \phi^\prime(t)
        %=\frac{1}{N}\E\bigg\langle \frac{\partial \HH_t(\sigma)}{\partial t}\bigg\rangle_{t}
        =\E\big\langle \CC(\sigma^1,\sigma^1)-\CC(\sigma^1,\sigma^2)\big\rangle_t,
    \end{equation*}
    where $\sigma^1,\sigma^2$ denote independent samples from $G_t$, and $\CC\colon\Sigma^N(d,\eps)\times\Sigma^N(d,\eps)\to\R$ is defined by
    \begin{equation*}
        \CC(\sigma,\sigma^\prime)
        = \frac{1}{N}\Big[\frac{\partial \HH_t(\sigma)}{\partial t}\HH_t(\sigma^\prime)\Big]
        \stackref{general_cov}{=}\frac{1}{2}\Big(\xi\big(R(\sigma,\sigma^\prime)\big)-\xi\big(R(P\sigma,P\sigma^\prime)\big)\Big).
    \end{equation*}
    %Here, we recall that $R(\sigma,\sigma^\prime)\equiv N^{-1}\sigma\sigma'^{\sT}$. Thus, 
    It follows that
    \begin{equation*}
    \begin{split}
        |\phi'(t)|
        %&\leq \sup_{\sigma,\sigma'\in \Sigma^N(d,\eps)}\bigg|\xi\Big(R\big(\sigma,\sigma^\prime\big)\Big)-\xi\Big(R\big(P\sigma,P\sigma^\prime\big)\Big)\bigg|\\
        &\leq \sup_{\|R\|_1\leq 1}\|\nabla\xi(R)\|_{\infty}\cdot \sup_{\sigma,\sigma'\in \Sigma^N(d,\eps)}\|R(\sigma,\sigma^\prime)-R(P\sigma,P\sigma^\prime)\|_1.
    \end{split}
    \end{equation*}
    For any $\sigma,\sigma^\prime\in \Sigma^N(d,\eps)$, we have
    \begin{equation*}
    \begin{split}
       & \|R(\sigma,\sigma^\prime)-R(P\sigma,P\sigma^\prime)\|_1 \\
       &\stackrefpp{def:overlap_entry}{eq:projection:guarantee}{=} \frac{1}{N}\sum_{k,k'=1}^\kappa\bigg|\sum_{i=1}^N\Big(\one\{\sigma_i=\vv e_k\}\one\{\sigma'_i=\vv e_{k'}\}
       - \one\{(P\sigma)_i=\vv e_k\}\one\{(P\sigma')_i=\vv e_{k'}\}\Big)\bigg|\\
        &\stackrefp{eq:projection:guarantee}{\leq} \frac{1}{N}\sum_{i=1}^N\sum_{k,k^\prime=1}^{\ka}\Big|\one\{\sigma_i=\vv e_k\}\one\{\sigma'_i=\vv e_{k'}\}
       - \one\{(P\sigma)_i=\vv e_k\}\one\{(P\sigma')_i=\vv e_{k'}\}\Big| \\
       &\stackrefp{eq:projection:guarantee}{\leq} \frac{1}{N}\sum_{i=1}^N\sum_{k,k^\prime=1}^{\ka}\Big(\big|\one\{\sigma_i=\vv e_k\}-\one\{(P\sigma)_i=\vv e_k\}\big|+\big|\one\{\sigma'_i=\vv e_{k'}\}-\one\{(P\sigma')_i=\vv e_{k'}\}\big|\Big) \\
       &\stackref{eq:projection:guarantee}{\leq} 4\eps.
    \end{split}
    \end{equation*}
    %where the last inequality holds for some constant $C\equiv C_{\ka}>0$ due to~\eqref{eq:projection:guarantee}. 
    The two previous displays together show that $\sup_{t\in [0,1]}|\phi^\prime(t)|\leq C'\eps$ for some constant $C'$ depending only on $\xi$ and $\kappa$. 
    Combining this fact with \eqref{rh3wv1}, we determine that
    \begin{equation*}
        \big|\eff_{N,\xi}(d,\eps)-\eff_{N,\xi}(d)\big|
        %\leq \big|\phi(1)-\phi(0)\big|+C\sqrt{\eps}\leq C'\sqrt{\eps},
        \leq C_\kappa\sqrt{\eps} + C'\eps.
    \end{equation*}
    We have thus proved \eqref{cbw24} with $C=C_\kappa+C'$.
\end{proof}

%Having Propositions~\ref{prop:upper:interpolation},~\ref{prop_general_lower_bound} and Lemma~\ref{lem:free:energy:eps:approx} in hand, Theorem~\ref{gen_con_par_thm} is immediate.
We now complete the main objective of this appendix.

\begin{proof}[Proof of Theorem~\ref{gen_con_par_thm}]
For any $d\in\DD$ and $N\geq1$, it follows from the definition of $\DD_N$ in \eqref{DN_def} that there exists $d^N\in\DD_N$ satisfying \eqref{fw424} with $L=1$.
      %For $d\in \DD$ and large enough $N\geq 1$, note that there exists $\wt{d}^N\equiv \big(\wt{d}^N_k\big)_{1\leq k\leq \ka}\in \DD_N$ such that $\big\|\wt{d}^N-d\big\|_{\infty}\leq \frac{2}{N}$, and $\wt{d}^N_k=0$ whenever $d_k=0$. Then, 
      So for any fixed $\eps>0$, we have $\Sigma^N(d,\eps)\supseteq\Sigma^N(d^N)$  once $N$ is large enough that $\eps\geq 1/N$.
      Hence
    \begin{equation*}
        \liminf_{N\to\infty}\eff_{N,\xi}(d,\eps)\geq \liminf_{N\to \infty}\eff_{N,\xi}(d^N)\stackref{eq_prop_general_lower_bound}{\ge} \inf_{\pi\in \Pi_d}\PP_{\xi}(\pi).
    \end{equation*}
    %where the last inequality holds by Proposition~\ref{prop_general_lower_bound}. 
   Combining this with the upper bound from Proposition~\ref{prop:upper:interpolation}, we deduce \eqref{gen_con_par_eq_a}. 
   % conclude that
   %  \begin{equation}\label{eq:proof:prop:lower}
   %      \lim_{\eps\searrow 0}\limsup_{N\to\infty}\eff_{N,\xi}(d,\eps)=\lim_{\eps\searrow 0}\liminf_{N\to\infty}\eff_{N,\xi}(d,\eps)=\inf_{\pi\in \Pi_d}\PP_{\xi}(\pi).
   %  \end{equation}
   %  We have thus established \eqref{gen_con_par_eq_a}.
    For \eqref{gen_con_par_eq_b}, we consider any sequence $d^N\in \DD_N$ such that $d^N\to d$ as $N\to\infty$, not necessarily satisfying \eqref{fw424}.
    Once $N$ is large enough that $\|d^N-d\|_\infty\leq\eps/2$, we have
    \begin{equation*}
        \Sigma^N(d, \eps/2) \subseteq \Sigma^N(d^N,\eps)\subseteq \Sigma^N(d,2\eps).
    \end{equation*}
    Hence $\eff_{N,\xi}(d,\eps/2)\leq \eff_{N,\xi}(d^N,\eps)\leq \eff_{N,\xi}(d,2\eps)$ for all large $N$, and so \eqref{gen_con_par_eq_a} forces
    \begin{equation}\label{eq:proof:prop:lower:2}
        \lim_{\eps\searrow 0}\limsup_{N\to\infty}\eff_{N,\xi}(d^N,\eps)=\lim_{\eps\searrow 0}\liminf_{N\to\infty}\eff_{N,\xi}(d^N,\eps)=\inf_{\pi\in \Pi_d}\PP_{\xi}(\pi).
    \end{equation}
    Meanwhile, by Lemma~\ref{lem:free:energy:eps:approx} we have
    \begin{equation*}
        \limsup_{N\to\infty}\big|\eff_{N,\xi}(d^N,\eps)-\eff_{N,\xi}(d^N)\big|\leq C\sqrt{\eps}.
    \end{equation*}
    By sending $\eps\searrow 0$ and appealing to \eqref{eq:proof:prop:lower:2}, 
    we obtain \eqref{gen_con_par_eq_b}.
    %we conclude
    % \begin{equation*}
    %     \lim_{N\to\infty}\eff_{N,\xi}(d^N)=\inf_{\pi\in \Pi_d}\PP_{\xi}(\pi).
    % \end{equation*}
\end{proof}

\bibliographystyle{acm}
\bibliography{erikbib}{}

\end{document}